  \gdef\sformat{"Date: 
\endgroup
\directlua{
 local cmd="git show -s --format='"..\sformat.."'"
 local r=io.popen(cmd):read("*a")
 if (r) then
      tex.print("\string\\def\string\\COMMIT{"..r.."}")
 end
 }
\or
\relax\fi
\makeatother
\ifdefined\COMMIT
        \usepackage{background}
        \backgroundsetup{%
         pages=all, placement=bottom,angle=0,scale=2,%
         vshift=20pt,%
         contents={\COMMIT}}
\fi
\else
\usepackage[british]{babel}
\fi

\newcommand{\KK}{\mathbb{K}}
\newcommand{\RR}{\mathbb{R}}
\newcommand{\SSs}{\mathbb{S}}
\newcommand{\CC}{\mathbb{C}}
\newcommand{\cS}{\mathcal{S}}
\newcommand{\cQ}{\mathcal{Q}}
\newcommand{\cB}{\mathcal B}
\newcommand{\cA}{\mathcal A}
\newcommand{\cU}{\mathcal U}

\newcommand{\wt}{\mathrm{wt}}
\newcommand{\cL}{\mathcal{L}}
\newcommand{\cH}{\mathcal{H}}
\newcommand{\GG}{\mathbb{G}}
\newcommand{\HH}{\mathbb{H}}
\newcommand{\PP}{\mathbb{P}}
\newcommand{\XX}{\mathbb{X}}
\newcommand{\fB}{\mathfrak{B}}
\newcommand{\fX}{\mathfrak{X}}
\newcommand{\fP}{\mathfrak{P}}
\newcommand{\fL}{\mathfrak{L}}
\newcommand{\fA}{\mathfrak{A}}
\newcommand{\fC}{\mathfrak{C}}
\newcommand{\Gr}{\mathrm{Gr}}
\newcommand{\cM}{\mathcal{M}}
\newcommand{\cC}{\mathcal{C}}
\newcommand{\cV}{\mathcal{V}}
\newcommand{\cW}{\mathcal{W}}
\newcommand{\cT}{\mathcal{T}}
\newcommand{\cX}{\mathfrak{X}}
\newcommand{\cF}{\mathcal{F}}
\newcommand{\cP}{\mathcal{P}}

\newcommand{\cN}{\mathcal{N}}
\newcommand{\fa}{\mathfrak a}
\newcommand{\sh}{\mathrm{sh}}
\newcommand{\fb}{\mathfrak b}
\newcommand{\fx}{\mathfrak x}
\newcommand{\fv}{\mathfrak v}
\newcommand{\diam}{\mathrm{diam}}
\newcommand{\PgL}{\mathrm{P}\Gamma{\mathrm{L}}}
\newcommand{\LL}{\mathbb{L}}
\newcommand{\TT}{\mathbb{T}}
\newcommand{\QQ}{\mathbb{Q}}
\newcommand{\cG}{\mathcal{G}}
\newcommand{\ccQ}{\mathcal{Q}}
\newcommand{\RM}{\mathrm{RM}\,}
\newcommand{\trace}{\mbox{\itshape trace}}
\newcommand{\diag}{\mbox{\itshape diag}}
\newcommand{\spin}{\text{\itshape spin}}
\newcommand{\cha}{\text{\itshape char}}
\newcommand{\bE}{\mathbb E}
\newcommand{\baM}{\overline{M}}
\newcommand{\PG}{\mathrm{PG}}
\newcommand{\Sp}{\mathrm{Sp}}
\newcommand{\GL}{\mathrm{GL}}
\newcommand{\PGL}{\mathrm{PGL}}
\newcommand{\PGO}{\mathrm{PGO}}
\newcommand{\PGU}{\mathrm{PGU}}
\newcommand{\PSp}{\mathrm{PSp}}
\newcommand{\FF}{\mathbb{F}}
\newcommand{\ZZ}{\mathbb{Z}}
\newcommand{\NN}{\mathbb{N}}
\newcommand{\WW}{\mathbb{W}}
\newcommand{\bS}{\mathbb{S}}
\newcommand{\Rad}{\mathrm{Rad}}
\newcommand{\Res}{\mathrm{Res}}
\newcommand{\Fix}{\mathrm{Fix}}
\newcommand{\Aut}{\mathrm{Aut}}
\newcommand{\lt}{\mathrm{lt}}
\newcommand{\gr}{\mathrm{gr}}
\newcommand{\er}{\mathrm{er}}
\newcommand{\aut}{\mathrm{Stab}}
\newcommand{\ch}{\mathrm{char}}
\newcommand{\rank}{\mathrm{rank}\,}
\newcommand{\N}{\mathcal{N}}
\newcommand{\ox}{\overline{x}}
\newcommand{\ov}{\overline{v}}
\newcommand{\oy}{\overline{y}}
\newcommand{\oU}{\widetilde{U}}
\newcommand{\oS}{\overline{S}}
\newcommand{\oM}{\overline{M}}
\newcommand{\ou}{\overline{u}}
\newcommand{\oV}{\overline{V}}
\newcommand{\oPi}{{\overline{\Pi}}_{\varphi}}
\newcommand{\oRad}{\overline{\mathrm{Rad}}(\varphi)}
\newcommand{\GF}{\mathrm{GF}}

\newcommand{\bZ}{\bf{0}}
\newcommand{\codim}{\mathrm{codim}\,}
\theoremstyle{plain}
\newtheorem{MainTheo}{Theorem}
\newtheorem{cor}[MainTheo]{Corollary}
\newtheorem{lemma}{Lemma}[section]
\newtheorem{theorem}[lemma]{Theorem}
\newtheorem{corollary}[lemma]{Corollary}
\newtheorem{proposition}[lemma]{Proposition}

\newtheorem{prop}[lemma]{Proposition}

\theoremstyle{definition}
\newtheorem{definition}{Definition}
\newtheorem{remark}[lemma]{Conjecture}
\newtheorem{note}[lemma]{Remark}
\newtheorem{prob}[lemma]{Problem}

\def\pr{\noindent{\bf Proof. }}
\def\eop{\hspace*{\fill}$\Box$}
\begin{document}

\title{The generating rank of a polar Grassmannian}
\author{Ilaria Cardinali, Luca Giuzzi and Antonio Pasini}
\maketitle

\begin{abstract}
In this paper we compute the generating rank of $k$-polar Grassmannians defined over commutative division rings. Among the new results, we compute the generating rank of $k$-Grassmannians  arising from Hermitian forms of Witt index $n$ defined over vector spaces of dimension $N > 2n$.
  We also study generating sets for the $2$-Grassmannians  arising from quadratic forms of Witt index $n$ defined over $V(N,\FF_q)$ for $q=4,8,9$ and $2n \leq N \leq 2n+2$. We prove that for $N >6$ they can be generated over the prime subfield, thus determining their generating rank.
\end{abstract}

\section{Introduction}\label{introduction}

\subsection{Basics on generation and embeddings}

Let $\Gamma=(P,\cL)$ be a point-line geometry, where $P$ is the set of points and $\cL$ is a collection of subsets of $P$, called lines, with the property that any two distinct points belong to at most one line and every line has at least two points. A {\em subspace} of $\Gamma$ is a subset $X \subseteq P$ such that every line containing at least two points of $X$ is entirely contained in $X$. Clearly a subspace $X$ with the lines of $\Gamma$ contained in it is a point-line geometry, which we also denote by $X$.

The intersection of all subspaces of $\Gamma$ containing a given subset $S\subseteq P$ is a subspace called the \emph{span} of $S$
and written as $\langle S\rangle_{\Gamma}.$ We say that $S\subseteq P$ is a \emph{generating set} (or \emph{spanning set}) for $\Gamma$ if $\langle S\rangle_{\Gamma}=P.$ In general,  $\Gamma$ might admit minimal generating sets with different cardinalities. We call the minimum cardinality of a generating set of $\Gamma$ the \emph{generating rank} $\gr(\Gamma)$ of $\Gamma.$

Given a projective space $\PG(V)$ defined over a vector space $V$ and a point-line geometry $\Gamma=(P,\cL)$, a \emph{projective embedding} of $\Gamma$ in $\PG(V)$ (an \emph{embedding}, for short) is an injective map $\varepsilon\colon P\rightarrow \PG(V)$ such that the image of each line of  $\Gamma$ is a projective line and $\langle \varepsilon(P)\rangle_{\PG(V)}=\PG(V).$ The \emph{dimension} of the embedding $\varepsilon$ is the vector dimension of $V$. We write $\varepsilon:\Gamma\rightarrow\PG(V)$ to mean that $\varepsilon$ is a projective embedding of $\Gamma$ in $\PG(V)$.

Given two embeddings $\varepsilon:\Gamma\rightarrow \PG(V)$ and $\varepsilon':\Gamma\rightarrow \PG(V')$ of $\Gamma$ a {\em morphism} from $\varepsilon$ to $\varepsilon'$ is a homomorphism $f:\PG(V)\rightarrow \PG(V')$ (see \cite[chp. 5]{FF}) such that $f\cdot \varepsilon = \varepsilon'$. If $f$ is a collineation then, regarded as a morphism of embeddings, it is called an {\em isomorphism}. If a morphism exists from $\varepsilon$ to $\varepsilon'$ then we say that $\varepsilon'$ is a {\em quotient} of $\varepsilon$ and that $\varepsilon$ {\em covers} $\varepsilon'$.  An embedding is \emph{(absolutely) universal} if it covers all projective embeddings of $\Gamma$.
The universal embedding of $\Gamma$, when it exists, is unique (up to isomorphisms).

The \emph{embedding rank} $\er(\Gamma)$ of $\Gamma$ is the least upper bound of the dimensions of its embeddings. When $\Gamma$ admits the universal embedding then $\er(\Gamma)$ is just the dimension of the latter.  Let $\varepsilon$ and $S$ be a projective embedding and a generating set of $\Gamma$. Then $\varepsilon (\langle S\rangle_{\Gamma})\subseteq \langle \varepsilon (S)\rangle_{\PG(V)}$. Hence $\dim(\varepsilon)\leq \gr(\Gamma)$.  So, $\gr(\Gamma) \geq \er(\Gamma)$. In particular, suppose that $\Gamma$ admits the universal embedding and $\gr(\Gamma) < \infty$. Suppose moreover that $\dim(\varepsilon)=\gr(\Gamma)$ for a particular embedding $\varepsilon$ of $\Gamma$. Then $\varepsilon$ is universal.

\subsection{The problem studied in this paper}

Let $\cal P$ be a non-degenerate Hermitian or orthogonal polar space of finite rank $n \geq 2$. In this paper we investigate the generating rank $\gr({\cal P}_k)$ of the $k$-Grassmannian ${\cal P}_k$ of $\cal P$ for $1 \leq k \leq n$. A few results on the embedding rank $\er({\cal P}_k)$ will be obtained as a by-product.

We refer to Section~\ref{Prelim} for the terminology, in particular for the definition of the defect of a non-degenerate polar space. Provisionally, we define the defect of $\cal P$ as the difference $d := \er({\cal P})-2n$. Note that $d$ can be infinite, but if the underlying field of $\cal P$ is finite then $d \leq 1$ if $\cal P$ is Hermitian and $d\leq 2$ if it is  quadratic.

Symplectic polar spaces will not be considered in this paper. Indeed if $\cal S$ is a non-degenerate symplectic polar space of rank $n$ in characteristic different from $2$ then the embeddings of its $k$-Grassmannians ${\cal S}_k$ are well understood and $\gr({\cal S}_k)$ is known, for every $k = 1, 2,..., n$. Explicitly, $\gr({\cal S}_k) = \er({\cal S}_k) = {{2n}\choose k}-{{2n}\choose{k-2}}$; see~\cite{BB98,B2007,BC2012,C98,CS01,B09,BP07,Premet}. We have nothing to add to this. On the other hand, symplectic polar spaces in characteristic $2$ should rather be regarded as orthogonal spaces (see e.g. \cite{BP09}); we adopt this point of view in this paper.

\subsubsection{A survey of known results}

Hereby we summarize all results on generating and embedding ranks of Hermitian or orthogonal Grassmannians we have found in the literature. Throughout this subsection and the following one $\FF$ always stands for the underlying field of the polar space under consideration.

Let $\cH$ be a non-degenerate Hermitian polar space of finite rank $n$ with defect $d = 0$ and let ${\cal H}_k$ be its $k$-Grassmannian for $1\leq k \leq n$. When $k > 1$ suppose that $\FF\not=\FF_4$. Then $\gr(\cH_k)= \er({\cal H}_k) = {2n\choose k}$; see \cite{BB98, BC2012, CS01, B10, BP07}.

Let $\FF = \FF_4$ and $k = n$. In this case $\er({\cal H}_n) = (4^n+2)/3$ (Li \cite{Li02}); most likely $\gr({\cal H}_n) = \er({\cal H}_n)$ for any $n \geq 2$ but a proof of this equality is known only for $n \leq 3$ (see Cooperstein \cite{Co2001} for $n = 3$; the case $n = 2$ is very well known).

We have found nothing in the literature on the generating rank of $\cH_k$ when $d > 0$ but the following result \cite{CS01}: if $d =1$ and $\FF$ is finite then $\gr(\cH_n)\leq 2^n$.

The orthogonal case is harder. Let $\cQ$ be a non-degenerate orthogonal polar space of rank $n$ and defect $d \leq 2$ and let ${\cal Q}_k$ be the $k$-grassmannian of $\cal Q$ for $1\leq k \leq n$. Nothing is known on $\gr({\cal Q}_k)$ for $2 < k < n$ but lower bounds provided by known embeddings of ${\cal Q}_k$. It is folklore that $\gr({\cal Q}_1) = \er({\cal Q}_1) = 2n+d$; anyway, see Corollary \ref{t:k=1 symplectic even} of this paper.

Let $k = n$. If $d = 0$ then $\cal Q$ is a hyperbolic quadric. This case is devoid of interest. Indeed in this case all lines of ${\cal Q}_n$ have just two points; consequently ${\cal Q}_n$ is not embeddable and the full set of points of ${\cal Q}_n$ is the unique generating set of ${\cal Q}_n$. So, let $d > 0$. When $1 \leq d \leq 2$ and $\mathrm{char}(\FF) \neq 2$ then $\cQ_n$ is embeddable and $\gr({\cal Q}_n) = \er({\cal Q}_n) = 2^n$ (see \cite{W82}, \cite{BB98}, \cite{CS01}, \cite{B11}).

Let $\ch(\FF)=2$ and $d > 0$. Then in general $\cQ$ admits many embeddings, but all of them cover a unique embedding, the minimum one, which can be obtained by factorizing the natural (universal) embedding of $\cQ$ over the radical $R$ of the bilinearization of the quadratic form $q$ associated to $\cQ$. If $R = 0$ and $d = 2$ then $\cQ$ admits a unique embedding. In this case $\cQ_n$ is embeddable and we have $\gr({\cal Q}_n) = \er({\cal Q}_n) = 2^n$, just as when $\ch(\FF) \neq 2$ (see \cite{CS01}, \cite{B11}). Suppose that $R \neq 0$ (which is always the case when $d = 1$) and that
$q(R) = \FF$ (as it is the case when $\FF$ is perfect and, consequently, $d = 1$). Then the minimum embedding embeds $\cQ$ as a symplectic space in  $\PG(2n-1,\FF)$ (see \cite{BP09, Pas17}). In this case $Q_n$ is embeddable and, if $\FF\neq \FF_2$, then $\gr(\cQ_n)= \er({\cal Q}_n) = {2n\choose n}-{2n\choose {n-2}}$ (see \cite{BP07}). If $\FF=\FF_2$ then $\er({\cal Q}_n) = (2^n+1)(2^{n-1}+1)/2$ (Li \cite{Li01}); it is likely that $\gr({\cal Q}_n) = \er({\cal Q}_n)$ for $\FF = \FF_2$ too, but this equality has been proved only for $n\leq 5$; see~\cite{Co1997}.

Finally, let $k = 2 < n$. Assume that $\FF$ is a finite prime field. Then $\gr({\cal Q}_2) = \er({\cal Q}_2) = {{2n+d}\choose 2}$ (see \cite{C98b, BPa01} for $d = 0$, \cite{BB98, CS97} for $d = 1$ and \cite{C98b} for $d = 2$). Nothing is known on $\gr({\cal Q}_2)$ for $\FF$ an arbitrary field except that if $d \leq 1$ then $\gr({\cal Q}_2) \leq {{2n+d}\choose 2}+g$ where $g$ is the minimal number of elements to be added to the prime subfield of $\FF$ in order to generate the whole of $\FF$ (see \cite{BPa01}).

Some more is known on $\er({\cal Q}_k)$. Explicitly, let $k \in \{2, 3\}$, $k < n$ and $d \leq 1$. Suppose that  $\FF$ is a perfect field of positive characteristic or a number field. When $k = 3$ suppose furthermore that $\FF \neq \FF_2$. Then $\er({\cal Q}_k) = {{2n+d}\choose k}$ (see \cite{IP13}).

\subsubsection{The main results of this paper}\label{Main Sec}

Let $\cH$ be a non-degenerate Hermitian polar space of finite rank $n$ and defect $d$ and let $\cH_k$ be the $k$-Grassmannian of $\cH$, for $1 \leq k \leq n$. Let $\FF$ be the underlying field of $\cH$.

\begin{MainTheo}\label{MT1}
Both the following hold:
\begin{enumerate}
\item Let $k<n$ and $d < \infty$. When $1 < k$ suppose moreover that $\FF$ has order $|\FF| > 4$. Then $\gr(\cH_k)={N\choose k}$, where $N=2n+d$.
\item Let $k=n$. If $d > 0$ then $\gr(\cH_n)\leq 2^n$.
\end{enumerate}
\end{MainTheo}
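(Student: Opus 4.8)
The plan is to prove the two parts by different techniques, reflecting the different regimes $k<n$ and $k=n$.

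For part (1), the target is the exact value $\gr(\cH_k)=\binom{N}{k}$ with $N=2n+d$. Since the natural embedding of $\cH_k$ lives in a space of dimension $\binom{N}{k}$ (the exterior-power embedding of the underlying Grassmannian restricted to the polar points, giving the lower bound $\gr(\cH_k)\geq\er(\cH_k)=\binom{N}{k}$ whenever that embedding is full), the work is entirely in the upper bound: I must exhibit a generating set of size exactly $\binom{N}{k}$. First I would fix a hyperbolic basis $e_1,\dots,e_n,f_1,\dots,f_n$ together with $d$ anisotropic vectors spanning the complement, so that the ambient space has dimension $N=2n+d$ and the Hermitian form is in standard shape. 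The natural candidate generating set is the collection of $k$-subspaces spanned by $k$-subsets of this basis that are totally isotropic; but there are not enough such subsets when $d>0$, so the honest statement is that I want to show the polar points span a subspace of the Grassmann geometry of the same dimension as the full Grassmannian $\binom{N}{k}$. \emph{The key step} is a Witt-type / inductive argument: I would set up an induction on $n$ (and possibly on $k$), peeling off a hyperbolic pair $\langle e_n,f_n\rangle$ and expressing a general totally isotropic $k$-space in terms of isotropic $k$-spaces supported on a smaller polar space of rank $n-1$ plus ``boundary'' terms that recombine across the peeled pair. The hypothesis $k<n$ is what guarantees enough room (there is always a totally isotropic line disjoint from a given point to pivot on), and the hypothesis $|\FF|>4$ is what lets me write an arbitrary point of a Hermitian line as a span of points defined over the fixed field, which is exactly what fails for $\FF_4$ and is the source of the excluded case in the survey.

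For part (2), where $k=n$ and $d>0$, the goal is only an upper bound $\gr(\cH_n)\leq 2^n$, and I would prove it by direct construction. \emph{The main idea} is that a maximal totally isotropic subspace (a generator, corresponding to a point of $\cH_n$) is determined by a subset of the hyperbolic index set: for each subset $A\subseteq\{1,\dots,n\}$ there is a ``standard'' generator $M_A$ obtained by taking $e_i$ for $i\in A$ and $f_i$ for $i\notin A$, and there are exactly $2^n$ such subsets. I would show that these $2^n$ standard generators generate $\cH_n$ as a subspace of the Grassmann geometry. The argument here is again inductive: given an arbitrary generator $M$, I would use the fact that any two generators meeting in a hyperplane are collinear in $\cH_n$, and repeatedly move from $M$ towards the standard generators by a sequence of such elementary steps, each time showing the intermediate generator lies in the span of already-constructed ones. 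The role of $d>0$ is essential: when $d=0$ the space is hyperbolic, the lines of $\cH_n$ degenerate to two points, and the whole geometry is no longer spanned by anything smaller than its full point set, consistent with the orthogonal discussion in the survey.

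\emph{The hardest part} I expect to be the inductive recombination step in part (1): controlling the ``boundary'' $k$-spaces that appear when one peels off a hyperbolic pair and verifying that they lie in the span of the chosen generating set without inflating its cardinality beyond $\binom{N}{k}$. The defect vectors make the bookkeeping genuinely more delicate than in the $d=0$ case, since totally isotropic $k$-spaces can use the anisotropic part only in restricted ways, and one must check that the count $\binom{N}{k}$ is matched exactly rather than merely bounded. The field restriction $|\FF|>4$ must be invoked precisely at the moment one expresses a generic point on a Hermitian line over the fixed subfield, and I would make sure that step is isolated and clearly flagged so the exclusion of $\FF_4$ is transparent.
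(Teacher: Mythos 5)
Your generating set for part (2) is exactly the one the paper uses (the apartment $F_n$ of $2^n$ ``standard'' generators attached to a frame, Theorem~\ref{k=n; 1}), but your argument for why it generates is a genuine gap. Moving from an arbitrary generator $M$ towards the standard ones through generators meeting in hyperplanes is a \emph{connectivity} argument, and connectivity does not give generation: to capture $M$ in the span you need a line of $\cH_n$ through $M$ already containing two points of the span, and nothing in your sketch produces such lines. The paper's mechanism is quite different: it first proves by induction on $n$ that $S_n(p_i)\cup S_n(p'_i)\subseteq\langle F_n\rangle_{\cH_n}$, deduces that the span contains every generator $X$ with $X\cap p_i^\perp=X\cap p_i'^\perp$, then takes a non-isotropic point $p$ on the projective line of $\PG(V)$ extending the hyperbolic line $\{p_1,p'_1\}^{\perp\perp}$; the hyperplane $K=p^\perp\cap\cH$ satisfies $K\cap p_1^\perp=K\cap p_1'^\perp$, has rank $n$ by Lemma~\ref{easy} --- and this is precisely where $d>0$ enters --- so $S_n(K)\subseteq\langle F_n\rangle_{\cH_n}$, and $S_n(K)$ spans $\cH_n$ by the hyperplane-generation result Proposition~\ref{k=n; 0}. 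Note also that your rationale for $d>0$ (``lines degenerate to two points'') is the orthogonal hyperbolic phenomenon; in the Hermitian case with $d=0$ the lines of $\cH_n$ are Baer sublines, and the apartment fails to generate simply because $\gr(\cH_n)=\binom{2n}{n}>2^n$ there.

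For part (1) your lower bound is right (the Pl\"ucker embedding of \cite{ILP} gives $\gr(\cH_k)\geq\er(\cH_k)\geq\binom{N}{k}$, as in the paper), but the step you yourself flag as hardest --- the ``recombination'' across a peeled hyperbolic pair --- is the entire content of the upper bound and is missing; moreover your induction runs transverse to the paper's. Peeling a hyperbolic pair reduces the rank $n$ at fixed defect $d$, which at best reproves the $d=0$ case and gives no handle on how the anisotropic part contributes to the count. The paper instead inducts on the \emph{defect}: it takes a maximal well-ordered chain of nice subspaces $K^{(0)}\subset\cdots\subset K^{(\omega)}=\cH$ with $\mathrm{def}(K^{(\delta)})=|\delta|$, each term a hyperplane of the next, and at each step applies Proposition~\ref{proposition1}, which says that $S_k(H)\cup S_k(p_0)\cup\widehat{G}_{H,p_0}$ generates; the rank induction enters only for $\Res(p_0)$ and for $(H\cap p_0^\perp)_{k-2}$, and Pascal's identity
\[
\binom{2n+\delta}{k}+\binom{2n-1+\delta}{k-1}+\binom{2n-1+\delta}{k-2}=\binom{2n+\delta+1}{k}
\]
keeps the count exact (Theorem~\ref{gr herm grass non-deg}). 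The defect-zero base case $\gr=\binom{2n}{k}$ is imported wholesale from Blok--Cooperstein \cite{BC2012}, and that citation is the \emph{only} place the hypothesis $|\FF|>4$ is used in this paper --- it is not invoked through a ``point of a Hermitian line over the fixed subfield'' step as you conjecture. So to repair your plan you would either need to supply the recombination lemma from scratch (essentially redoing \cite{BC2012} with anisotropic bookkeeping), or restructure as the paper does: quote the $d=0$ result and climb the defect one nice hyperplane at a time.
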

\noindent
By \cite{ILP}, for $k<n$ the geometry $\cH_k$ affords a projective embedding $\varepsilon_k$ in $\PG(\bigwedge^kV)$ called the \emph{Pl\"ucker embedding}, where $V = V(N,\FF)$ is the vector space hosting the (unique) embedding of $\cal H$. We have $\dim(\varepsilon_k)= \mathrm{dim}(\bigwedge^kV) = {{N}\choose {k}}$.
It follows from \cite{K-S} that ${\cal H}_k$ admits the universal embedding. By the first part of Theorem \ref{MT1} we get the following:

\begin{cor}
If $d < \infty$, $k<n$ (and $\FF \neq \FF_4$ when $k > 1$) then the Pl\"ucker embedding of $\cH_k$ is universal (whence $\er({\cal H}_k) = \gr({\cal H}_k)$).
\end{cor}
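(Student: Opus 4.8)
The plan is to obtain the corollary as a purely formal consequence of the first part of Theorem~\ref{MT1} together with the general comparison between generating rank and embedding rank recorded in Section~\ref{introduction}. No new geometry is needed: all of the genuine content has already been invested in Theorem~\ref{MT1}, so only a short dimension count remains.

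First I would check that the hypotheses of the corollary really license an application of Theorem~\ref{MT1}(1). The only delicate point is the field condition, since the corollary assumes $\FF \neq \FF_4$ (when $k>1$) whereas Theorem~\ref{MT1}(1) asks for $|\FF| > 4$. For a non-degenerate Hermitian polar space these coincide: the underlying field carries an automorphism of order two and is therefore never a prime field, so in the finite case its order is a square and the two smallest admissible fields are $\FF_4$ and $\FF_9$, with no admissible field of order strictly between $4$ and $9$. Hence $\FF \neq \FF_4$ is equivalent to $|\FF| > 4$ here, and Theorem~\ref{MT1}(1) yields $\gr(\cH_k) = \binom{N}{k} < \infty$ with $N = 2n+d$.

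Next I would assemble the embedding-theoretic inputs. By \cite{ILP} the Pl\"ucker embedding $\varepsilon_k \colon \cH_k \to \PG(\bigwedge^k V)$ exists with $\dim(\varepsilon_k) = \dim(\bigwedge^k V) = \binom{N}{k}$, so $\dim(\varepsilon_k) = \gr(\cH_k)$; and by \cite{K-S} the geometry $\cH_k$ admits the absolutely universal embedding. I would then invoke the principle stated at the end of the subsection on generation and embeddings: whenever $\Gamma$ admits the universal embedding, $\gr(\Gamma) < \infty$, and some embedding $\varepsilon$ satisfies $\dim(\varepsilon) = \gr(\Gamma)$, that embedding $\varepsilon$ is universal. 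Applied to $\Gamma = \cH_k$ and $\varepsilon = \varepsilon_k$ this shows the Pl\"ucker embedding is universal, and the parenthetical equality follows because the embedding rank equals the dimension of the universal embedding, so $\er(\cH_k) = \binom{N}{k} = \gr(\cH_k)$. I expect no real obstacle at this stage: the whole difficulty lies upstream in Theorem~\ref{MT1}(1), which is assumed, and the only step here needing a moment's care is the elementary hypothesis-matching of the preceding paragraph.
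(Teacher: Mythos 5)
Your proposal is correct and follows essentially the same route as the paper, which likewise derives the corollary by combining Theorem~\ref{MT1}(1) with the ${N\choose k}$-dimensional Pl\"ucker embedding of \cite{ILP}, the existence of the universal embedding from \cite{K-S}, and the general principle from Section~\ref{introduction} that an embedding whose dimension equals a finite generating rank must be universal (with $\er(\cH_k)$ then equal to that dimension). Your explicit check that $\FF \neq \FF_4$ is equivalent to $|\FF| > 4$ here---because the underlying field of a non-degenerate Hermitian polar space is a separable quadratic extension, so a finite one has square order---is a detail the paper leaves implicit, but it is the same argument, not a different one.
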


Turning to the case of $k = n$, it is likely  that if $d > 0$ then $\gr(\cH_n) = 2^n$. However when $d > 0$ the geometry $\cH_n$ admits no projective embedding. Consequently, there is no easy way to replace the inequality $\gr(\cH_n)\leq 2^n$ with the corresponding equality.

Let now $\cQ$ be a non-degenerate orthogonal polar space of finite rank $n$ and defect $d$. For $1\leq k \leq n$ let $\cQ_k$ be its $k$-Grassmannian. Let $\FF$ be the underlying field of $\cQ$.

\begin{MainTheo}\label{MT2}
Let $k=n$. If $d > 0$ and $\ch(\FF)\neq 2$ then $\gr(\cQ_n)\leq 2^n.$
\end{MainTheo}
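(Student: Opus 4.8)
The plan is to mirror the argument used for the Hermitian case in Main Theorem~\ref{MT1}(2): exhibit an explicit generating set of size $2^n$, namely the generators of an apartment of the hyperbolic part of $\cQ$, and prove that it spans $\cQ_n$. Fix a hyperbolic basis $e_1,f_1,\dots,e_n,f_n$ of a maximal hyperbolic subspace $H$ of $V=V(N,\FF)$, so that $q(e_i)=q(f_i)=0$ and $b(e_i,f_j)=\delta_{ij}$ for the bilinear form $b$ associated with the quadratic form $q$, and let $A$ be the anisotropic complement of $H$, with $\dim A=d>0$. Put $\Sigma=\{\langle g_1,\dots,g_n\rangle : g_i\in\{e_i,f_i\}\}$; every member of $\Sigma$ is a totally singular $n$-space, hence a point of $\cQ_n$, and $|\Sigma|=2^n$. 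It then suffices to show that $\Sigma$ spans $\cQ_n$, for this yields $\gr(\cQ_n)\le 2^n$.

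I would argue by induction on $n$. For a singular point $p$ of $\cQ$, the set $\mathcal{R}_p$ of generators containing $p$ is a subspace of $\cQ_n$ isomorphic to the dual polar space of $p^\perp/\langle p\rangle$; when $p=\langle e_i\rangle$ or $\langle f_i\rangle$ this residue has rank $n-1$, the same defect $d>0$ (since $\dim(p^\perp/\langle p\rangle)=2(n-1)+d$) and characteristic $\neq 2$, and the $2^{n-1}$ members of $\Sigma$ through $p$ form exactly an apartment of it. By the inductive hypothesis these span $\mathcal{R}_p$ inside $\cQ_n$, so the span of $\Sigma$ contains every generator meeting the set $T=\{\langle e_i\rangle,\langle f_i\rangle : 1\le i\le n\}$ of $2n$ singular points. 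The base case $n=1$ is immediate, since $\cQ_1$ is a single line and $\langle e_1\rangle,\langle f_1\rangle$ are two of its points.

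It remains to sweep in the generators $M$ disjoint from $T$, and here the hypotheses $d>0$ and $\ch(\FF)\neq 2$ enter decisively. Given such an $M$ one has $M\not\subseteq e_1^\perp$, for otherwise $\langle e_1\rangle\oplus M$ would be totally singular of dimension $n+1$; hence $U:=M\cap e_1^\perp$ is an $(n-1)$-space and $M_1:=\langle e_1\rangle\oplus U$ is a generator through $\langle e_1\rangle$, already in the span. The generators containing $U$ form a line $\ell$ of $\cQ_n$, and since $U^\perp/U$ is a rank-one orthogonal space of defect $d>0$ this line is thick, with at least three points. If I can locate a \emph{second} point of $\ell$ already in the span, then $\ell$ lies in the span and in particular $M$ does. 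I expect this to be the main obstacle: a single application of the construction produces only the one neighbour $M_1$, so the sweep must be organised as a secondary induction---on the dual-polar distance from $M$ to $\Sigma$, or on the number of coordinates in which $M$ fails to align with the apartment---arranged so that the residual rank-one space $U^\perp/U$ is guaranteed to carry at least two generators lying in the current span. Producing the required extra singular points is exactly where $\ch(\FF)\neq 2$ is used, through the singular vectors $e_i-\mu^2 q(a)\,f_i+\mu a$ with $a\in A$, $q(a)\neq 0$, furnished by the anisotropic part.

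Once the sweep is complete we obtain that $\Sigma$ spans $\cQ_n$, whence $\gr(\cQ_n)\le 2^n$. Two remarks on the hypotheses. The condition $d>0$ is precisely what makes the lines of $\cQ_n$ thick: for $d=0$ one is in the degenerate hyperbolic case, where $\cQ_n$ has thin lines and the statement fails. The condition $\ch(\FF)\neq 2$ keeps us in the regime where $2^n$---the dimension of the spin embedding when $\cQ_n$ is embeddable---is the relevant value, rather than the count ${2n\choose n}-{2n\choose n-2}$ governing the characteristic-two, symplectic-type case. For $d\le 2$ the bound is already known to be an equality, and the present argument is designed to cover arbitrary, possibly infinite, defect.
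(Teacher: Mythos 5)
Your proposal is a plan rather than a proof: the decisive step is explicitly left open. After the inductive stage places in the span all generators through a point of the frame, you must sweep in a generator $M$ avoiding every frame point, and your construction produces only the single neighbour $M_1=\langle e_1\rangle\oplus(M\cap e_1^\perp)$ on the line $\ell$ of $\cQ_n$ consisting of the generators through $U=M\cap e_1^\perp$. Since a line is forced into a span only once it carries \emph{two} points of it, everything hinges on exhibiting a second generator through $U$ already in the span; you acknowledge this yourself (``If I can locate a second point \dots I expect this to be the main obstacle'') and supply no mechanism for it: the secondary induction on dual-polar distance is never set up, and there is no reason the rank-one space $U^\perp/U$ should carry a second singular point corresponding to a generator in the current span. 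Moreover, the vectors $e_i-\mu^2 q(a)f_i+\mu a$ you invoke are singular in \emph{every} characteristic, so they cannot be where $\ch(\FF)\neq 2$ enters; that the hypothesis must enter somewhere essential is shown by the characteristic-two parabolic case (perfect field, $d=1$), where $\gr(\cQ_n)={{2n}\choose n}-{{2n}\choose{n-2}}>2^n$ and the apartment provably fails to span. Any completion of your sweep has to exclude exactly that situation, and the sketch gives no hint how.

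For the record, the paper does not attempt a direct apartment argument for Theorem \ref{MT2} at all: in Theorem \ref{orth-dps} it verifies the Maximal Chain condition via Theorems \ref{3} and \ref{4}, takes a nice subspace $K$ of defect $1$, quotes the known result $\gr(K_n)=2^n$ from \cite{BB98} and \cite{CS97} (this is where $\ch(\FF)\neq 2$ is consumed), and lifts the bound to arbitrary, possibly infinite, $d$ by Corollary \ref{clkn bis}, which rests on Proposition \ref{k=n; 0} (for a nice hyperplane $H$ the set $S_n(H)$ spans $\cP_n$) and a transfinite induction along a well ordered chain of nice subspaces. Even in the Hermitian analogue you are imitating (Theorem \ref{k=n; 1}), the very difficulty you face is bridged not by a distance induction but by a global hyperplane trick: a nonsingular point $p$ is chosen on the projective line extending the hyperbolic line $\{p_1,p'_1\}^{\perp\perp}$, so that $K=p^\perp\cap\cH$ is a nice hyperplane (Lemma \ref{easy}) all of whose generators $X$ satisfy $X\cap p_1^\perp=X\cap p_1'^\perp$ and hence lie in the span, after which Proposition \ref{k=n; 0} finishes. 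Your target claim --- that the apartment spans $\cQ_n$ when $d>0$ and $\ch(\FF)\neq 2$ --- is in fact true, but it is obtained by combining \cite{BB98} with Corollary \ref{clkn bis} (as the Remark following that corollary indicates), not by the local one-line-at-a-time sweep your outline leaves unfinished.
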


As remarked in the previous subsection, when $0 < d \leq 2$ and $\ch(\FF) \neq 2$ the inequality $\gr(\cQ_n)\leq 2^n$ is in fact an equality. Perhaps the same is true when $d > 2$ but, since $\cQ_n$ is not embeddable when $d > 2$, there is no easy way to prove it.

In order to state our next results we need to fix some notation an terminology. Let $V = V(N,\FF)$ and $q:V\rightarrow\FF$ be a quadratic form. Let $\FF_0$ be a proper subfield of $\FF$ and $B$ a basis of $V$ such that the polynomial which represents $q$ with respect to $B$ is defined over $\FF_0$. Let $V_0$ be the $N$-dimensional $\FF_0$-vector space of the $\FF_0$-linear combinations of the vectors of $B$. Consider the form $q_0\colon V_0\rightarrow \FF_0$ induced by $q$ on  $V_0$ and let $\cQ(\FF)$ and $\cQ(\FF_0)$ be the polar spaces associated to $q$ and $q_0$ respectively. Regarded $\PG(V_0)$ as a subgeometry of $\PG(V)$ as usual and recalling that $\cQ(\FF_0)$ and $\cQ(\FF)$ are subgeometries of $\PG(V_0)$ and $\PG(V)$ respectively, the polar space $\cQ(\FF_0)$ is a subgeometry of $\cQ(\FF)$. So, we can consider its span $\langle\cQ(\FF_0)\rangle_{\cQ(\FF)}$ in $\cQ(\FF)$. If $\langle\cQ(\FF_0)\rangle_{\cQ(\FF)}=\cQ(\FF)$ then we say that $\cQ(\FF)$ is $\FF_0$-\emph{generated}.

The above naturally extends to $k$-Grassmannians of $\cQ(\FF)$. Explicitly, every $k$-subspace $X$ of $\cQ(\FF_0)$, regarded as a subspace of $V_0$, whence a subset of $V$, spans in $V$ a $k$-subspace $\FF\otimes X = \langle X\rangle_V$ of $\cQ(\FF)$. So, the $k$-Grassmannian $\cQ_k(\FF_0)$ of $\cQ(\FF_0)$ can be naturally embedded into the
$k$-Grassmannian $\cQ_k(\FF)$ of $\cQ(\FF)$ by means of the map
\[\begin{array}{rcccc}
\xi & \colon & \cQ_k(\FF_0) & \to & \cQ_k(\FF)\\
 & & X & \to & \langle X\rangle_V
\end{array}\]
\begin{definition} \label{subgeometry}
If $\cQ_k(\FF)=\langle\xi(\cQ_k(\FF_0))\rangle_{\cQ_k(\FF)}$ then we say that $\cQ_k(\FF)$ is {\em generated over} $\FF_0$ (also $\FF_0$-{\em generated}, for short).
\end{definition}

Clearly, if $\cQ_k(\FF)$ is $\FF_0$-generated then $\gr(\cQ_k(\FF)) \leq \gr(\cQ_k(\FF_0))$. So, if we already know $\gr(\cQ_k(\FF_0))$ and we also know that $\cQ_k(\FF)$ admits an embedding of dimension equal to $\gr(\cQ_k(\FF_0))$, then we can conclude that $\er(\cQ_k(\FF)) = \gr(\cQ_k(\FF)) = \gr(\cQ_k(\FF_0))$.

As recalled in the previous subsection, if $n > 2$, $d \leq 1$ and $\FF_0$ is a prime field then $\gr(\cQ_2(\FF_0)) = {{2n+d}\choose 2}$ while, if $\FF$ is any field admitting $\FF_0$ as its prime subfield, then $\gr(\cQ_2(\FF)) \leq \gr(\cQ_2(\FF_0))+g$ where $g$ is the minimal size of a subset $G\subset \FF$ such that $\FF_0\cup G$ generates $\FF$. On the other hand, $\cQ_2(\FF)$ admits the Weyl embedding, which is just ${{2n+d}\choose 2}$-dimensional \cite{IP13}. So, it is natural to ask if $\cQ_2(\FF)$ is $\FF_0$-generated.

This problem is studied in \cite{BPa01} but the results obtained in that paper fall short of a complete solution. The following two theorems deal indeed with this problem. Admittedly, they don't contribute so much to its solution, nevertheless they look intriguing. They seem to point at opposite directions: Theorem \ref{MT3} apparently suggests a negative answer to our problem while Theorem \ref{MT4}, where a few particular cases are considered, lets us hope for an affirmative one.

\begin{MainTheo}\label{MT3}
Let $\cQ(\FF)$ be a non-degenerate hyperbolic polar space of rank $n = 3$ in $\PG(V)$, where $V = V(6,\FF)$ (so, $\cQ(\FF)$ has defect $d = 0$). Suppose that $\FF$ is not a prime field. Then the line-Grassmannian $\cQ_2(\FF)$ of $\cQ(\FF)$ is never $\FF_0$-generated, for any proper subfield $\FF_0$ of $\FF$.
\end{MainTheo}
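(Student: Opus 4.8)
The plan is to first translate the statement through the Klein correspondence. Since $\cQ(\FF)=Q^{+}(5,\FF)$ is the Klein quadric, its points are the lines of $\PG(3,\FF)$, its two families of planes are the points and the planes of $\PG(3,\FF)$, and the points of the line-Grassmannian $\cQ_2(\FF)$ — the lines lying on the quadric — are exactly the incident point-plane flags $(p,\pi)$ of $\PG(3,\FF)$ (the pencils of lines of $\PG(3,\FF)$). Under this identification the two types of lines of $\cQ_2(\FF)$ become: (Type I) fix a point $p$ and a line $L\ni p$, and let $\pi$ run through the pencil of planes containing $L$; (Type II) fix a plane $\pi$ and a line $L\subseteq\pi$, and let $p$ run through the points of $L$. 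Writing $p=\langle x\rangle$ and $\pi=\ker a$ with $a(x)=0$, one may further identify the Weyl embedding $\varepsilon_2$ with the $15$-dimensional adjoint embedding, the point $(p,\pi)$ going to the rank-one trace-zero matrix $\langle x a^{T}\rangle$; this is consistent with $\dim(\varepsilon_2)=\binom{6}{2}=15$.

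It is worth noting at the outset why the obstruction must be genuinely geometric. The quadratic form is defined over $\FF_0$, so $\xi(\cQ_2(\FF_0))$ spans the $\FF_0$-form $\bigwedge^{2}V_0$ of $\bigwedge^{2}V$ over $\FF_0$; hence its $\FF$-linear span is all of $\bigwedge^{2}V$ and $\langle\varepsilon_2(\xi(\cQ_2(\FF_0)))\rangle_{\PG(\bigwedge^{2}V)}=\PG(\bigwedge^{2}V)$. Thus no embedding-theoretic or dimension argument can detect the failure of $\FF_0$-generation: one must work inside the point-line geometry $\cQ_2(\FF)$ and control the subspace closure $\langle\xi(\cQ_2(\FF_0))\rangle_{\cQ_2(\FF)}$ directly.

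To this end I would introduce a field-of-definition invariant. For a point $p=\langle x\rangle$ of $\PG(3,\FF)$ let $\rho(p)$ be the least $\FF_0$-dimension of an $\FF_0$-defined subspace $U_0$ of $V(4,\FF)$ with $x\in U_0\otimes_{\FF_0}\FF$, and dually let $\rho^{*}(\pi)$ be defined through the coordinate functional of $\pi$; then $\rho(p)=1$ exactly for the $\FF_0$-rational points and $\rho^{*}(\pi)=1$ for the $\FF_0$-rational planes. The plan is to characterise the span by a condition coupling $p$ and $\pi$ — informally, $(p,\pi)\in\langle\xi(\cQ_2(\FF_0))\rangle_{\cQ_2(\FF)}$ precisely when the flag is subordinate to an $\FF_0$-defined flag, i.e. when there is an $\FF_0$-defined subspace $W_0$ with $\langle x\rangle\subseteq W_0\otimes\FF\subseteq\ker a$ (together with the degenerate cases $\rho(p)=1$ or $\rho^{*}(\pi)=1$). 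The argument then has three parts: (a) every $\FF_0$-flag satisfies the condition; (b) the condition is stable under both Type I and Type II closure steps; (c) a flag violating it exists as soon as $\FF\neq\FF_0$ — concretely, choosing $x$ and $a$ whose entries generate $\FF$ over $\FF_0$ and arranging the minimal $\FF_0$-defined line through $p$ not to lie in $\pi$. Part (c) produces a point of $\cQ_2(\FF)$ outside the span, which is the desired conclusion.

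The main obstacle is step (b), the closure-invariance, because the two line families feed one another: a Type I step attaches new planes to an already-produced point, and these planes are then available as bases for Type II steps, and conversely, so the bookkeeping must follow the $\FF_0$-defining subspaces through an unbounded alternation of the two operations. The decisive point, which I expect to carry the induction, is the rigidity lemma that through a point $p$ with $\rho(p)=2$ there is a \emph{unique} $\FF_0$-defined line $\ell_0(p)$, and that the $\FF_0$-rational points of any plane $\pi$ with $\rho^{*}(\pi)=2$ all lie on the unique $\FF_0$-defined line of $\pi$; this forces every reached plane through such a $p$ to contain $\ell_0(p)$, so that neither closure step can escape the coupled condition. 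The tightest case is a quadratic extension $[\FF:\FF_0]=2$, where $\rho$ and $\rho^{*}$ never exceed $2$ and a crude rank bound is vacuous, so the finer coupled invariant $\ell_0(p)\subseteq\pi$ is essential; higher-degree and infinite extensions leave more room for step (c) but require the same uniform control of the subspace of definition through the induction.
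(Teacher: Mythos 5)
Your proposal is correct, and after unwinding the Klein correspondence your obstruction set coincides exactly with the set $\Omega=\Omega_1\cup\Omega_2$ on which the paper's proof (Theorem \ref{not-gen}) rests, so the two arguments differ not in the invariant but in how they certify that it is a proper subspace. In the paper, $\Omega_1$ is the set of lines of $Q^+(5,\FF)$ meeting $Q^+(5,\FF_0)$ non-trivially and $\Omega_2$ the set of lines lying in $\FF_0$-rational planes; since the two quadric planes through a line correspond under Klein to the point $p$ and the plane $\pi$ of your flag, while an $\FF_0$-rational quadric point in the pencil is exactly an $\FF_0$-defined line $W_0$ with $p\in[W_0\otimes\FF]\subseteq\pi$, your coupled condition (with its degenerate cases $\rho(p)=1$, $\rho^{*}(\pi)=1$) describes precisely $\Omega$. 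The paper proves the subspace property by case analysis on the quadric: planes with two rational points are rational (using that each line lies on exactly two planes), intersections of rational subspaces are rational (Lemma \ref{triv}), and the mixed configuration of a line in $\Omega_1\setminus\Omega_2$ collinear with one in $\Omega_2\setminus\Omega_1$ is excluded by an ad hoc contradiction. Your step (b), which you flag as the main obstacle, actually closes more easily than you fear: because $\FF_0$-defined subspaces of $V(4,\FF)$ are stable under intersection and sum, each point $p$ has a unique \emph{minimal} $\FF_0$-defined subspace $M_0(p)$ and each plane $\pi$ a unique \emph{maximal} one $N_0(\pi)$ (your rigidity lemma is the case $\rho=2$ of this), and your condition is equivalent to $[M_0(p)]\subseteq\pi$, equivalently to $p\in[N_0(\pi)]$. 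With that self-dual reformulation, closure is a two-line check: on a Type II line with base $\pi$ and axis $L$, two flags in the set put two distinct points of $L$ inside $[N_0(\pi)]$, whence $L\subseteq[N_0(\pi)]$ and the whole line lies in the set; dually, on a Type I line, two distinct planes through $L$ meet exactly in $L$, so $[M_0(p)]\subseteq L$ and again the whole line is in. This absorbs the paper's mixed-case contradiction into the symmetry of the condition, makes your worry about unbounded alternation moot (a line-closed set is a subspace, so no induction over generation steps is needed), and handles $\rho(p)\geq 3$ as well as infinite extensions uniformly, since for $\rho(p)\geq 3$ the condition forces $\pi$ to be $\FF_0$-rational or is vacuous. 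Your step (c) is likewise immediate in this language, e.g.\ $p=\langle e_1+\varepsilon e_2\rangle$ and $\pi=\langle e_1+\varepsilon e_2,e_3,e_4\rangle$ give $[M_0(p)]=[\langle e_1,e_2\rangle_V]\not\subseteq\pi$, the flag counterpart of the paper's explicit line $\ell$. What the paper's hands-on computation buys is complete explicitness within elementary quadric geometry; what your formulation buys is a structural explanation of why $\Omega$ is closed, plus the correct (and not made in the paper) preliminary observation that the $\FF_0$-points already span the full embedding space, so that no embedding-dimension argument could ever detect the failure of $\FF_0$-generation.
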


\begin{MainTheo}\label{MT4}
Let $\cQ(\FF)$ be a non-degenerate orthogonal polar space of rank $n\geq 2$ and defect $d \leq 2$, defined in $\PG(V)$ where $V = V(N,\FF)$ with $N=2n+d$, and let $\cQ_2(\FF)$ be its line-Grassmannian. Suppose that $N > 6$ and $\FF = \FF_q$ with $q\in \{4, 8, 9\}$. Then $\gr(\cQ_2(\FF))= \er(\cQ_2(\FF)) = {{N\choose 2}}$. Moreover, if $d\leq 1$ then $\cQ_2(\FF)$ is generated over the prime subfield of $\FF$.
\end{MainTheo}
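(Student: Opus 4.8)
The plan is to prove Theorem \ref{MT4} in two stages, corresponding to its two claims: first the numerical equality $\gr(\cQ_2(\FF)) = \er(\cQ_2(\FF)) = \binom{N}{2}$, and then the stronger structural statement that $\cQ_2(\FF)$ is generated over the prime subfield when $d \leq 1$. The governing strategy throughout is the $\FF_0$-generation machinery set up in the introduction: if I can show that $\cQ_2(\FF)$ is $\FF_0$-generated for a suitable subfield $\FF_0$ (ultimately the prime field $\FF_p$), then $\gr(\cQ_2(\FF)) \leq \gr(\cQ_2(\FF_0))$, and combining this with the known value of $\gr(\cQ_2(\FF_0))$ over the prime field together with the existence of the $\binom{N}{2}$-dimensional Weyl embedding forces all three quantities to coincide. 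So the whole theorem reduces to a single generation statement over the prime subfield.

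**First I would** dispose of the numerical equality using the results surveyed in the introduction. For the prime field $\FF_0 = \FF_p$ with $p \in \{2,3\}$ (since $q \in \{4,8,9\}$ means $\FF = \FF_4, \FF_8, \FF_9$, with prime subfields $\FF_2$ or $\FF_3$) and $n > 2$, $d \leq 1$, we already have $\gr(\cQ_2(\FF_0)) = \binom{2n+d}{2} = \binom{N}{2}$. On the embedding side, $\cQ_2(\FF)$ carries the Weyl embedding of dimension exactly $\binom{N}{2}$ (cited from \cite{IP13}), so $\er(\cQ_2(\FF)) \geq \binom{N}{2}$; and since $\gr \geq \er$ always, and $\FF_0$-generation would give $\gr(\cQ_2(\FF)) \leq \gr(\cQ_2(\FF_0)) = \binom{N}{2}$, the chain of inequalities collapses to equality once generation is established. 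The case $d = 2$ needs separate handling since $\gr(\cQ_2(\FF_p))$ for $d = 2$ over an arbitrary prime field is among the cited results (\cite{C98b}), giving again $\binom{N}{2}$; here the theorem only claims the numerical equality, not prime-field generation, so I would run the same argument but allow $\FF_0$ to be any subfield for which generation can be verified, then transfer.

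**The main work, and the main obstacle,** is therefore the generation claim for $d \leq 1$: showing that the image $\xi(\cQ_2(\FF_p))$ spans $\cQ_2(\FF)$ inside $\cQ_2(\FF)$. This is where the restriction to $q \in \{4,8,9\}$ and $N > 6$ must be used, and I expect it to be genuinely computational rather than conceptual. The natural approach is to fix a basis $B$ of $V$ over which $q$ is defined over $\FF_p$, and show that every line of $\cQ(\FF)$ — i.e. every totally singular $2$-space, viewed as a point of $\cQ_2(\FF)$ — lies in the span of the totally singular $2$-spaces defined over $\FF_p$. Since $\FF_4, \FF_8, \FF_9$ are small and generated over $\FF_p$ by a single element $\theta$ with a low-degree minimal polynomial (degree $2$ or $3$), the key is a local computation: take a generic totally singular line through a prime-field point with a direction scaled by $\theta$, and exhibit it as spanned by finitely many prime-field totally singular lines. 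The contrast with Theorem \ref{MT3}, where $\FF_0$-generation \emph{fails} for $\cQ_2$ of the rank-$3$ hyperbolic quadric over any non-prime field, warns that this is delicate and dimension-sensitive; the hypothesis $N > 6$ is precisely what rules out the obstruction of Theorem \ref{MT3}, giving enough room in the geometry to reach the scaled directions. I would organize this as a sequence of claims reducing the general line to lines in a bounded-rank subgeometry (exploiting that $\cQ_2$ is generated by its residues at singular points, where lines correspond to lines of a smaller polar Grassmannian), and then verify the residual small cases for $q = 4, 8, 9$ by direct coordinate computation, which is where I anticipate the bulk of the effort to lie.
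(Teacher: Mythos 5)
Your high-level frame for $d\leq 1$ (prime-field generation $\Rightarrow$ $\gr(\cQ_2(\FF))\leq\gr(\cQ_2(\FF_0))=\binom{N}{2}$, matched against the $\binom{N}{2}$-dimensional embedding) is the same as the paper's, but your core computation is aimed at the wrong lines, and this is a genuine gap. You propose to verify generation by taking ``a totally singular line through a prime-field point with a direction scaled by $\theta$'' and spanning it by prime-field lines. Those lines are the \emph{easy} ones: any line of $\cQ(\FF)$ meeting the point-set of $\cQ(\FF_0)$ non-trivially automatically lies in $\langle\cQ_2(\FF_0)\rangle_{\cQ_2(\FF)}$ (Lemma \ref{l-gen}), and the proof of Theorem \ref{not-gen} shows that these lines, together with those in $\FF_0$-rational planes, can form a \emph{proper} subspace of the Grassmannian. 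The whole difficulty is the lines containing no $\FF_0$-rational point at all, for which your proposal offers no mechanism. The paper's key ingredient, which you are missing, is the Blok--Pasini criterion (Proposition \ref{cor5.4}(a), recast as Lemma \ref{gset}): $\cQ_2(\FF)$ is generated by $\cQ_2(\FF_0)$ together with \emph{one} suitably chosen line whose relevant points are irrational over every intermediate field. This makes the verification finite: Lemma \ref{m-gen} reduces everything to showing that a single explicit line $\ell_\varepsilon$ of $Q(6,\FF)$ lies in the span of rational lines, checked by hand for $q=4,8,9$ in Theorem \ref{t-gen}, and then propagated to all $N>6$ by the hyperplane recursion $S_2(H)\cup S_2(p_0)\cup\{\ell_0\}$ of Proposition \ref{lemma1} with the count ${{2n-1}\choose 2}+(2n-2)+1={{2n}\choose 2}$ (Corollary \ref{c-orth}). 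Note also that the induction must be based at the parabolic $Q(6,\FF)$, since the rank-$3$ hyperbolic case is exactly the obstruction of Theorem \ref{MT3}; your ``reduction to a bounded-rank subgeometry'' does not specify this, and landing on $Q^+(5,\FF)$ would be fatal.

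Your treatment of $d=2$ fails outright for $q\in\{4,9\}$. You propose to ``allow $\FF_0$ to be any subfield for which generation can be verified, then transfer,'' but an elliptic form requires $t^2+\lambda t+\mu$ irreducible over $\FF$, and every quadratic with coefficients in $\FF_2$ (resp.\ $\FF_3$) splits over $\FF_4$ (resp.\ $\FF_9$); since $\FF_4$ and $\FF_9$ have no intermediate subfields, $Q^-(2n+1,q)$ is not defined over any proper subfield for these $q$, so there is nothing to transfer from (the paper makes this warning explicit after \eqref{q-ell}). The paper instead proves the upper bound for $d=2$ directly, via the Proposition \ref{lemma1} recursion through a parabolic hyperplane $H\cong Q(2n,\FF)$, with no rationality claim (step 4 of Corollary \ref{c-orth}). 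Your lower bound has a matching hole at $q\in\{4,8\}$, $d=2$: neither the Pl\"ucker embedding ($\ch(\FF)\neq 2$) nor the Weyl embedding ($d\leq 1$) applies there, and the paper needs the separate argument of Lemma \ref{l-bound}, realizing $\cQ$ as a hyperplane of a rank-$(n+1)$, defect-$1$ polar space and comparing generating-set sizes.
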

\noindent
So far we have only considered the cases $k = n$ and $k = 2$. We shall prove later (Corollary \ref{t:k=1 symplectic even}) that $\gr(\cQ_1) = \er(\cQ_1) = 2n+d$ for any choice of $n$ and $d$, but this is not surprising at all. Regretfully, we have no sharp result to offer related to cases where $2 < k < n$.

\paragraph{Organization of the paper}
The paper is organized as follows. Section~\ref{Prelim} is dedicated to definitions and basic results on polar spaces; in particular, we introduce the defect of a non-degenerate polar space and we study some of its properties. In Section \ref{sec3new} we prove some general results on generating sets of polar Grassmannians. The results of Section 3 will be exploited in Sections \ref{Herm} and \ref{OG}, where Hermitian Grassmannians and orthogonal Grassmannians are studied.

\section{Preliminaries on polar spaces}\label{Prelim}

\subsection{Polar spaces, their Grassmannians and subspaces}\label{basics}

Let ${\cal P} = (P, {\cal L})$ be a non-degenerate polar space of finite rank $n \geq 2$ with no thin lines, regarded as a point-line geometry. Following \cite{BuekC}, we denote the collinearity relation of $\cal P$ by the symbol $\perp$. For every point $x\in P$, we denote by $x^\perp$ the set of points of $\cal P$ collinear with $x$, including $x$ among them; given a subset $X\subseteq P$, we put $X^\perp := \bigcap_{x\in X}x^\perp$.

All subspaces of $\cal P$ are possibly degenerate polar spaces of rank at most $n$. In particular, for a subspace $X$ of $\cal P$ it can be that $X\subseteq X^\perp$. If this is the case then $X$ is called a {\em singular} subspace. All singular subspaces of $\cal P$ are projective spaces (see \cite{BuekC}) of rank at most $n$ (recall that the rank of a projective space is its dimension augmented by 1), those of rank $n$ being the maximal ones. Henceforth, if $X$ is a singular subspace of $\cal P$ of rank $k$ we say that $X$ is a {\em singular} $k$-{\em subspace}, also a $k$-{\em subspace} for short. Clearly, the $1$-subspaces and $2$-subspaces are just the points and the lines of $\cal P$. If $n > 2$ the $3$-subspaces are also called {\em planes}; we allow $\emptyset$ as the unique $0$-subspace. For $k = 0, 1,..., n$ we denote the set of $k$-subspaces of $\cal P$ by the symbol $S_k({\cal P})$.

Given a $k$-subspace $X$ of $\cal P$ with $k < n$, the {\em upper residue} $\Res(X)^\uparrow$ of $X$ (also called the {\em star} of $X$) is the collection of all singular subspaces of $\cal P$ properly containing $X$. The upper residue of $X$ naturally yields a polar space of rank $n-k$, where $\Res(X)^\uparrow\cap S_{k+1}({\cal P})$ is the set of points and, if $k < n-1$, the $(k+2)$-subspaces containing $X$ play the role of lines, the set $\Res(X)^\uparrow$ being naturally identified with the family of (nonempty) singular subspaces of this polar space.

Conversely, let $X$ be a $k$-subspace with $k > 1$. The {\em lower residue} $\Res(X)^\downarrow$ of $X$ is the collection of all singular subspaces of $\cal P$ properly contained in $X$, namely the family of all proper subspaces of $X$, the latter being regarded as a projective space.

When $X$ has rank $n$ its upper residue is not defined; so we feel free to write $\Res(X)$ instead of $\Res(X)^\downarrow$, calling $\Res(X)$ the {\em residue} of $X$. Similarly, if $x$ is a point we write $\Res(x)$ instead of $\Res(x)^\uparrow$ and call $\Res(x)$ the {\em residue} of $x$.

We are now ready to define polar Grassmannians. Let $1\leq k < n$. Then the $k$-Grassmannian ${\cal P}_k$ of $\cal P$ is the point-line geometry with $S_k({\cal P})$ as the set of points and the following subsets of $S_k({\cal P})$ taken as lines:
\[\Res(X)^\uparrow\cap \Res(Y)^\downarrow~\subseteq~ S_k({\cal P}), \hspace{5 mm} \mbox{for}~X\in S_{k-1}({\cal P}),~ Y\in S_{k+1}({\cal P}), ~ X \subset Y.\]
Note that this definition also makes sense when $k = 1$, since $S_0({\cal P}) = \{\emptyset\}$ and $\emptyset$ is a singular subspace of $\cal P$, by convention. Clearly, ${\cal P}_1$ is just the same as $\cal P$.

On the other hand, let $k = n$. The set of points of the $n$-Grassmannian ${\cal P}_n$ is $S_n({\cal P})$ and the lines are the upper residues of the members of $S_{n-1}({\cal P})$. The geometry ${\cal P}_n$ is usually called a {\em dual polar space}.

Singular subspaces are the only subspaces of $\cal P$ we have considered so far, but in this paper we will often deal with other kinds of subspaces, as hyperplanes and family of subspaces, which we like to call nice.

A {\em hyperplane} is a proper subspace $H$ of $\cal P$ meeting every line of $\cal P$ non-trivially. In particular, for every point $x\in P$ the set $x^\perp$ is a hyperplane, usually called a {\em singular hyperplane}, with $x$ as its {\em deep point}. Needless to say, singular hyperplanes are not singular subspaces, in spite of the word `singular' used to name them. Indeed the singular hyperplanes are precisely the hyperplanes which, regarded as polar spaces, are degenerate, their deep points being their radicals. The quotient $H/x$ of $H = x^\perp$ over its radical $x$ is just the residue $\Res(x)$.

It is well known \cite{CS90, Pas90} that the collinearity relation $\perp$ induces a connected graph on the complement $P\setminus H$ of a hyperplane $H$.
This fact, combined with \cite[Lemma 4.1.1]{SH11}, implies that every hyperplane is a maximal subspace.

Let $x, y$ be two non-collinear points of $\cal P$. Then $\{x,y\}^\perp$ is a non-degenerate subspace of $\cal P$ isomorphic to $\Res(x)$ ($\cong \Res(y)$).
The double perp $\{x,y\}^{\perp\perp}$ is called a {\em hyperbolic line}. It contains $x$ and $y$ and no two of its points are collinear.

Finally, a {\em nice subspace} of $\cal P$ is a subspace containing two mutually disjoint maximal singular subspaces of $\cal P$. In other words, a subspace of $\cal P$ is nice if $\cal P$ induces on it a non-degenerate polar space of the same rank $n$ as $\cal P$. Let $\mathfrak{N}({\cal P})$ be the family of nice subspaces of $\cal P$, ordered by inclusion. Clearly $\cal P$ is the greatest element of $\mathfrak{N}({\cal P})$. The minimal elements of $\mathfrak{N}({\cal P})$ are the subspaces spanned by the unions of pairs of mutually disjoint maximal singular subspaces.

We recall that, given two disjoint maximal singular subspaces $M$ and $M'$ of $\cal P$ and basis $\{p_1,..., p_n\}$ of $M$, a unique basis $\{p',..., p'_n\}$ exists in $M'$ such that $p_i\perp p'_j$ if and only if $i\neq j$. A pair $\{\{p_1,..., p_n\},\{p'_1,..., p'_n\}\}$ of bases as above is called a {\em frame} of $\cal P$ (see \cite{PasDG},~\cite{BuekC}). The minimal elements of $\mathfrak{N}({\cal P})$ are just the subspaces of $\cal P$ that can be generated by (the points of the two bases of) a frame.

Recall that the length of a chain is its cardinality diminished by $1$, with the usual convention that $\mathfrak{n}-1 = \mathfrak{n}$ when $\mathfrak{n}$ is infinite.

\begin{definition}\label{defect}
The {\em anisotropic defect} $\mathrm{def}({\cal P})$ of $\cal P$ (the {\em defect} of $\cal P$ for short) is the least upper bound for the lengths of the well ordered chains of $\mathfrak{N}(\cP)$.
\end{definition}

This definition is never vacuous, since $\mathfrak{N}(\cP)$ always contains finite chains and, trivially, every finite chain is well ordered. In general, not all chains of $\mathfrak{N}(\cP)$ are well ordered and non-well ordered chains exist larger than $\mathrm{def}(\cP)$ (see Remark \ref{monster chains}). However, when $\mathfrak{N}(\cP)$ has finite length, namely all of its chains are finite,
then all chains of $\mathfrak{N}(\cP)$ are well ordered. In this case the above definition admits simpler formulations (see e.g. Theorem \ref{3}).
In Subsection \ref{anisotropic defect} we shall prove that, when $\cal P$ is embeddable and defined over a field, then $\mathrm{def}({\cal P}) = \er({\cal P})-2n$.

\subsection{Polar spaces defined over fields}\label{underlying field}

When $\rank(\cP) = n > 3$ all maximal singular subspaces of $\cP$ are isomorphic to $\PG(n-1,\KK)$ for a given division ring $\KK$ (Tits \cite[chapter 7]{Tits}). This fails to hold in general when $n \leq 3$. However, keeping the hypothesis $n \geq 2$, suppose that $\cP$ is embeddable, but not a grid when $n = 2$. Then all embeddings of $\cP$ are defined over the same division ring, which is taken as the {\em underlying division ring} of $\cP$. Moreover $\cP$ admits the universal embedding, except in one exceptional case of rank $2$ defined over a non-commutative division ring (Tits \cite[\S 8.6]{Tits}).

Suppose that the underlying division ring of $\cP$ is a field, say $\FF$, and let $\varepsilon:{\cP}\rightarrow \PG(V)$ be the universal embedding of $\cP$, where $V = V(N,\FF)$ and $N = \er(\cP)$. So, $\varepsilon(\cP)$ is the polar space $\cP(f)$ associated to a non-degenerate alternating, Hermitian or quadratic form $f$ of $V$, namely the singular subspaces of $\cP$ are the subspaces of $\PG(V)$ corresponding to subspaces of $V$ totally singular for $f$.

Note that, since $\varepsilon$ is universal, $f$ cannot be alternating when $\ch(\FF) = 2$ and, if $\ch(\FF)\neq 2$ or $\ch(\FF) = 2$ but $f$ is Hermitian, then $\varepsilon$ is the unique embedding of $\cP$ (see Tits \cite[Chapter 8]{Tits}). On the other hand, let $f$ be quadratic and $\ch(\FF) = 2$. Then $\varepsilon$ in general admits several proper quotients. All of them can be described by means of generalized quadratic forms as defined in \cite{Pas17} with  at most one exception, which occurs when the minimum quotient is associated to an alternating form.

We recall the definition of generalized quadratic form here, since we will need it in the next subsection. Let $\FF^2 := \{t^2\}_{t\in \FF}$. This is a subfield of $\FF$, since we are assuming that $\ch(\FF) = 2$. Let $U$ be a proper subgroup of the additive group of $\FF$ with the property that $U\FF^2 \subseteq U$. The quotient group $\FF/U$ can be regarded as an $\FF^2$-vector space, with scalar multiplication defined as follows: $(t+U)\cdot\lambda^2 = t\lambda^2 + U$ for $t, \lambda\in \FF$. A mapping $\phi:V'\rightarrow \FF/U$ from an $\FF$-vector space $V'$ to $\FF/U$  is said to be a {\em generalized quadratic form} if it satisfies the same properties which characterize quadratic forms except that its values belong to $\FF/U$. Explicitly,
\[\phi(x\lambda + y\mu) =  \phi(x)\cdot\lambda^2 + \phi(y)\cdot\mu^2 + (U+\alpha(x,y)\lambda\mu) ~~ (\forall~ x, y\in V',~ \lambda, \mu\in \FF)\]
for a (uniquely determined) alternating form $\alpha:V'\times V'\rightarrow \FF$, called the {\em bilinearization} of $\phi$. The polar space $\cP(\phi)$ associated to $\phi$ is defined in the same way as for quadratic forms: a point $\langle v\rangle_{V'}$ of $\PG(V')$ is a point of $\cP(\phi)$ precisely when $\phi(v) = U$ (the null element of $\FF/U$); a subspace $X$ of $\PG(V)$ is a singular subspace of $\cP(\phi)$ if and only if all of its points belong to $\cP(\phi)$. The polar space $\cP(\phi)$ is non-degenerate if and only if $\phi$ is non-degenerate, namely $\phi(x)\neq U$ for every non-zero vector $x$ in the radical $\Rad(\alpha)$ of $\alpha$.

A {\em source} of $\phi$ is a non-degenerate quadratic form $f':V'\rightarrow\FF$ with the same bilinearization as $\phi$ and such that $\phi(x) = f'(x)+U$ for every $x\in V'$. In other words, $\phi$ is $f'$ computed modulo $U$. In symbols: $\phi = |f'|_U$. As proved in \cite{Pas17}, every generalized quadratic form $\phi:V'\rightarrow\FF/U$ admits a (generally not unique) source.

We warn that the case $U = \{0\}$ is allowed in the previous setting. When $U = \{0\}$ we get back quadratic forms. We say that a generalized quadratic form $\phi:V'\rightarrow \FF/U$ is {\em proper} if $U \neq \{0\}$.

Turning back to the form $f:V\rightarrow \FF$ associated to the universal embedding $\varepsilon$ of $\cP$, with $f$ quadratic and $\ch(\FF) = 2$, let $\alpha$ be the bilinearization of $f$ and $R := \Rad(\alpha)$. All quotients of $\varepsilon$ can be obtained (modulo isomorphisms) as compositions of $\varepsilon$ with the (homomorphism of projective spaces induced by) the projection of $V$ onto a subspace of $R$. Let $\varepsilon_S$ be the embedding obtained in this way from a subspace $S\leq R$ and put $V' := V/S$ and $U := f(S) = \{f(x)\}_{x\in S}$. Then $U\FF^2\subseteq U$ (see \cite{Pas17}). Suppose that $U \neq \FF$. Then $\varepsilon_S(\cP)$ is the polar space $\cP(\phi)$ associated with the generalized quadratic form $\phi:V'\rightarrow \FF/U$, where $\phi = |f'|_U$ and $f'$ is the form induced by $f$ on a complement $W$ of $S$ in $V$, identified with $V' = V/S$ in the natural way \cite{Pas17}. On the other hand, let $U = \FF$ (which can happen only if $S = R$ and $R$ is a complement in $V$ of the span of a frame). Then $\varepsilon_S(\cP)$ is associated to the non-degenerate alternating form naturally induced by $\alpha$ on $V/R$ (see \cite{Pas17}). In this case $\dim(R) = |\FF:\FF^2|$ (see \cite{BP09}; note that in general $|\FF:\FF^2|$ is infinite).

\subsection{Back to the defect}\label{anisotropic defect}

Let $\cP$ be an embeddable non-degenerate polar space of finite rank $n \geq 2$ defined over a field $\FF$, but not a grid. As in the previous subsection, let $\varepsilon:{\cP}\rightarrow \PG(V)$ be the universal embedding of $\cP$ and $f$ an alternating, Hermitian or quadratic form $f$ of $V$ associated to $\varepsilon(\cP)$. So, $V \cong V(N,\FF)$ with $N = \er(\cP)$ and $\varepsilon(\cP) = \cP(f)$. The space $V$ admits a direct sum decomposition as follows with respect to $f$:
\begin{equation}\label{decomposition}
V ~ = ~  (V_1 \oplus V_2 \oplus ... \oplus V_n)\oplus V_0
\end{equation}
where $V_1,\dots V_n$ are mutually orthogonal hyperbolic $2$-spaces and $V_0$ is an anisotropic subspace orthogonal to each of $V_1,\dots, V_n$. If $\{u_i, v_i\}$ is a hyperbolic basis of $V_i$ for $i = 1, 2,..., n$ then $\{\{\langle u_1\rangle_V,..., \langle u_n\rangle_V\}, \{\langle v_1\rangle_V,..., \langle v_n\rangle_V\}\}$ is a frame of $\cP(f)$.

The dimension $d:=\dim(V_0)$ of $V_0$ will be called the \emph{anisotropic defect} of $f$ (also {\em defect} of $f$ for short) and denoted by $\mathrm{def}(f)$. In this subsection we shall prove that $\mathrm{def}(\cP) = \mathrm{def}(f)$.

In the sequel we will often deal with spans in $\PG(V)$. According to the conventions adopted since the beginning of this paper, we should denote them by the symbol $\langle . \rangle_{\PG(V)}$, but we prefer to replace it with a simpler symbol as $\langle . \rangle$, keeping the notation $\langle . \rangle_\cP$ and $\langle . \rangle_V$ for spans in $\cP$ and $V$ respectively. We also adopt the following notation: given a subspace $W$ of $V$ we denote by $[W]$ the subspace of $\PG(V)$ corresponding to it.

\begin{lemma}\label{1}
Let $F$ be (the set of points of) a frame of  $\cP$. Then $\langle F\rangle_{\cP} = \varepsilon^{-1}(\langle \varepsilon(F)\rangle)$.
\end{lemma}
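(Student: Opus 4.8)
The plan is to prove the two inclusions separately, the first being immediate and the second carrying all the weight. Throughout, write $F=\{\langle u_1\rangle,\dots,\langle u_n\rangle,\langle v_1\rangle,\dots,\langle v_n\rangle\}$ for the frame, where $(u_i,v_i)$ are the hyperbolic pairs of the decomposition \eqref{decomposition}, and set $W:=\langle u_1,v_1,\dots,u_n,v_n\rangle_V$, so that $\langle\varepsilon(F)\rangle=[W]$ and $\varepsilon^{-1}(\langle\varepsilon(F)\rangle)$ is exactly the set of points of $\cP$ lying in $[W]$, i.e. the non-degenerate polar space $\cP^\ast:=\cP(f|_W)$ of rank $n$ and defect $0$ induced by $f$ on $W$. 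For the inclusion $\langle F\rangle_{\cP}\subseteq\varepsilon^{-1}(\langle\varepsilon(F)\rangle)$ I would argue as in the introduction: since $\varepsilon$ maps lines to lines and $[W]$ is a projective subspace, the set $\{x\in P:\varepsilon(x)\in[W]\}$ is a subspace of $\cP$ containing $F$, hence it contains $\langle F\rangle_{\cP}$. It then remains to prove the reverse inclusion $\cP^\ast\subseteq\langle F\rangle_{\cP}$, that is, that the frame generates the defect-$0$ polar space $\cP^\ast$.

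First I would collect the easy material. The two halves of $F$ span the maximal singular subspaces $M=\langle u_1,\dots,u_n\rangle$ and $M'=\langle v_1,\dots,v_n\rangle$; inside a singular subspace the polar lines are precisely the projective lines, so $n$ independent points generate all of it, and therefore $[M],[M']\subseteq\langle F\rangle_{\cP}$. Consequently every \emph{balanced} point $\langle p+q\rangle$, with $p\in M$, $q\in M'$ nonzero and $f(p,q)=0$, also lies in $\langle F\rangle_{\cP}$: the condition $f(p,q)=0$ makes $\langle p,q\rangle$ a totally isotropic line, hence a line of $\cP$ joining the two points $\langle p\rangle\in[M]$ and $\langle q\rangle\in[M']$ of $\langle F\rangle_{\cP}$.

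The heart of the proof is then to show that every point $z=\langle w\rangle$ of $\cP^\ast$ lies on a line of $\cP$ carrying two balanced points; this at once yields $z\in\langle F\rangle_{\cP}$. Writing $w=p_0+q_0$ with $p_0\in M$, $q_0\in M'$, and putting $c:=f(p_0,q_0)$, the singularity of $z$ translates into $c=0$ in the orthogonal case but only into $\mathrm{Tr}(c)=0$ in the Hermitian case (and into no condition at all in the alternating case), since $f(w,w)$ equals $c$, $c+\bar c$, or $0$ respectively. When $c=0$ the point $z$ is itself balanced and lies on $\langle p_0,q_0\rangle$, a line already containing the two balanced points $\langle p_0\rangle,\langle q_0\rangle$; so the orthogonal case is essentially free.

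The main obstacle is exactly the Hermitian and alternating cases with $c\neq 0$. There I would produce a splitting $w=w_1+w_2$ with $\langle w_1\rangle,\langle w_2\rangle$ balanced and mutually collinear, by modifying $p_0,q_0$ along an auxiliary hyperbolic pair $(u_j,v_j)$ so as to cancel the off-diagonal interaction $c$ (this is where $n\geq 2$ is used, and where the constraint $c+\bar c=0$ is precisely what makes the Hermitian construction solvable; the alternating case is handled by the same device). The genuinely computational point is the simultaneous verification of the two balancing equations $f(p_i,q_i)=0$ and the collinearity $f(w_1,w_2)=0$, and this is the step where the three form-types must each be treated on their own terms. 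Once such a splitting is in place, $z$ lies on the line $\langle w_1,w_2\rangle\subseteq\langle F\rangle_{\cP}$, and combining this with the orthogonal case completes the inclusion $\cP^\ast\subseteq\langle F\rangle_{\cP}$, hence the lemma.
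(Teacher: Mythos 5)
Your plan is a genuinely different route from the paper's, which disposes of the lemma in two sentences: the paper observes that $F$ is a frame of the polar space $\cP_F:=\varepsilon^{-1}(\langle\varepsilon(F)\rangle)$, embedded by $\varepsilon$ in the $(2n-1)$-dimensional subspace $\langle\varepsilon(F)\rangle$, and then simply \emph{cites} \cite{BB98}, \cite{CS97} and \cite{BC2012} for the fact that a rank-$n$ polar space arising from a non-degenerate alternating, Hermitian or quadratic form in dimension $2n$ is generated by any of its frames. What you propose is a self-contained re-proof of that cited fact. Your preparatory steps are all correct: the first inclusion (preimages of projective subspaces are subspaces of $\cP$), the generation of $[M]$ and $[M']$ inside $\langle F\rangle_\cP$, the observation that balanced points $\langle p+q\rangle$ with $f(p,q)=0$ lie on totally singular lines joining $[M]$ to $[M']$, and the complete treatment of the quadratic case, where singularity of $\langle p_0+q_0\rangle$ really does force $c=f(p_0,q_0)=0$.

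The gap is that the decisive step --- the existence of the splitting $w=w_1+w_2$ into two collinear balanced points when $c\neq 0$ --- is asserted rather than proved, and this is exactly the mathematical content of the theorems the paper invokes; a proof that stops there has delegated its hardest part to a ``would''. The construction can be completed, but three things must actually be done. First, for arbitrary $p_0\in M$, $q_0\in M'$ you need either a fully general computation or a normalization: the stabilizer of the pair $(M,M')$ in the isometry group induces $\GL(M)$ on $M$ (acting contragrediently on $M'$) and preserves the balanced condition, so one may assume $p_0=u_1$ and $q_0=cv_1+\gamma v_2$. Second, writing $p_1=\alpha u_1+\beta u_2$, $q_1$ accordingly, the two balance equations plus collinearity reduce in the Hermitian case (for $\gamma=0$, say) to $\alpha+\alpha^\sigma=1$, which is solvable because the trace of the separable quadratic extension is surjective --- so your remark that $c+c^\sigma=0$ ``makes the construction solvable'' is on the right track but must be verified. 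Third, and most seriously, your claim that ``the alternating case is handled by the same device'' is false in characteristic $2$: the same system of equations forces $\alpha=1/2$, and indeed frame generation genuinely fails for alternating forms in characteristic $2$, since there $q(a+b)=q(a)+q(b)+f(a,b)$ for any quadratic form $q$ whose bilinearization is $f$, so the quadric $q=0$ is a proper subspace of the symplectic polar space containing the frame and all balanced points (e.g.\ the frame of $W(3,2)$ spans only the $9$ points of a $Q^+(3,2)$). Your argument survives only because, as recalled in Subsection \ref{underlying field}, the form associated to the universal embedding is never alternating in characteristic $2$; a complete write-up must invoke this explicitly, and should also check that $w_1,w_2$ are linearly independent, so that they really span a line of $\cP$ through $z$.
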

\begin{proof}
Indeed $F$ is a frame of the polar space $\cP_F := \varepsilon^{-1}(\langle \varepsilon(F)\rangle)$ and $\varepsilon$ embeds $\cP_F$ in the $(2n-1)$-dimensional subspace $\langle \varepsilon(F)\rangle$ of $\PG(V)$. As proved in \cite{BB98}, \cite{CS97} and \cite{BC2012}, a polar space of rank $n$ associated to a non-degenerate alternating, Hermitian or quadratic form in dimension $2n$ is generated by any of its frames.
\end{proof}

\begin{corollary}\label{1bis}
Let $F$ be a frame of $\cP$. Then $\langle F\rangle_\cP = \langle F'\rangle_\cP$ for any frame $F' \subset \langle F\rangle_\cP$.
\end{corollary}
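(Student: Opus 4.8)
The plan is to deduce the statement directly from Lemma~\ref{1}, reducing it to a single equality of projective subspaces of $\PG(V)$ together with an elementary dimension count. Since $F'$ is, by hypothesis, a frame of $\cP$, Lemma~\ref{1} applies to $F'$ exactly as it does to $F$, yielding $\langle F\rangle_\cP = \varepsilon^{-1}(\langle\varepsilon(F)\rangle)$ and $\langle F'\rangle_\cP = \varepsilon^{-1}(\langle\varepsilon(F')\rangle)$. As $\varepsilon$ is injective, it suffices to prove the single equality $\langle\varepsilon(F')\rangle = \langle\varepsilon(F)\rangle$ in $\PG(V)$.

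First I would establish the inclusion $\langle\varepsilon(F')\rangle \subseteq \langle\varepsilon(F)\rangle$. Indeed, the hypothesis $F' \subset \langle F\rangle_\cP = \varepsilon^{-1}(\langle\varepsilon(F)\rangle)$ means precisely that $\varepsilon(F') \subseteq \langle\varepsilon(F)\rangle$, and taking spans preserves inclusion. It then remains to check that the two spans have the same finite projective dimension, since an inclusion of projective subspaces of equal finite dimension is necessarily an equality.

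The one point that requires care is this dimension count, which I expect to be the only step with genuine content beyond invoking Lemma~\ref{1}. Recall that a frame is the union of two bases of a pair of disjoint maximal singular subspaces $M, M'$, each of rank $n$; under $\varepsilon$ these become disjoint maximal totally singular subspaces $\varepsilon(M), \varepsilon(M')$ of $\cP(f) = \varepsilon(\cP)$, of vector dimension $n$ each. Being disjoint, the underlying vector subspaces meet only in $0$, so their sum has vector dimension $2n$, i.e.\ projective dimension $2n-1$; and since the $2n$ points of the frame form bases of $\varepsilon(M)$ and of $\varepsilon(M')$, they span exactly this subspace. Hence $\dim\langle\varepsilon(F')\rangle = 2n-1 = \dim\langle\varepsilon(F)\rangle$ (this being the same dimension already used implicitly in the proof of Lemma~\ref{1}). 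Combined with the inclusion above, this forces $\langle\varepsilon(F')\rangle = \langle\varepsilon(F)\rangle$, and applying $\varepsilon^{-1}$ gives $\langle F'\rangle_\cP = \langle F\rangle_\cP$, as required.

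A coordinate-free variant runs as follows. By Lemma~\ref{1} the subspace $\cP_F := \langle F\rangle_\cP$ is itself a non-degenerate polar space of rank $n$ in vector dimension $2n$. The frame $F'$ lies in $\cP_F$, and since $\cP_F$ is a subspace, the two maximal singular subspaces spanned by the two bases of $F'$ are contained in $\cP_F$, where they remain disjoint and maximal; thus $F'$ is a frame of $\cP_F$. As any rank-$n$ polar space in vector dimension $2n$ is generated by each of its frames (the fact invoked in the proof of Lemma~\ref{1}), we get $\langle F'\rangle_{\cP_F} = \cP_F$. Finally, because $\cP_F$ is a subspace of $\cP$, one has $\cP_F = \langle F'\rangle_{\cP_F} \subseteq \langle F'\rangle_\cP \subseteq \cP_F$, whence $\langle F'\rangle_\cP = \cP_F = \langle F\rangle_\cP$.
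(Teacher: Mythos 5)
Your proposal is correct and is essentially the paper's own argument: your coordinate-free variant is exactly what the authors mean when they say the claim is ``implicit in the last sentence of the proof of Lemma~\ref{1}'' (namely, $F'$ is a frame of the rank-$n$ polar space $\cP_F = \langle F\rangle_\cP$, embedded in dimension $2n$, and such a polar space is generated by any of its frames), while your first variant merely unwinds the same fact into an inclusion-plus-dimension count via Lemma~\ref{1}. The paper additionally records a one-line alternative via transitivity of $\Aut(\cP)$ on frames, which your argument does not need.
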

\begin{proof}
This claim is implicit in the last sentence of the proof of Lemma \ref{1}. Anyway, it trivially holds true when $\mathrm{Aut}(\cP)$ acts transitively on the set of frames of $\cP$, as it is indeed the case when $\cP$ is embeddable.
\end{proof}

By Lemma \ref{1}, if $f$ is alternating then $\cP$ is the unique nice subspace of $\cP$, whence $\mathrm{def}(\cP) = \mathrm{def}(f) = 0$. So, henceforth we assume that $f$ is either Hermitian or quadratic.

\begin{lemma}\label{2}
Let  $X$ be a nice subspace of $\cP$. Then $\langle \varepsilon(X)\rangle\cap\varepsilon({\cP})  = \varepsilon(X)$.
\end{lemma}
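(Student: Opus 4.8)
Write $W:=\langle\varepsilon(X)\rangle_V$ for the vector span, so that $\langle\varepsilon(X)\rangle=[W]$ and, since $\varepsilon(\cP)=\cP(f)$ is the whole set of $f$-singular points of $\PG(V)$, we have $\langle\varepsilon(X)\rangle\cap\varepsilon(\cP)=[W]\cap\cP(f)=\cP(f_W)$, where $f_W:=f|_W$ is the restriction of $f$ to $W$. The inclusion $\varepsilon(X)\subseteq\cP(f_W)$ is immediate, as the points of $X$ are $f$-singular and lie in $[W]$ by the definition of $W$. What must be shown is the reverse inclusion $\cP(f_W)\subseteq\varepsilon(X)$. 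First note that $f_W$ is non-degenerate of rank $n$: since $X$ is nice it contains two disjoint maximal singular subspaces $M,M'$, whose preimages span a hyperbolic $2n$-subspace $H\le W$ on which $f$ is non-degenerate; as $n$ is the maximal rank of a totally singular subspace of $\cP$, the rank of $f_W$ is exactly $n$, and a radical vector of $f_W$ would produce a deep point of the induced polar space collinear with all of $X$, contradicting the non-degeneracy of $X$ together with $\langle\varepsilon(X)\rangle_V=W$. Thus $Y:=\varepsilon(X)$ is a line-closed subspace of the non-degenerate rank-$n$ polar space $\cQ:=\cP(f_W)$, it contains a frame of $\cQ$, and it spans $[W]$. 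The lemma is therefore equivalent to the purely geometric statement that \emph{such a subspace $Y$ coincides with $\cQ$}.

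\textbf{The reverse inclusion by induction on the rank.} We prove by induction on $n$ that every $p\in\cQ$ lies in $Y$. Fix a maximal singular subspace $M\subseteq Y$ of rank $n$ belonging to a frame. If $p\in M$ we are done; otherwise $p^\perp\cap M$ cannot be all of $M$ (else $\langle M,p\rangle$ would be totally singular of rank $n+1$), so it is a hyperplane of $M$, and we may pick a point $a\in M\cap p^\perp$. Thus $a\in Y$, $a\perp p$, and $\ell:=\langle a,p\rangle$ is a singular line of $\cQ$; it suffices to prove $\ell\subseteq Y$. Passing to the residue of $a$, the residue $\mathrm{Res}_Y(a)$ of $a$ in the non-degenerate polar space $Y$ is non-degenerate of rank $n-1$, hence a nice, line-closed subspace of $\mathrm{Res}_\cQ(a)$; moreover---and this is the delicate point---it spans the residual projective space. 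By the inductive hypothesis $\mathrm{Res}_Y(a)=\mathrm{Res}_\cQ(a)$. Since $a\perp p$, the line $\ell$ determines a point of $\mathrm{Res}_\cQ(a)=\mathrm{Res}_Y(a)$, which is precisely to say that $\ell$ is a line of $Y$; hence $p\in\ell\subseteq Y$. In the base case $n=2$ the residue is unavailable and one argues directly: by Lemma~\ref{1} every $f$-singular point lying in the hyperbolic $2n$-space $[H]$ spanned by a frame $F\subseteq Y$ belongs to $\langle F\rangle_\cP\subseteq Y$, and the remaining singular points---those involving the anisotropic directions of $W$---are recovered by combining frame points with points of $Y$ lying outside $[H]$, which exist precisely because $Y$ spans $[W]$.

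\textbf{The main obstacle.} The crux is to guarantee that the residual subspace $\mathrm{Res}_Y(a)$ still \emph{spans} the residual projective space $[(a^{\perp}\cap W)/\langle a\rangle]$, so that the inductive hypothesis genuinely applies: this is the one place where the spanning hypothesis $\langle\varepsilon(X)\rangle_V=W$ must be propagated one rank down, and it requires a separate linear-algebra argument controlling the contribution of the points of $Y$ lying outside $a^{\perp}$. A second, more technical difficulty is peculiar to quadratic $f$ in characteristic $2$: one must check that the radical of $f_W$ contains no singular vector (so that $\cQ$ is genuinely non-degenerate) and that $\mathrm{Res}_\cQ(a)$ is formed with respect to the bilinearization in the appropriate way. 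Note that Lemma~\ref{1} already disposes of all points of $\cQ$ lying in the hyperbolic core $[H]$ of a single frame, so the real content of the induction is confined to the anisotropic, defect-carrying directions of $W$---exactly the regime where a single frame no longer suffices and the full strength of the spanning hypothesis is needed.
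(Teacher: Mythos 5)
Your reduction is set up correctly ($Y=\varepsilon(X)$ is a line-closed, non-degenerate, rank-$n$, frame-containing subspace of $\cQ=\cP(f_W)$ spanning $[W]$, and the lemma says $Y=\cQ$), and the residue $\Res_Y(a)$ is indeed a nice line-closed subspace of $\Res_\cQ(a)$. But the step you yourself flag as ``the delicate point'' --- that $\Res_Y(a)$ \emph{spans} the residual projective space --- is not a loose end to be patched by ``a separate linear-algebra argument''; it is the entire content of the lemma, and your induction is circular there. Since $X$ is nice and $\langle\varepsilon(X)\rangle=[W]$, the restriction of $\varepsilon$ to $X$ is a projective embedding of $X$, so by the classification recalled in Subsection~\ref{underlying field} (see \cite{Pas17}) we have $\varepsilon(X)=\cP(f_X)$ for a form $f_X$ on $W$ which is a priori Hermitian, quadratic, or a proper generalized quadratic form --- \emph{not necessarily of the same type as} $f_W$. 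Now $Y\cap a^{\perp}$ is the singular hyperplane of $Y$ with deep point $a$, and its span in $[W]$ is the tangent hyperplane $[a^{\perp_{f_X}}]$ relative to $f_X$; this coincides with $[a^{\perp_f}\cap W]$ exactly when the polarities of $f_X$ and $f_W$ agree, which is essentially what the lemma asserts. So before your spanning claim can be proved one must exclude the type mismatches ($f_X$ Hermitian inside $f_W$ quadratic, $f_X$ quadratic or proper generalized quadratic inside $f_W$ Hermitian, $f_X$ proper generalized quadratic inside $f_W$ quadratic) --- and that exclusion \emph{is} the paper's proof: from $\varepsilon(X)\subseteq\varepsilon(Y)$ one gets $f_X(x)=0\Rightarrow f_Y(x)=0$ for all $x\in W$, the matched cases then give proportionality (hence equality), and the mixed cases are killed by explicit computations on vectors $(r,1,s,t,0,\dots)$ with respect to a hyperbolic basis. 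Your base case $n=2$ (``recovered by combining frame points with points of $Y$ lying outside $[H]$'') is likewise an assertion, not an argument.

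That no type-free incidence induction can close the gap is shown by the alternating case, which your argument never distinguishes: for $\FF$ non-perfect with $\ch(\FF)=2$, project a parabolic quadric $\cP(q)$, $q:V(2n+1,\FF)\rightarrow\FF$, from its nucleus into $\PG(2n-1,\FF)$. The image consists of the points $\langle \bar{w}\rangle$ with $q(w)\in\FF^2$; it is a \emph{proper} subset of the symplectic polar space, yet it is line-closed (on a totally isotropic line through two image points one has $q(\lambda u+\mu v)=\lambda^2q(u)+\mu^2q(v)$), non-degenerate of rank $n$, contains frames and spans $\PG(2n-1,\FF)$ --- satisfying every hypothesis your induction uses while violating its conclusion. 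Hence some step of your scheme, concretely the residue-spanning step and the base case, must genuinely invoke the Hermitian/quadratic nature of $f$, which your proposal never does. (The paper sidesteps the alternating case at the outset via universality of $\varepsilon$ and Lemma~\ref{1}.) In short: a genuinely different route from the paper's form-theoretic six-case analysis, but with a fatal, self-acknowledged hole exactly where that analysis lives.
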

\begin{proof}
Let $W$ be the subspace of $V$ corresponding to $\langle \varepsilon(X)\rangle$. So, $[W] := \langle \varepsilon(X)\rangle$. The preimage $Y:= \varepsilon^{-1}([W])$ is a subspace of $\cP$ and contains $X$. We must prove that $\varepsilon(X)=\varepsilon(Y)$, namely $X = Y$.

Since $X$ is nice and $X\subseteq Y$, the subspace $Y$ is nice as well. Consequently, all frames of $X$ are frames of $Y$ as well as frames of $\cP$. As stated above, $f$ is either Hermitian or orthogonal. The polar space $\varepsilon(Y) = [W]\cap\varepsilon({\cP})$ is associated to the form induced by $f$ on  $W$, which is of the same type as $f$. We denote it by $f_Y$.

Let $\varepsilon_{|X}\colon X\rightarrow [W]$ be the embedding induced by $\varepsilon$ on $X$. According to Subsection \ref{underlying field}, the polar space $\varepsilon(X)$ is realized by either a Hermitian form or a quadratic form over $W$ or by a generalized quadratic form of $W$. Let $f_X$ be the form on $W$ associated with $\varepsilon(X)$. Since $\varepsilon (X)\subseteq \varepsilon (Y)$, we have
\begin{equation}\label{eq1 lemma2}
f_X(x) = 0 ~ \Rightarrow ~ f_Y(x) = 0, \hspace{5 mm} (\forall x\in W)
\end{equation}
where, with a little abuse of notation, we write $f_X(x)$ for $f_X(x,x)$ or $f_Y(x)$ for $f_Y(x,x)$ when $f_X$ or $f_Y$ are Hermitian. It goes without saying that, if $f_X$ is a generalized quadratic form with codomain $\FF/U$, when writing $f_X(x) = 0$ we mean that $f_X(x) = U$.

The following six cases must be considered:

\begin{enumerate}
\item\label{sc-1} Both $f_X=h$  and $f_Y=h'$ are Hermitian forms.
\item\label{sc-2} $f_Y = h$ is a Hermitian form and $f_X = q'$ is a quadratic form.
\item\label{sc-3} $f_Y = h$ is a Hermitian form and  $f_X =  |q'|_U$ is a proper generalized quadratic form, with $q'$ as a source and $\FF/U$ as the codomain.
\item\label{sc-4} $f_Y = q$ is a quadratic form and $f_X = h'$ is a Hermitian form.
\item\label{sc-5} Both $f_Y=q$  and $f_X=q'$ are quadratic forms.
\item\label{sc-6} $f_Y = q$ is a quadratic form and $f_X = |q'|_U$ is a proper generalized quadratic form.
\end{enumerate}

\noindent
In Cases \ref{sc-1} and \ref{sc-5} condition (\ref{eq1 lemma2}) implies that $h$ and $h'$ are proportional, hence $\varepsilon(X) = \varepsilon(Y)$. In Case~\ref{sc-6} condition (\ref{eq1 lemma2}) implies that
\[q'(x) = 0 ~ \Rightarrow ~ q(x) = 0,\]
which forces $q$ and $q'$ to be proportional. Hence both $q$ and $q'$ define $\varepsilon(Y)$. However, as $|q'|_U$ is a proper generalized quadratic form, the polar space defined by $|q'|_U$ contains more points than that defined by $q'$. Consequently $\varepsilon(X) \supset \varepsilon(Y)$; contradiction. So, Case \ref{sc-6} cannot occur.

Let us consider Case~\ref{sc-2}. We recall that every frame of $X$ is also a frame of $Y$. Let $F = \{\{a_1,..., a_n\},\{b_1,..., b_n\}\}$ be a frame of $X$ and let $e_1,..., e_n, f_1,..., f_n$ be representative vectors of the points $\varepsilon(a_1),..., \varepsilon(a_n), \varepsilon(b_1),..., \varepsilon(b_n)$ respectively. Clearly, $\{e_1, f_1\},..., \{e_n,f_n\}$ are mutually orthogonal hyperbolic pairs and $W$ admits an ordered basis $B = (e_1, f_1, e_2, f_2,..., e_n, f_n,...)$, where $e_1, f_1,..., e_n, f_n$ are the first $2n$ vectors. Up to rescaling, we can assume to have chosen these vectors in such a way that $h$ and $q'$ admit the following expression with respect to $B$, where $\kappa\in \FF\setminus\{0\}$ is appropriately chosen and $\sigma$ is the involutory automorphism of $\FF$ associated to $h$:
\begin{equation}
q'(x_1, x_2, x_3, x_4,\dots) = x_1x_2 + \kappa x_3 x_4 +\dots
\end{equation}
\begin{equation}\label{h}
h(x_1, x_2, x_3, x_4,\dots) = x_1^\sigma x_2 + x_2^\sigma x_1 + x_3^\sigma x_4 + x_4^\sigma x_3 + \dots
\end{equation}
Take the vector $v=(r,1,s,t,0,0,0,\dots)\in W$, where the coordinates are given with respect to $B.$
Suppose that $r+\kappa st = 0$. Clearly  $q'(v) = 0$  and by equation~\eqref{h} we get
\[h(v) = r^\sigma + r +  s^\sigma t + t^\sigma s.\]
By \eqref{eq1 lemma2}  we have $h(v) = 0$, i.e.  $r^\sigma + r +  s^\sigma t + t^\sigma s = 0$. By replacing $-\kappa st$ in place of $r$, we get
\begin{equation}\label{condition}
(\kappa st)^\sigma + \kappa st = s^\sigma t + t^\sigma s,  \hspace{5 mm} \forall s,t\in \FF.
\end{equation}
Taking $s = 1,$ condition~(\ref{condition}) becomes $\mathrm{Tr}((\kappa-1)t)=0$ for every $t\in \FF$ forcing  $\kappa=1.$
 Thus, with $\kappa=1$, condition~(\ref{condition})  becomes
\[(st)^\sigma + st = s^\sigma t + t^\sigma s,  \hspace{5 mm} \forall s,t\in \FF.\]
Equivalently,
\[(s^\sigma-s)(t^\sigma-t) = 0, \hspace{5 mm} \forall s,t \in \FF.\]
This is not possible. Hence Case~\ref{sc-2} cannot occur.

Suppose Case \ref{sc-3} holds. For any $x\in W$ such that $q'(x)=0$ we have $|q'(x)|_U = 0$ and,  by \eqref{eq1 lemma2}, also $h(x) = 0$. We can now apply the same argument as in Case \ref{sc-2} on $q'$ and we reach a contradiction, as before.

Suppose  Case~\ref{sc-4} holds. Analogously to Case~\ref{sc-2} suppose that the expressions of $q$ and $h'$ with respect to a given basis $B$ of $W$ are the following, where $\kappa\in \FF\setminus\{0\}$ is appropriately chosen
\begin{equation}\label{q}
q(x_1, x_2, x_3, x_4,\dots) = x_1x_2 + \kappa x_3 x_4 +\dots
\end{equation}
\begin{equation}
h'(x_1, x_2, x_3, x_4,\dots) = x_1^\sigma x_2 + x_2^\sigma x_1 + x_3^\sigma x_4 + x_4^\sigma x_3 + \dots
\end{equation}
Take the vector $v=(r,1,s,t,0,0,0,\dots)\in W$. Suppose that $r+s^\sigma t = 0$. Clearly  $h'(v) = 0$  and by equation~(\ref{q}) we get
\[q(v) = r+\kappa st.\]
By \eqref{eq1 lemma2}  we have   $q(v) = 0$, i.e.    $r=-\kappa s t.$ Thus, we obtain the condition
\[s^\sigma t = \kappa st, \hspace{5 mm} \forall s,t\in \FF.\]
Taking  $s = t = 1$  we get $\kappa = 1$, so the previous condition becomes
\[s^\sigma t = st, \hspace{5 mm} \forall s,t\in \FF\]
forcing $\sigma$ to be the identity. This is a contradiction. Hence Case~\ref{sc-4} is impossible.

The lemma is proved.
\end{proof}

\begin{corollary}\label{2 bis}
The following are equivalent for two nice subspaces $X, Y$ of $\cP$ with $X\subset Y$:
\begin{enumerate}[{\rm (1)}]
\item\label{Ev1} $\langle X\cup\{x\}\rangle_\cP = Y$ for some point $x\in Y\setminus X$.
\item\label{Ev2} $X$ is a maximal subspace of $Y$;
\item\label{Ev3} $X$ is a hyperplane of $Y$:
\item\label{Ev4} $\langle \varepsilon(X)\rangle$ is a hyperplane of $\langle \varepsilon(Y)\rangle$.
\end{enumerate}
\end{corollary}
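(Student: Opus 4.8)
The plan is to prove the four conditions equivalent by establishing the cycle $(3)\Rightarrow(2)\Rightarrow(1)\Rightarrow(4)\Rightarrow(3)$, which keeps each individual implication elementary and invokes Lemma~\ref{2} precisely at the two points where one must pass between spans taken in $\cP$ and spans taken in $\PG(V)$.

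First, $(3)\Rightarrow(2)$ is immediate from a fact already recorded above. Since $Y$ is nice it is itself a non-degenerate polar space of rank $n$, so the connectivity of the collinearity graph on the complement of a hyperplane, combined with \cite[Lemma 4.1.1]{SH11}, shows that every hyperplane of $Y$ is a maximal subspace of $Y$. For $(2)\Rightarrow(1)$ I would simply observe that, for any point $x\in Y\setminus X$, the span $\langle X\cup\{x\}\rangle_\cP$ is a subspace of $Y$ (because $Y$ is a subspace containing $X\cup\{x\}$) which properly contains $X$; maximality of $X$ in $Y$ then forces $\langle X\cup\{x\}\rangle_\cP = Y$, which is in fact stronger than $(1)$.

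The geometric content lives in $(1)\Rightarrow(4)$ and $(4)\Rightarrow(3)$. For $(1)\Rightarrow(4)$ I would first record the general identity $\langle\varepsilon(\langle S\rangle_\cP)\rangle = \langle\varepsilon(S)\rangle$ valid for any point set $S$: the inclusion $\supseteq$ comes from $S\subseteq\langle S\rangle_\cP$, and $\subseteq$ comes from $\varepsilon(\langle S\rangle_\cP)\subseteq\langle\varepsilon(S)\rangle$ (noted in the introduction). Applying this with $S=X\cup\{x\}$ and using $(1)$ yields $\langle\varepsilon(Y)\rangle = \langle\langle\varepsilon(X)\rangle\cup\{\varepsilon(x)\}\rangle$. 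It then remains to check that $\varepsilon(x)\notin\langle\varepsilon(X)\rangle$: otherwise $\varepsilon(x)\in\langle\varepsilon(X)\rangle\cap\varepsilon(\cP)=\varepsilon(X)$ by Lemma~\ref{2}, contradicting $x\notin X$ together with the injectivity of $\varepsilon$. Since adjoining a single point outside a projective subspace raises its dimension by exactly one, $\langle\varepsilon(X)\rangle$ is a hyperplane of $\langle\varepsilon(Y)\rangle$.

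For $(4)\Rightarrow(3)$, which I expect to be the most delicate step, I would argue at two levels. Properness of $X$ in $Y$ follows because $\varepsilon(X)=\langle\varepsilon(X)\rangle\cap\varepsilon(\cP)$ and $\varepsilon(Y)=\langle\varepsilon(Y)\rangle\cap\varepsilon(\cP)$ by Lemma~\ref{2}, so $\varepsilon(X)=\varepsilon(Y)$ would force $\langle\varepsilon(X)\rangle=\langle\varepsilon(Y)\rangle$, contradicting $(4)$. To see that $X$ meets every line $\ell$ of $Y$, note that $\varepsilon(\ell)$ is a projective line inside $\langle\varepsilon(Y)\rangle$; since a line always meets a projective hyperplane, there is a point $p\in\ell$ with $\varepsilon(p)\in\langle\varepsilon(X)\rangle$, whence $\varepsilon(p)\in\langle\varepsilon(X)\rangle\cap\varepsilon(\cP)=\varepsilon(X)$ by Lemma~\ref{2} and therefore $p\in X$. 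Thus $X$ is a proper subspace of $Y$ meeting every line of $Y$, i.e. a hyperplane of $Y$. The only recurring point demanding care is the translation between spans in $\cP$ and in $\PG(V)$, and Lemma~\ref{2}, which says that nice subspaces are exactly recovered as $\langle\varepsilon(\cdot)\rangle\cap\varepsilon(\cP)$, is precisely the tool that prevents these arguments from producing points outside $\varepsilon(X)$ or $\varepsilon(Y)$.
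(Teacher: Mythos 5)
Your proof is correct and is essentially the paper's own argument: you establish the same four-step cycle $(4)\Rightarrow(3)\Rightarrow(2)\Rightarrow(1)\Rightarrow(4)$ (merely entered at a different vertex), with the one substantive implication $(1)\Rightarrow(4)$ handled exactly as in the paper, namely via Lemma~\ref{2} to conclude $\varepsilon(x)\notin\langle\varepsilon(X)\rangle$. The only difference is that you spell out the implications $(4)\Rightarrow(3)$ and $(3)\Rightarrow(2)$, which the paper dismisses as well known, and you do so correctly, again invoking Lemma~\ref{2} together with the fact, recorded earlier in the paper, that every hyperplane of a polar space is a maximal subspace.
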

\begin{proof}
It is well known that (\ref{Ev4}) implies (\ref{Ev3}) which in turn implies (\ref{Ev2}). Trivially, (\ref{Ev2}) implies (\ref{Ev1}). It remains to prove that (\ref{Ev1}) implies (\ref{Ev4}). Suppose that $\langle X\cup\{x\} \rangle_\cP = Y$ for a point $x\in Y\setminus X$. Then $\langle \varepsilon(X)\cup\{\varepsilon(x)\}\rangle = \langle\varepsilon(Y)\rangle$. However $\varepsilon(x)\not \in \langle \varepsilon(X)\rangle$ by Lemma \ref{2}, since $x\not \in X$. Therefore  $\langle \varepsilon(X)\rangle$ is a hyperplane of $\langle \varepsilon(Y)\rangle$, as claimed in (\ref{Ev4}).
\end{proof}

\begin{note}\label{2 ter}
The conclusion of Lemma \ref{2} also holds for $X$ a possibly degenerate subspace of $\cP$ with $\rank(\Res(X^\perp)^\uparrow)) \geq 2$. Accordingly, Corollary \ref{2 bis} can be stated in a more general form. Most likely, both Lemma \ref{1} and Lemma \ref{2} as well as their corollaries \ref{1bis} and \ref{2 bis} hold in a more general setting, with the field $\FF$ replaced by any division ring, but the technical details of the proofs look quite laborious; we have not checked them. We leave this job for future work.
\end{note}

In the sequel, given a chain $C = (X_i)_{i\in I}$ of $\mathfrak{N}(\cP)$, we denote by $W_i$ the subspace of $V$ corresponding to the span $\langle \varepsilon(X_i)\rangle$ of $\varepsilon(X_i)$ in $\PG(V)$. Clearly, if $C$ is maximal then it admits a minimum as well as a maximum element; its minimum element is spanned by a frame of $\cP$ while its maximum is $\cP$ itself.
We index the elements of a well ordered chain $C$ by ordinal numbers. In particular, when $C$ is known to admit a maximum element we write $C = (X_\delta)_{\delta\leq \omega}$ for an ordinal number $\omega$ such that $C$ and $\{\delta\}_{\delta\leq \omega}$ are isomorphic as ordered sets, $X_\omega$ being the maximum of $C$.

\begin{lemma}\label{3 new}
A well ordered chain $C = (X_\delta)_{\delta\leq \omega}$ of $\mathfrak{N}(\cP)$ with $X_\omega = \cP$ is maximal as a well ordered chain if and only if it is maximal as a chain, if and only if all the following hold:
\begin{enumerate}[{\rm (1)}]
\item\label{Ec1} $X_0$ is spanned by any of its frames;
\item\label{Ec2} for $\delta < \omega$, $X_\delta$  is a maximal subspace of $X_{\delta+1}$;
\item\label{Ec3} for every limit ordinal $\gamma\leq \omega$, we have $\bigcup_{\delta < \gamma}X_\delta = X_\gamma$.
\end{enumerate}
\end{lemma}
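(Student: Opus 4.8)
The plan is to prove the chain of equivalences by first establishing that the two notions of maximality coincide, and then showing that maximality as a chain is equivalent to the conjunction of conditions \eqref{Ec1}--\eqref{Ec3}. For the equivalence of the two maximality notions, note that any chain refining $C$ has, between consecutive elements and at limit stages, the same structure we are about to analyse; the key point is that a well ordered chain with a maximum that cannot be properly refined \emph{as a well ordered chain} also cannot be refined \emph{as an arbitrary chain}, because any element one might try to insert would sit between some $X_\delta$ and $X_{\delta+1}$ (or below $X_0$, or fill a gap at a limit), and inserting it preserves well-ordering. So the two maximality notions will coincide once we characterise either one by \eqref{Ec1}--\eqref{Ec3}.

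The heart of the proof is the characterisation by conditions \eqref{Ec1}--\eqref{Ec3}. First I would argue that a maximal chain satisfies all three. For \eqref{Ec1}: the minimum element $X_0$ of a maximal chain is a minimal nice subspace, hence spanned by a frame (by the discussion of minimal elements of $\mathfrak{N}(\cP)$ preceding Definition \ref{defect}); Lemma \ref{1} then gives that it equals the span of any of its frames. For \eqref{Ec2}: if some $X_\delta$ were not a maximal subspace of $X_{\delta+1}$, then there would exist a nice subspace strictly between them (one must check that a nice subspace admits a proper nice subspace in any such gap, which follows because $X_\delta$ and $X_{\delta+1}$ differ in anisotropic defect by more than one step, so one can enlarge $X_\delta$ by adjoining a single suitable point and apply Corollary \ref{2 bis}), contradicting maximality. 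For \eqref{Ec3}: at a limit ordinal $\gamma$, the union $\bigcup_{\delta<\gamma}X_\delta$ is itself a nice subspace sitting at or below $X_\gamma$; if it were properly contained in $X_\gamma$ we could insert it into the chain, contradicting maximality. The increasing union of nice subspaces is nice because it still contains a pair of disjoint maximal singular subspaces (any such pair lies in some $X_\delta$).

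Conversely, I would show that conditions \eqref{Ec1}--\eqref{Ec3} force maximality. Suppose $C$ satisfies all three but admits a proper refinement by a nice subspace $Z$. Then $Z$ falls into one of three positions relative to $C$: below $X_0$, strictly between some $X_\delta$ and $X_{\delta+1}$, or filling a limit stage. The first is excluded because $X_0$ is spanned by a frame, hence minimal in $\mathfrak{N}(\cP)$, so nothing nice lies properly below it. The second is excluded by \eqref{Ec2} together with Corollary \ref{2 bis}: maximality of $X_\delta$ in $X_{\delta+1}$ leaves no room for an intermediate nice subspace. The third is excluded by \eqref{Ec3}, since the union already equals $X_\gamma$ and so no new nice subspace can be inserted at a limit.

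The main obstacle I anticipate is the converse direction at successor steps, specifically verifying that \eqref{Ec2} genuinely rules out an intermediate \emph{nice} subspace rather than merely an intermediate subspace. Corollary \ref{2 bis} is stated for a pair $X\subset Y$ of nice subspaces and characterises when $X$ is a hyperplane of $Y$; the delicate point is to confirm that any nice $Z$ with $X_\delta \subset Z \subset X_{\delta+1}$ would contradict the maximality of $X_\delta$ as a \emph{subspace} of $X_{\delta+1}$, which uses that nice subspaces are themselves subspaces in the ordinary sense and that maximality as a subspace, as furnished by \eqref{Ec2}, admits no proper intermediate subspace of any kind. Getting the interplay between ``maximal as a nice subspace'' and ``maximal as a subspace'' exactly right, via Corollary \ref{2 bis}, is where the care is needed.
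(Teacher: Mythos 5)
Your proof is correct and its global skeleton coincides with the paper's: in both, the converse direction locates a hypothetical extra element at the least $\gamma$ with $X_\gamma \supset Z$ and splits into the initial, successor and limit positions, and the forward direction derives (1)--(3) by the same three insertion arguments. Your opening observation that adjoining a single element to a well ordered chain preserves well-ordering, so that the two maximality notions coincide from the start, is a slightly cleaner packaging than the paper's, which obtains the same equivalence by proving the cycle (1)--(3) $\Rightarrow$ maximal as a chain $\Rightarrow$ maximal as a well ordered chain $\Rightarrow$ (1)--(3). The one substantive divergence is at the successor step of the forward direction. The paper takes an \emph{arbitrary} subspace $X$ with $X_\delta \subset X \subset X_{\delta+1}$ and proves intrinsically that it is nice: necessarily $\rank(X)=n$, and if $X$ were degenerate a radical point of $X$ together with a maximal singular subspace of $X_\delta$ would span a singular subspace of rank $n+1$, a contradiction. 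You instead route through Corollary \ref{2 bis}, using the failure of its condition (1) to produce a proper intermediate one-point extension $\langle X_\delta\cup\{x\}\rangle_\cP$ (nice because it contains the nice subspace $X_\delta$). This is valid under the standing hypotheses of Subsection \ref{anisotropic defect}, but Corollary \ref{2 bis} is proved via the universal embedding, whereas the paper is deliberately embedding-free here --- a point it states explicitly right after the lemma and exploits later, since Corollary \ref{clkn bis} and Proposition \ref{t:k=1} apply the lemma to polar spaces that need not be embeddable, assuming only (MC). So your argument, as written, proves the lemma in lesser generality than the paper needs downstream; the repair is exactly the paper's direct degeneracy argument, which also lets you delete the justification ``$X_\delta$ and $X_{\delta+1}$ differ in anisotropic defect by more than one step,'' which is both unneeded and not something you are entitled to at that point. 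Two small slips: the claim that $X_0$ is spanned by \emph{any} of its frames should cite Corollary \ref{1bis} rather than Lemma \ref{1}; and in the converse direction the appeal to Corollary \ref{2 bis} is superfluous, since --- as your final paragraph correctly isolates --- condition (2) asserts maximality among \emph{all} subspaces, so a nice intermediate $Z$ is excluded outright.
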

\begin{proof}
Suppose that the above conditions are satisfied. By way of contradiction, let $C'$ be a chain properly containing $C$. Then $X'\not\in C$ for some term $X'$ of $C'$. However, as $C \subset C'$, for every $\delta \leq \omega$ either $X' \subset X_\delta$ or $X_\delta \subset X'$. Moreover, $X'\subset X_\omega = \cP$ and $X_0\subset X'$, by Condition~(\ref{Ec1}). Let $\gamma_1$ be the least ordinal $\gamma$ such that $X_\gamma \supset X'$. Then $X_\delta \subset X'$ for every $\delta < \gamma_1$. If $\gamma_1 = \gamma_0+1$ then $X'$ sits between $X_{\gamma_0}$ and $X_{\gamma_1}$. This contradicts (\ref{Ec2}). Therefore $\gamma_1$ is a limit ordinal. However in this case $X'$ contains $\bigcup_{\delta < \gamma_1}X_\delta$, hence $X' \supseteq X_{\gamma_1}$ by (\ref{Ec3}). Again, we have reached a contradiction. So, we must conclude that $C$ is maximal in the set of all chains of $\mathfrak{N}(\cP)$.

Conversely, suppose that $C$ is maximal as a well ordered chain. If $\langle F\rangle_\cP \subset X_0$ for some frame $F$ of $X_0$, then we can add $\langle F\rangle_\cP$ in front of $C$ as the new initial element, thus contradicting the maximality of $C$. Therefore (\ref{Ec1}) must hold. Similarly, if $X_\delta$ is not maximal as a proper subspace of $X_{\delta+1}$, let $X_\delta \subset X \subset X_{\delta+1}$. Necessarily $\rank(X) = n$, as $\rank(X_\delta) = n$. If $X$ is degenerate then $X^\perp\cap X_\delta = \emptyset$, since $X_\delta$ is non-degenerate. It follows that $X$ contains singular subspaces of rank $n+1$; contradiction. Therefore $X$ is nice. Thus we can add $X$ to $C$, inserting it between $X_\delta$ and $X_{\delta+1}$. Once again, we contradict the maximality of $C$. So, (\ref{Ec2}) must hold. Finally, suppose that $X := \bigcup_{\delta < \gamma}X_\delta \subset X_\gamma$. Clearly, $X$ is nice. So we can insert it in $C$ just before $X_\gamma$, after all terms $X_\delta$ with $\delta < \gamma$. Again, we have reached a contradiction; condition (\ref{Ec3}) must hold too.
\end{proof}

Note that in the proof of Lemma \ref{3 new} we have made no use of the hypothesis that $\cP$ is embeddable and defined over a field. We exploit that hypothesis in the next corollary, where conditions (\ref{Ec1}), (\ref{Ec2}) and (\ref{Ec3}) of Lemma \ref{3 new} are rephrased for the subspaces $W_\delta$ of $V$ corresponding to the terms $X_\delta$ of $C$.

\begin{corollary}\label{3 new bis}
  A well ordered chain $C = (X_\delta)_{\delta\leq \omega}$ of $\mathfrak{N}(\cP)$ with $X_\omega = \cP$ is maximal if and only if all the following conditions are satisfied:
\begin{enumerate}[$(1')$]
\item\label{Ec1'} $\dim(W_0) = 2n$;
\item\label{Ec2'} for $\delta < \omega$, $W_\delta$ is a hyperplane of $W_{\delta+1}$;
\item\label{Ec3'} for every limit ordinal $\gamma\leq \omega$, we have $\bigcup_{\delta < \gamma}W_\delta = W_\gamma$.
\end{enumerate}
\end{corollary}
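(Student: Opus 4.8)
The plan is to translate the three conditions of Lemma~\ref{3 new} into their counterparts for the subspaces $W_\delta$ of $V$, using the dictionary established by Lemmas~\ref{1} and~\ref{2}. Since $C$ is a chain with $X_\omega = \cP$ by hypothesis, Lemma~\ref{3 new} already tells us that $C$ is maximal if and only if conditions (\ref{Ec1}), (\ref{Ec2}) and (\ref{Ec3}) hold; so it suffices to prove the equivalences (\ref{Ec1})$\Leftrightarrow(1')$, (\ref{Ec2})$\Leftrightarrow(2')$ and (\ref{Ec3})$\Leftrightarrow(3')$, each proved separately. Recall that $W_\delta$ is by definition the subspace of $V$ with $[W_\delta] = \langle\varepsilon(X_\delta)\rangle$.

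\textbf{Condition $(1')$.} First I would handle $(\ref{Ec1})\Leftrightarrow(1')$. By Lemma~\ref{1}, if $F$ is a frame of $X_0$ then $\langle F\rangle_\cP = \varepsilon^{-1}(\langle\varepsilon(F)\rangle)$, and $\langle\varepsilon(F)\rangle$ is a $(2n-1)$-dimensional projective subspace, i.e. corresponds to a $2n$-dimensional subspace of $V$. Thus $X_0$ is spanned by a frame precisely when $\varepsilon(X_0)$ spans a $(2n-1)$-dimensional projective space, equivalently $\dim(W_0)=2n$. (Here one uses that a frame of $X_0$ is a frame of $\cP$, since $X_0$ is nice, and that $\varepsilon$ is an embedding so $\dim\langle\varepsilon(X_0)\rangle$ equals the vector dimension of the span.)

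\textbf{Conditions $(2')$ and $(3')$.} The equivalence $(\ref{Ec2})\Leftrightarrow(2')$ is exactly the content of Corollary~\ref{2 bis}: the implication chain there shows that $X_\delta$ being a maximal subspace of $X_{\delta+1}$ (item (\ref{Ev2}) of that corollary) is equivalent to $\langle\varepsilon(X_\delta)\rangle$ being a hyperplane of $\langle\varepsilon(X_{\delta+1})\rangle$ (item (\ref{Ev4})), which is precisely the statement that $W_\delta$ is a hyperplane of $W_{\delta+1}$. For the limit case $(\ref{Ec3})\Leftrightarrow(3')$ I would argue as follows: applying $\varepsilon$ and taking spans, the ascending union $\bigcup_{\delta<\gamma}X_\delta$ corresponds on the $V$-side to $\bigcup_{\delta<\gamma}W_\delta$, which is already a subspace of $V$ since the $W_\delta$ form a chain. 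One then needs $\bigcup_{\delta<\gamma}W_\delta = W_\gamma$ iff $\bigcup_{\delta<\gamma}X_\delta = X_\gamma$; the forward direction uses that $\varepsilon$ is injective and that $\varepsilon\big(\bigcup_{\delta<\gamma}X_\delta\big)=\bigcup_{\delta<\gamma}\varepsilon(X_\delta)$ spans $\bigcup_{\delta<\gamma}W_\delta$, while the reverse direction uses Lemma~\ref{2} (applied to the nice subspace $X_\gamma$) to recover $X_\gamma = \varepsilon^{-1}([W_\gamma]\cap\varepsilon(\cP))$ from $W_\gamma$.

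The main obstacle I anticipate is the limit-ordinal step $(3')$: one must be careful that taking the span commutes with the (possibly transfinite) union, i.e. that $\langle\bigcup_{\delta<\gamma}\varepsilon(X_\delta)\rangle$ really equals $\bigcup_{\delta<\gamma} W_\delta$ rather than something larger. Since each $W_\delta$ is a genuine linear subspace and they are nested, their union is already closed under linear combinations (any two vectors lie in a common $W_\delta$), so no extra span is introduced and the identification is clean; the nontrivial input is Lemma~\ref{2}, which guarantees that the nice subspace $X_\gamma$ is faithfully recorded by $W_\gamma$. The other two equivalences are essentially bookkeeping on top of the already-established Lemma~\ref{1} and Corollary~\ref{2 bis}.
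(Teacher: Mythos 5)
Your proposal is correct and takes essentially the same route as the paper, whose proof is precisely this dictionary: $(1')\Leftrightarrow$(1) via Lemma~\ref{1} (the paper cites Corollary~\ref{1bis}), $(2')\Leftrightarrow$(2) via Corollary~\ref{2 bis}, and $(3')\Leftrightarrow$(3) via Lemma~\ref{2}, with your observation that the nested union $\bigcup_{\delta<\gamma}W_\delta$ is already a subspace doing the same work as in the paper. One small label slip: in the reverse limit step Lemma~\ref{2} must be applied to each $X_\delta$ with $\delta<\gamma$ (so that a point $x\in X_\gamma$ whose representative vector falls in some $W_\delta$ satisfies $\varepsilon(x)\in [W_\delta]\cap\varepsilon(\cP)=\varepsilon(X_\delta)$, whence $x\in X_\delta$), not merely to $X_\gamma$ as your parenthetical says.
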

\begin{proof}
Conditions $(\ref{Ec1'}')$ and $(\ref{Ec2'}')$ are equivalent to (\ref{Ec1}) and (\ref{Ec2}) of Lemma \ref{3 new} in view of Corollaries \ref{1bis} and \ref{2 bis} respectively. Condition $(\ref{Ec3'}')$ is equivalent to (\ref{Ec3}) by Lemma \ref{2}.
\end{proof}

We are now going to prove the equality $\mathrm{def}(\cP) = \mathrm{def}(f)$.
We firstly consider the finite dimensional case. In this case, as stated in the next theorem, all chains of $\mathfrak{N}(\cP)$ are finite, whence well ordered. So, we can forget about well ordering and simply refer to chains.

\begin{theorem}\label{3}
Let $N = 2n+d$ be finite, namely $d = \mathrm{def}(f)$ is finite. Then all chains of $\mathfrak{N}(\cP)$ have length at most $d$, the maximal chains being those of length $d$.
\end{theorem}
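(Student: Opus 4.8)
The plan is to translate the order-theoretic question about chains of nice subspaces into an elementary dimension count among subspaces of $V$, using the dictionary supplied by Lemma \ref{2}. To each nice subspace $X$ I attach the subspace $W \leq V$ determined by $[W] = \langle \varepsilon(X)\rangle$; by Lemma \ref{2} we have $\varepsilon(X) = [W]\cap\varepsilon(\cP)$, so the assignment $X \mapsto W$ is injective and, for nice $X \subset Y$, strictly inclusion-preserving (were the two associated subspaces equal, so would $\varepsilon(X)$ and $\varepsilon(Y)$ be, whence $X = Y$). Since $N$ is finite, $V$ has no infinite strictly increasing chain of subspaces; pulling this back through the injection shows every chain of $\mathfrak{N}(\cP)$ is finite, hence (trivially) well ordered, so I may speak of chains rather than well ordered chains.

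Next I would pin down the dimension range. If $X$ is nice it contains a frame $F$, and by Lemma \ref{1} together with the fact that the $2n$ points of a frame span a $(2n-1)$-dimensional projective subspace, the associated $W$ satisfies $\dim(W) \geq 2n$; trivially $\dim(W) \leq \dim(V) = N = 2n+d$. Hence for any chain $X_0 \subset X_1 \subset \dots \subset X_m$ of $\mathfrak{N}(\cP)$ the associated subspaces satisfy
\[2n \leq \dim(W_0) < \dim(W_1) < \dots < \dim(W_m) \leq 2n+d,\]
a strictly increasing sequence of $m+1$ integers lying in the $(d+1)$-element set $\{2n, 2n+1, \dots, 2n+d\}$. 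Therefore $m+1 \leq d+1$, i.e.\ every chain has length at most $d$.

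It remains to identify the maximal chains. A chain of length $d$ realizes the maximum just established, so it admits no proper extension and is therefore maximal. Conversely, let $C$ be a maximal chain. Since $\cP$ is the greatest element of $\mathfrak{N}(\cP)$, it must occur as the top term of $C$, say $C = (X_0 \subset \dots \subset X_m = \cP)$; note that then $W_m = V$. By Corollary \ref{3 new bis} the maximality of $C$ forces $\dim(W_0) = 2n$ and $W_\delta$ to be a hyperplane of $W_{\delta+1}$ for each $\delta < m$ (condition $(3')$ being vacuous for a finite chain, since $0$ is the only non-successor index). Consequently $\dim(W_{\delta+1}) = \dim(W_\delta)+1$ at every step, so $m = \dim(W_m) - \dim(W_0) = (2n+d) - 2n = d$. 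Thus the maximal chains are precisely those of length $d$.

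The only genuinely substantive ingredient is the faithful dictionary $X \mapsto W$ of the first paragraph: everything hinges on Lemma \ref{2}, which guarantees that passing to spans in $\PG(V)$ loses no information and turns strict inclusions of nice subspaces into strict inclusions of subspaces of $V$ confined to the window $[2n, 2n+d]$. Once this is in place the length bound is pure counting and the characterization of maximal chains is a direct appeal to Corollary \ref{3 new bis}; I anticipate no further difficulty, the only points requiring care being the two endpoint computations $\dim(W_0) = 2n$ (from Lemma \ref{1}) and $\dim(W_m) = N$ (from $X_m = \cP$).
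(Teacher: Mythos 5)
Your proof is correct and takes essentially the same approach as the paper's: both use Lemma~\ref{2} to convert a chain of nice subspaces into a strictly increasing chain of subspaces of $V$ whose dimensions are confined to the window from $2n$ to $2n+d$, yielding the length bound by counting, and both obtain the characterization of the maximal chains from Corollary~\ref{3 new bis}. Your version merely makes explicit the endpoint computations ($\dim(W_0)=2n$, $W_m=V$) and the vacuity of condition $(3')$ for finite chains, details the paper leaves implicit.
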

\begin{proof}
We only must prove that all chains of $\mathfrak{N}(\cP)$ are finite of length at most $d$. Having proved this, the conclusion immediately follows from Corollary \ref{3 new bis}. So, let $(X_i)_{i\in I}$ be a chain of $\mathfrak{N}(\cP)$ with $I$ a totally ordered set of indices. With $i, j \in I$ and $W_i, W_j$ defined as above, if $i < j$ then $W_i \subset W_j$ by Lemma \ref{2}. Therefore $|I| \leq N-2n = d$.
\end{proof}

\begin{theorem}\label{4}
Let $d$ be infinite. Then:
\begin{enumerate}[{\rm (1)}]
\item\label{Mx1} All maximal well ordered chains of $\mathfrak{N}(\cP)$ have length $d$.
\item\label{Mx2} Every well ordered chain of $\mathfrak{N}(\cP)$ is contained in a maximal well ordered chain.
\end{enumerate}
\end{theorem}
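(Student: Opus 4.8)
The plan is to transport everything to the subspace lattice of $V$ by means of the order-preserving correspondence $X\mapsto W$ of Lemma \ref{2} (where $[W]=\langle\varepsilon(X)\rangle$), and then to settle the two assertions by, respectively, a transfinite dimension count and an explicit transfinite construction. As before I write $W_\delta$ for the subspace of $V$ attached to the term $X_\delta$ of a chain.

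For part (\ref{Mx1}), let $C$ be a maximal well ordered chain of $\mathfrak{N}(\cP)$. First I would observe that $C$ must contain $\cP$ as its greatest element: otherwise every term of $C$ is properly contained in the greatest element $\cP$, so $C\cup\{\cP\}$ is again a chain, and adjoining a greatest element to a well ordered set keeps it well ordered, contradicting maximality. Hence $C=(X_\delta)_{\delta\le\omega}$ with $X_\omega=\cP$, so Lemma \ref{3 new} and Corollary \ref{3 new bis} apply and give $\dim(W_0)=2n$, the hyperplane condition $W_\delta\subset W_{\delta+1}$ for every $\delta<\omega$, and $W_\gamma=\bigcup_{\delta<\gamma}W_\delta$ at every limit $\gamma$, with $W_\omega=V$. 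Now I would count dimensions transfinitely: choosing $v_\gamma\in W_\gamma\setminus W_{\gamma'}$ for each successor index $\gamma=\gamma'+1\le\omega$, a routine transfinite induction (the hyperplane condition feeds the successor step, union-continuity the limit step) shows that the classes of these $v_\gamma$ form a basis of $V/W_0$. Since the successor ordinals $\gamma\le\omega$ are exactly the $\gamma'+1$ with $\gamma'<\omega$, there are $|\omega|$ of them, whence $\dim(V/W_0)=|\omega|$. On the other hand $\dim(V/W_0)=\dim V-2n=d$ by the decomposition (\ref{decomposition}), $d$ being infinite. As the length of $C$ equals $|\omega|$ under the infinite-cardinal convention for chain lengths, we conclude that the length of $C$ is $d$.

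For part (\ref{Mx2}) I would extend an arbitrary well ordered chain $C$ to a maximal one by an explicit transfinite construction, and the point to stress is that Zorn's Lemma is genuinely unavailable: the union of an ascending family of well ordered chains need not be well ordered, so the poset of well ordered chains under inclusion lacks the upper bounds Zorn requires. Instead I would fill the gaps of $C$ one at a time. Below the minimum $W_{\min}$ of $C$ I insert a frame-span $W_0\le W_{\min}$ (which exists since $W_{\min}$ is nondegenerate of rank $n$) and a flag joining it to $W_{\min}$; above $C$, from its maximum or from $\bigcup C$ if $C$ has no maximum, I insert a flag up to $V$; across each successor gap $W_i\subset W_{i+1}$ I insert a flag; and at each limit index $\ell$ with $\bigcup_{i<\ell}W_i\subsetneq W_\ell$ I first insert the union $\bigcup_{i<\ell}W_i$ and then a flag up to $W_\ell$.

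The hard part, and the step I expect to be most delicate, is producing these interpolating flags inside $\mathfrak{N}(\cP)$, i.e. guaranteeing that every inserted term is again nice. Here I would exploit the orthogonal structure: for a gap $W_i\subset W_{i+1}$ between nice subspaces, each nondegenerate of rank $n$, write $W_i=H\perp A_i$ with $H$ a hyperbolic $2n$-space realizing the rank; then the orthogonal complement of $H$ in $W_{i+1}$ is anisotropic, so $W_{i+1}=H\perp A_{i+1}$ with $A_i\subseteq A_{i+1}$ anisotropic. Fixing a basis of $A_{i+1}/A_i$ and a well ordering of it, the partial sums $H\perp A'$ (with $A'$ running through $A_i$ plus an initial segment) form a well ordered flag with codimension-$1$ successor steps and union-continuity at limits; since every subspace of an anisotropic space is anisotropic and nondegenerate, and a nondegenerate formed space of positive Witt index is spanned by its singular points, each such $H\perp A'$ is nondegenerate of rank $n$ and spanned by singular points, hence corresponds via Lemma \ref{2} to a nice subspace. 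The union appearing at a limit index is nice by the same reasoning (it has rank $n$, trivial radical, and is spanned by singular points). Finally I would verify well-ordering: the resulting chain is a well ordered concatenation of well ordered flags, indexed by a well ordered sum of well ordered sets, hence well ordered; it contains $C$, and by construction it satisfies conditions $(1')$, $(2')$, $(3')$ of Corollary \ref{3 new bis}, so it is maximal.
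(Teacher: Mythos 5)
Your proof is correct and follows the paper's strategy almost verbatim: part (\ref{Mx1}) is the same dimension count through Corollary \ref{3 new bis} (the paper notes, just before Lemma \ref{3 new}, that a maximal chain automatically has $\cP$ as its maximum, which you re-derive), and part (\ref{Mx2}) is the same gap-filling construction, inserting the partial spans of a well ordered basis of a complement across each successor gap and at each limit where the union falls short, then checking the conditions of Corollary \ref{3 new bis} and using that a well ordered sum of well ordered sets is well ordered. The one place you genuinely diverge is the step you flag as hard: guaranteeing that the interpolated terms are nice. The paper gets this for free: it arranges (as you do) that the initial term is spanned by a frame, defines the inserted terms as $\varepsilon^{-1}([W_{\delta,\chi}])$, and since every $W_{\delta,\chi}$ contains $W_0$, every preimage contains the two disjoint maximal singular subspaces spanned by that frame, so niceness is immediate from the definition, with no orthogonal decomposition needed. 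Your splitting $W_{i+1}=H\perp A_{i+1}$ with $A_{i+1}$ anisotropic is correct (a singular vector orthogonal to $H$ would push the Witt index past $n$) but is overkill for niceness; what it does buy, and what the paper leaves implicit, is that every interpolated subspace is spanned by its singular points, which is exactly what identifies $\langle\varepsilon(X_{\delta,\chi})\rangle$ with $[W_{\delta,\chi}]$ and so legitimizes applying Corollary \ref{3 new bis} to the enriched chain. (That fact also follows without your decomposition: any $W\supseteq W_0$ splits as $H_1\oplus(H_1^\perp\cap W)$ for a hyperbolic plane $H_1\subseteq W_0$ with hyperbolic pair $u,v$, and for $a\perp H_1$ the vector $a+u-q(a)v$ is singular.) Your observation that Zorn's Lemma is unavailable here -- unions of ascending families of well ordered chains need not be well ordered -- is also on target, and is precisely why the paper, like you, proceeds by explicit transfinite construction.
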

\begin{proof}
By Corollary \ref{3 new bis}, if $C = (X_\delta)_{\delta\leq \omega}$ is a maximal well ordered chain of $\mathfrak{N}(\cP)$ then the codimension of $W_0$ in $V$ is equal to the cardinality $|\omega|$ of $\omega$, which is just the length of $C$. On the other hand, $\dim(W_0) = 2n$, since $X_0$ is spanned by a frame. It follows that $\dim(V) = 2n+|\omega|$, namely $2n+|\omega| = N = 2n+d$. Therefore $|\omega| = d$ ($= 2n+d$ since $d$ is infinite). Claim (\ref{Mx1}) is proved.

Turning to (\ref{Mx2}), let $C$ be a well ordered chain of $\mathfrak{N}(\cP)$. With no loss, we can assume that $\cP\in C$. So, $C  = (X_\delta)_{\delta\leq \omega}$ where $X_\omega = \cP$. We can also assume that $X_0$ is spanned by a frame. Indeed, if not, then we can replace $C$ with the chain obtained by adding the span of a frame of $X_0$ as the new initial element.  By Lemma \ref{2}, we have $W_\delta \subset W_{\delta+1}$ for every $\delta < \omega$. Let $B_\delta = (b_{\delta,\xi})_{\xi < \omega_\delta}$ be a well ordered basis for a complement of $W_\delta$ in $W_{\delta+1}$ and, for every $\chi \leq \omega_\delta$, put $W_{\delta, \chi} := \langle W_\delta\cup \{b_{\delta,\xi}\}_{\xi < \chi}\rangle_V$ and $X_{\delta,\chi} = \varepsilon^{-1}([W_{\delta,\chi}])$. So, $W_{\delta, 0} = W_\delta$ and $W_{\delta,\omega_\delta} = W_{\delta+1}$. Accordingly, $X_{\delta,0} = X_\delta$ and $X_{\delta,\omega_\delta} = X_{\delta+1}$ by Lemma \ref{2}. Moreover $X_{\delta,\chi} \subset X_{\delta,\chi'}$ for $0 \leq \chi < \chi' \leq \omega_\delta$, again by Lemma \ref{2}.

Let $\gamma\leq \omega$ be a limit ordinal and put $W'_\gamma := \cup_{\delta < \gamma}W_\delta$. Clearly $W'_\gamma \subseteq W_\gamma$. Suppose that $W'_\gamma \subset W_\gamma$ and let $B_\gamma = (b_{\delta,\xi})_{\xi < \omega_\gamma}$ be a well ordered basis for a complement of $W'_\gamma$ in $W_\gamma$. As above, put $W_{\gamma, \chi} := \langle W'_\gamma \cup \{b_{\delta,\xi}\}_{\xi < \chi}\rangle_V$ and $X_{\delta,\chi} = \varepsilon^{-1}([W_{\delta,\chi}])$ for every $\chi \leq \omega_\gamma$. So $W_{\gamma, 0} = W'_\gamma$, $X_{\gamma, 0} = \cup_{\delta < \gamma}X_\delta$, $W_{\gamma, \omega_\gamma} = W_\gamma$, $X_{\gamma, \omega_\gamma} = X_\gamma$ and $X_{\gamma,\chi} \subset X_{\gamma,\chi'}$ for any $0 \leq \chi < \chi' \leq \omega_\gamma$.

We can enrich the chain $C$ by adding the spaces $X_{\delta, \chi}$ and $X_{\gamma, \chi}$ defined as above, namely we place the chain $(X_{\delta,\chi})_{0 < \chi < \omega_\delta}$ between $X_\delta$ and $X_{\delta+1}$ and, when $W'_\gamma \subset W_\gamma$, we place $(X_{\delta,\chi})_{\chi < \omega_\gamma}$ after all terms $X_\delta$ with $\delta < \gamma$ and just before $X_\gamma$. In this way we obtain a larger well ordered chain $C'$. (Recall that a well ordered union of pairwise disjoint well ordered sets is still well ordered.) The chain $C'$ satisfies the conditions of Corollary \ref{3 new bis}. Hence it is a maximal well ordered chain.
\end{proof}

By Theorems \ref{3} and \ref{4} we immediately obtain the main result of this subsection:

\begin{corollary}\label{3&4}
In any case, $\mathrm{def}(\cP) = \mathrm{def}(f)$.
\end{corollary}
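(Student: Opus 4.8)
The plan is to read off the equality $\mathrm{def}(\cP) = \mathrm{def}(f)$ directly from Theorems \ref{3} and \ref{4}, splitting according to whether $d = \mathrm{def}(f)$ is finite or infinite, and in each case evaluating the least upper bound that defines $\mathrm{def}(\cP)$ in Definition \ref{defect}. All the substantive work has already been done in proving those two theorems; what remains is to match their conclusions against the definition of $\mathrm{def}(\cP)$, being careful about the range of the supremum and about the convention governing the length of infinite chains.

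First I would treat the case where $d$ is finite. As observed just before Theorem \ref{3}, in this situation every chain of $\mathfrak{N}(\cP)$ is finite, hence automatically well ordered, so the supremum defining $\mathrm{def}(\cP)$ may equally be taken over all chains. Theorem \ref{3} asserts that every such chain has length at most $d$ and that chains of length exactly $d$ exist. Therefore the least upper bound of the lengths is precisely $d$, giving $\mathrm{def}(\cP) = d = \mathrm{def}(f)$.

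Next I would treat the infinite case, where one must genuinely restrict attention to well ordered chains (non-well-ordered chains may be longer than $d$, as flagged in the discussion of monster chains). Theorem \ref{4}(\ref{Mx2}) places every well ordered chain inside a maximal well ordered chain, and Theorem \ref{4}(\ref{Mx1}) says every maximal well ordered chain has length exactly $d$. Recalling the convention $\mathfrak{n}-1 = \mathfrak{n}$ for infinite $\mathfrak{n}$, the length of a well ordered chain is just its cardinality; hence every well ordered chain has length at most $d$, while the maximal ones attain $d$. So the least upper bound is again $d$, and $\mathrm{def}(\cP) = \mathrm{def}(f)$.

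There is no real obstacle here, the two theorems having already done the heavy lifting; the only point demanding care is that in the infinite case the supremum in Definition \ref{defect} is taken over \emph{well ordered} chains, and it is exactly Theorem \ref{4}(\ref{Mx2}) that guarantees no well ordered chain escapes the bound $d$ furnished by Theorem \ref{4}(\ref{Mx1}).
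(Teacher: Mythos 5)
Your proof is correct and takes exactly the paper's route: the paper derives Corollary \ref{3&4} immediately from Theorems \ref{3} and \ref{4}, with the same split into the finite case (where all chains are finite, hence well ordered, and Theorem \ref{3} gives the supremum $d$) and the infinite case (where Theorem \ref{4} bounds and realizes the lengths of well ordered chains). Your explicit remarks --- that containment in a maximal well ordered chain of length $d$ bounds every well ordered chain by $d$, and that the convention $\mathfrak{n}-1=\mathfrak{n}$ makes length equal cardinality --- simply spell out what the paper leaves implicit.
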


\begin{note}\label{monster chains}
When $d$ is infinite $\mathfrak{N}(\cP)$ admits chains of length greater than $d$. By Theorem \ref{4}, these chains are not well ordered. For instance, let $d = \aleph_0$ and, with $V_1,..., V_n, V_0$ as in decomposition \eqref{decomposition}, let $B = (b_r)_{r\in\QQ}$ be a basis of $V_0$, indexed by the set $\QQ$ or rational numbers. Let $W_{-\infty} = \oplus_{i=1}^nV_i$ and for every real number $a$ put $W_a := \langle W_{-\infty}\cup\{b_r\}_{r < a}\rangle_V$ and $X_a := \varepsilon^{-1}([W_a])$. Also $X_{-\infty} := \varepsilon^{-1}([W_{-\infty}])$ and $X_{+\infty} := \cP$. Then $(X_a)_{a\in\RR}$, with $X_\infty$ and $X_{+\infty}$ added as the least and the greatest element, is a chain of $\mathfrak{N}(\cP)$ of length $|\RR|  > d$.

This chain is not maximal but, by Zorn's Lemma, it is contained in a maximal one. In fact it is contained in a unique a maximal chain, constructed as follows. For every $r\in \QQ$ put $W'_r := \langle W_r, b_r\rangle_V$, $X'_r := \varepsilon^{-1}([W'_r])$ and place $X'_r$ just after $X_r$, before all $X_a \in C$ with $a > r$. The chain $C'$ constructed in this way is maximal and contains $C$. Clearly, $|C'| = |C| = |\RR|$.
\end{note}

\begin{note}
If $d$ is countable then all maximal chains of $\mathfrak{N}(\cP)$ have length at least $d$. We conjecture that the same is true when $d > \aleph_0$. If so, then we can replace Definition \ref{defect} with a nicer one, where no mention of well ordered chains is made, e. g. as follows: the defect of $\cP$ is the minimum length of a maximal chain of $\mathfrak{N}(\cP)$. Regretfully, we presently do not know how to prove that conjecture.
\end{note}

\subsection{The polar Grassmannians considered in this paper}

Throughout this paper $V = V(N,\FF)$ is a vector space of (possibly infinite) dimension $N$ equipped with a non-degenerate Hermitian or quadratic form $f$ of finite Witt index $n \geq 2$ and defect $d = N-2n$, and ${\cal P} = \cP(f)$ is the non-degenerate polar space associated to $f$. So, $\rank(\cP) = n$ and $\mathrm{def}(\cP) = d$. In many of the results we will obtain we allow $d$ to be infinite, although some of them loose much of significance when $d$ is infinite.

For $1\leq k < n$ the polar $k$-Grassmannian ${\cal P}_k$ is a full subgeometry (but not a subspace) of the $k$-Grassmannian $\cG_k$ of $\PG(V)$. On the other hand, the point-set $S_n(\cP)$ of $\cP_n$ is a subset of $\cG_n$ but $\cP_n$ is never a full subgeometry of $\cG_n$. Indeed the lines of ${\cal P}_n$ are not contained in lines of $\cG_n$ except when $d = 0$. When $d = 0$ and $f$ is Hermitian the lines of $\cP_n$ are Baer sublines of lines of $\cG_n$ while when $d = 0$ and $f$ is quadratic all lines of $\cP_n$ are thin (each of them has just two points). In this case no three points of $\cP_n$ are collinear in $\cG_n$.

\paragraph{Terminology and notation}
We call $\cP(f)$ and its $k$-Grassmannian a {\em Hermitian polar space} and a {\em Hermitian $k$-Grassmannian} or a {\em quadratic polar space} and a {\em quadratic $k$-Grassmannian} according to whether $f$ is Hermitian or quadratic. We will use the letters $\cH$ and $\cH_k$ to denote Hermitian polar spaces and Hermitian $k$-Grassmannians respectively and letters as $\cQ$ and $\cQ_k$ for quadratic polar spaces and quadratic $k$-Grassmannians.
Recall that $\cH_1 = \cH$ and $\cQ_1 = \cQ$.

\section{Preliminaries on generation of polar Grassmannians}\label{sec3new}

Throughout this section $\cP$ is a non-degenerate polar space of finite rank $n \geq 2$ with no thin lines and $d := \mathrm{def}(\cP)$. When needed, we will assume the following, which we call the {\em Maximal Well Ordered Chain} condition (also {\em Maximal Chain} condition or (MC), for short):

\begin{itemize}
\item[(MC)] Every nice subspace of $\cP$ belongs to a maximal well ordered chain of $\mathfrak{N}(\cP)$.
\end{itemize}
Obviously, (MC) holds when $\mathrm{def}(\cP)$ is finite. In view of Theorem \ref{4}, when $\cP$ is embeddable and defined over a field then  (MC) is satisfied whatever $\mathrm{def}(\cP)$ is. However (MC) certainly holds true under far weaker hypotheses. Most likely, it holds whenever $n > 2$ or $n = 2$ and $\cP$ is embeddable.

According to the notation introduced in Section \ref{basics}, if $K$ is a subspace of $\cP$ of rank $m \leq n$ and $1 \leq k \leq m$, we put $S_k(K) := \{X\in S_k(\cP) : X\subset K\}$. The set $S_k(K)$ is a subspace of $\cP_k$. We denote by $\cP_k(K)$ the geometry induced by $\cP_k$ on $S_k(K)$. When $K$ is non-degenerate we can also consider the $k$-Grassmannian $K_k$. If either $m > k$ or $k = m = n$ then $\cP_k(K) = K_k$. On the other hand, if $m = k < n$ then $\cP_k(K) \neq K_k$, although these two geometries have the same set of points. Indeed in this case $S_k(K)$ is a co-clique in the collinearity graph of $\cP_k$, whence $\cP_k(K)$ has no lines, while $K_k$ does admit lines.

We also adopt the following convention: if $X$ is a singular subspace of $\cP$ of rank $h < k$ then we denote by $S_k(X)$ the set of $k$-subspaces of $\cP$ containing $X$. In other words, $S_k(X) = S_k(\cP)\cap \Res(X)^\uparrow$.

\subsection{Two extremal cases: $k = n$ and $k = 1$}

\subsubsection{The case $k = n$}\label{dps-introduction}

\begin{prop}\label{k=n; 0}
Let $H$ be a hyperplane of $\cP$ and suppose that the polar space $H$ is non-degenerate of rank $n$ (in short, $H$ is a nice subspace of $\cP$). Then $\langle S_n(H)\rangle_{\cP_n} = \cP_n$.
\end{prop}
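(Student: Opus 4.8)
The plan is to prove the statement pointwise: I will show that every point $M$ of $\cP_n$, i.e. every maximal singular subspace $M \in S_n(\cP)$, lies in $G := \langle S_n(H)\rangle_{\cP_n}$. If $M \subseteq H$ then $M \in S_n(H) \subseteq G$ and there is nothing to do, so I would immediately reduce to the case $M \not\subseteq H$ and try to exhibit $M$ on a line of $\cP_n$ that already has two points inside $G$.

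First I would pin down the intersection $W := M \cap H$. Since $H$ is a hyperplane of $\cP$ it meets every line of $\cP$, hence every line of the projective space $M$; as $M \not\subseteq H$, the subset $W$ is a proper subspace of $M$ meeting every line of $M$, i.e. a geometric hyperplane of $M$. Thus $W$ is a singular subspace of rank $n-1$ contained in $H$, so $W \in S_{n-1}(\cP)$ and $\ell := \Res(W)^\uparrow = S_n(W)$ is a genuine line of $\cP_n$ passing through $M$.

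The heart of the matter is to find at least two points of $\ell$ lying in $S_n(H)$. Here I would use that $H$ is a nice subspace, i.e. a \emph{non-degenerate} polar space of rank $n$. Inside $H$ the subspace $W$ is singular of corank $1$, so its upper residue in $H$ is a non-degenerate polar space of rank $1$; any such space has at least two points (for instance the two distinct isotropic points furnished by a hyperbolic pair). These correspond to two distinct maximal singular subspaces $M_1 \neq M_2$ of $H$, both containing $W$. Since $H$ has the same rank $n$ as $\cP$, each $M_i$ is in fact a maximal singular subspace of $\cP$, hence $M_1, M_2 \in S_n(H)$; and as they contain $W$ they both lie on $\ell$.

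To finish I would simply invoke the definition of a subspace: $G$ is a subspace of $\cP_n$ containing the two distinct points $M_1, M_2$ of the line $\ell$, so $\ell \subseteq G$, and in particular $M \in G$. As $M$ was arbitrary, $G = \cP_n$. The only step that needs care is the claim that a corank-$1$ singular subspace of the non-degenerate rank-$n$ polar space $H$ is contained in at least two maximal singular subspaces; I expect this rank-$1$ residue observation to be the main (albeit mild) obstacle, and I would record that it holds uniformly in both the Hermitian and the quadratic cases and regardless of whether the defect of $H$ is finite. Note in particular that the argument never needs to inspect $M$ more closely than through $W$, and it uses neither connectedness of $P\setminus H$ nor any embedding of $\cP_n$.
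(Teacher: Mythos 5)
Your proposal is correct and takes essentially the same route as the paper: reduce to $M \not\subseteq H$, observe that $M\cap H \in S_{n-1}(H)$, find two distinct maximal singular subspaces $M_1, M_2 \in S_n(H)$ through it, and conclude that $M$ lies on the line $\Res(M_1\cap M_2)^\uparrow$ of $\cP_n$, which already has two points in the span. The paper's proof is a terser version of exactly this argument; your justifications of why $M\cap H$ is a geometric hyperplane of $M$ and why the rank-$1$ residue of $W$ in the non-degenerate space $H$ has at least two points merely make explicit what the paper leaves implicit.
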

\begin{proof}
  Let $X\in S_n(\cP)$. Then either $X\in S_n(H)$ or $X\cap H \in S_{n-1}(H)$. However $X\cap H$ is contained in at least two $n$-spaces $X_1, X_2\in S_n(H)$. It follows that $X$ belongs to the line $\langle X_1, X_2\rangle_{\cP_n} = \Res(X_1\cap X_2)^\uparrow$ of $\cP_n$.
\end{proof}
\begin{corollary}\label{clkn}
  Under the hypotheses of Proposition \ref{k=n; 0}, we have $\gr(\cP_n)\leq \gr(H_n).$
\end{corollary}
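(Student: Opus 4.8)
The plan is to transfer a minimal generating set of the dual polar space $H_n$ into a generating set of $\cP_n$ of the same size. Since $H$ is a nice subspace of $\cP$ (non-degenerate of rank $n$), the remarks of Section~\ref{sec3new} give $\cP_n(H) = H_n$, so the point set of $H_n$ is exactly $S_n(H)$. First I would fix a generating set $G$ of $H_n$ with $|G| = \gr(H_n)$, so that $\langle G\rangle_{H_n} = S_n(H)$, and aim to show that the very same set $G$ generates $\cP_n$.

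The crucial structural observation is that every line of $H_n$ is contained in a line of $\cP_n$. Indeed, a line of $H_n$ has the form $\Res(Y)^\uparrow \cap S_n(H)$ for some $Y \in S_{n-1}(H) \subseteq S_{n-1}(\cP)$, whereas the line of $\cP_n$ through $Y$ is the full upper residue $\Res(Y)^\uparrow$; the former is visibly a subset of the latter, since every $n$-subspace of $H$ containing $Y$ is in particular an $n$-subspace of $\cP$ containing $Y$. From this I would deduce the key transfer lemma: if $U$ is any subspace of $\cP_n$, then $U \cap S_n(H)$ is a subspace of $H_n$. The verification is routine: a line $\ell$ of $H_n$ meeting $U \cap S_n(H)$ in at least two points lies inside a line $\ell'$ of $\cP_n$, which then meets $U$ in at least two points and is therefore contained in $U$; hence $\ell \subseteq \ell' \cap S_n(H) \subseteq U \cap S_n(H)$.

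Applying this with $U = \langle G\rangle_{\cP_n}$, the set $\langle G\rangle_{\cP_n} \cap S_n(H)$ is a subspace of $H_n$ containing $G$, so $\langle G\rangle_{H_n} \subseteq \langle G\rangle_{\cP_n}$. As $\langle G\rangle_{H_n} = S_n(H)$, this yields $S_n(H) \subseteq \langle G\rangle_{\cP_n}$. Now Proposition~\ref{k=n; 0} asserts $\langle S_n(H)\rangle_{\cP_n} = \cP_n$; since $\langle G\rangle_{\cP_n}$ is a subspace containing $S_n(H)$, it must contain $\langle S_n(H)\rangle_{\cP_n} = S_n(\cP)$, so $\langle G\rangle_{\cP_n} = \cP_n$. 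Thus $G$ is a generating set of $\cP_n$ of cardinality $\gr(H_n)$, and $\gr(\cP_n) \leq |G| = \gr(H_n)$.

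The main obstacle is the bookkeeping underlying the transfer lemma, namely confirming that span is monotone under passing from the subgeometry $H_n$ into the ambient $\cP_n$ once one knows that lines of $H_n$ sit inside lines of $\cP_n$. Everything else is a direct appeal to Proposition~\ref{k=n; 0}. I would note that the argument is insensitive to whether the lines of $H_n$ are thick or thin, since the containment of lines and the closure condition defining subspaces are all that is used.
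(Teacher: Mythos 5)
Your proposal is correct and follows exactly the route the paper intends: the corollary is stated without proof because it is the immediate consequence of Proposition~\ref{k=n; 0}, namely that a minimum generating set $G$ of $H_n$ satisfies $S_n(H)\subseteq\langle G\rangle_{\cP_n}$ (since lines of $H_n$ lie inside lines of $\cP_n$, spans are monotone under passing to $\cP_n$) and hence $\langle G\rangle_{\cP_n}\supseteq\langle S_n(H)\rangle_{\cP_n}=\cP_n$. Your explicit transfer lemma just makes rigorous the bookkeeping the authors leave implicit, and it is correct as written.
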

\begin{corollary}\label{clkn bis}
Suppose that $\cP$ satisfies the Maximal Chain condition {\rm (MC)} and let $K$ be a nice subspace of $\cP$. Then
  $\langle S_n(K)\rangle_{\cP_n}=\cP_n$ and so $\gr(\cP_n)\leq \gr(K_n)$.
\end{corollary}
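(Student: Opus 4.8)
The plan is to reduce the general nice-subspace case to the hyperplane case already settled in Proposition~\ref{k=n; 0}, by walking up a maximal well ordered chain of $\mathfrak{N}(\cP)$ that links $K$ to the whole of $\cP$. By the Maximal Chain condition (MC), the nice subspace $K$ lies in a maximal well ordered chain $C = (X_\delta)_{\delta\leq\omega}$ of $\mathfrak{N}(\cP)$ with $X_\omega = \cP$, and there is some index $\delta_0$ with $X_{\delta_0} = K$. The idea is to prove, by transfinite induction along the chain from $\delta_0$ up to $\omega$, that $\langle S_n(X_\delta)\rangle_{\cP_n} = \cP_n$ already holds once $\langle S_n(K)\rangle_{\cP_n} \supseteq S_n(K)$ is propagated upward; more precisely I would show that $\langle S_n(X_{\delta_0})\rangle_{\cP_n}$ contains $S_n(X_\delta)$ for every $\delta \geq \delta_0$, and in particular for $\delta = \omega$, which gives the claim.

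First I would record the successor step. By Lemma~\ref{3 new}, for every $\delta < \omega$ the term $X_\delta$ is a maximal subspace of $X_{\delta+1}$; equivalently, by Corollary~\ref{2 bis}, $X_\delta$ is a hyperplane of $X_{\delta+1}$. Now $X_{\delta+1}$ is itself a nice subspace, hence (when $X_{\delta+1} \neq \cP$, or in general viewed as a non-degenerate polar space of rank $n$ in its own right) Proposition~\ref{k=n; 0} applies \emph{inside} $X_{\delta+1}$: with $X_\delta$ playing the role of the nice hyperplane $H$ of the polar space $X_{\delta+1}$, we get $\langle S_n(X_\delta)\rangle_{(X_{\delta+1})_n} = (X_{\delta+1})_n$, i.e.\ every $n$-subspace contained in $X_{\delta+1}$ lies in the span of $S_n(X_\delta)$ taken in the dual polar space of $X_{\delta+1}$. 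Since lines of $(X_{\delta+1})_n$ are lines of $\cP_n$ (the upper residues of $(n-1)$-subspaces inside $X_{\delta+1}$ are contained in the corresponding upper residues in $\cP$), this span is computed the same way in $\cP_n$, giving $S_n(X_{\delta+1}) \subseteq \langle S_n(X_\delta)\rangle_{\cP_n}$. Combined with the inductive hypothesis $S_n(X_\delta)\subseteq \langle S_n(K)\rangle_{\cP_n}$, the successor step yields $S_n(X_{\delta+1})\subseteq \langle S_n(K)\rangle_{\cP_n}$.

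For the limit step, let $\gamma\leq\omega$ be a limit ordinal. By condition~(\ref{Ec3}) of Lemma~\ref{3 new} we have $\bigcup_{\delta<\gamma}X_\delta = X_\gamma$, so every $n$-subspace $Y\in S_n(X_\gamma)$, being a finite-rank singular subspace spanned by finitely many points, already lies in some $X_\delta$ with $\delta<\gamma$; hence $Y\in S_n(X_\delta)\subseteq \langle S_n(K)\rangle_{\cP_n}$ by the inductive hypothesis. This closes the transfinite induction, and taking $\gamma=\omega$ (or reaching $\omega$ as a successor) gives $S_n(\cP) = S_n(X_\omega)\subseteq \langle S_n(K)\rangle_{\cP_n}$, i.e.\ $\langle S_n(K)\rangle_{\cP_n} = \cP_n$. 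The inequality $\gr(\cP_n)\leq\gr(K_n)$ then follows exactly as in Corollary~\ref{clkn}: a generating set $S$ of $K_n$ has $\langle S\rangle_{\cP_n}\supseteq \langle S_n(K)\rangle_{\cP_n}=\cP_n$ once one checks $S$ spans $S_n(K)$ inside $\cP_n$, and since $K_n = \cP_n(K)$ when $K$ has rank $n$ (as noted in Section~\ref{sec3new}), $S$ is precisely a spanning set for $S_n(K)$.

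The step I expect to be the main obstacle is the compatibility of spans between the dual polar space $(X_{\delta+1})_n$ and the ambient dual polar space $\cP_n$: I must verify carefully that a line of $(X_{\delta+1})_n$ is a genuine line of $\cP_n$, so that spanning inside the smaller dual polar space is no stronger than spanning inside $\cP_n$, and hence that Proposition~\ref{k=n; 0} applied internally to $X_{\delta+1}$ really produces containment in $\langle S_n(X_\delta)\rangle_{\cP_n}$. This hinges on the fact that an $(n-1)$-subspace $Z$ lying in the nice subspace $X_{\delta+1}$ has the same upper residue relation when read in $X_{\delta+1}$ as in $\cP$, because $X_{\delta+1}$ is nice (non-degenerate of full rank $n$) and therefore inherits all $n$-subspaces through $Z$ from $\cP$ whenever they lie within $X_{\delta+1}$; the subtlety is only that $S_n(Z)$ computed in $X_{\delta+1}$ may be smaller than $S_n(Z)$ in $\cP$, but this causes no problem since we only need the line of $(X_{\delta+1})_n$ to sit inside the corresponding line of $\cP_n$. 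Once this identification is in hand, the two residue cases of Proposition~\ref{k=n; 0} go through verbatim.
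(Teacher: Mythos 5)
Your proposal is correct and follows essentially the same route as the paper's proof: the paper likewise takes a maximal well ordered chain through $K$ supplied by (MC) and runs the transfinite induction it calls ``a routine inductive argument'', applying Corollary~\ref{clkn} (i.e.\ Proposition~\ref{k=n; 0} inside $X_{\delta+1}$) at successor steps and the limit condition of Lemma~\ref{3 new} at limit ordinals. The details you spell out --- converting maximality into the hyperplane property via Corollary~\ref{2 bis}, checking that lines of $(X_{\delta+1})_n$ sit inside lines of $\cP_n$ so spans transfer to the ambient dual polar space, and locating a rank-$n$ singular subspace in some $X_\delta$ with $\delta<\gamma$ at limit steps --- are exactly what the paper leaves implicit.
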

\begin{proof}
By (MC) there exists a well ordered chain $(K_\delta)_{\delta\leq \omega}$ with $K_0 = K$, $K_\omega = \cP$ and such that both conditions (\ref{Ec2}) and (\ref{Ec3}) of Lemma \ref{3 new} hold (recall that Lemma \ref{3 new} holds even if $\cP$ is neither embeddable nor defined over a field). With the help of Corollary \ref{clkn}, by a routine inductive argument we obtain that $\langle S_n(K)\rangle_{\cP_n}= S_n(K_\delta)$ for every $\delta \leq \omega$. In particular, $\langle S_n(K)\rangle_{\cP_n}= S_n(K_\omega) = S_n(\cP)$.
\end{proof}

\begin{note}
With $\cP$ and $H$ as in Proposition \ref{k=n; 0}, it can be that $\gr(\cP_n) < \gr(H_n)$. For instance, let $\cP$ be Hermitian of defect $d = 1$. Then $H$ has defect $0$. Let $F$ be a frame of $H$ and $F_n$ the family of $n$-subspaces of $H$ spanned by points of $F$. Then $F_n\subset S_n(\cP)$ spans $\cP_n$ (see \cite{CS01}). So, $\gr(\cP_n) \leq 2^n$, as $|F_n| = 2^n$. Nevertheless $F_n$ is not big enough to generate $H_n$. Indeed, as proved in \cite{BC2012}, if the underlying field $\FF$ of $\cP$ has order $|\FF| > 4$ then $\gr(H_n)$ is equal to ${{2n}\choose n}$ ($ > 2^n$); when $\FF = \FF_4$ and $n > 2$ it is even larger, since in this case $\er(H_n) > {{2n}\choose n}$ (Li \cite{Li02}).

Similarly, let $\cP$ be orthogonal of defect $1$. Then $H$ has defect $0$. Let $F$ be a frame of $H$. If the underlying field $\FF$ of $\cP$ has characteristic $\ch(\FF) \neq 2$ then $F_n$ spans $\cP_n$ (see \cite{BB98}), but it cannot span $H_n$. Indeed $S_n(H)$ is the unique generating set for $H_n$.
\end{note}

\subsubsection{The case $k = 1$}

\begin{prop}\label{t:k=1}
Suppose that $\mathfrak{N}(\cP)$ admits at least one maximal well ordered chain. Then $\gr(\cP) \leq 2n+d$ ($= d$ if $d$ is infinite).
\end{prop}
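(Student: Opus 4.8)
The plan is to exhibit an explicit generating set of size $2n+d$ by building it up along a maximal well ordered chain of $\mathfrak{N}(\cP)$, starting from the span of a frame and adding one point at each step of the chain. The base case is the minimal element $X_0$ of the chain, which is spanned by a frame $F$ of $\cP$ consisting of $2n$ points; by Lemma~\ref{1} (or rather by the generation-by-frames fact recalled in its proof) these $2n$ points generate $X_0$ as a polar space. Since $X_0$ has defect $0$, this matches the desired bound $2n + 0$ at the bottom of the chain.

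The inductive step would use the maximal chain condition together with Corollary~\ref{2 bis}. By the hypothesis, $\cP$ admits a maximal well ordered chain $C = (X_\delta)_{\delta \leq \omega}$ with $X_\omega = \cP$ and $|\omega| = d$ (this cardinality being exactly the defect, by Corollary~\ref{3&4} and Theorem~\ref{4}, or trivially by Theorem~\ref{3} in the finite case). By Lemma~\ref{3 new}, each $X_\delta$ is a maximal subspace of $X_{\delta+1}$, so by Corollary~\ref{2 bis} there is a single point $x_\delta \in X_{\delta+1}\setminus X_\delta$ with $\langle X_\delta \cup \{x_\delta\}\rangle_\cP = X_{\delta+1}$. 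I would set $S := F \cup \{x_\delta : \delta < \omega\}$, a set of cardinality $2n + d$, and argue by transfinite induction on $\delta$ that $\langle S \cap X_\delta\rangle_\cP = X_\delta$, in other words that the first $2n + |\delta|$ elements generate $X_\delta$. At successor ordinals this is exactly the single-point extension property from Corollary~\ref{2 bis}; at limit ordinals $\gamma$ it follows from condition~(\ref{Ec3}) of Lemma~\ref{3 new}, namely $\bigcup_{\delta<\gamma}X_\delta = X_\gamma$, since the span of the union of an ascending chain of subspaces is the union. At $\delta = \omega$ this yields $\langle S\rangle_\cP = X_\omega = \cP$, proving $\gr(\cP) \leq 2n+d$.

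The main obstacle I anticipate is purely bookkeeping rather than conceptual: one must be careful that the single points $x_\delta$ can be chosen coherently for all $\delta < \omega$ simultaneously, and that the transfinite induction is set up with the right indexing so that the limit-ordinal case genuinely reduces to condition~(\ref{Ec3}). There is also a small subtlety in the infinite case about why $2n + d = d$, namely that $d$ absorbs the finite summand $2n$ when $d$ is infinite; this is the parenthetical remark in the statement and is immediate from cardinal arithmetic. No genuinely hard estimate is needed here, since all the structural work has already been done in Lemma~\ref{3 new} and Corollary~\ref{2 bis}; the proposition is essentially a clean corollary of those results, obtained by counting the frame points plus the one new point contributed at each successor step of the chain.
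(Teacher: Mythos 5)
Your proposal is correct and follows essentially the same route as the paper's proof: a transfinite induction along a maximal well ordered chain grounded in the three conditions of Lemma~\ref{3 new} (frame generation at the bottom, one new point at each successor step by maximality, unions at limit ordinals), the only cosmetic difference being that you assemble one explicit generating set $S$ of size $2n+|\omega|$, while the paper inductively bounds $\gr(K_\delta)\leq 2n+|\delta|$ with separately chosen generating sets $G_\delta$. One small remark: since this proposition sits in Section~\ref{sec3new} with no embeddability hypothesis, you should invoke only the trivial implication (\ref{Ev2})$\Rightarrow$(\ref{Ev1}) (maximality of $X_\delta$ in $X_{\delta+1}$ by itself yields single-point generation) rather than the full Corollary~\ref{2 bis}, and you only need $|\omega|\leq d$, which is immediate from Definition~\ref{defect}, rather than the equality $|\omega|=d$ drawn from Theorem~\ref{4}.
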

\begin{proof}
Let $C = (K_\delta)_{\delta\leq \omega}$ be a maximal well ordered chain of $\mathfrak{N}(\cP)$. We shall prove by (transfinite) induction that
\begin{itemize}
\item[$(\ast)$] $\gr(K_\delta) \leq 2n+|\delta|$ for every $\delta \leq \omega$.
\end{itemize}
The conclusion of the proposition is just the case $\delta = \omega$ of $(\ast)$. Recall that $C$ satisfies all conditions of Lemma \ref{3 new}. Hence $\gr(K_0) \leq 2n$ by condition (1) of that lemma. Let now $\gamma \leq \omega$ and suppose that $(\ast)$ holds on $K_\delta$ for every $\delta < \gamma$.

Suppose firstly that $\gamma$ is not a limit ordinal, say $\gamma = \delta+1$.  By (\ref{Ec2}) of Lemma \ref{3 new},  $K_\delta$ is a maximal subspace of $K_\gamma$. Hence $\gr(K_\gamma) \leq \gr(K_\delta)+1$. However $\gr(K_\delta) \leq 2n+|\delta|$ by the inductive hypothesis. Therefore $\gr(K_\gamma) \leq 2n+|\delta|+1 = 2n+ |\gamma|$.

On the other hand, let $\gamma$ be a limit ordinal. Then $K_\gamma = \bigcup_{\delta < \gamma}K_\delta$ by (3) of Lemma \ref{3 new}. For every $\delta < \gamma$, let $G_\delta$ be a generating set for $K_\delta$. Then $G_\gamma := \bigcup_{\delta < \gamma}G_\delta$ is a generating set for $K_\gamma$. However $(\ast)$ holds for $K_\delta$ for every $\delta < \gamma$, by the inductive hypothesis. Accordingly, we can assume to have chosen
$G_\delta$ in such a way that $|G_\delta| \leq 2n+|\delta|$. Clearly, $2n+|\delta| \leq 2n+ |\gamma| = |\gamma|$. (Indeed $\gamma$, being a limit ordinal, is infinite.) So, $G_\gamma$ is the union  of $|\gamma|$ sets all of which have cardinality at most $|\gamma|$. Therefore $|G_\gamma| \leq |\gamma|$ (because $\gamma$ is infinite). However $|\gamma| = 2n+|\gamma|$. The proof is complete.
\end{proof}

By Proposition \ref{t:k=1} we immediately obtain the following:

\begin{corollary}\label{t:k=1 symplectic even}
Under the hypotheses of Proposition \ref{t:k=1}, if moreover $\cP$ admits an embedding of dimension $2n+d$, then $\gr(\cP) = \er(\cP) = 2n+d$.
\end{corollary}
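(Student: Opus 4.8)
The plan is to sandwich $\gr(\cP)$ between a lower and an upper bound that both equal $2n+d$. The upper bound is already in hand: since the hypotheses of Proposition \ref{t:k=1} are assumed, that proposition gives directly $\gr(\cP) \leq 2n+d$. So the only remaining work is to produce the matching lower bound, and this is exactly where the extra assumption — the existence of an embedding of dimension $2n+d$ — comes in.

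For the lower bound I would invoke the general principle established in the introductory subsection on generation and embeddings, namely that $\dim(\varepsilon) \leq \gr(\Gamma)$ for every projective embedding $\varepsilon$ of a point-line geometry $\Gamma$ (recall the argument there: if $S$ generates $\Gamma$ then $\varepsilon(P) = \varepsilon(\langle S\rangle_\Gamma) \subseteq \langle \varepsilon(S)\rangle$, forcing $|\varepsilon(S)|$, hence $|S|$, to be at least $\dim(\varepsilon)$). Applying this to the given embedding $\varepsilon$ with $\dim(\varepsilon) = 2n+d$ yields $2n+d \leq \gr(\cP)$. Combining with the upper bound from Proposition \ref{t:k=1} gives $\gr(\cP) = 2n+d$.

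It then remains to pin down the embedding rank. Here I would use the same chain of inequalities recorded in the introduction, $\dim(\varepsilon) \leq \er(\cP) \leq \gr(\cP)$: the existence of a $(2n+d)$-dimensional embedding forces $\er(\cP) \geq 2n+d$ (the embedding rank being the least upper bound of the dimensions of all embeddings), while the inequality $\er(\cP) \leq \gr(\cP) = 2n+d$ gives the reverse bound. Hence $\er(\cP) = 2n+d$ as well, and all three quantities coincide.

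Honestly there is no serious obstacle here; the statement is an immediate consequence of Proposition \ref{t:k=1} together with the elementary relations between $\gr$, $\er$ and $\dim(\varepsilon)$ set up at the start of the paper. The only point requiring a moment's care is to make sure the generic inequality $\dim(\varepsilon)\leq\gr(\cP)$ is being applied to the \emph{specific} embedding furnished by the hypothesis, so that its dimension $2n+d$ really lands as a lower bound for $\gr(\cP)$; once this is observed, the equalities drop out by squeezing.
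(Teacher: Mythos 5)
Your proof is correct and is exactly the argument the paper intends: the paper derives this corollary ``immediately'' from Proposition \ref{t:k=1} (giving $\gr(\cP)\leq 2n+d$) together with the chain $\dim(\varepsilon)\leq \er(\cP)\leq \gr(\cP)$ established in the introduction, applied to the hypothesized $(2n+d)$-dimensional embedding. Nothing is missing; your care in applying the generic inequality to the specific embedding is precisely the right observation.
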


\subsection{Generation of $k$-polar Grassmannians with $1 < k < n$}\label{k<n-introduction}

\subsubsection{The case $k=2$}\label{k=2}
Suppose that $n > 2$. The following lemma is well known (see e.g. \cite{BPa01}). We will use it very often in the sequel.

\begin{lemma}\label{lemma0}
Let $G \subseteq S_2(\cP)$. If a singular plane $X\in S_3(\cP)$ contains three elements of $\langle G \rangle_{\cP_2}$ which do not belong to the same pencil of lines of $X$, then $S_2(X) \subseteq\langle G \rangle_{\cP_2}$.
\end{lemma}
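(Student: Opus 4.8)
The plan is to reduce the claim to the elementary projective-geometry fact that three non-collinear points generate a projective plane, after identifying correctly the substructure of $\cP_2$ that lives inside the plane $X$. As recalled just before this subsection, $S_2(X)$ is a subspace of $\cP_2$, and the induced geometry $\cP_2(X)$ has as points the lines of $\cP$ contained in $X$ and as lines the pencils of lines of $X$; indeed a line $\Res(p)^\uparrow\cap\Res(Y)^\downarrow$ of $\cP_2$ is contained in $S_2(X)$ exactly when $Y=X$, in which case it is the set of all lines of $X$ through the point $p$. Hence $\cP_2(X)$ is the dual of the projective plane $X$, so it is itself a thick projective plane: its proper subspaces are the empty set, the singletons and its lines, and any three non-collinear points span the whole of $S_2(X)$.

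Next I would use that, because $S_2(X)$ is a subspace of $\cP_2$, spanning inside $\cP_2(X)$ agrees with spanning in $\cP_2$: for every $S\subseteq S_2(X)$ one has $\langle S\rangle_{\cP_2(X)}=\langle S\rangle_{\cP_2}$. On one hand $\langle S\rangle_{\cP_2}\subseteq S_2(X)$, since $S_2(X)$ is a subspace of $\cP_2$ containing $S$; on the other hand every subspace of $\cP_2(X)$ is automatically a subspace of $\cP_2$, because a line of $\cP_2$ meeting it in two points already lies in $S_2(X)$ and is therefore a line of $\cP_2(X)$.

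Finally I would apply this to the three given elements, say $\ell_1,\ell_2,\ell_3$. They lie in $\langle G\rangle_{\cP_2}\cap S_2(X)$, hence are three points of the projective plane $\cP_2(X)$, and the hypothesis that they do not belong to a common pencil of $X$ means precisely that they are not collinear in $\cP_2(X)$. Thus $\langle \ell_1,\ell_2,\ell_3\rangle_{\cP_2(X)}=S_2(X)$, which by the previous paragraph equals $\langle \ell_1,\ell_2,\ell_3\rangle_{\cP_2}$; since $\ell_1,\ell_2,\ell_3\in\langle G\rangle_{\cP_2}$ and the latter is a subspace, we conclude $S_2(X)=\langle \ell_1,\ell_2,\ell_3\rangle_{\cP_2}\subseteq\langle G\rangle_{\cP_2}$. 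The one point demanding care is this dictionary, turning ``belonging to the same pencil of $X$'' into ``collinear in $\cP_2(X)$'' and matching the two span operators; there is no genuine computational obstacle, the substance being only that a triangle spans the projective plane.
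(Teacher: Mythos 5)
Your proof is correct. The paper gives no proof of this lemma at all --- it is stated as ``well known'' with a reference to \cite{BPa01} --- and your argument supplies exactly the standard reasoning behind that citation: you correctly identify $\cP_2(X)$ with the dual of the projective plane $X$, you verify the one genuinely delicate point (a line of $\cP_2$, i.e.\ a pencil $\Res(p)^\uparrow\cap\Res(Y)^\downarrow$, having two members inside $X$ forces $Y=X$, which is why $S_2(X)$ is a subspace of $\cP_2$ and why the span operators $\langle\,\cdot\,\rangle_{\cP_2(X)}$ and $\langle\,\cdot\,\rangle_{\cP_2}$ agree on subsets of $S_2(X)$), and you then reduce to the elementary fact that three non-collinear points generate a projective plane.
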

 Given a (possibly singular) hyperplane $H$ of $\cP$, let $\ell_0$ be a line of $\cP$ and $p_0$ a point of $\cP\setminus H$ such that $\ell_0\not\subseteq H\cup p_0^\perp$ and $H\cap \ell_0\not= p_0^\perp\cap \ell_0.$ Recall that, according to conventions stated at the beginning of this section, $S_2(p_0)$ is the set of lines of $\cP$ through $p_0$ and $S_2(H)$ is the set of lines of $\cP$ which are contained in $H$. We put:
\begin{equation}\label{defgensetk=2}
S_2(H,p_0,\ell_0) ~ := ~  S_2(H)\cup S_2(p_0)\cup \{\ell_0\} ~ \subseteq ~  S_2(\cP).
\end{equation}

\begin{proposition}\label{lemma1}
The set $S_2(H, p_0, \ell_0)$ spans $\cP_2.$
\end{proposition}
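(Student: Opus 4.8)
The plan is to show that every line of $\cP$ lies in $\langle S_2(H,p_0,\ell_0)\rangle_{\cP_2}$ by a step-by-step enlargement of the set of lines known to be spanned. I would denote $G := S_2(H,p_0,\ell_0)$ and $\overline G := \langle G\rangle_{\cP_2}$, and the main engine throughout will be Lemma \ref{lemma0}: if a singular plane contains three elements of $\overline G$ not all in a single pencil, then every line of that plane is in $\overline G$.

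First I would handle the lines meeting $H$ in a point. The key geometric input is that the collinearity graph on $P\setminus H$ is connected (the fact cited just before Definition \ref{defect}), so I expect to \emph{transport} the information carried by $p_0$ and $\ell_0$ across the complement of $H$. Concretely, I would first argue that for the single point $p_0$ we can already recover many planes: take a plane $X$ through $\ell_0$; since $\ell_0\not\subseteq H$ and $\ell_0\not\subseteq p_0^\perp$, the plane $X$ contains the line $\ell_0\in\overline G$, a line $X\cap H\in S_2(H)\subseteq\overline G$ (if $X\cap H$ is a line), and, when $p_0\in X$, a line of $S_2(p_0)\subseteq\overline G$ through $p_0$. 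The hypotheses $\ell_0\not\subseteq H\cup p_0^\perp$ and $H\cap\ell_0\neq p_0^\perp\cap\ell_0$ are exactly what guarantees that three such lines can be found in general position inside a suitable plane, so Lemma \ref{lemma0} applies and gives $S_2(X)\subseteq\overline G$ for that plane. I would then bootstrap: once all lines of one plane on a point $p\notin H$ are in $\overline G$, the role played by $p_0$ (``all lines through $p_0$ are available'') can be passed to neighbouring points, and connectivity of $P\setminus H$ propagates this to every point of $P\setminus H$.

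The heart of the argument, and the step I expect to be the main obstacle, is the inductive/propagation mechanism for moving the ``$S_2(p)$ is spanned'' property from $p_0$ to an arbitrary point $p\in P\setminus H$. The difficulty is bookkeeping: to apply Lemma \ref{lemma0} at each new plane I must always exhibit three non-concurrent spanned lines, and near the boundary where lines become tangent to $H$ or to $p^\perp$ the generic position can fail, so some cases (tangent lines, lines meeting $H$ in a deep point of a singular hyperplane, etc.) need separate treatment. I would organize this as a transfinite or ordinary induction along a path in the collinearity graph of $P\setminus H$, at each step choosing an auxiliary plane in which two already-spanned lines (one through the previously-handled point, one inside $H$) together with a transversal force a third, thereby filling in $S_2(p)$ for the next point $p$.

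Finally I would collect the lines entirely inside $H$, which are in $G$ by definition, together with all lines meeting $P\setminus H$, which the propagation step has placed in $\overline G$; since every line of $\cP$ either lies in $H$ or meets $P\setminus H$, this exhausts $S_2(\cP)$ and yields $\overline G = S_2(\cP)$, i.e. $G$ spans $\cP_2$. Throughout I would lean on Lemma \ref{lemma0} as the only plane-filling tool and on the connectivity of $P\setminus H$ as the only global ingredient, keeping the case analysis (which tangency configurations can occur for the specific chosen $\ell_0$ and $p_0$) as the part requiring the most care.
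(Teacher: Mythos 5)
There is a genuine gap, and it sits exactly where you predicted it would. First, a local flaw: your seeding step is incoherent as written. You propose to work in a plane $X$ through $\ell_0$ and to use, ``when $p_0\in X$'', a line of $S_2(p_0)$ inside $X$ --- but no singular plane contains both $p_0$ and $\ell_0$, since that would force $\ell_0\subseteq p_0^\perp$, contrary to the standing hypothesis $\ell_0\not\subseteq H\cup p_0^\perp$. The paper's actual opening moves are different: first (case (a)) every line $m\subseteq p_0^\perp$ is captured inside the plane $\langle p_0,m\rangle_\cP$, which contains two lines of the pencil at $p_0$ together with its trace on $H$; only then (case (b)) are the planes $X\supseteq\ell_0$ filled, via the triple $\ell_0$, $X\cap H$, $p_0^\perp\cap X$, the third line being available by case (a) and the non-concurrency of the three being precisely the hypothesis $H\cap\ell_0\neq p_0^\perp\cap\ell_0$. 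This is repairable, but your sketch does not contain it.

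The serious gap is the propagation mechanism itself. In each auxiliary plane your step names only two spanned lines (one through the previously handled point, one inside $H$); these meet in a point $z$, and their span in $\cP_2$ is merely the pencil of that plane at $z$, so Lemma \ref{lemma0} cannot fire without a third spanned line missing $z$ --- and the provenance of your ``transversal'' is never explained. This cannot be fixed by connectivity bookkeeping alone, because the obstruction is global: $S_2(H)\cup S_2(p_0)$ by itself spans a \emph{proper} subspace of $\cP_2$ (in the embeddable cases its Pl\"ucker image lies in a subspace of dimension ${{N-1}\choose 2}+(N-2)={N\choose 2}-1$, a hyperplane; this is also what the arithmetic behind Theorem \ref{gr herm grass non-deg} and Lemma \ref{l-bound} requires). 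Hence any point-to-point propagation whose individual steps use only lines of $H$ and pencils at already-handled points would prove a false statement; every plane you fill must ultimately be fed, directly or indirectly, by $\ell_0$. Manufacturing that third line is the entire content of the paper's proof: a case division on the position of the target line relative to $\ell_0$ (coplanar with $\ell_0$; meeting $\ell_0$; meeting only $\ell_0^\perp$; skew to both), double-perp and hyperbolic-line arguments in point residues to exclude degenerate tangency configurations (cases (c.2) and (c.4)), and a re-seeding device (case (e.2)) in which new transversals $\ell'_0$, already shown to lie in the span, replace $\ell_0$ and the whole machine is re-run on $S_2(H,p_0,\ell'_0)$. Note also that the paper never invokes the connectivity of $P\setminus H$ in this proof (that fact is used elsewhere, to show hyperplanes are maximal subspaces). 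In short: you have correctly identified the plane-filling lemma as the engine and correctly located the hard step, but what you offer in its place is a description of the difficulty, not an argument that overcomes it.
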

\begin{proof}
Put $G := S_2(H,p_0,\ell_0)$ for short. Let $m\in \cP_2$. We will show that $m\in \langle G \rangle_{\cP_2}$. The proof is divided into several steps according to the different mutual positions between $m$ and $p_0$ or $m$ and $\ell_0.$ If $p_0\in m$ or $m\subseteq H$ there is nothing to prove. Suppose $p_0\not\in m$ and $m\not\subseteq H.$ There are five main cases to consider.

\medskip

\noindent
(a)  {\it $m\perp p_0$, namely $m$ is coplanar with $p_0$ in $\cP$.}  The plane $X:=\langle p_0, m\rangle_{\cP}$ of $\cP$ spanned by $p_0$ and $m$ contains at least two lines through $p_0$, which are in $S_2(p_0)\subseteq G$, and the line $X\cap H\in S_2(H)\subseteq G$, which does not contain $p_0$. Hence $m\in \langle G\rangle_{\cP_2}$ by Lemma~\ref{lemma0}.

\medskip

\noindent
(b)  {\it $m\not\perp p_0$, $m\cap \ell_0\not= \emptyset$ and $m$ is coplanar with $\ell_0$ in $\cP$.} Let $X:=\langle \ell_0, m\rangle_{\cP}$ be the plane of $\cP$ spanned by $\ell_0$ and $m$. Then $p_0^\perp\cap X$ is a line of $\cP$ coplanar with $p_0$. It belongs to $\langle G\rangle_{\cP_2}$ by Case (a). Since $H\cap \ell_0\not= p_0^\perp\cap \ell_0$ by assumption, the three lines $\ell_0$, $X\cap H$ and $p_0^\perp\cap X$ are non-concurrent lines of $X$. The first two of them belong to $G$ while $p_0^\perp\cap X \in \langle G\rangle_{\cP_2}$ by Case (a). Hence all lines of $X$ belong to $\langle G\rangle_{\cP_2}$ by Lemma~\ref{lemma0}. In particular, $m\in\langle G\rangle_{\cP_2}$.

\medskip

\noindent
(c) {\it $m\not\perp p_0$, $m\cap \ell_0\not=\emptyset$ but $m$ is not coplanar with $\ell_0$.} Put $p:=m\cap \ell_0.$ There are four subcases to consider.

\medskip

\noindent
(c.1) {\it $p\in H$ and $X\cap H\neq \ell_0^\perp \cap X$ for at least one singular plane $X$ through $m$.} Then the line $\ell_0^\perp \cap X$ belongs to $\langle G\rangle_{\cP_2}$ by Case (b). Moreover $X\cap H\in S_2(H)$, the lines $\ell_0^\perp\cap X$ and $X\cap H$ are collinear as points of $\cP_2$ and $m\in\langle X\cap H, \ell_0^\perp \cap X\rangle_{\cP_2}$. Therefore $m \in\langle G \rangle_{\cP_2}$.

\medskip

\noindent
(c.2) {\it $p\in H$ and $X\cap H=\ell_0^\perp \cap X$ for every singular plane $X$ through $m$.} In $\Res(p)$, the set $H_p$ of all lines of $H$ through $p$ reads as a hyperplane and $\{m,\ell_0\}^{\perp_p}\subseteq H_p$, where $\perp_p$ stands for the collinearity relation in the polar space $\Res(p)$. Let
$m'\neq m$ be a line through $p$ coplanar with $m$. Suppose $X\cap H={m'}\,^{\perp}\cap H$ for any singular plane $X$ of $\cP$ through $m$;  then in $\Res(p)$ we have $\{m,\ell_0\}^{\perp_p}=\ell_0^{\perp_p}\cap H_p=m^{\perp_p}\cap H_p=m'\,^{\perp_p}\cap H_p$. So $m'\in\{m,\ell_0\}^{\perp_p\perp_p}$. This is a contradiction. Indeed $\{m,\ell_0\}^{\perp\perp}$ is a hyperbolic line of $\Res(p)$ because $m\not\perp_p\ell_0$, while $m' \perp_p m \neq m'$.

So, for each line $m'\neq m$ through $p$ coplanar with $\ell_0$ there exists at least a singular plane $X$ through $m$ such that $X\cap H\neq m'^\perp \cap \alpha.$ Consequently $m'\in \langle G \rangle_{\cP_2}$ by Case (c.1). Consider now the singular plane $X':=\langle m', m'^\perp \cap \alpha \rangle_{\cP}$ of $\cP$ spanned by $m'$ and $m'^\perp \cap \alpha.$ The plane $X'$ contains $m'\in \langle G\rangle_{\cP_2}$; moreover $X'\cap H\in S_2(H)$ and $p_0^\perp \cap X' \in \langle G\rangle_{\cP_2}$ by Case (a). Therefore all lines of $X'$ belong to $\langle G\rangle_{\cP_2}$ by Lemma~\ref{lemma0}. In particular, $m'\,^\perp \cap X \in \langle G\rangle_{\cP_2}.$ However, $\alpha\cap H$ and $m'\,^\perp\cap X$ are collinear as points of $\cP_2$ and $m$ belongs to the line of $\cP_2$ spanned by them. It follows that $m\in \langle G\rangle_{\cP_2}$.

\medskip

\noindent
(c.3) {\it $p\not\in H$ and $p_0^\perp\cap X\cap H\not= m^\perp \cap X\cap H$ for some singular plane $X$ of $\cP$ through $\ell_0$.} As $p\in m\cap X$,  both intersections $p_0^{\perp}\cap\pi\cap H$ and $m^{\perp}\cap\pi\cap H$ are single points. Consider the singular plane $Y :=\langle m,m^\perp\cap X\rangle_{\cP}$. Then $Y$ contains the following three non-concurrent lines: $m^\perp\cap X$, which is in $\langle G\rangle_{\cP_2} $ by Case (b), $Y\cap H\in G$ and $Y\cap p_0^\perp$, which is in $\langle G \rangle_{\cP_2} $ by Case (a). By Lemma~\ref{lemma0}, all the lines of $Y$ belong to $\langle G\rangle_{\cP_2}$. In particular, $m\in \langle G\rangle_{\cP_2}.$

\medskip

\noindent
(c.4) {\it $p\not\in H$ and $p_0^\perp\cap X\cap H= m^\perp \cap X\cap H$ for every singular plane $X$ of $\cP$ through $\ell_0$.} Let $M$ be the set of lines $n$ through $p$ not coplanar with $\ell_0$ but coplanar with $m$ (possibly $n=m$) and $S_3(\ell_0)$ the set of singular planes through $\ell_0$. Suppose firstly that $p_0^\perp\cap X\cap H =  n^\perp \cap X\cap H$ for every $n\in M$ and every $X\in S_3(\ell_0)$. The intersection
$q_X := p_0^\perp\cap X\cap H =  n^\perp \cap X\cap H$ is a point. This point does not depend on the choice of $n\in M$ but it depends on the choice of $X\in S_3(\ell_0)$, namely elements $X, Y\in S_3(\ell_0)$ exist such that $q_X \neq q_Y$. Indeed if otherwise
$q_X =\ell_0\cap H$, which contradicts the hypothesis that $\ell_0\cap p_0^{\perp}\neq\ell_0\cap H$.

We have $\ell_0\subseteq q_X^{\perp}$, since $q_X\in X$ and $X \supset\ell_0$. Furthermore $n\subset q_X^{\perp}$  for every $n\in M$. Consequently, $p \perp q_X$. In the polar space $\Res(p)$ the lines through $p$ coplanar with $\ell_0$ form a singular hyperplane $\ell_0^{\perp_p}$ while $M$ reads as $m^{\perp_p}\setminus \ell_0^{\perp_p}$. Let now $\ell_X$ be the line of $\cP$ joining $p$ with $q_X$. As $p\in n$ and $q_X\perp n$ for every $n\in M$, we have $M\subseteq \ell_X^{\perp_p}$. However, $M$ spans the hyperplane $m^{\perp_p}$ of $\Res(p)$. It follows that $m^{\perp_p} = \ell_X^{\perp_p}$, whence $m = \ell_X$. Therefore $q_X\in m$. Accordingly, $q_X=m\cap H$ as $q_X\in H$. So, $q_X$ does not depend on the choice of $X\in S_3(\ell_0)$. But this contradicts what we have proved above.

Therefore $p_0^\perp\cap X\cap H \neq  n^\perp \cap X\cap H$ for some $n\in M$ and $X\in S_3(\ell_0)$. Let $Y$ be the plane of $\cP$ spanned by $m$ and $n$. We have $n\in \langle G\rangle_{\cP_2}$ by Case (c.3) and $\ell_0^\perp \cap Y \in \langle G\rangle_{\cP_2}$ by Case (b). Moreover $n$ and  $\ell_0^\perp \cap Y$ are collinear as points of $\cP_2$ and $m$ belongs to the line of $\cP_2$ spanned by them. Hence $m\in \langle G\rangle_{\cP_2}$.

\medskip

\noindent
(d) {\it $m\not\perp p_0$, $m\cap \ell_0=\emptyset$ and $m\cap \ell_0^\perp\not=\emptyset$.} We have two subcases.

\medskip

\noindent
(d.1)  {\it $m\subseteq \ell_0^\perp$}. Choose a point $p\in\ell_0$ neither collinear with $p_0$ nor belonging to $H$ and consider the singular plane $X =\langle p,m\rangle_{\cP}$. All  lines of $X$  through $p$ belong to $\langle G \rangle_{\cP_2}$ by Case (c). Moreover $p\not\in X\cap H$.  Hence all lines of $X$ belong to $\langle G\rangle_{\cP_2}$ by Lemma~\ref{lemma0}. In particular, $m\in\langle G\rangle_{\cP_2}$.

\medskip

\noindent
(d.2) {\it $|m\cap \ell_0^\perp|=1$.} Write $p_1:=m^\perp\cap \ell_0$ and consider the singular plane $X:=\langle m,p_1\rangle_{\cP}$. All lines of $X$ through $p_1$ belong to $\langle G\rangle_{\cP_2}$ by Case (c). Suppose firstly that $p_1\in H$. Then $p_1\not \in p_0^\perp \cap X$, since $p_0^\perp \cap \ell_0\not\in H$. So $X$ contains three non-concurrent lines of $\langle G\rangle_{\cP_2}$: two of them are contributed by the pencil of lines through $p_1$; the third one is $p_0^\perp \cap X$, which belongs to $\langle G\rangle_{\cP_2}$ by Case (a). Hence all lines of $X$ belong to $\langle G\rangle_{\cP_2}$ by Lemma~\ref{lemma0}. In particular, $m \in\langle G\rangle_{\cP_2}.$

On the other hand, let $p_1\not\in H$. Then $X$ contains three non-concurrent lines of $\langle G\rangle_{\cP_2}$, namely two lines from the pencil centered at $p_1$ and the line $X\cap H$. Hence $m \in\langle G\rangle_{\cP_2}$, as above.

\medskip

\noindent
(e) {\it $m\cap \ell_0= m\cap \ell_0^\perp =\emptyset$ and $m\not\perp p_0$.} We must consider two subcases.

\medskip

\noindent
(e.1) {\it The point $p_X:= \ell_0^\perp\cap X$ is not in $H\cap p_0^\perp$ for some singular plane $X$ through $m$.}  All lines of $X$ through $p_X$ are in $\langle G\rangle_{\cP_2}$ by Case (d). Moreover $X\cap p_0^\perp\in \langle G\rangle_{\cP_2}$ by Case (a) and $X\cap H\in G$. By Lemma~\ref{lemma0}, all lines of $X$ belong to $\langle G\rangle_{\cP_2}$. In particular, $m\in \langle G\rangle_{\cP_2}.$

\medskip

\noindent
(e.2) {\it The point $p_X:= \ell_0^\perp\cap X$ belongs to $H\cap p_0^\perp$ for every singular plane $X$ through $m$.}  Let $q_0:=\ell_0\cap p_0^{\perp}$ and $p_H:=\ell_0\cap H$. By hypothesis,  $q_0\not\in H$. Let $\ell_0'$ the line of $\cP$ joining $q_0$ and  $q_0^{\perp}\cap m$. If $\ell_0'\not\subseteq p_0^{\perp}$ then $q_0^{\perp}\cap m\not\in p_0^{\perp}$. Put $G' := S_2(H, p_0, \ell'_0)$. Then $m \in \langle G'\rangle_{\cP_2}$ by the case previously examined and $G' \subseteq \langle G\rangle_{\cP_2}$ as $\ell_0'\in\langle G \rangle_{\cP_2}$. Therefore $m\in \langle G\rangle_{\cP_2}$.

On the other hand, let $q_0^{\perp}\cap m\in p_0^{\perp}$ and let $\ell_0'$ be the line joining $p_H$ with $p_H^{\perp}\cap m$. If $\ell_0'\not\subseteq H$, that is $p_H^{\perp}\cap m\not\in H$, then we can argue as in the previous paragraph, thus obtaining that $m\in \langle G\rangle_{\cP_2}$.

So, suppose that $p_{H}^{\perp}\cap m\in H$. The points $p'_H:=p_H^{\perp}\cap m$ and $q_0':=q_0^{\perp}\cap m$ are distinct for, otherwise, $\ell_0^{\perp}\cap m\neq\emptyset$, a contradiction. Let $\ell_H$ be the line through $p_H$ and $p'_H$ and $m_0$ the line through $q_0$ and $q_0'$. We have $\ell_H\subseteq H$ while $m_0\subseteq p_0^{\perp}$. Let $n_0\subseteq p_0^{\perp}$ and  $n_H\subseteq H$ be the lines of $\cP$ joining the point $\ell_H\cap p_0^{\perp}$ with a point of $m_0$ and the point $m_0\cap H$ with a point of $\ell_H$ respectively. There are two possibilities.

\medskip

\noindent
(e.2.1) $n_0\neq n_H$. Let $p$ be a point of $\ell_0$ different from $p_H$ and $q_0$ and let $\ell'_0$ be the line joining $p$ with a point of $n_H$ different from $n_H\cap\ell_H$ and $n_H\cap m_0$. As $n_0\neq n_H$, the point $\ell_0'\cap n_H=\ell_0'\cap H$ is not in $p_0^{\perp}$. So we can define $G' :=  S_2(H, p_0, \ell'_0)$ and we get $G' \subseteq \langle G\rangle_{\cP_2}$.

Suppose that $\ell_0'\cap n_H\in p_H'^{\perp}$ and $\ell'_0\cap p_0^{\perp}\in q_0'^{\perp}$. Then $n_H\perp\ell_H$ and consequently $m_0\perp\ell_0$. Therefore $q_0'\perp\ell_0$, against the assumption $m^{\perp}\cap\ell_0=\emptyset$. So, either $\ell_0'\cap n_H\not\in p_H'^{\perp}$ or $\ell'_0\cap p_0^{\perp}\not\in q_0'^{\perp}$. Hence $m \in \langle G'\rangle_{\cP_2}$ in view of the previous cases. Then $m \in \langle G\rangle_{\cP_2}$ since $G'\subseteq \langle G\rangle_{\cP_2}$.

\medskip

\noindent
(e.2.2) $n_H=n_0$. The line $n := n_H = n_0$ is contained in $p_0^{\perp}\cap H$. Let $X$ be the singular plane $\langle n,p_0\rangle_{\cP}$ and $a:=\ell_0^{\perp}\cap X$. The point $a$ is on the line $X\cap q_0^{\perp}$ joining $p_0$ with $n\cap m_0$.  Furthermore, $a\neq p_0$ since
$\ell_0\not\perp p_0$. Consider the singular plane $Y := \langle a,\ell_0\rangle_{\cP}$, the point  $b:=m^{\perp}\cap Y$ and the singular plane $Z =\langle m,b\rangle_{\cP}$. We have $b\perp \ell_0$. So, every line of $Z$ through $b$ belongs to $\langle G\rangle_{\cP_2}$ by one of the cases from (a) to (d).
If $b\not\in p_0^{\perp}\cap H$, then $Z$ contains a triangle consisting of a line through $b$, a line in $p_0^{\perp}$ and a line in
$H$. Hence all lines of $Z$ are in $\langle G\rangle_{\cP_2}$ by Lemma~\ref{lemma0}. In particular, $m\in\langle G\rangle_{\cP_2}$.

On the other hand, if $b\in p_0^{\perp}\cap H$ then $b$ is the unique point of $H$ on the line $m'$ through $q_0$ and $a$. Since $b\in p_H^{\perp}\cap p_H'^{\perp}$, we have $b\perp\ell_H$. Thus the point $p_H'':=n\cap\ell_H$ is collinear with $b$ as well as with $a$. So $p_H''$ is collinear with $q_0$. Since $p_H''$ is also collinear with $p_H$, then it is collinear with $\ell_0$. Thus $\ell_H\perp\ell_0$ and, consequently, $p_H'\perp\ell_0$. This crashes against the assumption $m^{\perp}\cap\ell_0=\emptyset$. This contradiction concludes the proof.
\end{proof}

\subsubsection{The general case $2 \leq k<n$}\label{k>2}

Let $2\leq k<n$ and let $H$ be a hyperplane of $\cP$. Take a point $p_0$ of $\cP\setminus H$. Then $H\cap p_0^{\perp}$ is a non-degenerate subspace of $\cP$ of rank $n-1$, isomorphic to $\Res(p_0)$. When $k > 2$, let $G_{H,p_0} \subseteq S_{k-2}(H\cap p_0^{\perp})$ be a generating set for the $(k-2)$-Grassmannian $(H\cap p_0^{\perp})_{k-2}$ of $H\cap p_0^{\perp}$. When $k =2$ we put $G_{H,p_0} = \{\emptyset\}$. For any $Z\in G_{H,p_0}$, choose
a $k$-space $\widehat{Z}\in S_k(\cP)$ containing $Z$ and such that $\widehat{Z}\not \subseteq H\cup p_0^\perp$ and $\widehat{Z}\cap H \neq \widehat{Z}\cap p_0^\perp$ (equivalently $\widehat{Z}\cap H\cap p_0^{\perp}= Z$). Put $\widehat{G}_{H,p_0} :=  \{\widehat{Z} : Z\in G_{H,p_0}\}$. In particular, when $k = 2$ the set $\widehat{G}_{H,p_0}$ consists of a single line $\ell_0$ as in Subsection \ref{k=2}. Define
\begin{equation}\label{defgensetk>2}
S_k(H, p_0, \widehat{G}_{H,p_0}) ~ := ~  S_k(H)\cup S_k(p_0)\cup\widehat{G}_{H,p_0} ~ \subseteq ~ S_k(\cP).
\end{equation}
Note that $S_k(H)$ is never empty, since all hyperplanes of $\cP$ have rank either $n$ or $n-1$. Note also that the set $S_2(H,p_0,\ell_0)$ defined in \eqref{defgensetk=2} for $k = 2$ is just a special case of the above.

\begin{lemma}\label{l1}
Suppose that $n \geq 4$. Let $x,y$ be two collinear points of $p_0^{\perp}\cap H$. Then there exists at least one 4-space $X\in S_4(\cP)$ such that
$p_0^{\perp}\cap X\cap H=\langle x,y\rangle_{\cP}$.
\end{lemma}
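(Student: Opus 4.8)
The plan is to produce a singular $4$-space $X$ containing $\ell := \langle x,y\rangle_{\cP}$ and meeting $M := p_0^{\perp}\cap H$ in exactly $\ell$. Since $\{x,y\}\subseteq p_0^\perp\cap H$ gives $\ell\subseteq M$, the desired equality $p_0^{\perp}\cap X\cap H=\ell$ reads simply as $X\cap M=\ell$. The observation that guides the whole construction is that $X$ must \emph{not} be contained in $p_0^{\perp}$: if $X\subseteq p_0^{\perp}$ then $X\cap M=X\cap H$ is a hyperplane of the solid $X$, hence a plane, far too big. So I aim for a solid $X$ meeting $p_0^{\perp}$ in a plane $Y$ and meeting $H$ within that plane only in $\ell$. (Note $p_0\notin\ell$, as $p_0\notin H\supseteq\ell$, so $\pi_0:=\langle\ell,p_0\rangle_{\cP}$ is a genuine singular plane.)

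The first step is to build an auxiliary singular plane $Y$ with $\ell\subseteq Y\subseteq p_0^{\perp}$, $Y\cap H=\ell$ and, crucially, $p_0\notin Y$. Such a $Y$ is $Y:=\langle\ell,z\rangle_{\cP}$ for any point $z\in \pi_0^{\perp}\setminus(H\cup\pi_0)$. I would prove such a $z$ exists by contradiction: if $\pi_0^{\perp}\subseteq H\cup\pi_0$, pick $b\in\pi_0\setminus\ell$ (so $b\notin H$, since $\pi_0\cap H=\ell$) and a point $c\in\pi_0^{\perp}\setminus\pi_0$ (available since $\Res(\pi_0)$ has rank $n-3\geq 1$); then $b\perp c$, and the line $m:=\langle b,c\rangle_{\cP}\subseteq\pi_0^{\perp}$ meets the plane $\pi_0$ only in $b$, so $m\setminus\{b\}\subseteq\pi_0^{\perp}\setminus\pi_0\subseteq H$. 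As $b\notin H$ and lines of $\cP$ are not thin, $H$ would meet $m$ in all but one of its $\geq 3$ points, which is impossible for a subspace. The resulting $Y=\langle\ell,z\rangle_{\cP}$ then satisfies $Y\subseteq p_0^{\perp}$ (as $z,\ell\perp p_0$), $p_0\notin Y$ (as $z\notin\pi_0$), and $Y\cap H=\ell$ (since $z\notin H$ forces the hyperplane $Y\cap H$ of the projective plane $Y$ to be exactly $\ell$).

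The second step extends $Y$ to the required solid. Put $X_0:=\langle Y,p_0\rangle_{\cP}$; this is singular of rank $4$ because $p_0\perp Y$ and $p_0\notin Y$. Since $Y\subsetneq X_0$ and $\cP$ is non-degenerate, the correspondence $W\mapsto W^{\perp}$ is strictly inclusion-reversing on singular subspaces, so $X_0^{\perp}\subsetneq Y^{\perp}$; I choose $t\in Y^{\perp}\setminus X_0^{\perp}$. Then $t\perp Y$ but $t\not\perp X_0$, which, as $X_0=\langle Y,p_0\rangle_{\cP}$, forces $t\not\perp p_0$; moreover $t\notin Y$, for otherwise $t\in X_0\subseteq X_0^{\perp}$. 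Setting $X:=\langle Y,t\rangle_{\cP}$ then gives a singular $4$-space containing $\ell$.

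Finally I would verify $X\cap M=\ell$. Since $t\not\perp p_0$ we have $X\not\subseteq p_0^{\perp}$, so $X\cap p_0^{\perp}$ is a proper subspace of the solid $X$ meeting every one of its lines, i.e. a plane; it contains the plane $Y$ (as $Y\subseteq p_0^{\perp}$), hence equals $Y$. Therefore $X\cap M=X\cap p_0^{\perp}\cap H=Y\cap H=\ell$, as required. The step I expect to be the main obstacle is the construction of $Y$ in the first step: securing a plane through $\ell$ inside $p_0^{\perp}$ that meets $H$ only in $\ell$ yet avoids $p_0$. This is precisely the point where one must escape $p_0^{\perp}$ (the naive choice $X\supseteq\pi_0\ni p_0$ fails, since $t\perp\pi_0$ would force $t\perp p_0$), and it is where the hypotheses that $\cP$ has no thin lines and that $n\geq 4$ (so that $\pi_0^{\perp}$ properly contains $\pi_0$ and $X_0$ genuinely has rank $4$) both enter.
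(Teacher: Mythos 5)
Your proof is correct, but it takes a genuinely different route from the paper's. The paper settles the lemma in a few lines by passing to the upper residue $\Res(\ell)^\uparrow$ of $\ell=\langle x,y\rangle_{\cP}$, a non-degenerate polar space of rank $n-2\geq 2$: the planes through $\ell$ contained in $H\cap p_0^{\perp}$ form the intersection of the two distinct hyperplanes of $\Res(\ell)^\uparrow$ induced by $H$ and by $p_0^{\perp}$; since hyperplanes of a polar space are maximal subspaces, that intersection is proper and not a hyperplane, so some line of $\Res(\ell)^\uparrow$ --- i.e.\ some $4$-space $X\supseteq\ell$ --- misses it, and any subspace of $X$ properly containing $\ell$ and lying in $H\cap p_0^{\perp}$ would yield a plane in the avoided set, whence $p_0^{\perp}\cap X\cap H=\ell$. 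You instead build the witnessing solid by hand: first the plane $Y$ with $\ell\subseteq Y\subseteq p_0^{\perp}$, $Y\cap H=\ell$, $p_0\notin Y$ (your no-thin-lines escape from $H\cup\pi_0$ inside $\pi_0^{\perp}$ is sound, and it is exactly where $n\geq 4$ enters, via $c\in\pi_0^{\perp}\setminus\pi_0$), then the extension by $t\in Y^{\perp}\setminus X_0^{\perp}$; all subsequent verifications are correct. The one fact you cite without proof --- strict inclusion-reversal of $W\mapsto W^{\perp}$ on singular subspaces --- is a standard consequence of non-degeneracy (equivalently $W^{\perp\perp}=W$; concretely it follows from non-degeneracy of $\Res(Y)^\uparrow$, which has rank $n-3\geq 1$, and for $n=4$ even more simply, since $t\perp X_0$ would create a singular subspace of rank $5$). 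As for what each approach buys: the paper's argument is shorter but leans on the hyperplane machinery of Section 2 (hyperplanes induce hyperplanes on residues and are maximal, via connectedness of hyperplane complements); yours is longer but elementary and, notably, never uses that $H$ meets every line of $\cP$ --- only that $H$ is a subspace containing $x,y$ with $p_0\notin H$ --- so it actually proves a slightly more general statement. In the residue picture, your $X$ is a line through the point $Y$ of the hyperplane $S_3(p_0^{\perp})\cap S_3(\ell)$ avoiding the intersection of the two hyperplanes, which is all the lemma requires.
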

\begin{proof}
Put $\ell :=\langle x,y\rangle_{\cP}$. We must prove that there exists a $4$-space $X$ of $S_4(\cP)$ such that $X\supset \ell$ and
$p_0^{\perp}\cap X\cap H=\ell$. Equivalently, a line exists in the upper residue $\Res(\ell)^\uparrow$ of $\ell$ which is disjoint from the subspace $S_3(H\cap p_0^{\perp})\cap S_3(\ell)$. The latter is the intersection of two distinct hyperplanes of $\Res(\ell)^\uparrow$, namely $S_3(H)\cap S_3(\ell)$ and $S_3(p_0^\perp)\cap S_3(\ell)$. So, $S_3(H\cap p_0^{\perp})\cap S_3(\ell)$ is a proper subspace but not a hyperplane of $\Res(\ell)^\uparrow$. Hence $\Res(\ell)^\uparrow$ indeed contains a line which misses $S_3(H\cap p_0^{\perp})\cap S_3(\ell)$.
\end{proof}

\begin{lemma}\label{lemma1-gen-l}
With $k \geq 3$, let $Z\in S_{k+1}(\cP)$ and $X_1, X_2\in S_{k-2}(\cP)$ be such that $Z\supset X_1\cup X_2$ (hence $\langle X_1\cup X_2\rangle_{\cP}$ is a singular subspace), $\rank(X_1\cap X_2)=k-3$ and $\rank(\langle X_1\cup X_2\rangle_{\cP})=k-1$. Suppose that all $k$-subspaces of $Z$ through $X_1$ or $X_2$ belong to $\langle S_k(H, p_0, \widehat{G}_{H,p_0})\rangle_{\cP_k}$. Then all $k$-subspaces of $Z$ through $X_1\cap X_2$ belong to $\langle S_k(H, p_0, \widehat{G}_{H,p_0})\rangle_{\cP_k}$.
\end{lemma}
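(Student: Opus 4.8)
The plan is to pass to the residue of $X_1\cap X_2$ inside $Z$ and thereby reduce the whole assertion to an elementary configuration in $\PG(3,\FF)$. Write $G:=S_k(H,p_0,\widehat{G}_{H,p_0})$ and $C:=X_1\cap X_2$, so that $C$ has rank $k-3$ while $\langle X_1\cup X_2\rangle_\cP$ has rank $k-1$ by hypothesis. Since $Z$ is a singular subspace of rank $k+1$, it is a projective space $\PG(k,\FF)$ whose subspaces belonging to $\cP$ are exactly its projective subspaces. Quotienting by $C$ (equivalently, working in $\Res(C)^\uparrow$ restricted to the subspaces contained in $Z$) yields a $3$-dimensional projective space $\overline{Z}:=Z/C\cong\PG(3,\FF)$. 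A dimension count shows that the images $a:=X_1/C$ and $b:=X_2/C$ are two distinct points of $\overline{Z}$, that $\langle X_1\cup X_2\rangle_\cP/C$ is the line $\langle a,b\rangle$, and, crucially, that the $k$-subspaces of $Z$ containing $C$ correspond bijectively to the planes of $\overline{Z}$, and the $(k-1)$-subspaces of $Z$ containing $C$ to the lines of $\overline{Z}$.

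Under this correspondence the hypothesis reads: every plane of $\overline{Z}$ through $a$ or through $b$ is the image of a $k$-subspace lying in $\langle G\rangle_{\cP_k}$; call such a plane \emph{marked}. The key translation of lines of $\cP_k$ is this: for a line $\ell$ of $\overline{Z}$, the pencil of all planes of $\overline{Z}$ through $\ell$ is the image of the $\cP_k$-line $\Res(Y_0)^\uparrow\cap\Res(Z)^\downarrow$, where $Y_0$ is the $(k-1)$-subspace with $Y_0/C=\ell$ (here $Y_0\in S_{k-1}(\cP)$ and $Z\in S_{k+1}(\cP)$, so this is a genuine line of $\cP_k$). Since $\cP$ has no thin lines, this line has at least two points; as $\langle G\rangle_{\cP_k}$ is a subspace, if two of the planes through $\ell$ are marked, then every plane through $\ell$ is marked. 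The goal thus becomes the purely combinatorial claim that, starting from all planes through $a$ and all planes through $b$, this pencil rule marks every plane of $\PG(3,\FF)$.

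To prove the combinatorial claim, let $\pi$ be any plane; if $a\in\pi$ or $b\in\pi$ it is already marked. Otherwise the line $\langle a,b\rangle$ meets $\pi$ in a single point $p$, and I would choose a line $\ell\subset\pi$ with $p\notin\ell$. Then $\ell$ is skew to $\langle a,b\rangle$, so $\langle\ell,a\rangle$ and $\langle\ell,b\rangle$ are two \emph{distinct} planes, both marked (the first passes through $a$, the second through $b$) and both belonging to the pencil through $\ell$, which also contains $\pi$; the pencil rule then marks $\pi$. Translating back, every $k$-subspace of $Z$ through $C=X_1\cap X_2$ lies in $\langle G\rangle_{\cP_k}$, which is exactly the assertion. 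The only points requiring care are the dimension bookkeeping in $\overline{Z}$ (to confirm $\overline{Z}\cong\PG(3,\FF)$ and the plane/line correspondences) and the existence of a line of $\pi$ avoiding $p$, which needs only $|\FF|\ge 2$; everything else is routine. I expect the dimension count setting up the quotient to be the main thing to get exactly right, the $\PG(3,\FF)$ argument itself being short.
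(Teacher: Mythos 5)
Your proof is correct and is essentially the paper's own argument in different clothing: your line $\ell\subset\pi$ avoiding $p=\pi\cap\langle a,b\rangle$ lifts exactly to the paper's choice of a line of $Y$ disjoint from $U=\langle X_1\cup X_2\rangle_{\cP}$, your two marked planes $\langle \ell,a\rangle$ and $\langle \ell,b\rangle$ are the paper's $Y_1=\langle X_1,\ell\rangle_{\cP}$ and $Y_2=\langle X_2,\ell\rangle_{\cP}$, and your pencil rule is the paper's use of the line of $\cP_k$ through $Y_1$ and $Y_2$ (determined by $Y_1\cap Y_2$ and $Z$) on which $Y$ lies. The only presentational difference is that the paper argues directly in $\cP$, without passing to the quotient $Z/(X_1\cap X_2)\cong\PG(3,\FF)$, and splits off the trivial case $Y\supseteq U$ instead of your case $a\in\pi$ or $b\in\pi$.
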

\begin{proof}
Put $G := S_k(H, p_0, \widehat{G}_{H,p_0})$, for short. Also $W:=X_1\cap X_2$ and $U:=\langle X_1,X_2\rangle_{\cP}.$ Let $Y$ be a $k$-space of $Z$ through $W$.
If $Y\supset U$ then $Y\in  \langle G\rangle_{\cP_k}$ by our hypotheses, because $X_1\subset U \subset Y$.

If $Y\not\supset U$ then $\rank(Y\cap U)=k-2$. Hence there exists a line $\ell$ in $Y$ disjoint from $U.$ Put $U':= \langle W, \ell\rangle_{\cP}$. Hence $\rank(U')=k-1$ and $U'\subseteq Y.$ Define $Y_1:=\langle X_1,\ell \rangle_{\cP}$ and $Y_2:=\langle X_2,\ell \rangle_{\cP}$. Since  $X_1\subseteq Y_1$ and $\rank(Y_1)=k$ we have $Y_1\in \langle G\rangle_{\cP_k}.$ Similarly, $Y_2\in \langle G\rangle_{\cP_k}.$  Note that $Y_1\cap Y_2= U'$ and $\rank(\langle Y_1, Y_2\rangle_\cP)=k+1$ (actually, $Z = \langle Y_1, Y_2\rangle_\cP$.) Hence $Y\in  \langle Y_1, Y_2\rangle_{\cP_k} \subseteq \langle G\rangle_{\cP_k}.$
\end{proof}

\begin{prop}\label{proposition1}
The set $S_k(H,p_0, \widehat{G}_{H,p_0})$ spans $\cP_k$, for any $k = 2, 3,..., n-1$.
\end{prop}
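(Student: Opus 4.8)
The plan is to argue by induction on $k$, the base case $k=2$ being precisely Proposition~\ref{lemma1}. So fix $k$ with $3\le k\le n-1$ (whence $n\ge k+1\ge 4$, which is exactly what makes Lemma~\ref{l1} available) and assume the statement for $k-1$ in every non-degenerate polar space of rank $n-1$. Write $G:=S_k(H,p_0,\widehat G_{H,p_0})$. The first move is a reduction to a local statement. Since $H$ is a hyperplane and $p_0^\perp$ is a (singular) hyperplane, for every singular $k$-space $X$ the subspace $X\cap H\cap p_0^\perp$ has rank at least $k-2\ge 1$, hence contains a point. Thus \emph{every} $k$-space of $\cP$ contains a point of $H\cap p_0^\perp$, and it suffices to prove
\[
S_k(x)\ \subseteq\ \langle G\rangle_{\cP_k}\qquad\text{for every point }x\in H\cap p_0^\perp .
\]

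Fix such an $x$ and pass to the residue $R_x:=\Res(x)^\uparrow$, a non-degenerate polar space of rank $n-1$. Since two $k$-spaces lying on a common line of $\cP_k$ and both containing $x$ meet in a $(k-1)$-space that still contains $x$, the set $S_k(x)$ is a subspace of $\cP_k$, and the correspondence $Y\mapsto Y/x$ identifies it with the $(k-1)$-Grassmannian $(R_x)_{k-1}$; spans computed inside $S_k(x)$ therefore agree with spans in $\cP_k$. Under this identification $S_k(H)\cap S_k(x)$ becomes $S_{k-1}(\bar H_x)$, where $\bar H_x$ is the geometric hyperplane of $R_x$ induced by $H$ (recall $x\in H$), while $S_k(p_0)\cap S_k(x)$ becomes $S_{k-1}(\bar p_x)$, where $\bar p_x:=\langle x,p_0\rangle_{\cP}$ is a point of $R_x$ lying off $\bar H_x$ (as $p_0\notin H$). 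By the inductive hypothesis applied to $R_x$, a set of the shape $S_{k-1}(\bar H_x,\bar p_x,\widehat G_x)$ spans $(R_x)_{k-1}$; hence it is enough to exhibit, inside $\langle G\rangle_{\cP_k}$, one admissible choice of the remaining piece $\widehat G_x$.

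Unravelling the definition, $\widehat G_x$ consists of generic $k$-spaces $\widehat Z_x$ of $\cP$ through $x$, whose bases $Z_x$ are $(k-2)$-spaces of $H\cap p_0^\perp$ containing $x$ and running over a generating set of $\bigl(\Res_{H\cap p_0^\perp}(x)^\uparrow\bigr)_{k-3}$, each satisfying the genericity conditions $\widehat Z_x\not\subseteq H\cup p_0^\perp$ and $\widehat Z_x\cap H\cap p_0^\perp=Z_x$. The crux of the proof is to show that each such $\widehat Z_x$ already lies in $\langle G\rangle_{\cP_k}$, and this is where I would deploy the two preparatory lemmas. Lemma~\ref{l1} (legitimate because $n\ge4$) guarantees that the generic intersection patterns $p_0^\perp\cap X\cap H$ demanded along the way are genuinely realised by $4$-spaces, and Lemma~\ref{lemma1-gen-l} is applied repeatedly to pass from $k$-spaces sharing a $(k-2)$-space $X_1$ or $X_2$ to $k$-spaces through the smaller common part $X_1\cap X_2$ of rank $k-3$, thereby transporting the spanning step down to the level at which the fixed generating set $G_{H,p_0}$ does its work. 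Concretely, because $Z_x$ lies in the span of $G_{H,p_0}$ inside $(H\cap p_0^\perp)_{k-2}$, one moves $Z_x$ towards that generating set along lines of $(H\cap p_0^\perp)_{k-2}$ and lifts each such move to $\cP_k$ through the pencil argument of Lemma~\ref{lemma0} in the singular planes involved, mirroring cases (a)--(e) of Proposition~\ref{lemma1} but now carried out rank by rank via Lemma~\ref{lemma1-gen-l}.

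I expect this last step---producing every admissible generic $k$-space $\widehat Z_x$ inside $\langle G\rangle_{\cP_k}$---to be the main obstacle and the genuine technical heart of the argument; the residue identifications and the rank count are essentially bookkeeping. Once the generic spaces are secured, the inductive hypothesis closes each residue $R_x$, giving $S_k(x)\subseteq\langle G\rangle_{\cP_k}$ for every $x\in H\cap p_0^\perp$, and the rank count of the first paragraph assembles these local conclusions into $\langle G\rangle_{\cP_k}=\cP_k$, as required.
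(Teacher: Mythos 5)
Your reduction is sound as far as it goes: every $k$-space meets $H\cap p_0^\perp$ in a subspace of rank at least $k-2\geq 1$, the set $S_k(x)$ is a subspace of $\cP_k$ identified with the $(k-1)$-Grassmannian of $\Res(x)^\uparrow$, and under this identification $S_k(H)\cap S_k(x)$ and $S_k(p_0)\cap S_k(x)$ do supply the hyperplane and point ingredients needed to invoke the statement for $k-1$ in $R_x$. But the third ingredient --- an admissible family $\widehat{G}_x$ of generic $(k-1)$-spaces of $R_x$ lying in $\langle G\rangle_{\cP_k}$ --- is exactly where your text stops being a proof. The elements of $\widehat{G}_{H,p_0}$ sit over base spaces in $G_{H,p_0}$ which in general do not contain your fixed point $x$, so not a single generic space through $x$ is available a priori, and the sentence about ``moving $Z_x$ towards the generating set along lines of $(H\cap p_0^\perp)_{k-2}$ and lifting each move \dots mirroring cases (a)--(e)'' names this difficulty without resolving it. You say so yourself: you expect this step to be ``the main obstacle and the genuine technical heart.'' An argument whose self-declared heart is a gesture has a genuine gap, and your induction on $k$ does not shrink it --- producing $\widehat{G}_x$ inside the span at rank $n-1$ is just as hard as the original statement, so the inductive step carries the full burden it was supposed to discharge.

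The paper closes precisely this gap by a different organization, which is worth comparing with your plan. It does not induct on $k$ at all: it reduces to $(k-2)$-spaces $Z\subseteq H\cap p_0^\perp$ rather than to points, so that in $\Res(Z)^\uparrow$ (a polar space of rank $n-k+2\geq 3$) the set $S_k(Z)$ \emph{is} a line-Grassmannian and Proposition~\ref{lemma1} applies verbatim as a black box the moment one generic ``line'' $\bar{\ell}_0=\widehat{Z}$ is in hand --- no re-run of cases (a)--(e) at higher rank is ever needed. The transport of generic spaces, the step you left open, is done by an induction on generation depth: writing $S_{k-2}(H\cap p_0^\perp)=\bigcup_{i\geq 0}U(i)$ with $U(0)=G_{H,p_0}$ and $U(i)$ the points on lines of $\cP_{k-2}$ through pairs from $U(i-1)$, one takes $Z\in\langle Z_1,Z_2\rangle_{\cP_{k-2}}$ with $\rank(Z_1\cap Z_2)=k-3$, applies Lemma~\ref{l1} in $\Res(Z_1\cap Z_2)^\uparrow$ (rank $n-k+3\geq 4$) to produce $Y\in S_{k+1}(\cP)$ with $Y\cap H\cap p_0^\perp=\langle Z_1,Z_2\rangle_{\cP}$, and then Lemma~\ref{lemma1-gen-l} places every $k$-subspace of $Y$ through $Z_1\cap Z_2$ --- in particular a generic $\widehat{Z}\supset Z$ with $\widehat{Z}\cap H\cap p_0^\perp=Z$ --- inside $\langle G\rangle_{\cP_k}$, after which Proposition~\ref{lemma1} in $\Res(Z)^\uparrow$ gives $S_k(Z)\subseteq\langle G\rangle_{\cP_k}$. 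This pair of devices (residues of $(k-2)$-spaces, so the base case does all the local work, plus the $U(i)$ filtration, so generic spaces propagate from $\widehat{G}_{H,p_0}$ step by step) is the missing mechanism; to repair your proposal you would have to import it essentially wholesale, at which point the induction on $k$ becomes superfluous.
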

\begin{proof}
If $k=2$ the thesis holds by  Proposition~\ref{lemma1}. Suppose $k>2$; so $n\geq 4$. As in the proof of Lemma \ref{lemma1-gen-l}, put $G := S_k(H, p_0, \widehat{G}_{H,p_0})$. Consider the following claim:
\begin{itemize}
\item[$(\ast)$] {\em We have $S_k(Z)\subseteq \langle G\rangle_{\cP_k}$ for every $Z\in S_{k-2}(H\cap p_0^{\perp}).$}
\end{itemize}
Suppose that $(\ast)$ holds true. Take $X\in S_k(\cP)$. We have $\rank(X\cap H\cap p_0^\perp)\geq k-2$, hence $S_{k-2}(X\cap H\cap p_0^\perp) \neq \emptyset$. Let $Y\in S_{k-2}(X\cap H\cap p_0^\perp)$. We have $S_k(Y)\subseteq \langle G\rangle_{\cP_k}$ by claim $(\ast)$. Hence $X\in \langle G\rangle_{\cP_k}$. The equality $\langle G\rangle_{\cP_k} = \cP_k$ follows.

Claim $(\ast)$ remains to be proved. We will prove it by induction on the number $i$ of steps required to reach a given $Z\in S_{k-2}(H\cap p_0^\perp)$ starting from $G_{H,p_0}$. Put $U(0)= G_{H,p_0}$ and recursively define $U(i)\subseteq S_{k-2}(H\cap p_0^\perp)$ as follows:
\[U(i)~ := ~ \{Z \in \langle Z_1, Z_2\rangle_{\cP_{k-2}}\colon Z_1, Z_2\in U(i-1)\}~ \text{ for } i>0.\]
As $G_{H,p_0}$ spans $(H\cap p_0^\perp)_{k-2}$, we have $S_{k-2}(H\cap p_0^\perp) = \cup_{i=0}^\infty U(i)$. So, in order to prove claim $(\ast)$ we only must show that it holds for all elements in $U(i)$ and every $i \geq 0$. We shall prove this by induction on $i$.

Let $i=0$. Take $Z\in U(0)= G_{H,p_0}$. We must show that $S_k(Z)\subseteq \langle G\rangle_{\cP_k}$. By definition of $\widehat{G}_{H,p_0}$, there exists $\widehat{Z}\in \widehat{G}_{H,p_0} \subset G$ containing $Z$ and such that $\widehat{Z}\not \subseteq H\cup p_0^\perp$ and $\widehat{Z}\cap H \neq \widehat{Z}\cap p_0^\perp$. The intersection $\overline{H} := S_{k-1}(H)\cap S_{k-1}(Z)$ is a hyperplane of $\Res(Z)^\uparrow$ while $\bar{p}_0 := \langle Z, p_0\rangle_\cP$ and $\bar{\ell}_0 := \widehat{Z}$ are a point and a line of $\Res(Z)^\uparrow$ respectively.  Consider the set
\[S_k(H, p_0, Z)  ~ = ~ S_2(\overline{H}, \bar{p}_0, \bar{\ell}_0) ~:= ~ (S_k(H)\cap S_k(Z))\cup(S_k(p_0)\cap S_k(Z))\cup\{\widehat{Z}\}.\]
This is the analogous in $\Res(Z)^\uparrow$ of the set $S_2(H,p_0,\ell_0)$ defined in \eqref{defgensetk=2}. Indeed $S_k(H)\cap S_k(Z)$ is the set $S_2(\overline{H})$ of the lines of $\Res(Z)^\uparrow$ contained in the hyperplane $\overline{H}$ of $\Res(Z)^\uparrow$, the intersection $S_k(p_0)\cap S_k(Z)$ is the set $S_2(\bar{p}_0)$ of lines of $\Res(Z)^\uparrow$ through the point $\bar{p}_0$ of $\Res(Z)^\uparrow$ and $\bar{\ell}_0 = \widehat{Z}$ is a line of $\Res(Z)^\uparrow$ neither contained in $\overline{H}$ nor passing through $\bar{p}_0$ and such that $\bar{\ell}_0\cap\overline{H} \neq \bar{\ell_0}\cap \bar{p}_0^\perp$.

By Proposition \ref{lemma1} the set $S_k(H, p_0, Z) = S_2(\overline{H}, \bar{p}_0, \bar{\ell}_0)$ spans the $2$-Grassmannian of $\Res(Z)^\uparrow$. However, the latter is just the intersection of $\Res(Z)^\uparrow$ with the $k$-Grassmannian $\cP_k$ of $\cP$. Therefore $S_k(H, p_0, Z)$ spans $S_k(Z)$ in $\cP_k$, as claimed in $(\ast)$. So, $(\ast)$ is proved for $U(0)$.

Let now $i>0$ and assume that $(\ast)$ holds for $U(i-1)$. Take $Z\in U(i).$ By definition of $U(i)$ there exist $Z_1, Z_2\in U(i-1)$ with
$Z_1\cap Z_2\subset Z \subset\langle Z_1, Z_2\rangle_{\cP}$, where $\rank(Z_1\cap Z_2)=k-3$ and $\langle Z_1, Z_2\rangle_{\cP}$ is a singular subspace of rank $k-1$. Also, $Z_1, Z_2\in S_{k-2}(p_0^\perp\cap H)$. Put  $W:= Z_1\cap Z_2$ and $U:=\langle Z_1, Z_2\rangle_{\cP}.$
By the inductive hypothesis, all $k$-singular spaces through $Z_1$ or $Z_2$ belong to $\langle G\rangle_{\cP_k}$.  By Lemma~\ref{l1} in $\Res(W)^\uparrow$ and with $Z_1$ and $Z_2$ regarded as two collinear points of $\Res(W)^\uparrow$ both contained in the subspace $S_{k-2}(H\cap P_0^\perp)\cap S_{k-2}(W)$ of $\Res(W)^\uparrow$, there exists $Y\in S_{k+1}(W)$ such that $Y\cap p_0^\perp\cap H=U.$ Note that $Z\subset Y$, as $Z\subset U \subset Y$.

By Lemma~\ref{lemma1-gen-l} all $k$-spaces in $Y$ through $W$ are in $\langle G\rangle_{\cP_k}.$ Take a $k$-subspace $\widehat{Z}$ in $Y$ through $Z$ with the property that $\widehat{Z}\cap p_0^\perp\cap H= Z$. Since $W\subset Z\subset \widehat{Z}\subset Y$, we have that $\widehat{Z}\in \langle G\rangle_{\cP_k}.$ Apply now Proposition~\ref{lemma1} in $\Res(Z)^\uparrow$ with $\bar{\ell}_0 := \widehat{Z}$, $\overline{H} := S_k(Z)\cap S_k(H)$ and $\bar{p}_0 := \langle Z, p_0\rangle_\cP$ in the roles of $\ell_0$, $H$ and $p_0$ respectively. We obtain that all lines of $\Res(Z)^\uparrow$ belong to the span of $S_2(\overline{H}, \bar{p}_0, \bar{\ell}_0)$ in the $2$-Grassmannian of $\Res(Z)^\uparrow$. Equivalently, $S_k(Z) =  \langle S_k(H, p_0, Z) \rangle_{\cP_k} \subseteq \langle G\rangle_{\cP_k}$.  Claim $(\ast)$ holds for $U(i)$. The inductive step of the proof is complete. By induction, $(\ast)$ holds true.
\end{proof}

Suppose that $H$ is singular and let $q$ be its deep point. So, $H = q^\perp$ and $H\cap p_0^\perp = \{q,p_0\}^\perp$. As above, given a generating set $G_{q,p_0} := G_{H,p_0}$ of the $(k-2)$-Grassmannian of $\{q,p_0\}^\perp$ (with  $G_{q,p_0} = \{\emptyset\}$ when $k = 2$), for every $Z\in G_{q,p_0}$ choose $\widehat{Z}\in S_k(Z)$ such that $\widehat{Z}\cap\{q,p_0\}^\perp = Z$ and put $\widehat{G}_{q,p_0} := \{\widehat{Z} : Z\in G_{q,p_0}\}$. Next define
\begin{equation}
S_k(q,p_0,\widehat{G}_{q,p_0})~:= ~ S_k(q)\cup S_k(\{q,p_0\}^{\perp})\cup S_k(p_0)\cup\widehat{G}_{q,p_0} ~ \subseteq ~ \cP_k.
\end{equation}

\begin{corollary}\label{cor2}
The set $S_k(q,p_0,\widehat{G}_{q,p_0})$ spans  $\cP_k.$
\end{corollary}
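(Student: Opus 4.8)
The plan is to deduce this from Proposition~\ref{proposition1} applied to the singular hyperplane $H:=q^\perp$. Since $H\cap p_0^\perp=\{q,p_0\}^\perp$, the data $G_{q,p_0}$ and $\widehat G_{q,p_0}$ form an admissible choice of $G_{H,p_0}$ and $\widehat G_{H,p_0}$ for that proposition: the requirement $\widehat Z\cap\{q,p_0\}^\perp=Z$ with $\rank(Z)=k-2<k=\rank(\widehat Z)$ forces $\widehat Z\not\subseteq q^\perp$ and $\widehat Z\not\subseteq p_0^\perp$ (otherwise the intersection would have rank $\geq k-1$), which is exactly the condition imposed in Proposition~\ref{proposition1}. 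Hence that proposition already yields $\langle S_k(q^\perp)\cup S_k(p_0)\cup\widehat G_{q,p_0}\rangle_{\cP_k}=\cP_k$. As $S_k(p_0)$ and $\widehat G_{q,p_0}$ are contained in $S_k(q,p_0,\widehat G_{q,p_0})$, the whole statement follows once I establish the containment
\[S_k(q^\perp)~\subseteq~\langle S_k(q)\cup S_k(\{q,p_0\}^\perp)\rangle_{\cP_k}.\]

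To prove this I would take $X\in S_k(q^\perp)$ and dispose of the trivial cases first: if $q\in X$ then $X\in S_k(q)$, and if $X\subseteq\{q,p_0\}^\perp$ then $X\in S_k(\{q,p_0\}^\perp)$. So assume $q\notin X$ and $X\not\subseteq\{q,p_0\}^\perp$. Because $X\subseteq q^\perp$, the span $Y:=\langle q,X\rangle_\cP$ is a singular subspace of rank $k+1$. Since $q\notin p_0^\perp$ we have $Y\not\subseteq p_0^\perp$, so by the one-or-all axiom $Y':=Y\cap p_0^\perp$ is a hyperplane of the projective space $Y$, i.e.\ $Y'\in S_k(\cP)$; as $Y\subseteq q^\perp$ this gives $Y'\subseteq\{q,p_0\}^\perp$, whence $Y'\in S_k(\{q,p_0\}^\perp)$ and $q\notin Y'$. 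The standing assumptions also guarantee $X\neq Y'$.

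The key observation is that $S_k(Y)$ is a full subspace of $\cP_k$ carrying the structure of the dual of the projective space $Y$: if $A,B\in S_k(Y)$ are collinear in $\cP_k$ then $\langle A,B\rangle_\cP=Y$ (both have rank $k$ inside the rank-$(k+1)$ space $Y$), so the line $\Res(A\cap B)^\uparrow\cap\Res(Y)^\downarrow$ joining them stays inside $S_k(Y)$; consequently $\cP_k$-spans of subsets of $S_k(Y)$ coincide with projective spans in the dual space of $Y$. In that dual space the hyperplanes of $Y$ through $q$, namely $S_k(q)\cap S_k(Y)$, form a geometric hyperplane, and both $X$ and $Y'$ lie off it. Since a projective space is generated by one of its hyperplanes together with a single external point, I conclude $X\in\langle(S_k(q)\cap S_k(Y))\cup\{Y'\}\rangle_{\cP_k}\subseteq\langle S_k(q)\cup S_k(\{q,p_0\}^\perp)\rangle_{\cP_k}$, which is the desired containment.

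The only genuinely structural step is the last one: recognising that $S_k(Y)$ is closed under $\cP_k$-collinearity and is isomorphic to the dual projective space of $Y$, so that the elementary ``hyperplane plus one external point generates everything'' principle applies. Everything else is bookkeeping to match the triple $(q,p_0,\widehat G_{q,p_0})$ with the hypotheses of Proposition~\ref{proposition1} and to clear the trivial case distinctions.
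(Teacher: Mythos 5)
Your proposal is correct and follows essentially the same route as the paper: both reduce to Proposition~\ref{proposition1} applied with $H=q^\perp$ and then establish the containment $S_k(q^\perp)\subseteq\langle S_k(q)\cup S_k(\{q,p_0\}^\perp)\rangle_{\cP_k}$ via the auxiliary subspace $Y=\langle q,X\rangle_\cP$. Your ``dual projective space of $Y$ is generated by a hyperplane plus an external point'' step, unwound at the point $X$, yields precisely the single line of $\cP_k$ that the paper exhibits directly, namely the line through $X$, $\langle X\cap p_0^\perp,\, q\rangle_\cP$ and $Y\cap p_0^\perp$ determined by the pair $X\cap p_0^\perp\subset Y$.
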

\begin{proof}
We shall prove that $S_k(q)\cup S_k(\{q, p_0\}^\perp)$ spans $S_k(q^\perp)$. Then the conclusion follows from Proposition \ref{proposition1}.

Let $X\in S_k(q^\perp)$ such that $q\not\in X\not\subseteq q_0^\perp$. Put $Y := \langle X, q\rangle_\cP$ and $Z := X\cap p_0^\perp$. Then $Y \in S_{k+1}(q^\perp)$ and $Z\in S_{k-1}(q^\perp)$. The subspaces $Z$ and $Y$ uniquely determine a line $L$ of $\cP_k$ contained in $S_k(q^\perp)$. Clearly $X\in L$. The $k$-subspaces $\langle Z, q\rangle_\cP$ and $Y\cap p_0^\perp$ belong to $L$. Hence $L\subseteq \langle S_k(q)\cup S_k(\{q, p_0\}^\perp)\rangle_{\cP_k}$. In particular, $X\subseteq \langle S_k(q)\cup S_k(\{q, p_0\}^\perp)\rangle_{\cP_k}$.
\end{proof}

In order to apply Proposition \ref{proposition1}, apart from knowing $\widehat{G}_{H,p_0}$, we need to determine
a generating set for the geometry $\cP_k(H)$ induced by $\cP_k$ on $S_k(H)$ and a generating set for the $(k-1)$-Grassmannian of $\Res(p_0)$ of $\mathrm{\Res}(p_0)$. In view of Corollary \ref{cor2}, in the special case $H=q^{\perp}$ a generating set for $S_k(q)$ and one for $\cP_k(\{p_0,q\}^\perp)$ are enough to generate $\cP_k(H)$.

We warn that when $k = n-1$ and $H$ is non-degenerate of rank $n-1$ the geometry $\cP_{n-1}(H)$ admits no lines; consequently, $S_{n-1}(H)$ is the unique generating set for $\cP_{n-1}(H)$. In this case Proposition \ref{proposition1} is of no use. Similarly, Corollary  \ref{cor2} is useless when $k = n-1$. Indeed $\rank(\{q,p_0\}^\perp) = n-1$, whence $\cP_{n-1}(\{q,p_0\}^\perp)$ has no lines.

\section{Hermitian Grassmannians}\label{Herm}

Throughout this section $\cH$ is a non-degenerate Hermitian polar space of finite rank $n \geq 2$ and defect $d$ and $\cH_k$ is its $k$-Grassmannian, for $1\leq k\leq n$. We recall that the underlying field $\FF$ of $\cH$ is a separable quadratic extension of a subfield $\FF_0$. We denote by $\sigma$ the unique non trivial element of $\mathrm{Gal}(\FF:\FF_0)$.

By Corollary \ref{3&4}, we can regard $\cH$ as the polar space associated to a non-degenerate Hermitian form $h:V\times V\rightarrow \FF$ of Witt index $n$ and defect $\mathrm{def}(h) = \mathrm{def}(\cH) = d$, for a $(2n+d)$-dimensional $\FF$-vector space $V$. In view of the decomposition \eqref{decomposition}, we can assume to have chosen a basis $B = (e_1, e_2,... e_{2n}, e_{2n+1},...)$ of $V$ such that $h$ can be expressed as follows with respect to $B$:
\begin{equation}\label{Herm - h}
h(\sum_{i > 0}e_ix_i, \sum_{i> 0}e_iy_i) =
    \sum_{i=i}^{n}\left(x_{2i-1}^{\sigma}y_{2i}+x_{2i}^{\sigma}y_{2i-1}\right)+ h_0(\sum_{j > 2n}e_jx_j, \sum_{j > 2n}e_jy_j)
\end{equation}
with $h_0$ anisotropic. Clearly, $h_0$ is the form induced by $h$ on $V_0\times V_0$, where $V_0 := \{e_1,..., e_{2n}\}^\perp = \langle e_{2n+1}, e_{2n+2},...\rangle_V$ is the orthogonal of $\langle e_1,..., e_{2n}\rangle_V$ with respect to $h$.

Before to discuss the generating rank of $\cH_k$, we state the following elementary lemma.

\begin{lemma}\label{easy}
Let $d > 0$. Then all non-degenerate hyperplanes of $\cH$ have rank $n$ and defect $d-1$ ($ = d$ if $d$ is infinite).
\end{lemma}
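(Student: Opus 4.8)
The plan is to translate the statement into the language of the Hermitian form $h$ and then settle a short Witt-index question. Since $\cH$ is Hermitian, its embedding $\varepsilon\colon\cH\rightarrow\PG(V)$ is the unique one (recalled in Subsection \ref{underlying field}), and it is standard that for such an embedding every geometric hyperplane of $\cH$ is the trace $\varepsilon^{-1}([z^{\perp}])$ of a projective hyperplane $[z^{\perp}]$, where $z^{\perp}$ is the orthogonal in $V$ of a single vector $z$ (so $z^{\perp}$ has codimension $1$). First I would observe that such a hyperplane is singular exactly when $h(z,z)=0$, in which case $[z]$ is its deep point and it is degenerate, while it is \emph{non-degenerate} exactly when $z$ is anisotropic, i.e. $h(z,z)\neq0$. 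Thus the task reduces to fixing an arbitrary anisotropic $z$ and analysing the polar space $H=\cP(h|_{z^{\perp}})$.

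Next I would exploit the orthogonal splitting. Since $h(z,z)\neq0$ we have $V=\langle z\rangle_V\perp z^{\perp}$, so $h|_{z^{\perp}}$ is again non-degenerate, now of dimension $N-1=2n+d-1$. Any maximal singular subspace $[U]$ of $\cH$ (with $U$ totally isotropic of dimension $n$) meets $[z^{\perp}]$ in codimension at most one, so $[U\cap z^{\perp}]$ is a singular subspace of $H$ of rank at least $n-1$; on the other hand every singular subspace of $H$ is one of $\cH$, whence $\rank(H)\le n$. Therefore $\rank(H)\in\{n-1,n\}$, and correspondingly the anisotropic kernel of $h|_{z^{\perp}}$ has dimension $d-1$ or $d+1$; equivalently $\mathrm{def}(H)\in\{d-1,d+1\}$. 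The entire content of the lemma is the assertion that the rank does not drop.

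Ruling out the drop is the heart of the matter, and it is here that the arithmetic of $\FF$ enters. If $\rank(H)=n-1$, then $h|_{z^{\perp}}$ would contain an anisotropic Hermitian subspace of dimension $d+1\ge2$. When $\FF$ is finite this is impossible, because anisotropic non-degenerate Hermitian forms over a finite field have dimension at most $1$ (the very reason, noted in the introduction, that $d\le1$ for finite $\FF$); hence $d=1$, $h|_{z^{\perp}}$ has even dimension $2n$, its defect $2n-2\rank(H)$ is even and at most $1$, so it is $0$, giving $\rank(H)=n$ and $\mathrm{def}(H)=0=d-1$. In the general setting the statement again comes down to excluding a $(d+1)$-dimensional anisotropic Hermitian subspace of $V$, which is what pins the anisotropic kernel of $h|_{z^{\perp}}$ to dimension $d-1$ rather than $d+1$; comparing with the decomposition \eqref{decomposition}, once this bound on anisotropic subspaces is granted the dimension count immediately yields $\rank(H)=n$ and $\mathrm{def}(H)=d-1$ (with $d-1=d$ when $d$ is infinite). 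I expect this non-drop of the Witt index — i.e. the control of anisotropic Hermitian subspaces over the underlying field — to be the only delicate point of an otherwise elementary argument.
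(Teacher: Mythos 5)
Your opening reduction coincides exactly with the paper's: there too a non-degenerate hyperplane is written as $K=p^{\perp}\cap\cH$ for a point $p=[z]$ of $\PG(V)$ with $h(z,z)\neq 0$, the restriction of $h$ to $z^{\perp}$ is non-degenerate of dimension $2n+d-1$, and everything hinges on the Witt index not dropping to $n-1$; your bookkeeping of the two alternatives for the defect ($d-1$ versus $d+1$) is also correct. For finite $\FF$ your argument is complete and correct: a non-degenerate anisotropic Hermitian form over a finite field has dimension at most $1$, so an anisotropic kernel of dimension $d+1\geq 2$ is impossible.

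For general $\FF$, however, you have not actually given a proof, and the principle you propose to lean on is simply not available: over infinite fields anisotropic Hermitian forms exist in every dimension (e.g.\ definite forms for $\CC/\RR$), so there is no abstract exclusion of a $(d+1)$-dimensional anisotropic subspace of $z^{\perp}$; whether the index drops is a question about the particular anisotropic vector $z$ (its norm class), not about $\FF$ alone. This is the genuine gap. The paper fills it constructively rather than by classification: after normalizing $p=[v]$ with $v=e_1+te_2$ and $t+t^{\sigma}\neq 0$, it exhibits the explicit vector $u=e_1-t^{\sigma}e_2+\frac{t+t^{\sigma}}{\kappa}e_{2n+1}$, where $\kappa=h(e_{2n+1},e_{2n+1})\in\FF_0\setminus\{0\}$, checks that $u$ is isotropic and orthogonal to $v$ and to $e_3,\dots,e_{2n}$, and concludes that $\langle e_3,e_5,\dots,e_{2n-1},u\rangle_V$ is a totally isotropic $n$-space inside $v^{\perp}$, so $\rank(K)=n$ and the defect $\dim(v^{\perp})-2n=d-1$ falls out. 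Note that this is precisely where the hypothesis $d>0$ enters (the construction needs the anisotropic vector $e_{2n+1}$), an ingredient your sketch never uses. Your instinct that the arithmetic of $\FF$ is the delicate point is in fact well founded: a direct computation shows the displayed $u$ is isotropic only after $h(v,v)=t+t^{\sigma}$ is arranged to equal $\kappa$ up to norms, which is automatic when the norm map $\FF\to\FF_0$ is surjective (e.g.\ $\FF$ finite) but fails over $\RR\subset\CC$, where a signature count shows the index does drop for $z$ of the wrong norm class. So a repair of your proposal must actually produce an isotropic vector of $z^{\perp}$ using the anisotropic summand, as the paper does, rather than appeal to a dimension bound on anisotropic Hermitian forms.
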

\begin{proof}
Let $K$ be a non-degenerate hyperplane of $\cH$. In $\PG(V)$ we see that $K = p^\perp\cap \cH$ where $p := K^\perp$ is a point of $\PG(V)$, non-singular for $h$. Conversely, for every point $p\in \PG(V)\setminus \cH$, the subspace $K := p^\perp\cap\cH$ is a non-singular hyperplane of $\cH$. With no loss, we can assume that $p = [v]$ where $v = e_1+te_2$ for $t\in \FF$ such that $t+t^\sigma \neq 0$. Put
\[u ~ := ~ e_1 - t^\sigma e_2 + \frac{t+t^\sigma}{\kappa}e_{2n+1}\]
where $\kappa := h(e_{2n+1},e_{2n+1}) = h_0(e_{2n+1},e_{2n+1}) \in \FF_0\setminus \{0\}$. The vector $u$ is isotropic for $h$ and orthogonal to each of $e_3, e_4,..., e_{2n}$. Hence $e_3, e_5,..., e_{2n-1}, u$ span an $n$-dimensional subspace $X$ of $V$, totally isotropic for $h$ and orthogonal to $p$. So, $[X]\subseteq K$.
It follows that $\rank(K) = n$. As $\dim(v^\perp) = 2n+d-1$, the form induced by $h$ on $v^\perp$ has defect $d-1$. Hence $\mathrm{def}(K) = d-1$.
\end{proof}

\subsection{The case $k=n$}\label{herm dps}

Let $F = \{p_1,..., p_n, p'_1,..., p'_n\}$ be a frame of $\cH$, now regarded as a set of $2n$-points rather than a partition $\{\{p_1,..., p_n\},\{ p'_1,..., p'_n\}\}$ of this set in two sets of $n$ mutually collinear points. We recall that the collinearity graph of $\cH$ induces on $F$ a complete $n$-partite graph $(F,\perp)$ with classes of size $2$. We can assume that $\{p_1, p'_1\},..., \{p_n, p'_n\}$ are the $n$ classes of this partition. The graph $(F,\perp)$ contains just $2^n$ maximal cliques; each of them picks just one point from each of the pairs  $\{p_1, p'_1\},..., \{p_n, p'_n\}$. Let $F_n\subset S_n(\cH)$ be the collection of the $n$-subspaces of $\cH$ spanned by the maximal cliques of $(F,\perp)$. So, $|F_n|=2^n$. Following Pankov \cite{Pank1}, we call $F_n$ an {\em apartment} of $\cH_n$.

The next theorem generalizes a result of Cooperstein and Shult \cite[Section 3]{CS01}. Explicitly, under the hypothesis that $d = 1$ and $\FF$ is finite, Cooperstein and Shult prove that $\gr(\cH_n) \leq 2^n$. We obtain the same conclusion, but with no hypotheses on $\FF$ and allowing any value for $d$ (even an infinite one), except $0$.

\begin{theorem}\label{k=n; 1}
Let $d > 0$. Then $F_n$ spans $\cH_n$.
\end{theorem}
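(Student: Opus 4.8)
The plan is to reduce the statement to a single inclusion about the maximal singular subspaces contained in the minimal nice subspace $M:=\langle F\rangle_{\cH}$ spanned by the frame. The subspace $M$ is non-degenerate of rank $n$ and defect $0$, hence nice, and $\cH$ satisfies the Maximal Chain condition by Theorem \ref{4}; so Corollary \ref{clkn bis} applied with $K=M$ gives $\langle S_n(M)\rangle_{\cH_n}=\cH_n$. Since $\langle F_n\rangle_{\cH_n}$ is a subspace, it therefore suffices to prove
\[ S_n(M)\ \subseteq\ \langle F_n\rangle_{\cH_n}. \]
This reduction is uniform in $d$ (it also absorbs the infinite case), and it isolates exactly where the hypothesis $d>0$ must enter: inside the dual polar space $M_n$ the apartment $F_n$ need not span, so the extra room furnished by the anisotropic part $V_0$ of \eqref{decomposition} is indispensable. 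The geometric mechanism I would use is that for every $(n-1)$-subspace $W\subseteq M$, all maximal singular subspaces of $M$ through $W$ lie on the \emph{single} line $\Res(W)^{\uparrow}$ of $\cH_n$, which is a rank-$1$ Hermitian polar space of defect $d$ and so has at least three points; consequently, as soon as two maximal subspaces through $W$ lie in a subspace of $\cH_n$, so do all of them, together with further maximal subspaces not contained in $M$.

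To establish $S_n(M)\subseteq\langle F_n\rangle_{\cH_n}$ I would induct on $n$. For the inductive step, fix a frame point $q$ and pass to the upper residue $\Res(q)^{\uparrow}$, which is a Hermitian polar space of rank $n-1$ with the \emph{same} defect $d>0$; the members of $F_n$ through $q$ project onto an apartment of its frame, and $S_n(q)$ is a subspace of $\cH_n$ on which $\cH_n$ induces precisely the dual polar space of $\Res(q)^{\uparrow}$. By the induction hypothesis those members span $S_n(q)$, whence $S_n(q)\subseteq\langle F_n\rangle_{\cH_n}$ for every frame point $q$. The base $n=2$ is immediate, for then each $\Res(q)^{\uparrow}$ is itself a line of $\cH_n$ carrying the two apartment members through $q$, so the fat-line principle above already yields $S_2(q)\subseteq\langle F_2\rangle_{\cH_2}$. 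In this way $\langle F_n\rangle_{\cH_n}$ is shown to contain every maximal singular subspace meeting the frame.

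The remaining step, and the main obstacle, is to pass from the maximal subspaces meeting $F$ to an arbitrary $X\in S_n(M)$ containing no frame point. Here I would again use fat lines: for a frame point $q$ with $q\not\perp X$ the hyperplane $W:=X\cap q^{\perp}$ of $X$ determines a line $\Res(W)^{\uparrow}$ on which $\langle W,q\rangle_{\cH}\in S_n(q)$ is already available, and forcing $X$ onto this line requires a \emph{second} already-generated member, equivalently a second frame point in $W^{\perp}$. When $d>0$ the enlarged orthogonal spaces $X^{\perp}\supsetneq X$ make such configurations plentiful and, crucially, permit external (non-$M$) maximal subspaces to serve as stepping stones between maximal subspaces of $M$ that are not joined inside $M_n$; it is exactly this linkage that collapses when $d=0$. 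Turning this into a proof that the closure eventually reaches all of $S_n(M)$ is the delicate part: it calls for an iteration organised by a connectivity argument, in the spirit of the connectedness of the complement of a hyperplane invoked after Proposition \ref{k=n; 0}, and it must be carried out uniformly in $n$ (and hence for all $d>0$).
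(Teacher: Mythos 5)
Your step 2 is sound, and it is in fact claim (a) of the paper's own proof: the members of $F_n$ through a frame point $q$ correspond to the apartment of the residue $\Res(q)^\uparrow$, which is Hermitian of rank $n-1$ with the \emph{same} defect $d>0$, and $S_n(q)$ is a subspace of $\cH_n$ on which $\cH_n$ induces the dual polar space of that residue; induction then gives $S_n(q)\subseteq\langle F_n\rangle_{\cH_n}$ for every frame point $q$ (with the same $n=2$ base). But your step 3 is precisely where the proposal stops being a proof: you correctly identify the local mechanism — a line $\Res(W)^\uparrow$ of $\cH_n$ captures $X$ only once \emph{two} of its points are already generated — and then appeal to an unspecified ``iteration organised by a connectivity argument'' that is never carried out. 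That passage is the whole difficulty of the theorem. Note also that your opening reduction to $S_n(M)$, $M=\langle F\rangle_\cH$, buys nothing: since $\langle S_n(M)\rangle_{\cH_n}=\cH_n$ by Corollary \ref{clkn bis}, the inclusion $S_n(M)\subseteq\langle F_n\rangle_{\cH_n}$ is logically equivalent to the theorem itself, and, as your own discussion of ``stepping stones'' outside $M$ shows, restricting attention to $S_n(M)$ does not localize the problem.

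The paper closes exactly this gap with a concrete device absent from your sketch. First observe that if $X\cap p_i^\perp=X\cap p_i'^\perp=:Y$ for some pair $\{p_i,p_i'\}$ of the frame, then $X$ lies on the line of $\cH_n$ through $\langle Y,p_i\rangle_\cH$ and $\langle Y,p_i'\rangle_\cH$, both already in $\langle F_n\rangle_{\cH_n}$ by claim (a) — this realizes your ``second already-generated member'' systematically, using the two frame points of a single antipodal pair. Second, and this is the key idea you are missing, the set of \emph{all} maximal singular subspaces with this property for $i=1$ contains $S_n(K)$ for one well-chosen nice hyperplane $K$: embedding $\cH$ in $\PG(V)$, the hyperbolic line $\{p_1,p_1'\}^{\perp\perp}$ is properly contained in a projective line $L$ of $\PG(V)$ (it is a Baer subline), so one may pick $p\in L\setminus\{p_1,p_1'\}^{\perp\perp}$ and set $K:=p^\perp\cap\cH$; then $K\cap p_1^\perp=K\cap p_1'^\perp$, and $K$ is non-degenerate of rank $n$ precisely because $d>0$ (Lemma \ref{easy} — this is where the hypothesis actually enters, rather than through any abstract ``plentifulness'' of configurations). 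Hence $S_n(K)\subseteq\langle F_n\rangle_{\cH_n}$, and Proposition \ref{k=n; 0} yields $\langle S_n(K)\rangle_{\cH_n}=\cH_n$, completing the proof with no connectivity iteration at all. Without this (or an equivalent) construction, your closure argument remains a plausible programme, not a proof.
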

\begin{proof}
Consider the following property:
\begin{itemize}
\item[(a)] $S_n(p_i)\cup S_n(p'_i)\subseteq \langle F_n\rangle_{\cF_n}$, for every $i = 1, 2,..., n$.
\end{itemize}
We firstly prove the following:
\begin{itemize}
\item[(b)]  If property (a) holds then $F_n$ spans $\cH_n$.
\end{itemize}
Assume (a). Let $X\in \cH_n\setminus(S_n(p_i)\cup S_n(p'_i))$ and suppose that $X\cap p_i^\perp = X\cap p_i'^\perp =: Y$, say. Then $X$ belong to the line of $\cH_b$ spanned by the points $X_i := \langle Y, p_i\rangle_\cP$ and $X'_i = \langle Y, p'_i\rangle_\cP$ of $F_n$. So,
\begin{itemize}
\item[(c)] $\langle F_n\rangle_{\cH_n}$ contains all $X\in S_n(\cH)$ such that $X\cap p_i^\perp = X\cap p_i'^\perp$ for some $i = 1, 2,..., n$.
\end{itemize}
Consider the hyperbolic line $\ell = \{p_1, p'_1\}^{\perp\perp}$. Regarded $\cH$ as embedded in $\PG(V)$, the hyperbolic line $\ell$ is properly contained in a line $L$ of $\PG(V)$. Take $p\in L\setminus \ell$ and consider the hyperplane $p^\perp$, where $\perp$ is the orthogonality relation relative to a form $h:V\times V\rightarrow \FF$ which defines $\cH$. Then $K := p^\perp\cap \cH$ is a non-singular hyperplane of $\cH$ and, since $d > 0$, it has rank $n$ by Lemma \ref{easy}. Moreover, $K\cap p_1^\perp = K\cap p_1'^\perp$. Hence $X\cap p_1^\perp = X\cap p_1'^\perp$ for every $X\in S_n(K)$. Therefore $S_n(K) \subseteq \langle F_n\rangle_{\cH_n}$ by claim (c). However $S_n(K)$ spans $\cH_n$ by Proposition \ref{k=n; 0}. Consequently $\langle F_n\rangle_{\cH_n} = \cH_n$. Claim (b) is proved.

Claim (a) remains to be proved. We shall prove it by induction on $n$. Let $n=2$. Then $\Res(p_i)$ and $\Res(p'_i)$ are lines of $\cH_2$. Each of them belongs to just two elements of $F_2$. So, the elements of $F_2$ span $S_2(p_i)$ as well as $S_2(p'_i)$. Let now $n > 2$. With no loss, we can assume that $p_i = [e_{2i-1}]$ and $p'_i = [e_{2i}]$ for $i = 1, 2,... n$. The subspace $V':=\{e_1,e_2\}^{\perp}$ of $V$ has dimension $2(n-1)+d$ and the form $h$ induces on $V'$ a non-degenerate Hermitian form $h'$ of Witt index $n-1$ and anisotropic defect $d$. So, the polar space $\cH'$ associated with $h'$ has rank $n-1$ and defect $d$. Moreover $F' := \{p_2,..., p_n, p'_2,...., p'_n\}$ is a frame of $\cH'$. Let $\cH'_{n-1}$ be the dual of the polar space $\cH'$. The apartment $F'_{n-1}$ of $\cH'_{n-1}$ associated to $F'$ consists of the $2^{n-1}$ intersections $[V']\cap X$ for $X\in F_n$. (Observe that the equation $X\cap [V'] = Y\cap [V']$ defines an equivalence relation on $F_n$ with $2^{n-1}$ classes, all of which have size $2$.) By the inductive hypothesis, (a) holds for $\cH'_{n-1}$ and $F'_{n-1}$. Hence $F'_{n-1}$ spans $\cH'_{n-1}$, by claim (b).

However $S_n(p_1) = \{\langle p_1, X'\rangle_{\cH} : X'\in S_{n-1}(\cH')\}$ and, for every line $\Res(Y')^\uparrow$ of $\cH'_{n-1}$, the residue $\Res(Y)^\uparrow$ of $Y := \langle p_1, Y'\rangle_{\cH}$ is a line of $\cH_n$. Moreover, the $n$-spaces of $F_n$ containing $p_1$ are just the spans $\langle p_1, X'\rangle_\cH$ for $X'\in F'_{n-1}$. It follows that $S_n(p_1)\subseteq \langle F_n\rangle_{\cF_n}$. Similarly, $S_n(p'_1) \subseteq \langle F_n\rangle_{\cF_n}$. Clearly, the same argument works for $p_i$ and $p'_i$ for every $i = 2, 3,..., n$. Claim (a) is proved.
\end{proof}

The next corollary contains part (2) of Theorem \ref{MT1}.

\begin{corollary}\label{csmall}
Let $d>0$. Then $\gr(\cH_n)\leq 2^n.$ Moreover, if $n=2$ then $\gr(\cH_2)=4$.
 \end{corollary}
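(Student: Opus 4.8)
The first assertion is essentially immediate from the theorem just proved. By Theorem \ref{k=n; 1}, when $d>0$ the apartment $F_n$ is a generating set of $\cH_n$, and by its very construction $|F_n| = 2^n$. Hence $\gr(\cH_n) \leq |F_n| = 2^n$, which is exactly the first claim. So the only real content of the corollary lies in the case $n = 2$.

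For $n = 2$ the bound just obtained reads $\gr(\cH_2) \leq 4$, so the whole point is the reverse inequality $\gr(\cH_2) \geq 4$. My plan is to establish it by a direct combinatorial argument, showing that no set of at most three points can span $\cH_2$. The key structural fact is that, since $\cH$ is a non-degenerate polar space of rank $2$, its dual $\cH_2$ is a thick generalized quadrangle: it contains no triangles, any two of its points lie on at most one line, and for every line $\ell$ and every point off $\ell$ there is exactly one point of $\ell$ collinear with it. I emphasise that here the lower bound cannot be routed through the embedding rank via $\gr \geq \er$, because $\cH_n$ admits no projective embedding when $d>0$; the combinatorial route is therefore essential.

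I would then examine the possible mutual positions of three points $a,b,c$ of $\cH_2$. If $a,b,c$ are collinear, their span is the single line through them, which is a subspace since two points determine at most one line. If exactly two of them, say $a,b$, lie on a line $\ell$ while $c \notin \ell$, then $c$ is collinear with a unique point $p \in \ell$; writing $m$ for the line $cp$, the set $\ell \cup m$ is already a subspace, because a line meeting $\ell \cup m$ in two points that are not both on $\ell$ nor both on $m$ would force a triangle, which is impossible. Finally, if $a,b,c$ are pairwise non-collinear, then no line carries two of them, so $\{a,b,c\}$ is a subspace trivially. In every case the span is a single line, a pair of concurrent lines, or just three points; and since $\cH_2$ is thick it has strictly more points than any such configuration, so the span is always proper. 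Consequently no three points generate $\cH_2$, giving $\gr(\cH_2) \geq 4$, and together with $\gr(\cH_2) \leq 4$ this yields $\gr(\cH_2) = 4$.

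The one place requiring genuine care is the closure step in each case, namely the verification that no further point is forced into the span; this is precisely where the absence of triangles in a generalized quadrangle is used, and it is the only non-formal ingredient. Beyond that bookkeeping I anticipate no real obstacle: the thickness of $\cH_2$ (guaranteed by $\cH$ being non-degenerate of rank $2$) ensures at once that each of the three candidate spans is properly contained in the point set of $\cH_2$.
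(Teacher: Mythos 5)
Your proposal is correct and takes essentially the same route as the paper: the bound $\gr(\cH_n)\leq 2^n$ is deduced exactly as in the paper from Theorem \ref{k=n; 1} together with $|F_n|=2^n$, and for $n=2$ the paper likewise invokes the fact that no generalized quadrangle can be generated by fewer than four points. The only difference is that the paper cites this last fact as well known, whereas you supply its (correct) elementary proof via the case analysis of the span of three points.
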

\begin{proof}
The general claim immediately follows from Theorem \ref{k=n; 1} and the fact that $|F_n|=2^n$. When $n=2$, the dual polar space $\cH_2$ is a generalized quadrangle; no generalized quadrangle can be generated by less than $4$ points.
\end{proof}

\subsection{The case $k<n$}\label{herm k<n}

The next theorem contains part (1) of Theorem \ref{MT1}. It is based on a result Blok and Cooperstein \cite{BC2012}, where it is stated that if $\FF\neq \FF_4$ and $d = 0$ then $\gr(\cH_k) = {{2n}\choose k}$. We shall prove that the same holds for any value of the defect $d$, modulo replacing $2n$ with $2n+d$ in the binomial coefficient.

\begin{theorem}\label{gr herm grass non-deg}
Suppose that $\FF\not=\FF_4.$ Then $\gr(\cH_k)={{2n+d}\choose k}$ ($ = d$ when $d$ is infinite) for every $k = 1, 2,..., n-1$.
\end{theorem}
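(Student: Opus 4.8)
The plan is to prove the two inequalities $\gr(\cH_k)\geq \binom{N}{k}$ and $\gr(\cH_k)\leq \binom{N}{k}$ separately, where $N=2n+d$. The lower bound is immediate: for $k<n$ the geometry $\cH_k$ carries the Pl\"ucker embedding $\varepsilon_k$ into $\PG(\bigwedge^kV)$ (from \cite{ILP}, as recalled in the introduction), of dimension $\dim\bigwedge^kV=\binom{N}{k}$; since $\gr(\cH_k)\geq\er(\cH_k)\geq\dim(\varepsilon_k)$, we obtain $\gr(\cH_k)\geq\binom{N}{k}$. When $d$ is infinite this reads $\gr(\cH_k)\geq d$, as $\binom{N}{k}=N=d$. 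So everything reduces to the upper bound, which I would attack by induction, peeling off one unit of defect at a time with the defect-zero result of Blok and Cooperstein \cite{BC2012} as base case.

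For the inductive step assume $d>0$ and $2\leq k\leq n-1$ (the case $k=1$ being $\gr(\cH)=2n+d$ by Corollary \ref{t:k=1 symplectic even}, and $n=2$ forcing $k=1$). By Lemma \ref{easy}, $\cH$ contains a non-degenerate hyperplane $H$, itself Hermitian of rank $n$ but of defect $d-1$. Fix a point $p_0\notin H$. Proposition \ref{proposition1} then says that $S_k(H)\cup S_k(p_0)\cup\widehat{G}_{H,p_0}$ spans $\cH_k$, where $\widehat{G}_{H,p_0}$ has the cardinality of a generating set $G_{H,p_0}$ of the $(k-2)$-Grassmannian of $H\cap p_0^\perp$. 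Since $S_k(H)$ and $S_k(p_0)$ are subspaces of $\cH_k$, I would replace each by a minimal generating set of the geometry it carries: here $\cP_k(H)=H_k$ (because $\rank(H)=n>k$), while $S_k(p_0)$ induces the $(k-1)$-Grassmannian of the residue $\Res(p_0)$. This yields
\[
\gr(\cH_k)\leq \gr(H_k)+\gr(\Res(p_0)_{k-1})+\gr\big((H\cap p_0^\perp)_{k-2}\big).
\]

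The next step is bookkeeping on ranks and defects feeding the inductive hypothesis. The space $H_k$ is rank $n$, defect $d-1$, so $\gr(H_k)=\binom{N-1}{k}$; the residue $\Res(p_0)$ is Hermitian of rank $n-1$ and defect $d$ on a space of dimension $N-2$ (and $H\cap p_0^\perp\cong\Res(p_0)$), giving $\gr(\Res(p_0)_{k-1})=\binom{N-2}{k-1}$ and $\gr((H\cap p_0^\perp)_{k-2})=\binom{N-2}{k-2}$, the last being $1=\binom{N-2}{0}$ when $k=2$. Two applications of Pascal's rule, $\binom{N-2}{k-1}+\binom{N-2}{k-2}=\binom{N-1}{k-1}$ and then $\binom{N-1}{k}+\binom{N-1}{k-1}=\binom{N}{k}$, collapse the right-hand side to exactly $\binom{N}{k}$. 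Since for each of the three reduced spaces twice the rank plus the defect equals $N-1$ or $N-2$, a single induction on $N=2n+d$ closes the finite-defect case.

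The hard part will be the infinite-defect case, where $N$ is an infinite cardinal and the passage $d\mapsto d-1$ decreases nothing, so induction on $N$ collapses. Here I would run a transfinite induction along a maximal well-ordered chain $(K_\delta)_{\delta\leq\omega}$ of nice subspaces of $\cH$ (which exists by Theorem \ref{4}), exactly as in the proof of Proposition \ref{t:k=1}: each $K_\delta$ is Hermitian of rank $n$ and defect $|\delta|$, $K_\omega=\cH$, at successor ordinals $K_\delta$ is a non-degenerate hyperplane of $K_{\delta+1}$ so the displayed reduction applies verbatim, and limit ordinals are handled by unions of the generating sets already built (a rank-$k$ singular subspace, being spanned by finitely many points, lies in one earlier term of the chain). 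The residue and intersection terms at each successor step have rank $n-1$, so an outer induction on the rank supplies their generating ranks, the rank-two base again being the $k=1$ statement. The genuine obstacles are therefore twofold: organizing this nested induction (on $N$ for finite defect, on the rank together with the transfinite chain for infinite defect) so that every invocation of the inductive hypothesis is legitimate with the invariant strictly decreasing, and verifying the geometric identifications underpinning the reduction — that $S_k(H)$ and $S_k(p_0)$ really are subspaces of $\cH_k$ isomorphic to $H_k$ and $\Res(p_0)_{k-1}$, so that generating ranks may be substituted term by term.
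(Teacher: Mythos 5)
Your proposal is correct and matches the paper's proof essentially step for step: the lower bound via the Pl\"ucker embedding, the upper bound via the decomposition $S_k(H)\cup S_k(p_0)\cup\widehat{G}_{H,p_0}$ of Proposition \ref{proposition1} applied to a non-degenerate hyperplane of defect one less (Lemma \ref{easy}), Pascal's rule collapsing $\binom{N-1}{k}+\binom{N-2}{k-1}+\binom{N-2}{k-2}$ to $\binom{N}{k}$, the Blok--Cooperstein defect-zero result as base case, $k=1$ via Proposition \ref{t:k=1}, and a transfinite induction along a maximal well-ordered chain (Theorem \ref{4}) with an outer induction on the rank. The only difference is organizational: the paper runs the transfinite induction along the chain uniformly for all defects, nested inside induction on $n$, whereas you fold the finite-defect bookkeeping into a single induction on $N=2n+d$ and reserve the chain for infinite defect---an equivalent repackaging of the same argument.
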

\begin{proof}
We only must prove that
\begin{itemize}
\item[$(\ast)$]  $\gr(\cH_k) \leq {{2n+d}\choose k}$ for every $k = 1, 2,..., n-1$.
\end{itemize}
Indeed, since $\cH_k$ admits an embedding of dimension ${{2n+d}\choose k}$ (namely the Pl\"{u}cker embedding \cite{ILP}), we have $\er(\cH_k) \geq {{2n+d}\choose k}$. By combining this inequality with $(\ast)$ we get the thesis of the theorem.

Note firstly that $(\ast)$ holds for $k = 1$ by Proposition \ref{t:k=1}. (Recall that $\mathfrak{N}(\cH)$ admits maximal well ordered chains even if $d$ is infinite, by Theorem \ref{4}.)

We proceed by induction on $n$. When $n = 2$ then necessarily $k = 1$. Hence $(\ast)$, as noticed above. Let $n>2$ and let $(K^{(\delta)})_{\delta\leq\omega}$ be a maximal well ordered chain of $\mathfrak{N}(\cH)$, where $|\omega| = d$. Recall that $K^{(\delta)}$ is a non-degenerate polar space of rank $n$. We have $\mathrm{def}(K^{(\delta)}) = |\delta|$ by Theorems \ref{3} and \ref{4}. Moreover, $K^{(\delta)}$ is Hermitian, associated to the form induced by $h$ on the subspace of $V$ corresponding to the subspace of $\PG(V)$ spanned by $K^{(\delta)}$ (see Lemma \ref{2}).

We shall prove by induction on $\delta$ that $(\ast)$ holds for $K^{(\delta)}$ for every $\delta \leq\omega$. It holds for $K^{(0)}$ by \cite{BC2012}. Suppose it holds for $K := K^{(\delta)}$ and consider $K' := K^{(\delta+1)}$. Since $(\ast)$ holds when $k = 1$, we can assume $k > 1$. By Lemma \ref{3 new} and Corollary \ref{2 bis}, the subspace $K'$ is a hyperplane of $K$. Let $p_0\in K\setminus K'$ and let $\Res(p_0)$ be the residue of $p_0$ in $K'$. By Proposition \ref{proposition1}, the set $S_k(K,p_0, \widehat{G}_{K,p_0})$ spans the $k$-Grassmannian $K'_k$ of $K'$. The $k$-Grassmannian $K_k$ of $K$ admits a generating set $G_K$ of size at most ${{2n+|\delta|}\choose k}$ by the inductive hypothesis on $\delta$. We have $K\cap p_0^\perp\cong \Res(p_0)$ and $\Res(p_0)$ has rank $n-1$ but the same defect $|\delta+1|$ as $K'$. Therefore $(k-2)$-Grassmannian $(K\cap p_0^\perp)_{k-2}$ of $K\cap p_0^\perp$ admits a generating set $G_{K,p_0}$ of size
\[|G_{K,p_0}| ~ \leq ~ {{2(n-1)+|\delta+1|}\choose{k-2}} ~ = ~ {{2n-1+|\delta|}\choose{k-2}}\]
by the inductive hypothesis on $n$. Accordingly, $|\widehat{G}_{K,p_0}| \leq {{2n-1+|\delta|}\choose{k-2}}$. (This also holds when $k = 2$: indeed in this case $G_{K,p_0} = \{\emptyset\}$ has size $1 = {{2n-1+|\delta|}\choose 0}$ and $\widehat{G}_{K,p_0}$ consists of a single line.) Finally, $S_k(p_0)$ is the point-set of the $(k-1)$-Grassmannian of $\Res(p_0)$. By the inductive hypothesis on $n$ we can generate it by a subset $G_{p_0} \subseteq S_k(p_0)$ of size
\[|G_{p_0}| ~ \leq ~ {{2(n-1)+|\delta+1|}\choose{k-1}} ~= ~ {{2n-1+|\delta|}\choose{k-1}}.\]
To sum up, the $k$-Grassmannian $K'_k$ of $K'$ can be generated by a set $G_K\cup\widehat{G}_{K,p_0}\cup G_{p_0}$ of size
\begin{multline*} |G_K\cup\widehat{G}_{K,p_0}\cup G_{p_0}| \leq {{2n+|\delta|}\choose k} + {{2n-1+|\delta|}\choose {k-1}} + {{2n-1+|\delta|}\choose {k-2}}=\\ = {{2n+|\delta|}\choose k} + {{2n+|\delta|}\choose {k-1}}= {{2n+|\delta|+1}\choose {k}} = {{2n+|\delta+1|}\choose {k}}
\end{multline*}
So $(\ast)$ hold for $K'$ too. (Needless to say, when $\delta$ is infinite the above computations amount to the following triviality: $|\delta|+|\delta|+|\delta| = |\delta| + |\delta| = |\delta|$.) In order to finish the proof we must consider the case of $K^{(\delta)}$ with $\delta$ a limit ordinal. In this case $\delta$ is infinite and we have $S_k(K^{(\delta)}) = \bigcup_{\gamma < \delta}S_k(K^{(\gamma)})$. By the inductive hypothesis, the $k$-Grassmannian $K^{(\gamma)}_k$ can be spanned by a subset $G_\gamma$ of size $|G_\gamma| \leq {{2n+|\gamma|}\choose k}$. Clearly, $G_\delta := \bigcup_{\gamma < \delta}G_\gamma$ spans $K^{(\delta)}_k$. However, ${{2n+|\gamma|}\choose k} \leq |\delta|$ for every $\gamma < \delta$, because $\delta$ is infinite. It follows that $|G_\delta|\leq |\delta| = {{2n+|\delta|}\choose k}$. The proof is complete.
\end{proof}

\section{Orthogonal  Grassmannians}\label{OG}

In this section $\cQ$ is a non-degenerate orthogonal polar space of finite rank $n \geq 2$ and defect $d$ and $\cQ_k$ is its $k$-Grassmannian, for $1\leq k\leq n$. We will only consider the cases $k = n$ and $k = 2$. When $k = 1$ we know that $\gr(\cQ_1) = \er(\cQ_1) = 2n+d$ by Corollary \ref{t:k=1 symplectic even}; we have nothing to add to that. Regretfully, we cannot offer any interesting new results on the case $2 < k < n$.

Henceforth $\FF$ stands for the underlying field of $\cQ$. Likewise in the Hermitian case, we can regard $\cQ$ as the polar space associated to a non-degenerate quadratic form $q:V\rightarrow\FF$ of Witt index $n$ and defect $\mathrm{def}(q) = \mathrm{def}(\cQ) = d$, for a $(2n+d)$-dimensional $\FF$-vector space $V$. We can also assume to have chosen a basis $B = (e_1, e_2,... e_{2n}, e_{2n+1},...)$ of $V$ such that $q$ can be expressed as follows with respect to $B$:
\begin{equation}\label{forma quadratica}
 q(\sum_{i > 0}e_ix_i)  =  \sum_{i=1}^nx_{2i-1}x_{2i}+  q_0(\sum_{j> 2n}e_jx_j)
\end{equation}
with $q_0$ anisotropic, namely $q_0(v) \neq 0$ for every non-zero vector $v \in V_0 := \langle e_{2n+1}, e_{2n+2},...\rangle_V$. It goes without saying that $q_0$ is the form induced by $q$ on $V_0$. Clearly, $V_0 := \{e_1,..., e_{2n}\}^\perp$ is the orthogonal of $\langle e_1,..., e_{2n}\rangle_V$ with respect to (the bilinearization of) $q$.

\subsection{Two sub-defects when $\ch(\FF) = 2$}

Let $f:V\times V\rightarrow \FF$ be the bilinearization of $q$. We recall that when $\ch(\FF)\neq 2$ the form $f$ is non-degenerate, with the same Witt index and defect as $q$, and defines the same polar space as $q$, namely $\cQ$.

On the other hand, let $\ch(\FF) = 2$. Then $f$ is a possibly degenerate alternating form and defines a (possibly degenerate) polar space which properly contains $\cQ$ as a subspace. Let $R = \Rad(f)$ be the radical of $f$. Then $R \subseteq V_0$. Let $V'_0$ be a complement of $R$ in $V_0$ and put
\[d_1 := \dim(R), \hspace{8 mm} d_2 := \dim(V'_0) = \mathrm{cod}_{V_0}(R) = \mathrm{cod}_V(\langle\{ e_1,..., e_{2n}\}\cup R\rangle_V).\]
So, $d = d_1 + d_2$. Clearly, the numbers $d_1$ and $d_2$ do not depend on the particular choice of the basis $B$; they only depend on the form $q$. Rather, they only depend on the equivalence-and-proportionality class of $q$. So, as $q$ is uniquely determined by $\cQ$ modulo equivalence and proportionality, these numbers only depend on $\cQ$. Borrowing two words from a terminology popular among finite geometers, we call $d_1$ and $d_2$ the {\em parabolic sub-defect} and the {\em elliptic sub-defect} of $\cQ$ respectively, denoting them by $\mathrm{def}_1(\cQ)$ and $\mathrm{def}_2(\cQ)$.

Note that the form induced by $q_0$ on $R$ is necessarily expressed as a sum of monomials of degree $2$. On the other hand, suppose that $d_2 > 0$ and let $v\in V'_0\setminus\{0\}$. Then $V_0'\setminus v^\perp\neq \emptyset$. Indeed if otherwise then $v^\perp = V$, namely $v\in R$; impossible, since $V'_0\cap R = \{0\}$. Therefore, if $V'_0 = \oplus_{j\in J}V'_{0,j}$ is a decomposition of $V'_0$ as a sum of mutually orthogonal subspaces then $\dim(V'_{0,j}) \geq 2$ for every $j\in J$. Consequently, $d_2\geq 2$.

\begin{lemma}\label{V'_0 decomposition}
For every positive integer $m$ such that $2m \leq d_2$, we can always choose the summands $V'_{0,j}$ of an orthogonal decomposition $V'_0 = \oplus_{j\in J}V'_{0,j}$ in such a way that $m$ of them are $2$-dimensional. In particular if $d_2$ is finite then it is even and $V'_0$ decomposes as the direct sum of $d_2/2$ mutually orthogonal $2$-dimensional subspaces.
\end{lemma}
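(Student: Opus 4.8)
The plan is to reduce the statement to the classical structure theorem for non-degenerate alternating forms. The first step is to observe that the restriction $f|_{V'_0}$ of the bilinearization $f$ to $V'_0$ is a non-degenerate alternating form. Indeed, since $V_0$ is orthogonal to the hyperbolic part $\langle e_1,\dots,e_{2n}\rangle_V$ and $f$ is non-degenerate there, the radical of $f|_{V_0}$ is exactly $R$; as $V'_0$ is a complement of $R$ in $V_0$, any vector of $V'_0$ orthogonal to all of $V'_0$ is orthogonal to all of $V_0$ as well (its pairing with $R$ being zero automatically), hence lies in $V'_0\cap R=\{0\}$. Thus $f|_{V'_0}$ is non-degenerate, and it is alternating because $\ch(\FF)=2$. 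This is the structure already exploited in the paragraph preceding the lemma to force $\dim(V'_{0,j})\ge 2$, and it is what we shall use here.

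Next I would peel off hyperbolic planes one at a time. Given a non-zero vector $u\in V'_0$, non-degeneracy furnishes $w$ with $f(u,w)\ne 0$, so $H:=\langle u,w\rangle_V$ is a $2$-dimensional $f$-non-degenerate subspace. The key sublemma is that splitting off a \emph{finite-dimensional} non-degenerate subspace leaves a non-degenerate complement: since $\dim H<\infty$ and $f|_H$ is non-degenerate, every $v\in V'_0$ has a unique component in $H$ matching its pairing against $H$, whence $V'_0=H\oplus H^\perp$; and any vector of $H^\perp$ orthogonal to all of $H^\perp$ is then orthogonal to all of $V'_0$, hence zero, so $f|_{H^\perp}$ is non-degenerate. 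Crucially this argument uses only that $H$ is finite-dimensional, so it is valid even when $d_2$ is infinite.

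Iterating this construction $m$ times — legitimate precisely because $2m\le d_2$ guarantees the complement stays non-zero at each of the $m$ steps — yields an $f$-orthogonal decomposition $V'_0=H_1\perp\cdots\perp H_m\perp W$ with each $H_i$ two-dimensional and $W:=(H_1\perp\cdots\perp H_m)^\perp$ again non-degenerate. Decomposing $W$ into orthogonal summands in any way completes this to a full orthogonal decomposition of $V'_0$ possessing the $m$ required $2$-dimensional members $H_1,\dots,H_m$. For the final assertion, when $d_2$ is finite the peeling process strictly lowers the dimension by $2$ at each step, so it terminates; it can only terminate at $W=0$, since a non-degenerate alternating form on a non-zero space always admits a vector $u$ and a partner $w$ with $f(u,w)\ne 0$ and hence another hyperbolic plane to remove. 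Therefore $d_2$ is even and $V'_0$ is the orthogonal direct sum of the $d_2/2$ hyperbolic planes produced.

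I expect the only genuinely delicate point to be the bookkeeping in the infinite-dimensional case, namely the verification that removing a finite-dimensional non-degenerate subspace leaves a non-degenerate complement; everything else is the standard symplectic-basis argument, and the remaining checks (that $f|_{V'_0}$ is alternating and non-degenerate, and that hyperbolic pairs exist) are routine.
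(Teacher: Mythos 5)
Your proposal is correct and follows essentially the same route as the paper: both arguments iteratively peel off $2$-dimensional non-degenerate subspaces $\langle u,w\rangle$ (with $f(u,w)\neq 0$) and pass to the perpendicular complement inside $V'_0$, relying on the observation preceding the lemma that no non-zero vector of $V'_0$ is orthogonal to all of $V'_0$ since $V'_0\cap R=\{0\}$. Your explicit sublemma that splitting off a finite-dimensional non-degenerate subspace leaves a non-degenerate complement, and your verification of the termination and parity claims in the finite case, merely spell out details the paper's proof leaves implicit.
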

\begin{proof}
Pick $v_1\in V'_0\setminus \{0\}$. As remarked above, there exists $w_1\in V'_0$ such that $v_1\not\perp w_1$. Put $V'_{0,1} = \langle v_1, w_2\rangle_V$. By the above, $V'_0\setminus x^\perp \neq \emptyset$ for every $x\in V'_{0,1}$. Hence $W_1 := V_{0,1}'^\perp\cap V'_0$ is a complement of $V'_{0,1}$ in $V'_0$. If $m > 1$ we can repeat the previous construction in $W_1$ instead of $V'_0$, thus obtaining a second $2$-subspace $V'_{0,2}$ orthogonal to $V'_{0,1}$. Again, $W_2 := V_{0,2}'^\perp\cap W_1$ is a complement of $V'_{0,2}$ in $W_1$. If $m > 2$ we switch to $W_2$ and go on in the same way.
\end{proof}

Recalling that $q_0$ is anisotropic, by Lemma \ref{V'_0 decomposition} we immediately obtain that, if $\FF$ is perfect, then $(0,0)$, $(1, 0)$, $(0,2)$ are the only possibilities for the pair $(d_1,d_2)$. They correspond to the cases usually called {\em hyperbolic}, {\em parabolic} and {\em elliptic} respectively.

\begin{prob}
Find a characterization of $d_1 = \mathrm{def}_1(\cQ)$ and $d_2 = \mathrm{def}_2(\cQ)$ in the same vein of Definition \label{defect} and Section \ref{anisotropic defect}, without any explicit reference to the form $q$.
\end{prob}

\subsection{The case $k=n$}\label{orthog dps}

The next theorem embodies Theorem \ref{MT2} of the Introduction.

\begin{theorem}\label{orth-dps}
Let $\mathrm{def}(\cQ) = d > 0$. When $\ch(\FF) = 2$ assume furthermore that $\mathrm{def}_2(\cQ) > 0$. Then $\gr(\cQ_n) \leq 2^n$.
\end{theorem}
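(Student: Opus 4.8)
The plan is to mirror the proof of Theorem \ref{k=n; 1}, its Hermitian counterpart, replacing the hyperbolic-line argument used there to manufacture a suitable non-degenerate hyperplane by an explicit construction in which the hypothesis $\mathrm{def}_2(\cQ) > 0$ (in characteristic $2$) turns out to be exactly what is needed. Fix a frame $F = \{p_1,\dots,p_n,p'_1,\dots,p'_n\}$ with $p_i = [e_{2i-1}]$, $p'_i = [e_{2i}]$ as in \eqref{forma quadratica}, and let $F_n$ be the associated apartment, so $|F_n| = 2^n$. As in the Hermitian case I isolate the property \textbf{(a)}: $S_n(p_i)\cup S_n(p'_i)\subseteq \langle F_n\rangle_{\cQ_n}$ for every $i$, and split the argument into a self-contained implication \textbf{(b)} ``(a) implies $F_n$ spans $\cQ_n$'' together with a verification of (a) by induction on $n$.

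For (b) I proceed verbatim as in Theorem \ref{k=n; 1}: property (a), together with the observation that any $X\in S_n(\cQ)$ with $X\cap p_i^\perp = X\cap p_i'^\perp =: Y$ lies on the line $\Res(Y)^\uparrow = \langle \langle Y,p_i\rangle_\cP, \langle Y,p'_i\rangle_\cP\rangle_{\cQ_n}$ (whose two named points both lie in $\langle F_n\rangle_{\cQ_n}$ by (a)), yields \textbf{(c)}: $\langle F_n\rangle_{\cQ_n}$ contains every $X$ with $X\cap p_i^\perp = X\cap p_i'^\perp$ for some $i$. It then suffices to exhibit a non-degenerate hyperplane $K$ of rank $n$ with $K\cap p_1^\perp = K\cap p_1'^\perp$: every $X\in S_n(K)$ satisfies $X\subseteq K\subseteq p^\perp$, hence $X\cap p_1^\perp = X\cap p_1'^\perp$, so $S_n(K)\subseteq \langle F_n\rangle_{\cQ_n}$ by (c), while $S_n(K)$ spans $\cQ_n$ by Proposition \ref{k=n; 0}; thus $\langle F_n\rangle_{\cQ_n} = \cQ_n$.

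The construction of $K$ is the only genuinely new point, and the non-degeneracy check is where I expect the main obstacle to lie, since this is precisely where $\mathrm{def}_2(\cQ)>0$ becomes indispensable. Pick $w_0\in V'_0\setminus\{0\}$ — available exactly because $\mathrm{def}_2(\cQ) = \dim(V'_0) > 0$ (when $\ch(\FF)\neq 2$ one has $R = 0$, so $V'_0 = V_0\neq 0$ as $d>0$). Set $\lambda := q_0(w_0)\neq 0$, $p := [e_1+\lambda e_2]$ and $K := p^\perp\cap\cQ$. Since $q(e_1+\lambda e_2) = \lambda\neq 0$, the point $p$ is non-singular and $p\notin \Rad(f)$, so $K$ is a proper hyperplane; moreover for $x\in K$ one has $f(x,e_1)+\lambda f(x,e_2)=0$, giving $x\perp p_1\Leftrightarrow x\perp p'_1$ and hence $K\cap p_1^\perp = K\cap p_1'^\perp$. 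The vector $u := e_1-\lambda e_2 + w_0$ satisfies $q(u) = -\lambda+\lambda = 0$ and $f(u,p) = \lambda-\lambda = 0$, and is orthogonal to $e_3,e_5,\dots,e_{2n-1}$, so $\langle e_3,e_5,\dots,e_{2n-1},u\rangle_V$ is a totally singular $n$-subspace inside $K$, whence $\rank(K) = n$. Non-degeneracy is automatic when $\ch(\FF)\neq 2$, since then $f(p,p) = 2\lambda\neq 0$ forces $p\notin p^\perp$. When $\ch(\FF)=2$ the radical of the bilinearization of $q|_{p^\perp}$ equals $\langle p\rangle\oplus R$, and a short computation shows $K$ is non-degenerate if and only if $\lambda\notin q_0(R)$; this holds because, for $r\in R$, an equality $q_0(w_0) = q_0(r)$ would give $q_0(w_0+r) = q_0(w_0)+q_0(r)+f(w_0,r) = 0$ (as $r\in R$), forcing $w_0 = r\in R$ against $V'_0\cap R = \{0\}$. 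Note that if $V'_0 = \{0\}$ the point $[u]$ would instead fall into the radical and $K$ would be degenerate, which is why the hypothesis cannot be dropped.

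Finally I prove (a) by induction on $n$, following Theorem \ref{k=n; 1}. For $n = 2$ each of $\Res(p_i)^\uparrow$ and $\Res(p'_i)^\uparrow$ is a line of $\cQ_2$ carrying two distinct members of $F_2$, which therefore span it. For $n > 2$ I pass to $V' := \{e_1,e_2\}^\perp$: the induced form has Witt index $n-1$, the same defect $d>0$, and — crucially in characteristic $2$ — the same sub-defects $d_1,d_2$, since one checks $\Rad(f|_{V'}) = R$. Hence the associated polar space $\cQ'$ satisfies the inductive hypothesis, and its apartment $F'_{n-1} = \{X\cap[V'] : X\in F_n,\ p_1\in X\}$ spans $\cQ'_{n-1}$ by the rank $n-1$ case. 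Transporting this through the isomorphism $S_n(p_1) = \Res(p_1)^\uparrow\cong \cQ'_{n-1}$, $X'\mapsto \langle p_1,X'\rangle_\cP$, yields $S_n(p_1)\subseteq \langle F_n\rangle_{\cQ_n}$, and symmetrically for $p'_1$ and for every other pair $p_i,p'_i$. This establishes (a), and combined with (b) completes the proof.
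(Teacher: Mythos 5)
Your proposal is correct, but it takes a genuinely different route from the paper's. The paper proves Theorem \ref{orth-dps} in a few lines by reduction: invoking the Maximal Chain condition (via Theorems \ref{3} and \ref{4}) together with Corollary \ref{clkn bis}, it passes to a minimal nice subspace $K$ of $\cQ$ --- of defect $1$ when $\ch(\FF)\neq 2$, or elliptic with $(\mathrm{def}_1(K),\mathrm{def}_2(K))=(0,2)$ when $\ch(\FF)=2$, the latter existing by Lemma \ref{V'_0 decomposition}, which is exactly where $\mathrm{def}_2(\cQ)>0$ enters --- and then quotes the literature (\cite{BB98,CS97}, respectively \cite{CS01,B11}) for $\gr(K_n)=2^n$. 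You instead transplant the paper's own Hermitian argument (Theorem \ref{k=n; 1}) and prove the stronger, essentially self-contained statement that an apartment $F_n$ generates $\cQ_n$; the only genuinely new ingredient is your explicit hyperplane $K=p^\perp\cap\cQ$ with $p=[e_1+\lambda e_2]$, $\lambda=q_0(w_0)$, $w_0\in V'_0\setminus\{0\}$, playing the role of the Hermitian Lemma \ref{easy}, and your verifications all check out: $q(e_1+\lambda e_2)=\lambda\neq 0$; the totally singular $n$-space $\langle e_3,e_5,\dots,e_{2n-1},u\rangle_V$ with $u=e_1-\lambda e_2+w_0$ gives $\rank(K)=n$; $K\cap p_1^\perp=K\cap p_1'^\perp$; and in characteristic $2$ the radical of the bilinearization on $p^\perp$ is $\langle p\rangle\oplus R$, so non-degeneracy of $K$ amounts to $\lambda\notin q_0(R)$, which your anisotropy argument ($q_0(w_0+r)=q_0(w_0)+q_0(r)$ since $f(w_0,r)=0$) secures from $V'_0\cap R=\{0\}$ --- this is precisely where $d_2>0$ is indispensable, as you note. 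The inductive step is also sound, since $\Rad(f|_{\{e_1,e_2\}^\perp})=R$, so $d$, $d_1$, $d_2$ all persist. One half-line worth adding: your equivalence between non-degeneracy of $K$ and the absence of singular vectors in $\Rad(f|_{p^\perp})$ tacitly uses that $p^\perp$ is spanned by its singular vectors (true here for $n\geq 2$, e.g.\ via vectors of the form $e_3+z-q_0(z)e_4$ with $z\in V_0$). As for what each approach buys: yours is self-contained modulo Proposition \ref{k=n; 0} and yields the orthogonal analogue of Theorem \ref{k=n; 1} --- apartments generate $\cQ_n$ under these hypotheses --- which the paper never states in this generality (it records the apartment fact only in special cases, in the Note after Corollary \ref{clkn bis}); the paper's proof buys brevity and a uniform treatment of arbitrary, even infinite, defect through the nice-subspace and (MC) machinery, although your construction, being local in finitely many coordinates, handles infinite $d$ equally well.
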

\begin{proof}
Let $d > 0$. By Theorems \ref{3} and \ref{4}, the Maximal Chain condition (MC) holds in $\cQ$. Suppose firstly that $\ch(\FF) \neq 2$ and let $K$ be a nice subspace of $\cQ$ of defect $\mathrm{def}(K) = 1$ (clearly, $K = \cQ$ if $d = 1$).  For instance, $K$ can be the polar space defined by $q$ on $\langle e_1,..., e_{2n}, v\rangle_V$, for any $v\in V_0\setminus\{0\}$. Then $\gr(K_n) = 2^n$ by \cite{BB98, CS97}. Therefore $\gr(\cQ_n) \leq 2^n$ by Corollary \ref{clkn bis}.

On the other hand, let $\ch(\FF) = 2$ and $d_2 > 0$. By Lemma \ref{V'_0 decomposition} we can choose a nice subspace $K$ of $\cQ$ with $\mathrm{def}_1(K) = 0$ and $\mathrm{def}_2(K) = 2$. Then $\gr(K_n) = 2^n$ by \cite{CS01, B11}. Hence $\gr(\cQ_n) \leq 2^n$ by Corollary \ref{clkn bis}.
\end{proof}

\subsection{Restriction to a subfield of $\FF$}\label{real sec}

Before to turn to the case $k = 2 < n$ we need to fix some terminology and notation, to be exploited in the discussion of that case.

Recall that $V(N,\FF)$ is the $\FF$-vector space of almost everywhere null mappings from a given set $I$ of cardinality $N$ to $\FF$, a mapping $v:I\rightarrow\FF$ being almost everywhere null if $v(i) = 0$ for all but finitely many choices of $i\in I$. Given a subfield $\FF_0$ of $\FF$, the $\FF_0$-linear combinations of the vectors of $B$ form an $\FF_0$-vector space $V(N,\FF_0) \subseteq V(N,\FF)$.

In the setting adopted for orthogonal polar spaces at the beginning of this section, there is no real loss in assuming that $V = V(N,\FF)$ where $N := 2n+d$ and that $B$ is the natural basis of $V(N,\FF)$. Suppose that, modulo equivalence and proportionality if necessary, we have chosen the form $q$ in such a way that it admits a description as in \eqref{forma quadratica} and $q(v)\in \FF_0$ for every vector $v\in V(N,\FF_0)$. Then $q$ induces a quadratic form $q_{|\FF_0}$ on $V(N,\FF_0)$ with the same Witt index $n$ and the same defect $d$ as $q$. Moreover, when $\ch(\FF) = 2$ we also have $\mathrm{def}_1(q_{|\FF_0}) = \mathrm{def}_1(q) = d_1$ and $\mathrm{def}_2(q_{|\FF_0}) = \mathrm{def}_2(q) = d_2$.

As in the second part of Subsection \ref{Main Sec}, we denote by $\cQ(\FF)$ respectively $\cQ_k(\FF)$ the polar space and the $k$-Grassmannian associated to $q$ and by $\cQ(\FF_0)$ respectively $\cQ_k(\FF_0)$ the corresponding geometries arising from $q_{|\FF_0}$. The following is a completion of Definition \ref{subgeometry}.

\begin{definition}\label{real}
We say that a subspace $X$ of $\PG(N-1,\FF)$ is \emph{$\FF_0$-rational} or \emph{defined over $\FF_0$} if it admits a basis consisting of (points of $\PG(N-1,\FF)$ represented by) vectors of $V(N,\FF_0)$. Accordingly, a subset of $S_k(\cQ(\FF))$ is $\FF_0$-{\em rational} if all of its elements are $\FF_0$-rational. We say that an element (a subset) of $S_k(\cQ(\FF))$ is {\em generated over} $\FF_0$ (also {\it $\FF_0$--generated}) if it belongs (is contained) in $\langle G\rangle_{\cQ_k(\FF)}$ for an $\FF_0$-rational subset of $S_k(\cQ(\FF))$. In particular, $\cQ_k(\FF)$ is $\FF_0$--{\em generated} if it admits an $\FF_0$-rational generating set (compare Definition \ref{subgeometry}).
\end{definition}

Regarded $\PG(N-1,\FF_0)$ as a subgeometry of $\PG(N-1,\FF)$, every subspace $X$ of $\PG(N-1,\FF_0)$ is a subset of $\PG(N-1,\FF_0)$. So, we can consider its span in $\PG(N-1,\FF)$, which we will denote by $\langle X\rangle_\FF$ and call the $\FF$-{\em extension} of $X$. Clearly,  $\dim(\langle X\rangle_\FF) = \dim(X)$ and $\langle X\rangle_\FF$ is $\FF_0$-rational. Conversely, if $X$ is a subspace of $\PG(N-1,\FF)$ then $X_{|\FF_0} := X\cap\PG(N-1,\FF_0)$ is a subspace of $\PG(N-1,\FF_0)$, which we call the $\FF_0$-{\em restriction} of $X$; we have $\dim(X) = \dim(X_{|\FF_0})$ if and only if $X$ is the $\FF$-extension of $X_{|\FF_0}$ if and only if $X$ is $\FF_0$-rational.

In view of the above, when no dangerous confusion arises, we can freely regard subspaces of $\PG(N-1,\FF_0)$ as subspaces of $\PG(N-1,\FF)$ and conversely $\FF_0$-rational subspaces of $\PG(N-1,\FF)$ as subspaces of $\PG(N-1,\FF_0)$, thus implicitly identifying a subspace of $\PG(N-1,\FF_0)$ with its $\FF$-extensions and an $\FF_0$-rational subspace of $\PG(N-1,\FF)$ with their $\FF_0$-restrictions. In this way we can regard $\cQ_k(\FF_0)$ as a subgeometry of $\cQ_k(\FF)$, as we have done in Subsection \ref{Main Sec}; in this free setting, $\cQ_k(\FF)$ is $\FF_0$-generated if and only if it is spanned by a subset of $\cQ_k(\FF_0)$ if and only if $\cQ_k(\FF) = \langle \cQ_k(\FF_0)\rangle_{\cQ_k(\FF)}$.

\subsection{The case $k=2<n$}\label{orth k<n}

Cooperstein \cite{C98b} has proved for $k=2 < n$ that if $\FF$ is a prime finite field then $\gr(\cQ_2)={N\choose 2}$ for $0\leq d\leq 2$.
Cardinali and Pasini~\cite{IP13} have determined the embedding rank of $\cQ_k$ for $k=2<n$ and $k=3<n$ under the hypothesis $d \leq 1$, showing that $\er(\cQ_2)={N\choose 2}$, while $\er(\cQ_3)={N\choose 3}$, if $\FF$ is a perfect field of positive characteristic or a number field (and $\FF\not\cong \FF_2$ for $k=3$). This provides a lower bound on the generating rank.

In view of \cite{C98b}, it is natural to ask whether $\cQ_k(\FF)$ can be generated over a given subfield $\FF_0$ of $\FF$; in particular, this would help to determine $\gr(\cQ_k(\FF))$ for $\FF$ a non-prime finite field. The previous problem is investigated in \cite[Theorem 3.1 and Proposition 3.2]{BPa01}. For the case $k=2$, Blok and Pasini~\cite[Corollary 5.4]{BPa01} have proved that when $\FF$ is a finite
extension of the field $\FF_0$ by means of elements $\alpha_1,\ldots,\alpha_t$ then $\gr(\cQ_2(\FF))\leq\gr(\cQ_2(\FF_0))+t$; see~\cite[Corollary 5.4]{BPa01}. As we shall make extensive use of this result, we recall it in detail in the next proposition. Note that part (a) of this proposition is contained in the proof of \cite[Corollary 5.4]{BPa01}, while part (b) is the original statement in \cite{BPa01}.

\begin{prop}[Corollary 5.4, \cite{BPa01}]\label{cor5.4}
Let $\mathrm{def}(\cQ(\FF)) = d \leq 1$ and let $\FF_0$ be a subfield of $\FF$.
\begin{enumerate}[{\rm (a)}]
\item\label{Co54-a}  Suppose that $\FF=\FF_0(\varepsilon)$ is a simple extension of $\FF_0$ and let $G$ be a generating set for $\cQ_2(\FF_0)$. Let $\ell_0$ and $\ell_1$ be two $\FF_0$-rational lines of $\cQ(\FF)$ such that $\ell_1^\perp\cap\ell_0 = \emptyset$ (i.e. $\ell_0$ and $\ell_1$ are opposite). Then $\cQ_2(\FF)$ is generated by any set of the form $G\cup\{t\}$ where $t=\langle p,q\rangle_{\cQ_2(\FF)}$, $p\in\ell_0$, $q=p^{\perp}\cap\ell_1$ and neither $p$ nor $q$ are $\KK$-rational, for any proper subfield $\KK$ of $\FF$ containing $\FF_0$.
\item\label{Co54-b} If the field $\FF$ is generated by adjoining $k$ elements to its subfield $\FF_0$, then the geometry $\cQ_2(\FF)$ can be generated by adding at most $k$ points to its subgeometry $\cQ_2(\FF_0)$.
\end{enumerate}
\end{prop}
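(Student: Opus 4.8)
The plan is to establish (a) first; (b) then follows by a short induction. Set $S := \langle G\cup\{t\}\rangle_{\cQ_2(\FF)}$. Since $G$ generates $\cQ_2(\FF_0)$, the subspace $S$ contains every $\FF_0$-rational line of $\cQ(\FF)$, that is, the whole subgeometry $\cQ_2(\FF_0)$ in the sense of Definition \ref{real}. The basic engine is Lemma \ref{lemma0}: whenever a singular plane $X\in S_3(\cQ)$ contains three lines of $S$ in non-concurrent position, the whole pencil-set $S_2(X)$ lies in $S$. In particular, every $\FF_0$-rational plane contains three non-concurrent $\FF_0$-rational lines, so $S_2(X)\subseteq S$ for all such planes. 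The task is thus to propagate from the $\FF_0$-rational planes to all planes of $\cQ$, the only new seed being the single non-rational line $t$.

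To organise this propagation I would parametrise a transversal family linking $\ell_0$ and $\ell_1$. Since $\ell_0,\ell_1$ are opposite, for each point $p\in\ell_0$ the point $p^\perp\cap\ell_1$ is well defined and $t_p:=\langle p,\,p^\perp\cap\ell_1\rangle_{\cQ}$ is a singular line; fixing an $\FF_0$-rational projective coordinate on $\ell_0\cong\PG(1,\FF)$, write $t_\lambda$ for the transversal through the point of parameter $\lambda\in\FF\cup\{\infty\}$, so that, after an $\FF_0$-rational change of coordinate, $t=t_\varepsilon$. All transversals with $\lambda\in\FF_0$ are $\FF_0$-rational, hence already in $S$. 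Put $\fL:=\{\lambda\in\FF : t_\lambda\in S\}$; then $\FF_0\subseteq\fL$ and $\varepsilon\in\fL$, and the goal becomes $\fL=\FF$.

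The heart of the proof, and the step I expect to be the main obstacle, is to show that $\fL$ is closed under the field operations of $\FF$. Here I would run a von Staudt--style coordinate arithmetic: given $\lambda,\mu\in\fL$, one selects auxiliary singular planes meeting the configuration $\{\ell_0,\ell_1\}$ in which three already-reachable lines sit in non-concurrent position, applies Lemma \ref{lemma0} to fill those planes, and chains the resulting pencils so that the transversal of parameter $\lambda+\mu$, respectively $\lambda\mu$, is forced into $S$ (after the obvious $\FF_0$-affine normalisations inherited from $\cQ_2(\FF_0)$). The genuinely delicate part is the bookkeeping: choosing the auxiliary planes and links, and checking at each stage both the non-concurrency required by Lemma \ref{lemma0} and the membership in $S$ of every intermediate line invoked; the hypothesis $d\leq 1$ is what guarantees the configuration has the room needed for these choices. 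Granting this, $\fL$ is a subring of $\FF$ containing $\FF_0$ and $\varepsilon$; as $\FF/\FF_0$ is finite, this subring is a field, and the requirement that $p,q$ be non-$\KK$-rational for every proper intermediate field $\KK$ forces $\varepsilon$ to be a primitive element, whence $\fL\supseteq\FF_0(\varepsilon)=\FF$. Thus $t_\lambda\in S$ for all $\lambda$, and the seeded transversals, propagated once more through Lemma \ref{lemma0}, place $S_2(X)$ in $S$ for every singular plane $X$; therefore $S=\cQ_2(\FF)$, proving (a).

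For (b), write $\FF=\FF_0(\alpha_1,\dots,\alpha_k)$ and set $\FF_i:=\FF_{i-1}(\alpha_i)$ for $1\leq i\leq k$, so that each $\FF_i/\FF_{i-1}$ is a simple extension with $\alpha_i$ primitive. Starting from a generating set of $\cQ_2(\FF_0)$, I would apply part (a) at the $i$-th stage with $(\FF_{i-1},\FF_i,\alpha_i)$ in the roles of $(\FF_0,\FF,\varepsilon)$, choosing $\FF_{i-1}$-rational opposite lines $\ell_0,\ell_1$ and a link point whose parameter is $\alpha_i$ and is non-$\KK$-rational for every proper field $\KK$ between $\FF_{i-1}$ and $\FF_i$. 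Each stage enlarges a generating set of $\cQ_2(\FF_{i-1})$ to one of $\cQ_2(\FF_i)$ by exactly one point, so after $k$ stages at most $k$ points have been added to the generating set of $\cQ_2(\FF_0)$, as claimed.
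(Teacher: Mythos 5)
First, a point of fact that frames the comparison: the paper does not prove this proposition at all. It is quoted from Blok and Pasini \cite{BPa01} — as the surrounding text explains, part (a) is extracted from the \emph{proof} of Corollary 5.4 in \cite{BPa01} and part (b) is the statement proper — so there is no internal argument in this paper against which your attempt can be matched. Judged on its own merits, your text is a strategy outline rather than a proof, and it has a genuine gap exactly at what you yourself call its heart: the closure of $\fL=\{\lambda\in\FF : t_\lambda\in S\}$ under the field operations is \emph{the} substance of the result, and you grant it ("Granting this, \dots") instead of proving it. Calling the construction of the auxiliary planes "bookkeeping" does not discharge it: this is the only place where the hypotheses $d\leq 1$, the oppositeness of $\ell_0,\ell_1$, and the non-$\KK$-rationality of $p$ and $q$ can enter, and the paper's own Theorem \ref{not-gen} shows that propagation arguments of precisely this kind can provably fail (for $Q^+_2(5,\FF)$ no amount of chaining through Lemma \ref{lemma0} from $\FF_0$-rational data reaches all lines), so the existence of suitable planes containing three non-concurrent lines of $S$ cannot be waved through on general-position grounds; it must be exhibited configuration by configuration, which is what the actual proof in \cite{BPa01} does.

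There are also two concrete errors in the part you do argue. The step "as $\FF/\FF_0$ is finite, this subring is a field" assumes a hypothesis that is nowhere given: $\FF=\FF_0(\varepsilon)$ is an arbitrary simple extension, possibly transcendental (and in part (b) the adjoined elements $\alpha_1,\dots,\alpha_k$ need not be algebraic), so a subring containing $\FF_0$ and $\varepsilon$ only yields $\FF_0[\varepsilon]$, and for $\FF=\FF_0(t)$ a rational function field your argument stops short of $\FF$; you would need a separate von Staudt configuration giving closure under multiplicative inverses, which you never supply. Second, the hypothesis on $p$ and $q$ is about those two \emph{points}; your reduction of it to "$\varepsilon$ is primitive" uses only the parameter of $p$ on $\ell_0$, and the condition on $q=p^\perp\cap\ell_1$ is never used — a sign that your parametrisation does not capture where that hypothesis is actually needed. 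Finally, the concluding step, that once all transversals $t_\lambda$ lie in $S$ every plane of $\cQ(\FF)$ can be filled (including planes containing at most one $\FF_0$-rational point, the exact obstruction exploited in the proof of Theorem \ref{not-gen}), is again asserted without argument. So the proposal correctly identifies a plausible architecture — seed, transversal family, coordinate arithmetic, propagation — but leaves unproven every step in which the theorem's hypotheses do real work.
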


\noindent
Since every finite field is a simple extension of its prime subfield, by part (\ref{Co54-b}) of Proposition \ref{cor5.4} and Cooperstein \cite{C98b} we immediately obtain the following:

\begin{prop}\label{gr k = 2, F finito}
Let $\FF$ be finite and $\mathrm{def}(\cQ(\FF)) = d \leq 1$. Then  ${{2n + d}\choose 2} \leq \gr(\cQ_2(\FF)) \leq {{2n+d}\choose 2}+1$  for any $n > 2$.
\end{prop}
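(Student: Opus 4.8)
The plan is to sandwich $\gr(\cQ_2(\FF))$ between $\binom{2n+d}{2}$ and $\binom{2n+d}{2}+1$, deriving the lower bound from an embedding and the upper bound by specialising Proposition~\ref{cor5.4}(\ref{Co54-b}) to the prime subfield. First I would dispose of the lower bound. By the general inequality $\gr(\Gamma)\geq\er(\Gamma)$ established in the introductory discussion on generation and embeddings, it suffices to exhibit a projective embedding of $\cQ_2(\FF)$ of dimension $\binom{2n+d}{2}$. Such an embedding is the Weyl embedding of $\cQ_2(\FF)$, which has dimension $\binom{2n+d}{2}$ by \cite{IP13} (recall that a finite field is perfect of positive characteristic, so the hypotheses of that reference are met for $d\leq 1$ and $n>2$). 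Hence $\gr(\cQ_2(\FF))\geq\binom{2n+d}{2}$.

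For the upper bound I would exploit that a finite field is a simple extension of its prime subfield: writing $\FF_0$ for the prime subfield $\FF_p$ of $\FF$, there is a primitive element $\varepsilon$ with $\FF=\FF_0(\varepsilon)$, so $\FF$ arises from $\FF_0$ by adjoining a single element. Applying Proposition~\ref{cor5.4}(\ref{Co54-b}) with $k=1$ then yields that $\cQ_2(\FF)$ is generated by adding at most one point to its subgeometry $\cQ_2(\FF_0)$; that is, $\gr(\cQ_2(\FF))\leq\gr(\cQ_2(\FF_0))+1$. Since $\FF_0$ is a prime finite field and $d\leq 1\leq 2$, Cooperstein's theorem \cite{C98b} gives $\gr(\cQ_2(\FF_0))=\binom{2n+d}{2}$, whence $\gr(\cQ_2(\FF))\leq\binom{2n+d}{2}+1$. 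Combining the two inequalities proves the proposition.

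At the level of this statement there is essentially no obstacle to overcome: the result is a formal assembly of two external inputs, the mathematical weight residing in Cooperstein's computation over the prime field and in the Blok--Pasini corollary recalled as Proposition~\ref{cor5.4}. The only points that require a moment's attention are checking that the hypotheses of the two cited results are simultaneously satisfied---this is immediate, since $d\leq 1$ lies within the range $0\leq d\leq 2$ allowed by \cite{C98b} and within the range $d\leq 1$ allowed by Proposition~\ref{cor5.4}, and every finite field is both perfect and a simple extension of its prime subfield---and confirming that the Weyl embedding genuinely realises the dimension $\binom{2n+d}{2}$ used for the lower bound, which is precisely the content of \cite{IP13}.
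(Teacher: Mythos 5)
Your proposal is correct and follows essentially the same route as the paper, which presents this proposition as an immediate consequence of Proposition~\ref{cor5.4}(\ref{Co54-b}) (a finite field being a simple extension of its prime subfield, so a single added point suffices) combined with Cooperstein's value $\gr(\cQ_2(\FF_0))={{2n+d}\choose 2}$ from \cite{C98b}, the lower bound coming, exactly as in your argument, from the ${{2n+d}\choose 2}$-dimensional embedding of \cite{IP13} via the inequality $\gr\geq\er$. You have merely written out the hypothesis checks that the paper leaves implicit; nothing is missing.
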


The following characterization of when $\cQ_2(\FF)$ is generated over a subfield $\FF_0$ is also consequence of part (\ref{Co54-a}) of Proposition \ref{cor5.4}

\begin{lemma}\label{gset}
 Suppose $\FF$ is a simple field extension of $\FF_0.$ A subset $G \subseteq \cQ_2(\FF_0)$ generates $\cQ_2(\FF)$ if and only if $G$ generates $\cQ_2(\FF_0)$ and there exist two opposite lines $\ell,\ell'$ of $\cQ(\FF_0)$ and a line
  $m\in\langle G\rangle_{\cQ_2(\FF)}$ of $\cQ(\FF)$ such that each of the $\FF$-extensions $\langle \ell\rangle_\FF$ and $\langle \ell'\rangle_\FF$ of $\ell$ and $\ell'$ meets $m$ in a point and neither of the points $m\cap \langle \ell\rangle_\FF$ and $m\cap\langle \ell'\rangle_\FF$ is $\KK$-rational, for any proper subfield $\KK$ of $\FF$ containing $\FF_0$.
\end{lemma}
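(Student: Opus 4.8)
The plan is to derive both implications from Proposition~\ref{cor5.4}(\ref{Co54-a}), handling the sufficiency direction directly and splitting the necessity direction into the two asserted conditions.

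For sufficiency ($\Leftarrow$), suppose $G$ generates $\cQ_2(\FF_0)$ and that $\ell,\ell',m$ are as in the statement. First I would set $\ell_0:=\langle\ell\rangle_\FF$ and $\ell_1:=\langle\ell'\rangle_\FF$; these are $\FF_0$-rational lines of $\cQ(\FF)$, and they are again opposite because opposition of two lines is the non-vanishing of the $2\times 2$ Gram determinant of the bilinear form on representative points, a quantity lying in $\FF_0$ and hence still nonzero over $\FF$. Writing $p:=m\cap\ell_0$ and $q:=m\cap\ell_1$, the two points are distinct (as $\ell_0\cap\ell_1=\emptyset$), they are collinear since they lie on the singular line $m$, and $q=p^\perp\cap\ell_1$ because opposition forces $p\notin\ell_1^\perp$, so $p^\perp\cap\ell_1$ is a single point. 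As neither $p$ nor $q$ is $\KK$-rational for any proper intermediate subfield $\KK$, Proposition~\ref{cor5.4}(\ref{Co54-a}) applies with $t=\langle p,q\rangle_{\cQ_2(\FF)}=m$ and shows that $G\cup\{m\}$ generates $\cQ_2(\FF)$. Since $m\in\langle G\rangle_{\cQ_2(\FF)}$, absorbing $m$ gives $\langle G\rangle_{\cQ_2(\FF)}=\cQ_2(\FF)$.

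For necessity ($\Rightarrow$) I would prove the two conditions separately. The opposite-line configuration is the soft part: since $\langle G\rangle_{\cQ_2(\FF)}=\cQ_2(\FF)$, the line $m$ may be taken to be \emph{any} line of $\cQ(\FF)$, so the claim becomes a purely geometric existence statement. I would use the opposite $\FF_0$-rational lines $\ell=\langle e_1,e_3\rangle$ and $\ell'=\langle e_2,e_4\rangle$ coming from the standard frame (both totally singular and opposite by the shape of the form), observe that the perspectivity $p\mapsto p^\perp\cap\langle\ell'\rangle_\FF$ from $\langle\ell\rangle_\FF$ to $\langle\ell'\rangle_\FF$ is defined over $\FF_0$ and therefore preserves $\KK$-rationality, and conclude that it suffices to choose a single point $p\in\langle\ell\rangle_\FF$ lying in no proper intermediate subfield. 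Such a point exists whenever $\FF$ is not the union of the proper subfields containing $\FF_0$, which holds in particular for the finite fields relevant here; then $m:=\langle p,\,p^\perp\cap\langle\ell'\rangle_\FF\rangle$ does the job.

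The substantive point is that $G$ must also generate $\cQ_2(\FF_0)$, i.e.\ enlarging the field cannot make generation easier. Here I would argue by contradiction: if $R:=\langle G\rangle_{\cQ_2(\FF_0)}$ were a proper subspace of $\cQ_2(\FF_0)$, I would extend it to a geometric hyperplane $H_0$ of $\cQ_2(\FF_0)$, realize $H_0$ as $\varepsilon_0^{-1}([\Phi_0])$ for a linear form $\Phi_0$ on the universal, $\binom{N}{2}$-dimensional embedding module $U_0$ of $\cQ_2(\FF_0)$, extend $\Phi_0$ to an $\FF$-linear form $\Phi$ on $U=\FF\otimes_{\FF_0}U_0$, and pull $[\Phi]$ back through the compatible $\FF$-embedding to a proper geometric hyperplane $H$ of $\cQ_2(\FF)$ containing $G$, contradicting $\langle G\rangle_{\cQ_2(\FF)}=\cQ_2(\FF)$. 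The main obstacle is exactly the step that turns ``$R$ proper'' into ``$R$ is contained in a geometric hyperplane realized by a linear functional'': this amounts to the property that a proper subspace of $\cQ_2(\FF_0)$ spans a proper subspace of the universal embedding (equivalently, that an image-spanning set is already a generating set), a property tied to $\gr(\cQ_2(\FF_0))=\er(\cQ_2(\FF_0))$ \cite{C98b} and to the universal embedding of $\cQ_2(\FF_0)$ realizing all its geometric hyperplanes. Once this is available, the compatibility $\varepsilon|_{\cQ_2(\FF_0)}=\varepsilon_0$ for the Weyl embedding, which is defined over the prime field \cite{IP13}, together with the fact that an $\FF_0$-subset of $U_0$ spans $U$ over $\FF$ iff it spans $U_0$ over $\FF_0$, makes the descent routine, and $G\subseteq R\subseteq H_0$ forces $\varepsilon(G)\subseteq[\Phi]$, hence $G\subseteq H$.
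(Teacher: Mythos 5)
Your ``$\Leftarrow$'' direction is correct and is exactly the paper's route: the paper states this lemma with no separate proof, presenting it as a direct consequence of part (\ref{Co54-a}) of Proposition \ref{cor5.4}, which is precisely your argument. Your supporting checks are sound: the $\FF$-extensions of opposite $\FF_0$-rational lines stay opposite because the relevant $2\times 2$ Gram determinant lies in $\FF_0$ and opposition is its non-vanishing; opposition makes $p^\perp\cap\ell_1$ a single point, so $q=m\cap\langle\ell'\rangle_\FF$ is forced; and $m=\langle p,q\rangle$, so Proposition \ref{cor5.4}(\ref{Co54-a}) applies with $t=m$ and $m$ is then absorbed into $\langle G\rangle_{\cQ_2(\FF)}$. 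The existence half of ``$\Rightarrow$'' is also fine, and can be streamlined: with $\ell=\langle e_1,e_3\rangle$, $\ell'=\langle e_2,e_4\rangle$ the perspectivity sends $[e_1+\lambda e_3]$ to $[e_4-\lambda e_2]$, so both points are $\KK$-rational exactly when $\lambda\in\KK$; since $\FF=\FF_0(\varepsilon)$ is simple by hypothesis, $\lambda=\varepsilon$ lies in no proper intermediate subfield, and you do not need the union-of-proper-subfields discussion at all.

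The genuine gap is the remaining half of ``$\Rightarrow$'': that $\langle G\rangle_{\cQ_2(\FF)}=\cQ_2(\FF)$ forces $\langle G\rangle_{\cQ_2(\FF_0)}=\cQ_2(\FF_0)$. Your proposed descent needs two facts, neither of which is available from the paper or the works it cites: (i) that a proper subspace $R$ of $\cQ_2(\FF_0)$ extends to a \emph{geometric hyperplane} --- Zorn's lemma only gives a maximal proper subspace, and a maximal subspace of a point-line geometry need not meet every line, so this extension property is an open hypothesis for orthogonal line-Grassmannians; and (ii) that such a hyperplane arises from a linear functional on an embedding module $U_0$ with $U=\FF\otimes_{\FF_0}U_0$ compatibly with the $\FF$-embedding --- but \cite{IP13} computes the dimension of the Weyl embedding only under hypotheses on the field and does not establish its universality, and for the characteristic-$2$ subfield situations the paper actually cares about ($\FF_2\subset\FF_4$, $\FF_2\subset\FF_8$) even $\er(\cQ_2(\FF_0))$ is not settled. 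Moreover, the numerical equality $\gr=\er$ from \cite{C98b} (which in any case concerns prime $\FF_0$, not an arbitrary subfield) does not convert ``$\varepsilon_0(G)$ spans $U_0$'' into ``$G$ generates $\cQ_2(\FF_0)$''; these are different statements even when the two ranks coincide. For comparison: the paper's one-line justification via Proposition \ref{cor5.4}(\ref{Co54-a}) really only delivers the ``if'' half, and that is the only direction the paper ever uses (in Lemma \ref{m-gen} and hence Theorem \ref{t-gen}); so your proof fully covers what the paper needs, but as a proof of the stated ``if and only if'' it is incomplete at exactly the step you flag, and no repair follows from the cited results.
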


The next lemma follows from \cite[Theorem 1.3]{BPa01}; we provide a short proof here.

\begin{lemma}\label{l-gen}
Let $G$ be a generating set of $\cQ_2(\FF_0)$. Then the span $\langle G\rangle_{\cQ_2(\FF)}$ of $G$ in $\cQ_2(\FF)$ contains all lines of $\cQ(\FF)$ which meet the point-set of $\cQ(\FF_0)$ non-trivially.
\end{lemma}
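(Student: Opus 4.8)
The plan is to split the statement into two claims and combine them at the end: first, that $S:=\langle G\rangle_{\cQ_2(\FF)}$ contains every $\FF_0$-rational line of $\cQ(\FF)$; second, that $S$ contains every singular line through any fixed $\FF_0$-rational point $p$. Since a line meeting the point-set of $\cQ(\FF_0)$ non-trivially passes through at least one such $p$, the two claims together give the lemma. For the first claim I would note that the $\FF_0$-rational lines of $\cQ(\FF)$ are exactly the $\FF$-extensions of the points of $\cQ_2(\FF_0)$, and that each line $L$ of $\cQ_2(\FF_0)$ (a pencil of $\FF_0$-rational lines through an $\FF_0$-rational point inside an $\FF_0$-rational plane) is contained in its $\FF$-extension $\bar L$, which is a genuine line of $\cQ_2(\FF)$. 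Running the generation process of $\cQ_2(\FF_0)$ and inducting on its steps, whenever a line $L$ acquires two points already lying in $S$, those two points also lie on $\bar L$, so $\bar L\subseteq S$ and hence $L\subseteq S$. Thus $\langle G\rangle_{\cQ_2(\FF_0)}\subseteq S$, and since $G$ generates $\cQ_2(\FF_0)$ this already is the set of all $\FF_0$-rational lines.

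For the second claim, fix an $\FF_0$-rational point $p$ and pass to the residue $\Pi:=\Res(p)^{\uparrow}$, a non-degenerate polar space over $\FF$ of rank $n-1\geq 2$ (recall $k=2<n$) carrying the induced $\FF_0$-structure. Under the usual identification the lines of $\cQ(\FF)$ through $p$ are the points of $\Pi$, the pencils centred at $p$ are the lines of $\Pi$, and these pencils are exactly the lines of $\cQ_2(\FF)$ contained in $S_2(p)$. Consequently $T:=S\cap S_2(p)$ is a subspace of $\Pi$, and by the first claim it contains all $\FF_0$-rational points of $\Pi$ (i.e.\ the $\FF_0$-rational lines through $p$). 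So everything reduces to showing that $\Pi$ is generated by its $\FF_0$-rational points; this is the heart of the matter.

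To prove $\Pi=\langle\Pi(\FF_0)\rangle_{\Pi}$ I would start from an $\FF_0$-rational frame of $\Pi$ (available because the form is defined over $\FF_0$). Its two classes span two disjoint maximal singular subspaces, which lie in $S':=\langle\Pi(\FF_0)\rangle_{\Pi}$ since a subspace contains the line joining any two of its collinear points; hence $S'$ is a \emph{nice} subspace of $\Pi$. If $\Pi$ has defect $0$, then Lemma \ref{1} already shows that the $\FF_0$-rational frame generates all of $\Pi$. Otherwise $\Pi$ has positive defect, so it is not a grid and Lemma \ref{2} applies: the nice subspace $S'$ equals $\langle\varepsilon(S')\rangle\cap\Pi$. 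Since the singular points of the non-degenerate space $\Pi(\FF_0)$ span the whole ambient $\FF_0$-space, their $\FF$-extensions span the whole ambient $\FF$-space, so $\langle\varepsilon(S')\rangle$ is the entire projective space and $S'=\Pi$. In either case $T=\Pi$, that is, every line through $p$ lies in $S$.

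The main obstacle is precisely this last reduction: reaching the lines through $p$ whose direction involves the anisotropic part of the form, equivalently generating the points of $\Pi$ that the defect-$0$ frame misses. The clean way past it is the combination of niceness with Lemma \ref{2}, together with the elementary but essential observation that the $\FF_0$-rational singular points span the ambient space over $\FF$ (to reach an anisotropic direction $v_0$ one exhibits the $\FF_0$-rational isotropic vectors $e-\lambda^{2}c\,f+\lambda v_0$, with $c=q_0(v_0)$ and $\{e,f\}$ a hyperbolic pair orthogonal to $v_0$). A minor point to be checked separately is the grid case $n-1=2$, $d=0$, where Lemma \ref{2} is unavailable but Lemma \ref{1} disposes of it directly.
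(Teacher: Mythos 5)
Your proposal is correct and follows essentially the same route as the paper's proof: fix an $\FF_0$-rational point $p$, observe that the span of $G$ in $\cQ_2(\FF)$ contains all $\FF_0$-rational lines (in particular those through $p$), and then conclude by generating the residue of $p$ over $\FF$ from its $\FF_0$-rational points. The only difference is one of detail, not of strategy: the paper simply asserts that $\Res_\FF(p)$ is generated by the point-set of $\Res_{\FF_0}(p)$ (this is delegated to \cite{BPa01}, Theorem 1.3), whereas you supply a self-contained justification via the niceness of $\langle\Pi(\FF_0)\rangle_\Pi$, Lemma \ref{1} and Lemma \ref{2}, together with the explicit $\FF_0$-rational singular vectors spanning the anisotropic directions and a separate check of the grid case --- all of which is sound.
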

\begin{proof}
Let $p$ be a point of $\cQ(\FF_0)$. Since $\langle G\rangle_{\cQ_2(\FF_0)} = \cQ_2(\FF_0)$, all lines of $\cQ(\FF_0)$
through $p$ belong to $\langle G\rangle_{\cQ_2(\FF)}$. On the other hand, the residue $\Res_\FF(p)$ of $p$ in $\cQ(\FF)$ is generated by the point-set of the residue $\Res_{\FF_0}(p)$ of $p$ in $\cQ(\FF_0)$, i. e.  the set of lines of $\cQ(\FF_0)$ through $p$. So, every line of $\cQ(\FF)$ through $p$ belong $\langle G\rangle_{\cQ_2(\FF)}$. The result follows.
\end{proof}

\begin{lemma} \label{r-pi}
Let $X$ be a plane of $\cQ(\FF)$ and $G$ a generating set for $\cQ_2(\FF_0)$. If $X$ contains at least two $\FF_0$-rational points then all lines of $X$ belong to $\langle G\rangle_{\cQ(\FF)}$.
\end{lemma}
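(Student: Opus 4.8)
The plan is to deduce the statement directly from Lemma~\ref{l-gen} and Lemma~\ref{lemma0}. First I would observe that, since $X$ is a singular plane of $\cQ(\FF)$, every projective line of $X$ is a singular line of $\cQ(\FF)$ and hence a genuine line of the geometry; so it is meaningful to ask whether the lines of $X$ lie in $\langle G\rangle_{\cQ_2(\FF)}$, and proving $S_2(X)\subseteq\langle G\rangle_{\cQ_2(\FF)}$ is exactly the goal.

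Let $p$ and $p'$ be two distinct $\FF_0$-rational points of $X$, as provided by the hypothesis. Each line of $X$ through $p$ is a line of $\cQ(\FF)$ which meets the point-set of $\cQ(\FF_0)$ (at the $\FF_0$-rational point $p$), so by Lemma~\ref{l-gen} it belongs to $\langle G\rangle_{\cQ_2(\FF)}$; the same holds for the lines of $X$ through $p'$. Thus the entire pencil of $X$ centred at $p$ and the entire pencil centred at $p'$ are contained in $\langle G\rangle_{\cQ_2(\FF)}$.

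To finish I would exhibit three non-concurrent lines of $X$ already known to lie in $\langle G\rangle_{\cQ_2(\FF)}$ and invoke Lemma~\ref{lemma0}. Take $\ell_1:=\langle p,p'\rangle_{\cQ(\FF)}$, a second line $\ell_2\neq\ell_1$ through $p$, and a line $\ell_3\neq\ell_1$ through $p'$; such $\ell_2,\ell_3$ exist because every pencil in the projective plane $X$ contains $|\FF|+1\geq 3$ lines. Since $\ell_3$ passes through $p'$ but not through $p$ (otherwise $\ell_3=\langle p,p'\rangle_{\cQ(\FF)}=\ell_1$), the three lines $\ell_1,\ell_2,\ell_3$ share no common point and therefore do not belong to a single pencil of $X$. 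All three lie in $\langle G\rangle_{\cQ_2(\FF)}$ by the previous paragraph, so Lemma~\ref{lemma0} yields $S_2(X)\subseteq\langle G\rangle_{\cQ_2(\FF)}$, i.e. all lines of $X$ lie in the span, as required.

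There is no real obstacle here: the content is entirely carried by the two quoted lemmas, and the only thing to verify is the elementary fact that a projective plane admits three non-concurrent lines through a prescribed pair of points, which is immediate. The one point worth stating carefully is that the lines of $X$ are honest lines of $\cQ(\FF)$, so that both Lemma~\ref{l-gen} and Lemma~\ref{lemma0} apply verbatim.
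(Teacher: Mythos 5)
Your proof is correct, and it shares the paper's key ingredient, namely Lemma~\ref{l-gen}, which places every line through an $\FF_0$-rational point into $\langle G\rangle_{\cQ_2(\FF)}$; but you close the argument differently. The paper never invokes Lemma~\ref{lemma0} here: given an arbitrary line $s$ of $X$, it picks a point $p\in s\setminus\{a,b\}$ (possible since lines are not thin), joins $p$ to the two rational points $a$ and $b$ to obtain $\ell,m\in\langle G\rangle_{\cQ_2(\FF)}$ via Lemma~\ref{l-gen}, and then observes that either $s\in\{\ell,m\}$ or $s$ lies on the pencil-line of $\cQ_2(\FF)$ spanned by the two distinct collinear points $\ell$ and $m$; so only two spanned lines in a single pencil are needed, and each line of $X$ is handled individually. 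You instead collect the full pencils at the two rational points, extract three pairwise distinct non-concurrent lines, and invoke Lemma~\ref{lemma0} as a black box to sweep up all of $S_2(X)$ at once. Your route is legitimate: Lemma~\ref{lemma0} is available in this context because $k=2<n$ forces $n>2$, your non-concurrency check is sound ($\ell_1\cap\ell_2=\{p\}$ while $\ell_3$ misses $p$, since a line through both $p$ and $p'$ would equal $\ell_1$), and the existence of $\ell_2,\ell_3$ only needs pencils of size at least $3$, which holds over any field. What each approach buys: yours is slightly more modular, reusing a lemma the paper deploys constantly in Section~\ref{sec3new}; the paper's is marginally more self-contained and economical, needing only one pencil and two spanned lines per target line $s$ rather than the plane-wide generation statement. (You also correctly read the span in the statement as $\langle G\rangle_{\cQ_2(\FF)}$, i.e.\ in the Grassmannian, which is what the paper intends.)
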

\begin{proof}
Let $a$ and $b$ be two distinct $\FF_0$-rational points of $X$ and let $s$ be a line of $X$. Given a point $p\in s\setminus \{a,b\}$, let $\ell$ and $m$ be the lines joining $p$ with $a$ and $b$. Then $\ell, m\in \langle G\rangle_{\cQ_2(\FF)}$ by Lemma \ref{l-gen}. If $s\in \{\ell, m\}$ then we are done. Otherwise $\ell \neq m$ and $s\in \langle \ell, m\rangle_{\cQ_2(\FF)}$. However $\ell, m\in \langle G\rangle_{\cQ_2(\FF)}$. Therefore $s\in \langle G\rangle_{\cQ_2(\FF)}$.
\end{proof}

The next lemma is nothing but elementary linear algebra.

\begin{lemma}\label{triv}
Let $X$ and $Y$ be $\FF_0$-rational subspaces of $\PG(N-1,\FF)$. Then $X\cap Y$ too is $\FF_0$-rational. In particular, if $X\cap Y$ is a single point then that point belongs to $\PG(N-1,\FF_0)$.
\end{lemma}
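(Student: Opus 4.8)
The plan is to translate everything into the language of Subsection~\ref{real sec} and then finish with a one-line dimension count. Passing from projective subspaces to the underlying vector subspaces of $V(N,\FF)$, recall from that subsection that an $\FF$-subspace $W$ is $\FF_0$-rational precisely when $W=\langle W_0\rangle_\FF$ with $W_0:=W\cap V(N,\FF_0)$, equivalently when $\dim_\FF W=\dim_{\FF_0}W_0$. Writing $X_0:=X\cap V(N,\FF_0)$ and $Y_0:=Y\cap V(N,\FF_0)$, the whole lemma reduces to the single assertion that, for $\FF_0$-rational $X,Y$, one has $X\cap Y=\langle X_0\cap Y_0\rangle_\FF$; the inclusion $\supseteq$ is trivial, so only $\subseteq$ (equivalently, equality of dimensions) needs an argument.

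First I would isolate the one genuinely linear-algebraic fact on which everything rests. If $b_1,\dots,b_r\in V(N,\FF_0)$ are linearly independent over $\FF$, let $M$ be the (finite-column) coordinate matrix having the $b_i$ as its rows; its entries lie in $\FF_0$ and its $\FF$-rank is $r$, hence its $\FF_0$-rank is also $r$, since the rank of a matrix over a field is unchanged under field extension. Choosing an invertible $r\times r$ minor of $M$ over $\FF_0$ and solving the corresponding system shows that any $\FF_0$-rational vector lying in $\langle b_1,\dots,b_r\rangle_\FF$ is already an $\FF_0$-combination of the $b_i$. Thus $\langle b_1,\dots,b_r\rangle_\FF\cap V(N,\FF_0)=\langle b_1,\dots,b_r\rangle_{\FF_0}$, and in particular the $\FF$-dimension of an $\FF_0$-rational subspace equals the $\FF_0$-dimension of its restriction.

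The remainder is bookkeeping. The set-theoretic identity $(X\cap Y)\cap V(N,\FF_0)=X_0\cap Y_0$ is immediate, and $X+Y$ is again $\FF_0$-rational (take the union of $\FF_0$-rational bases of $X$ and $Y$), so by the previous paragraph $(X+Y)\cap V(N,\FF_0)=X_0+Y_0$ and $\dim_\FF(X+Y)=\dim_{\FF_0}(X_0+Y_0)$, while likewise $\dim_\FF X=\dim_{\FF_0}X_0$ and $\dim_\FF Y=\dim_{\FF_0}Y_0$. Applying Grassmann's identity over $\FF$ and over $\FF_0$ and comparing,
\[
\dim_\FF(X\cap Y)=\dim_\FF X+\dim_\FF Y-\dim_\FF(X+Y)
=\dim_{\FF_0}X_0+\dim_{\FF_0}Y_0-\dim_{\FF_0}(X_0+Y_0)
=\dim_{\FF_0}(X_0\cap Y_0).
\]
Since $\langle X_0\cap Y_0\rangle_\FF\subseteq X\cap Y$ and these two $\FF$-spaces now have the same dimension, they coincide; as $X_0\cap Y_0=(X\cap Y)\cap V(N,\FF_0)$, this says exactly that $X\cap Y$ is $\FF_0$-rational. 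The ``in particular'' clause is then the case of $\FF$-dimension $1$ (projective dimension $0$): a single point that is $\FF_0$-rational is a point of $\PG(N-1,\FF_0)$ in the sense of Definition~\ref{real}.

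The only delicate point is that Grassmann's identity be applied with finite dimensions; in every application in this paper $N$ is finite, so there is nothing to check. In full generality one avoids the dimension count entirely: for $v\in X\cap Y$ write $v$ as a finite $\FF$-combination of vectors of $X_0$ and, separately, of $Y_0$, apply the finite case to the two finite-dimensional $\FF_0$-rational subspaces these combinations span, and use the base fact of the second paragraph to place $v$ in $\langle X_0\cap Y_0\rangle_\FF$. Either way the main (indeed the only) obstacle is the rank-invariance fact of the second paragraph; once that is granted, the statement falls out immediately.
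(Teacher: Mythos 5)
Your proof is correct. There is nothing in the paper to compare it against: the authors state Lemma~\ref{triv} with the single remark that it is ``nothing but elementary linear algebra'' and give no proof, so your write-up is a complete and valid realization of exactly that remark. The two ingredients you isolate are the right ones: rank invariance of an $\FF_0$-matrix under the field extension $\FF_0\subseteq\FF$, which yields $\langle b_1,\dots,b_r\rangle_\FF\cap V(N,\FF_0)=\langle b_1,\dots,b_r\rangle_{\FF_0}$ for $\FF_0$-vectors $b_i$ independent over $\FF$ (and hence $\dim_\FF W=\dim_{\FF_0}(W\cap V(N,\FF_0))$ for every $\FF_0$-rational subspace $W$, consistently with the equivalences stated in Subsection~\ref{real sec}), together with the Grassmann dimension identity applied over both fields. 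One step you leave implicit but which your rank fact does cover: the identity $(X+Y)\cap V(N,\FF_0)=X_0+Y_0$ requires first extracting an $\FF$-basis of $X+Y$ from the union of $\FF_0$-rational bases of $X$ and $Y$; since that extracted basis lies in $X_0\cup Y_0$, any rational vector of $X+Y$ is an $\FF_0$-combination of it and hence lies in $X_0+Y_0$, so the argument goes through. Your closing observation is also accurate: the dimension count needs $N<\infty$, which holds in every application of the lemma in this paper (it is only invoked for $Q^+(5,\FF)$, where $N=6$), and your alternative reduction of the general case to finite rational subspaces disposes of the infinite-dimensional situation anyway.
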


Suppose that $\mathrm{def}(\cQ) = d \leq 1$. Modulo proportionality or equivalence, we can assume that $q$ admits the following expression, well defined over every subfield $\FF_0$ of $\FF$:
\begin{equation}\label{q-odd}
\left.  \begin{array}{ll}
    \displaystyle   q(x_1,\ldots,x_{2n}):=\sum_{i=1}^nx_{2i-1}x_{2i} & \mbox{ if $d=0$ } \\[.2cm]
    \displaystyle q(x_1,\ldots,x_{2n+1}):=\sum_{i=1}^nx_{2i-1}x_{2i}+x_{2n+1}^2 & \mbox{ if $d=1$}. \\
    \end{array}\right\}
\end{equation}
We shall also use the customary notation $Q(2n,\FF)$ for $\cQ(\FF)$ when $d=1$ ({\em parabolic} case) and
$Q^+(2n-1,\FF)$ when $d=0$ ({\em hyperbolic} case). Accordingly, $Q_k(2n,\FF)$ and $Q_k^+(2n-1,\FF)$ are the corresponding $k$-Grassmannians.

When $d=2$, with $\mathrm{def}_2(\cQ) = d_2 = d$ if $\ch(\FF) = 2$, then $q$ can always be given the following expression:
  \begin{equation}\label{q-ell}
    q(x_1,\ldots, x_{2n+2}):=\sum_{i=1}^nx_{2i-1}x_{2i}+
    (x_{2n+1}^2+\lambda x_{2n+1}x_{2n+2}+\mu x_{2n+2}^2),
  \end{equation}
where the polynomial $t^2+\lambda t+\mu$ is irreducible over $\FF$. This is the so-called {\em elliptic} case; the customary symbol for $\cQ(\FF)$ in this case is $Q^-(2n+1,\FF)$. We warn that in general, given a subfield $\FF_0$ of $\FF$, we cannot exploit equivalence and proportionality in such a way as to obtain a polynomial over $\FF_0$ at the right side of \eqref{q-ell}, but in a few cases we can.

The next theorem deals with the hyperbolic case. It corresponds to Theorem \ref{MT3} of the Introduction.

\begin{theorem}\label{not-gen}
The line-Grassmannian $Q_2^+(5,\FF)$ of $Q^+(5,\FF)$ is never $\FF_0$--generated, for any proper subfield $\FF_0$ of $\FF.$
\end{theorem}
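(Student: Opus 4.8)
The plan is to translate the statement through the Klein correspondence and then analyse the generation process directly on the flag geometry of $\PG(3,\FF)$. Recall that $Q^+(5,\FF)$ is the Klein quadric: its points correspond to the lines of $\PG(3,\FF)$, its two families of planes (the maximal singular subspaces) to the points and the planes of $\PG(3,\FF)$, and hence the lines of $Q^+(5,\FF)$ — i.e.\ the points of $\cQ_2(\FF)$ — to the incident point--plane flags $(P,\pi)$ of $\PG(3,\FF)$. Under this identification the two types of lines of $\cQ_2(\FF)$ become the two rulings of the flag variety: fix the point $P$ and let $\pi$ run through a pencil of planes (all planes containing a fixed line through $P$), or fix the plane $\pi$ and let $P$ run along a line contained in $\pi$. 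A flag is $\FF_0$-rational exactly when both $P$ and $\pi$ are defined over $\FF_0$. Thus ``$\cQ_2(\FF)$ is $\FF_0$-generated'' becomes the assertion that the $\FF_0$-rational flags, closed under these two ruling operations, yield every flag of $\PG(3,\FF)$.

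First I would pin down the base of the span. By Lemma~\ref{l-gen} the span already contains every flag $(P,\pi)$ for which some $\FF_0$-rational line $a$ of $\PG(3,\FF)$ satisfies $P\in a\subseteq\pi$; in particular, for every $\FF_0$-rational point $P$ the whole fibre $\{(P,\pi):P\in\pi\}$ lies in the span, because the planes through $P$ form a projective plane $\PG(2,\FF)$ whose lines are exactly the same-point rulings, and its $\FF_0$-rational points span it (three non-collinear points generate a projective plane). Dually this holds for $\FF_0$-rational planes, and Lemma~\ref{r-pi} records the planar version. Then I would run the closure analysis over a \emph{non}-rational point $P$: the only $\FF_0$-rational line through $P$ is the $\FF$-extension of the smallest $\FF_0$-rational subspace $\langle P\rangle_{\FF_0}$ containing $P$ (for a quadratic extension this is the line $\langle P,P^\sigma\rangle$). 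Consequently the reachable flags over $P$ are confined to planes $\pi$ containing this rational hull, and one checks that neither ruling can escape this constraint — the same-point ruling keeps $P$ fixed and only enlarges the pencil inside the hull, while the same-plane ruling can introduce a non-rational $P$ only along an $\FF_0$-rational line, hence inside its hull (this is the inductive step).

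With that characterisation in hand the contradiction is immediate: choosing a non-rational point $P$ together with a plane $\pi\ni P$ that does \emph{not} contain the rational hull of $P$ produces a flag $(P,\pi)$ outside the span, so $\cQ_2(\FF)$ cannot be $\FF_0$-generated. For a simple extension one may instead quote Lemma~\ref{gset}: the analysis shows that no line $m$ in the span meets two opposite $\FF_0$-rational lines in points that are simultaneously non-$\KK$-rational for every proper intermediate field $\KK$, which is exactly the failure of the criterion. The main obstacle I expect is the closure analysis of the middle paragraph — proving rigorously that the two rulings cannot move a non-rational point's plane off its rational hull. This rigidity is special to rank $3$: for the Klein quadric there are exactly two families of maximal singular subspaces and the residue of a point of the quadric is a grid, and it is precisely this feature that breaks down for $N>6$, where Theorem~\ref{MT4} goes the other way.
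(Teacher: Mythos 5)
Your proposal is correct, and once the Klein dictionary is unwound it is the paper's own proof in different clothing: your set of flags $(P,\pi)$ with $\pi$ containing the $\FF_0$-rational hull of $P$ is exactly the paper's obstruction set $\Omega=\Omega_1\cup\Omega_2$, where $\Omega_1$ is the set of lines of $Q^+(5,\FF)$ meeting $Q^+(5,\FF_0)$ and $\Omega_2$ the set of lines lying in $\FF_0$-rational planes (a flag carries a rational line of $\PG(3,\FF)$ precisely when the hull of $P$ is a rational line inside $\pi$, while $P$ rational or $\pi$ rational correspond to the hull being a point, respectively the plane $\pi$ itself). Both arguments then reduce to the same two facts: this set is a subspace of the line-Grassmannian, and it is proper. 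Where your packaging genuinely pays off: the hull formalism compresses the paper's intermediate claim (c) and its three-case analysis (1)--(3) into a single mechanism --- on a same-point ruling, two flags in the set force the pencil's axis $\pi_1\cap\pi_2$ to \emph{be} the hull, and on a same-plane ruling they force either the line carrying the moving point or $\pi$ itself to be rational, all via the rationality of meets and spans of rational subspaces (the paper's Lemma~\ref{triv}); and properness comes for free, since any non-rational $P$ admits a plane through it avoiding its hull, whereas the paper exhibits an explicit line in coordinates and checks its two planes by hand. One small repair: your parenthetical assertion that the unique rational line through a non-rational $P$ is the extension of $\langle P\rangle_{\FF_0}$ tacitly assumes the hull is a line; for extensions of degree greater than $2$ the hull can be a plane or all of $\PG(3,\FF)$, in which case \emph{no} rational line passes through $P$ --- but your hull condition handles these cases correctly (a planar hull forces $\pi$ to equal that rational plane; a full hull excludes every flag over $P$), so the closure step you flagged as the main obstacle does go through, essentially by the computation just described. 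Finally, your closing remark that everything hinges on each line of $Q^+(5,\FF)$ lying on exactly two planes is also the paper's diagnosis; it records the corresponding expectation for $Q^+_{n-1}(2n-1,\FF)$ in Conjecture~\ref{not-gen-c}.
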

   \begin{proof}
Suppose $\FF_0<\FF$ and take $\varepsilon\in\FF\setminus\FF_0$. Let $\Omega_1$ be the set of lines of $Q^+(5,\FF)$ which meet $Q^+(5,\FF_0)$ non-trivially and $\Omega_2$ the set of lines of $Q^+(5,\FF)$ contained in $\FF_0$-rational planes of $Q^+(5,\FF)$. Put $\Omega := \Omega_1\cup \Omega_2$. We firstly prove the following:

\begin{itemize}
\item[(a)] The set $\Omega$ is a proper subset of $Q^+_2(5,\FF)$.
\end{itemize}
Consider the following line of $Q^+(5,\FF)$:
\[ \ell := \langle (1,0,\varepsilon,0,0,0),     (0,\varepsilon,0,-1,0,0)\rangle_V\]
where $V = V(6,\FF)$ and we take the liberty to identify subspaces of $V$ with the corresponding subspaces of $\PG(V)$. Then $\ell^{\perp} = X_1\cup X_2$ where $X_1$ and $X_2$ are the following planes of $Q^+(5,\FF)$
\[X_1:= \langle\ell, (0,0,0,0,1,0)\rangle_V, \hspace{5 mm}  X_2:= \langle \ell,(0,0,0,0,0,1)\rangle_V. \]
The line $\ell$ belongs to neither $\Omega_1$ nor $\Omega_2$. Indeed $\ell$ has no $\FF_0$-rational point and $X_1$ and $X_2$ are the only two planes of $Q^+(5,\FF)$ which contain $\ell$, but each of them admits exactly one $\FF_0$-rational point. Therefore $\ell \not\in \Omega$. Claim (a) is proved. Consider now the following claim:

\begin{itemize}
\item[(b)] the set $\Omega$ is a subspace of $Q^+_2(5,\FF)$.
\end{itemize}
If (b) holds true then $\Omega = \langle Q^+(5,\FF_0)\rangle_{Q^+_2(5,\FF)}$ by  Lemmas \ref{l-gen} and \ref{r-pi}. Hence $\langle Q^+(5,\FF_0)\rangle_{Q^+_2(5,\FF)} \subset Q^+_2(5,\FF)$ by (a) and the theorem is proved. Claim (b) remains to be proved. We shall firstly prove the following
     \begin{itemize}
     \item[(c)] Every plane of $Q^+(5,\FF)$ containing two distinct $\FF_0$-rational points is $\FF_0$-rational.
     \end{itemize}
Let $X$ be a plane of $Q^+(5,\FF)$ and let $a, b$ be two $\FF_0$-rational points of $X$. The line $r$ of $Q^+(5,\FF_0)$ spanned by $a$ and $b$
belongs to just two planes $X_1$ and $X_2$ of $Q^+(5,\FF_0)$. Their $\FF$-extensions $\langle X_1\rangle_\FF$ and $\langle X_2\rangle_\FF$ are the two planes of $Q^+(5,\FF)$ through the $\FF$-extension $\langle r\rangle_\FF$ of $r$. The plane $X$ is one of them. Hence $X$ is $\FF_0$-rational, as claimed in (c).

We are now ready to prove (b). Let $x$ and $y$ be two distinct elements of $\Omega$, collinear as points of $Q^+_2(5,\FF)$, and let $L$ be the line of $Q^+_2(5,\FF)$ spanned by them. We must show that $L\subseteq \Omega$. Three cases must be considered.

\medskip

\noindent
(1) $x, y \in \Omega_1$, namely each of $x$ and $y$ contains an $\FF_0$-rational point. Put $p = x\cap y$ be the meet-point of the lines $x$ and $y$ (which exists since $x$ and $y$ are collinear in $Q^+_2(5,\FF)$. If $p$ is $\FF_0$-rational, then all elements of $L$ belong to $\Omega_1$. Suppose that $p$ is not $\FF_0$-rational. The plane $X$ spanned by $x$ and $y$ (which is a plane of $Q^+(5,\FF)$) contains at least two $\FF_0$-rational points, one of which is contributed by $x$ and the other one by $y$. Hence $X$ is $\FF_0$-rational by claim (c). Therefore $L\subseteq \Omega_2$. So, in either case $L\subseteq \Omega$.

\medskip

\noindent
(2)  $x, y\in\Omega_2.$ Let $X$ and $Y$ be $\FF_0$-rational planes of $Q^*(5,\FF)$ containing $x$ and $y$ respectively. Let also $Z$ be the plane of $Q^+(5,\FF)$ spanned by $x$ and $y$ and $p := x\cap y$. If $Z$ is $\FF_0$-rational then $L\subseteq \Omega_2$. So, suppose that $Z$ is not $\FF_0$-rational. Then $X, Y, Z$ are pairwise distinct. In $\Res(p)$ they appear as three distinct lines, with $Z$ meeting each of $X$ and $Y$ in a point (namely $x$ and $y$ respectively).  So, $X$ and $Y$ belong to the same family of lines of the grid $\Res(p)$. Consequently, as planes of $Q^+(5,\FF)$, they meet in a single point, namely $p$. However $X$ and $Y$ are $\FF_0$-rational. Therefore $p$ is $\FF_0$-rational, by Lemma \ref{triv}. Hence $L\subseteq \Omega_1$.

\medskip

\noindent
(3) One of the lines $x$ and $y$ belongs to $\Omega_1\setminus \Omega_2$ and the other one belongs to $\Omega_2\setminus \Omega_1$. We shall see that this situation leads to a contradiction, thus finishing the proof of (b).

Let $x\in\Omega_1\setminus\Omega_2$ and $y\in\Omega_2\setminus\Omega_1$, to fix ideas.
Put $p = x\cap y$ and let $X$ be the plane of $Q^+(5,\FF)$ spanned by $x$ and $y$. Then neither $p$ nor $X$ are $\FF_0$-rational, since $y\not\in \Omega_1$ and $x\not\in \Omega_2$. On the other hand, $x\in \Omega_1$ contains at least one $\FF_0$-rational point. Let $q$ be one of them. Moreover, if $Y$ is the plane of $Q^+(5,\FF)$ through $y$ different from $X$, then $Y$ is $\FF_0$-rational, since $y\in \Omega_2$. As $q$ and $Y$ are $\FF_0$-rational,  $Q^+(5,\FF_0)$ contains $q$ and the $\FF_0$-section $Y_0$ of $Y$. Consider the line $y_0 = q^\perp\cap Y_0$ of $Q^+(5,\FF_0)$. The $\FF$-extension $\langle y_0\rangle_\FF$ of $y_0$ is contained in $Y = \langle Y_0\rangle_\FF$ and is orthogonal to $q$. However $q^\perp\cap Y = y$ in $Q^+(5,\FF)$. Therefore $y = \langle y_0\rangle_\FF$. So, $y$ is $\FF_0$-rational. We have reached a  contradiction.
   \end{proof}

\begin{remark}\label{not-gen-c}
The proof of Theorem \ref{not-gen} exploits the fact that every line of $Q^+(5,\FF)$ is contained in precisely two singular planes. This suggests the conjecture that  $Q^+_{n-1}(2n-1,\FF)$ is never $\FF_0$--generated, for any $n > 2$ and any proper subfield $\FF_0$ of $\FF$.
\end{remark}

\begin{note}
Theorem \ref{not-gen} implies that, for a prime $p$, no generating set of $Q_2^+(5,p)$ generates $Q_2^+(5,p^h)$ for $h>1$. However, by Proposition \ref{cor5.4}, if $G_0$ is a generating set for $Q_2^+(5,p)$ then there exist a line $\ell$ of $Q^+(5,p^h)$ such that $G_0\cup\{\ell\}$ generates $Q_2^+(5,p^h)$. So, the generating rank of $Q^+_2(5,p^h)$ is at most $16$ (recall that $15$ is the generating rank of $Q_2^+(5,p)$, by \cite{C98b}). This does not preclude the possibility that $Q_2^+(5,p^h)$ admits a generating set of size 15 but such a set, if it exists, cannot be contained in $Q^+_2(5,p^r)$, for any $r < h$.
\end{note}

We shall shall now turn to the proof of Theorem \ref{MT4} of the Introduction, but we firstly state a preliminary technical lemma.

Let $\cQ = Q(6,\FF)$. According to \eqref{q-odd}, we can assume that $\cQ$ is associated to the following quadratic form $q:V(7,\FF)\rightarrow\FF$:
\[ q(x_1,\dots,x_7) = x_1x_2+x_3x_4+x_5x_6+x_7^2. \]
The following is the bilinearization of $q$:
\[ \begin{array}{rcl}
f((x_1,\dots,x_7),(y_1,\dots,y_7)) & = &  \sum_{i=1}^3(x_{2i-1}y_{2i}+x_{2i}y_{2i-1})+2x_7y_7 \\
 & & (= \sum_{i=1}^3(x_{2i-1}y_{2i}+x_{2i}y_{2i-1}) \hspace{3 mm}\mbox{when}~ \ch(\FF) = 2).
\end{array} \]
Given a proper subfield $\FF_0$ of $\FF$, let $\varepsilon\in\FF\setminus\FF_0$ be such that $\FF=\FF_0(\varepsilon)$. Let $\ell_\varepsilon$ be the line of $\PG(5,\FF)$ corresponding to the following subspace of $V = V(7,\FF)$:
\[\langle (0,\varepsilon,\varepsilon^{-1},-\varepsilon,0,-1,1),
       (1,1,0,0,\varepsilon,-\varepsilon^{-1},0)\rangle_V.\]
Comparing the above expressions for $q$ and $f$, it is straightforward to check that $\ell_\varepsilon$ is totally singular for $q$, namely it is a line of $\cQ$

\begin{lemma}\label{m-gen}
With $\cQ$, $\FF_0$, $\varepsilon$ and $\ell_\varepsilon$ as above, let $\cQ_2 = Q_2(6,\FF)$ be the line-Grassmannian of $\cQ$ and suppose that  $\ell_\varepsilon$ is $\FF_0$-generated. Then $\cQ_2$ is $\FF_0$--generated.
\end{lemma}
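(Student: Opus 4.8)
The plan is to read this off the characterisation of $\FF_0$-generation furnished by Lemma~\ref{gset} (equivalently Proposition~\ref{cor5.4}(\ref{Co54-a})), taking $\ell_\varepsilon$ itself as the distinguished line $m$. First I would assemble the candidate generating set. By hypothesis $\ell_\varepsilon$ is $\FF_0$-generated, so there is an $\FF_0$-rational set $G_0\subseteq S_2(\cQ(\FF))$ with $\ell_\varepsilon\in\langle G_0\rangle_{\cQ_2(\FF)}$. Choosing any generating set $G_1\subseteq S_2(\cQ(\FF_0))$ of $\cQ_2(\FF_0)$ and putting $G:=G_0\cup G_1$, the set $G$ is $\FF_0$-rational, it generates $\cQ_2(\FF_0)$, and $\ell_\varepsilon\in\langle G\rangle_{\cQ_2(\FF)}$. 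Thus $G$ is the natural $\FF_0$-rational candidate for a generating set of $\cQ_2(\FF)$, and $m:=\ell_\varepsilon$ is a line of $\langle G\rangle_{\cQ_2(\FF)}$ at our disposal.

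With these choices the ``only if'' obligations of Lemma~\ref{gset} are already met ($G$ generates $\cQ_2(\FF_0)$ and $m\in\langle G\rangle_{\cQ_2(\FF)}$), so to conclude $\langle G\rangle_{\cQ_2(\FF)}=\cQ_2(\FF)$ it remains to produce two \emph{opposite} $\FF_0$-rational (totally singular) lines $\ell,\ell'$ of $\cQ$ whose $\FF$-extensions $\langle\ell\rangle_\FF$ and $\langle\ell'\rangle_\FF$ each meet $\ell_\varepsilon$ in a single point, neither meet-point being $\KK$-rational for any proper subfield $\KK$ with $\FF_0\subseteq\KK\subsetneq\FF$. This is exactly what the deliberately chosen coordinates of $\ell_\varepsilon$ are for: writing $v_1=(0,\varepsilon,\varepsilon^{-1},-\varepsilon,0,-1,1)$ and $v_2=(1,1,0,0,\varepsilon,-\varepsilon^{-1},0)$ for its two spanning vectors, one singles out two points $p=a_1v_1+b_1v_2$ and $q=a_2v_1+b_2v_2$ of $\ell_\varepsilon$, checks from the displayed forms of $q$ and $f$ that the $\FF_0$-rational lines $\ell,\ell'$ carrying $p,q$ are totally singular and mutually opposite, and verifies that the way $\varepsilon$ and $\varepsilon^{-1}$ enter the coordinates of $p$ and $q$ forces these points to lie in no proper intermediate subfield. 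Lemma~\ref{triv} is the clean tool for controlling rationality of the relevant intersection points, and Lemmas~\ref{l-gen} and~\ref{r-pi} record that all lines meeting $\cQ(\FF_0)$ or lying in $\FF_0$-rational planes are already in $\langle G\rangle_{\cQ_2(\FF)}$, which helps keep track of what the verification must still add.

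Once such $\ell,\ell',m=\ell_\varepsilon$ are exhibited, Lemma~\ref{gset} gives $\langle G\rangle_{\cQ_2(\FF)}=\cQ_2(\FF)$, i.e.\ $G$ is an $\FF_0$-rational generating set and $\cQ_2(\FF)$ is $\FF_0$-generated, as claimed. The hard part is precisely the geometric bookkeeping of the middle step: pinning down which points $p,q$ of $\ell_\varepsilon$ lie on $\FF_0$-rational singular lines, confirming that the two resulting lines are opposite rather than concurrent or perpendicular, and establishing the non-$\KK$-rationality of the meet-points. I would expect this verification to be sensitive to the degree $[\FF:\FF_0]$ (the interplay of $\varepsilon$ and $\varepsilon^{-1}$ in $v_1,v_2$ behaves very differently according to whether $1,\varepsilon,\varepsilon^{-1}$ are $\FF_0$-dependent), so a short case analysis on $[\FF:\FF_0]$ is the anticipated obstacle, the remainder being a routine application of Lemma~\ref{gset}.
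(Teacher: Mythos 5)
Your reduction to Lemma~\ref{gset} is the right general frame, but the pivotal choice $m:=\ell_\varepsilon$ is exactly where the argument breaks: applying Lemma~\ref{gset} with $m=\ell_\varepsilon$ requires two \emph{opposite} lines of $\cQ(\FF_0)$ whose $\FF$-extensions both meet $\ell_\varepsilon$, and in general no such pair exists. Take $\FF=\FF_4$, $\FF_0=\FF_2$, $\varepsilon^2=\varepsilon+1$. A non-rational point $[u_1+\varepsilon u_2]$ (with $u_1,u_2\in V(7,\FF_2)$ independent) lies on the $\FF$-extension of exactly one $\FF_2$-rational projective line, namely $\langle u_1,u_2\rangle$, and that line belongs to $\cQ(\FF_2)$ only if $u_1$, $u_2$ and $u_1+u_2$ are all singular. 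Running through the five points $[av_1+bv_2]$ of $\ell_\varepsilon$, where $v_1=(0,\varepsilon,\varepsilon^{-1},-\varepsilon,0,-1,1)$ and $v_2=(1,1,0,0,\varepsilon,-\varepsilon^{-1},0)$: the point $[v_1]$ has rational component $(0,0,1,0,0,1,1)$ with $q$-value $1$; the point $[v_2]$ has component $(1,1,0,0,0,1,0)$ with $q$-value $1$; the points $[v_1+\varepsilon v_2]$ and $[v_1+\varepsilon^2 v_2]$ have components $(0,0,1,0,1,0,1)$ and $(1,1,1,0,1,1,1)$, again with $q$-value $1$; only for $[v_1+v_2]$ is the rational line $\langle (1,1,1,0,0,0,1),(0,1,1,1,1,1,0)\rangle$ totally singular. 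So at most \emph{one} line of $\cQ(\FF_2)$ has its extension meeting $\ell_\varepsilon$, and the configuration demanded by Lemma~\ref{gset} with $m=\ell_\varepsilon$ is unattainable. (Over $\FF_8$ it is worse: $[v_1]$ lies on no rational line extension at all, since its three $\FF_2$-components $(0,0,1,0,0,1,1)$, $(0,1,0,1,0,0,0)$, $(0,0,1,0,0,0,0)$ span a $3$-space.) Your expectation of a case analysis on $[\FF:\FF_0]$ is a symptom of this: the plan gets harder, not easier, as the degree grows, and already fails at degree $2$.

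The paper's proof avoids this by applying Lemma~\ref{gset} not to $\ell_\varepsilon$ but to a different transversal: the line $t=\langle [1,0,0,0,\varepsilon,0,0],[0,\varepsilon,0,0,0,-1,0]\rangle$ joining the explicit opposite $\FF_0$-rational lines $t_1=\langle e_1,e_5\rangle$ and $t_2=\langle e_2,e_6\rangle$ at points which are non-$\KK$-rational for every intermediate proper subfield $\KK$ because $\FF_0(\varepsilon)=\FF$. The real content --- which your proposal defers entirely to ``geometric bookkeeping'' around $\ell_\varepsilon$ --- is then to show $t\in\langle G\rangle_{\cQ_2(\FF)}$, and this is done by a chain of pencil arguments: $t\in\langle s,\ell_\varepsilon\rangle_{\cQ_2}$ (the hypothesis that $\ell_\varepsilon$ is $\FF_0$-generated is used here, and only here), then $s\in\langle s_1,s_2\rangle_{\cQ_2}$ and $s_i\in\langle s_{i1},s_{i2}\rangle_{\cQ_2}$, where each auxiliary line $s_{ij}$ contains an $\FF_0$-rational point and is therefore $\FF_0$-generated by Lemma~\ref{l-gen}. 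This argument is uniform in $\FF$, with no case distinction on the degree. To repair your proof you would have to abandon $m=\ell_\varepsilon$ and build exactly such a chain from a transversal of a rational opposite pair back to $\ell_\varepsilon$ --- which is the paper's proof.
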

\begin{proof}
Henceforth, given a non-zero vector $(x_1,..., x_7)$ of $V$ we denote the corresponding point of $\PG(V)$ by the symbol $[x_1,..., x_7]$. For short, we denote spans in $\PG(V)$ by the symbol $\langle ... \rangle$ instead of $\langle ...\rangle_{\PG(V)}$.

Consider the projective line $t:=\langle p_1, p_2\rangle$ spanned by the points $p_1 = [1,0,0,0,\varepsilon,0,0]$ and $p_2 = [0,\varepsilon,0,0,0,-1,0]$. Both $p_1$ and $p_2$ belong to $\cQ$ and $t$ is a line of $\cQ$.
Let $t_1$ and $t_2$ be the following lines of $\cQ$:
\[t_1 := \langle [1,0,0,0,0,0,0],[0,0,0,0,1,0,0] \rangle, \hspace{5 mm} t_2 := \langle [0,1,0,0,0,0,0],[0,0,0,0,0,1,0] \rangle.\]
The lines $t_1$ and $t_2$ are opposite (i.e. $t_i^\perp\cap t_j = \emptyset$ for $\{i,j\} = \{1,2\}$) and $p_i \in t_i$ for $i = 1, 2$. Moreover, neither $p_1$ nor $p_2$ are defined over any proper subfield of $\FF$ containing $\FF_0$, since $\FF_0(\varepsilon) = \FF$. So, $t, t_1, t_2, p_1, p_2$ are as in the setting of Lemma \ref{gset}. By that lemma, if $t$ is $\FF_0$-generated then $\cQ$ is also $\FF_0$-generated. Hence in the remainder of the proof we will be concerned in proving that $t$ is $\FF_0$-generated. Put
   \[s := \langle [0,0,\varepsilon^{-1},-\varepsilon,0,0,1], [1,1,0,0,\varepsilon,-\varepsilon^{-1},0]\rangle.\]
Then $s$ is a line of $\cQ$ and is collinear with $\ell_\varepsilon$ in $\cQ_2$. Moreover, $t\in \langle s, \ell_\varepsilon\rangle_{\cQ_2}$.
The line $\ell_\varepsilon$ is $\FF_0$-generated. Hence it is enough to prove that $s$ is also $\FF_0$--generated. Consider
\[s_{1} := \langle [0,0,\varepsilon^{-1},-\varepsilon,0,0,1], [1,0,0,0,\varepsilon,0,0]\rangle, ~~ s_{2} := \langle [0,0,\varepsilon^{-1},-\varepsilon,0,0,1], [0,1,0,0,0,-\varepsilon^{-1},0]\rangle.\]
Then $s\in\langle s_1, s_2\rangle_{\cQ_2}$ Also, $s_1\in \langle s_{11}, s_{12}\rangle_{\cQ_2}$ and  $s_2\in\langle s_{21}, s_{22}\rangle_{\cQ_2}$ where
\[s_{11}  := \langle[0,0,\varepsilon^{-1},-\varepsilon,\varepsilon^{2},0,1], [1,0,0,0,0,0,0]\rangle, ~~~ s_{12} := \langle [\varepsilon,0,\varepsilon^{-1},-\varepsilon,0,0,1], [0,0,0,0,1,0,0]\rangle,\]
\[s_{21} := \langle[0,0,\varepsilon^{-1},-\varepsilon,0,-\varepsilon^{-1},1],  [0,1,0,0,0,0,0]\rangle, ~ s_{22} := \langle  [0,1,\varepsilon^{-1},-\varepsilon,0,0,1],  [0,0,0,0,0,1,0]\rangle.\]
By Lemma~\ref{l-gen} the lines $s_{11}$, $s_{12}$, $s_{21}$ and $s_{22}$ are
     $\FF_0$--generated; it follows that $s_1$ and $s_2$ are $\FF_0$--generated and so also  $s$ is  $\FF_0$--generated.
     The proof is complete.
        \end{proof}

The next theorem yields the core of Theorem \ref{MT4} of the Introduction.

\begin{theorem}\label{t-gen}
If $\FF$ is $\FF_4, \FF_8$ or $\FF_9$ then $Q_2(6,\FF)$ is generated over the prime subfield of $\FF$.
\end{theorem}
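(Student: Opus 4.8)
The plan is to collapse the whole statement onto the single line $\ell_\varepsilon$ and then to dismantle that line by repeated pencil-splitting. By Lemma~\ref{m-gen}, once we fix a generator $\varepsilon$ of $\FF$ over its prime field $\FF_0$ (so that $\FF = \FF_0(\varepsilon)$), it suffices to prove that the specific line $\ell_\varepsilon$ of $\cQ = Q(6,\FF)$ is $\FF_0$-generated; the theorem then reduces to three finite computations, one for each of $\FF_4 = \FF_2(\varepsilon)$, $\FF_8 = \FF_2(\varepsilon)$ and $\FF_9 = \FF_3(\varepsilon)$. In each case I would choose $\varepsilon$ with a convenient minimal polynomial, so that $\varepsilon^{-1}$ becomes a low-degree $\FF_0$-polynomial in $\varepsilon$: taking $\varepsilon^2+\varepsilon+1 = 0$ gives $\varepsilon^{-1} = \varepsilon+1$ for $\FF_4$; taking $\varepsilon^3+\varepsilon+1 = 0$ gives $\varepsilon^{-1} = \varepsilon^2+1$ for $\FF_8$; and taking $\varepsilon^2+1 = 0$ gives $\varepsilon^{-1} = -\varepsilon$ for $\FF_9$. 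This reducibility of $\varepsilon^{-1}$ is exactly the feature that makes the argument go through for these fields rather than for an arbitrary one.

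The engine is the elementary \emph{pencil-splitting} step already implicit in the proof of Lemma~\ref{m-gen}. If $\ell = \langle p, v\rangle_V$ is a line of $\cQ$ through a point $p$, and we can write $v = v' + v''$ with $v', v''$ singular vectors that are orthogonal to each other and to $p$ and independent modulo $\langle p\rangle_V$, then $X := \langle p, v', v''\rangle_V$ is a totally singular plane and $\langle p, v'\rangle_V$, $\langle p, v''\rangle_V$, $\ell$ are three distinct lines of the pencil through $p$ in $X$, hence three points on a common line of $\cQ_2$. Therefore $\ell \in \langle\, \langle p,v'\rangle_V,\ \langle p, v''\rangle_V\,\rangle_{\cQ_2}$, and $\ell$ is $\FF_0$-generated as soon as both daughter lines are. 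The \emph{seeds} at which the recursion stops are furnished by Lemmas~\ref{l-gen} and~\ref{r-pi}: any line carrying an $\FF_0$-rational point is $\FF_0$-generated, and more generally any line lying in a plane with two $\FF_0$-rational points is $\FF_0$-generated.

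With these tools I would exhibit, for each field, an explicit binary decomposition tree rooted at $\ell_\varepsilon$, in the very style of the chain $s \to s_1, s_2 \to s_{11}, s_{12}, s_{21}, s_{22}$ in the proof of Lemma~\ref{m-gen}. At a node $\langle p, v\rangle_V$ one peels off an $\FF_0$-rational singular coordinate direction $e_i$ with $1\le i\le 6$ (note $e_7$ is non-singular and must be left untouched, which is harmless since the seventh coordinate stays $\FF_0$-rational throughout), replacing $v$ by $v - c_i e_i$ and re-expressing any surviving $\varepsilon^{-1}$ through the chosen minimal polynomial. Iterating this peels away the finitely many $\FF_0$-irrational coordinates until every leaf line either passes through a standard basis point $[e_i]$ or lies in an $\FF_0$-rational plane.

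The main obstacle is purely bookkeeping rather than conceptual: at each split one must verify that the auxiliary plane $\langle p, v', v''\rangle_V$ is genuinely totally singular for $q$ (equivalently, that $e_i$ is orthogonal to $p$, which happens precisely when the coordinate of $p$ paired with $e_i$ vanishes, and that $v', v''$ are mutually orthogonal), that the two daughter lines are again lines of $\cQ$ through $p$, and that the process terminates. What keeps this tractable for $q \in \{4,8,9\}$ is that $[\FF:\FF_0] \le 3$ and $\varepsilon^{-1}$ reduces to degree at most $2$, so only a bounded number of coordinates are ever irrational and the decomposition tree has small depth. Over a larger field the number of reduction steps, and the need to introduce fresh generators, is no longer controlled — which is precisely the obstruction made visible by Theorem~\ref{not-gen} in the hyperbolic case.
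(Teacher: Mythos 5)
Your overall route coincides with the paper's: reduce via Lemma~\ref{m-gen} to showing that the single line $\ell_\varepsilon$ is $\FF_0$-generated, then split lines along pencils until every leaf meets $\cQ(\FF_0)$ and invoke Lemmas~\ref{l-gen} and~\ref{r-pi}. The gap is that the entire content of the paper's proof is the explicit exhibition of those splittings, and the concrete mechanism you propose provably cannot produce them. A pencil split of $\ell = \langle p,v\rangle_V$ with $v'' = c_ie_i$ requires the plane $\langle p,v,e_i\rangle_V$ to be totally singular, hence $e_i\perp p$ \emph{and} $e_i\perp v$: indeed $q(v-c_ie_i) = -c_if(v,e_i)$, so singularity of the daughter vector already forces $f(v,e_i)=0$, and this condition depends only on the line $\ell$, not on the chosen representatives. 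For the root $\ell_\varepsilon = \langle u_1,u_2\rangle_V$ with $u_1 = (0,\varepsilon,\varepsilon^{-1},-\varepsilon,0,-1,1)$ and $u_2 = (1,1,0,0,\varepsilon,-\varepsilon^{-1},0)$ one computes $f(e_1,u_1)=\varepsilon$, $f(e_2,u_2)=1$, $f(e_3,u_1)=-\varepsilon$, $f(e_4,u_1)=\varepsilon^{-1}$, $f(e_5,u_1)=-1$, $f(e_6,u_2)=\varepsilon$, all nonzero; so no coordinate direction $e_i$ with $1\leq i\leq 6$ is orthogonal to $\ell_\varepsilon$, and your coordinate-peeling recursion cannot take even its first step. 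The paper's actual splittings use directions that are not coordinate vectors at all, and their pencils are centred at points visible in neither spanning set (for instance the three lines $s_1, s_{11}, s_{12}$ in the proof of Lemma~\ref{m-gen} concur at $a+\varepsilon e_1 + \varepsilon^2 e_5$); it is exactly these ad hoc choices of $\ell_1,\ell_2$ and $\ell_{ij}$, written down and verified separately for $q=4,8,9$, that constitute the proof of Theorem~\ref{t-gen}. Announcing that a decomposition tree ``would'' be exhibited, without producing it or proving that some algorithm generating it terminates, leaves the statement unproved.

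Two of your framing claims are also off. First, the invertibility of $\varepsilon$ as a low-degree $\FF_0$-polynomial in $\varepsilon$ is not special to these three fields: in any finite extension one has $\varepsilon^{-1}\in\FF_0[\varepsilon]$ of degree less than $[\FF:\FF_0]$, so this cannot be ``the feature'' that singles out $q\in\{4,8,9\}$ --- the paper itself remarks after Theorem~\ref{t-gen} that no general rule is visible in its choice of lines, which is why the result is stated only for these fields. Second, Theorem~\ref{not-gen} is not an instance of the reduction tree growing out of control over large fields: it shows that $Q_2^+(5,\FF)$ fails to be $\FF_0$-generated for \emph{every} proper subfield $\FF_0$ of \emph{every} field $\FF$, by the structural fact that each line of $Q^+(5,\FF)$ lies in exactly two planes, an obstruction of a different nature from the combinatorial bookkeeping you describe for $Q_2(6,\FF)$.
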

\begin{proof}
Let $\FF_0$ be the prime subfield of $\FF$. Since we are in the hypothesis of Lemma \ref{m-gen} it is enough to prove that, in each of the three cases considered in the theorem, the line $\ell_\varepsilon$ of Lemma \ref{m-gen} is $\FF_0$-rational;  then apply Lemma \ref{m-gen}.

\medskip

\noindent
Let $\FF = \FF_4$. We have $\FF_4=\FF_2[\varepsilon]$ for $\varepsilon\in \FF_4\setminus\FF_2$ such that $\varepsilon^2+\varepsilon+1=0$. Take
\[\ell_{1} := \langle[1,0,0,1,1,1,1], [1,\varepsilon^2,\varepsilon,\varepsilon,0,1,0]\rangle, ~~~ \ell_{2} := \langle [1,1,1,0,0,0,1], [1,0,1,1,\varepsilon^2,\varepsilon,0]\rangle.\]
Then $\ell_1, \ell_2$ are lines of $Q(6,4)$ and $\ell_\varepsilon\in \langle \ell_1, \ell_2\rangle_{\cQ_2}$, where $\cQ_2 = Q_2(6,4)$. Since both $\ell_1$ and $\ell_2$ contain points defined over $\FF_2$, by Lemma~\ref{l-gen} they are both $\FF_2$-generated. Hence the line $\ell_\varepsilon$ is $\FF_2$--generated.

\medskip

\noindent
Let $\FF = \FF_8$. Now $\FF_8=\FF_2[\varepsilon]$ with $\varepsilon^3+\varepsilon+1=0$. Take
\[\ell_{1} := \langle [0,0,1,\varepsilon^4,1,\varepsilon^5,1], [1,\varepsilon,\varepsilon^4,\varepsilon^4,\varepsilon^4,0,0]\rangle,~~ \ell_{2} :=  \langle [\varepsilon^5,0,\varepsilon^2,\varepsilon^3,\varepsilon,\varepsilon^3,1], [0,1,\varepsilon,\varepsilon,\varepsilon^6,\varepsilon^3,0]\rangle. \]
Then $\ell_\varepsilon\in\langle \ell_1, \ell_2\rangle_{\cQ_2}$ with  $\ell_1, \ell_2\in Q(6,8)$. Put
\[\ell_{11} := \langle [0,0,1,1,1,0,1], [1,\varepsilon,\varepsilon^4,\varepsilon^4,\varepsilon^4,0,0]\rangle, ~~ \ell_{12} := \langle [0,0,0,1,0,1,0],  [1,\varepsilon,\varepsilon^4,\varepsilon^4,\varepsilon^4,0,0]\rangle,\]
\[\ell_{21} := \langle[1,0,1,0,1,0,0], [0,1,\varepsilon,\varepsilon,\varepsilon^6,\varepsilon^3,0]\rangle, ~~~~ \ell_{22} := [0,1,1,1,0,0,1],
 [0,1,\varepsilon,\varepsilon,\varepsilon^6,\varepsilon^3,0]\rangle.\]
Then  $\ell_1\in \langle \ell_{11}, \ell_{12}\rangle_{\cQ_2}$ and  $\ell_2\in \langle\ell_{21}, \ell_{22}\rangle_{\cQ_2}.$
 By Lemma~\ref{l-gen} the lines $\ell_{11},\ell_{12},\ell_{21}$ and $\ell_{22}$ are $\FF_2$-generated; so $\ell_\varepsilon$ too is $\FF_2$-generated.

\medskip

\noindent
Let $\FF = \FF_9$. We have $\FF_9=\FF_3[\varepsilon]$ with $\varepsilon^2-\varepsilon-1=0$. Put
\[\ell_1 := \langle [0,\varepsilon^7,\varepsilon^7,\varepsilon^7,\varepsilon^5,-1,1],  [1,1,0,0,\varepsilon,\varepsilon^3,0]\rangle, ~~ \ell_2 :=
\langle [0,\varepsilon^5,\varepsilon^7,\varepsilon^2,\varepsilon^2,-1,1], [1,1,0,0,\varepsilon,\varepsilon^3,0]\rangle. \]
Then $\ell_\varepsilon\in \langle\ell_1,\ell_2\rangle_{\cQ_2}$ with  $\ell_1, \ell_2\in \cQ_2(6,9).$ Also,  $\ell_1\in \langle\ell_{11},\ell_{12}\rangle_{\cQ_2}$ and $\ell_2\in\langle \ell_{21},\ell_{22}\rangle_{\cQ_2}$ where
\[\ell_{11} := \langle [0,\varepsilon^3,\varepsilon^3,\varepsilon^3,\varepsilon^6,\varepsilon^3,1], [1,-1,1,1,0,0,0]\rangle, ~ \ell_{12} :=
\langle [0,-1,-1,-1,1,1,1], [1,\varepsilon^5,\varepsilon^6,\varepsilon^6,1,\varepsilon^2,0]\rangle,\]
\[\ell_{21} := \langle  [1,-1,1,0,0,0,1], [1,\varepsilon^5,\varepsilon^2,\varepsilon^2,1,\varepsilon^2,0]\rangle, ~~ \ell_{22} := \langle  [-1,1,0,-1,0,0,1], [0,1,-1,-1,\varepsilon^5,\varepsilon^7,0] \rangle.\]
 By Lemma~\ref{l-gen} the lines $\ell_{11},\ell_{12},\ell_{21}$ and $\ell_{22}$ are $\FF_3$-generated; hence $\ell_\varepsilon$ is $\FF_3$-generated as well.
 \end{proof}

\begin{note}
Most likely the statement of Theorem \ref{t-gen} holds for infinitely many finite fields, perhaps for all of them, but the proof of Theorem \ref{t-gen} contains no obvious hints for generalizations. In particular, it is not clear if a general rule is implicit in the way the lines $\ell_i$ and $\ell_{ij}$ are chosen.
\end{note}

In order to finish the proof of Theorem \ref{MT4} we need the following general lemma.

\begin{lemma}\label{l-bound}
Given any field $\FF$, let $\cQ = \cQ(\FF)$ with rank $n > 2$ and defect $d \leq 2$. When $d = 2$ and $\ch(\FF) = 2$ assume furthermore that $\mathrm{def}_2(\cQ) = d$. Then $\gr(\cQ_2)\geq{{2n+d}\choose 2}$.
\end{lemma}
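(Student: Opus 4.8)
The plan is to produce a projective embedding of $\cQ_2$ of vector dimension exactly ${N\choose 2}$ and then invoke the inequality $\gr(\cQ_2)\ge\er(\cQ_2)\ge\dim(\varepsilon)$ recorded in the Introduction. Since for $k=2<n$ the Grassmannian $\cQ_2$ is a full subgeometry of the line-Grassmannian $\cG_2$ of $\PG(V)$, the Pl\"ucker map $\varepsilon_2\colon\langle u,v\rangle_V\mapsto[u\wedge v]$ restricts to an injective map sending lines of $\cQ_2$ to projective lines; the only thing to decide is the dimension of the span of its image, i.e. the dimension of the subspace $W\subseteq\bigwedge^2V$ spanned by the bivectors $u\wedge v$ coming from totally singular lines $\langle u,v\rangle$ of $\cQ$. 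Thus the whole problem reduces to computing $\dim W$, and the target is $\dim W={N\choose 2}$ whenever the Pl\"ucker embedding suffices.

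First I would treat the case $\ch(\FF)\neq2$, for all $d\le2$ at once. Working in the basis of \eqref{forma quadratica}, the pairs $\{e_i,e_j\}$ with $i<j\le2n$ and $\{i,j\}\neq\{2k-1,2k\}$ are themselves totally singular, so those bivectors lie in $W$ immediately. The three remaining families are recovered as follows. The conjugate bivectors come from the totally singular lines $\langle e_{2i-1}+e_{2j},\,e_{2i}-e_{2j-1}\rangle$, whose wedges reduce, modulo already known terms, to the sums $A_i+A_j$ with $A_i:=e_{2i-1}\wedge e_{2i}$; since $n\ge3$ there are at least three hyperbolic blocks, and in characteristic $\neq2$ the identity $2A_i=(A_i{+}A_j)+(A_i{+}A_k)-(A_j{+}A_k)$ isolates each $A_i$. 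The mixed bivectors $e_i\wedge e_\ell$ ($i\le2n<\ell$) and the anisotropic bivectors $e_\ell\wedge e_{\ell'}$ ($2n<\ell<\ell'$) are obtained from totally singular lines of the form $\langle e_{2m-1},\,e_\ell+\gamma e_{2m'-1}+\delta e_{2m'}\rangle$ and $\langle e_\ell+\gamma e_{2m-1}+\delta e_{2m},\,e_{\ell'}+\gamma'e_{2m'-1}+\delta'e_{2m'}\rangle$ with $m\neq m'$, after diagonalising $q_0$; on expanding, every term other than the wanted one is already in $W$. This yields $W=\bigwedge^2V$, hence $\dim(\varepsilon_2)={N\choose 2}$, and the lower bound follows.

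For $d\le1$ and arbitrary characteristic I would instead quote the Weyl embedding of $\cQ_2$ from \cite{IP13}, which is a genuine embedding of dimension ${N\choose 2}$; this already gives $\er(\cQ_2)\ge{N\choose 2}$ with no restriction on $\FF$, covering in particular the char-$2$ cases $d=0$ and $d=1$. The genuinely hard case, and the reason for the extra hypothesis, is $d=2$ with $\ch(\FF)=2$. Here $f$ is alternating, so the functional $\phi_f\colon\bigwedge^2V\to\FF$ given by $\phi_f(u\wedge v)=f(u,v)$ is well defined and nonzero and vanishes on every totally singular bivector; hence the Pl\"ucker image lies in the hyperplane $\ker\phi_f$ and the Grassmann embedding is one dimension short of ${N\choose 2}$. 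The hypothesis $\mathrm{def}_2(\cQ)=d$ is exactly the condition $\Rad(f)=0$, i.e. $f$ is a non-degenerate alternating form. Under it I would verify that the Weyl module embedding, which properly covers the Grassmann quotient, remains a bona fide embedding of $\cQ_2$ without its dimension dropping below ${N\choose 2}$, whence again $\gr(\cQ_2)\ge\er(\cQ_2)\ge{N\choose 2}$. The main obstacle is precisely this characteristic-$2$ gap: one cannot argue inside $\bigwedge^2V$, and the crux is to show that once $\Rad(f)=0$ the larger Weyl embedding realises the full dimension ${N\choose 2}$.
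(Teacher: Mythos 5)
Your first two cases are fine: for $\ch(\FF)\neq 2$ your explicit computation of the span of the Pl\"ucker image (recovering the bivectors $A_i=e_{2i-1}\wedge e_{2i}$ from the relations $A_i+A_j$ via $2A_i=(A_i{+}A_j)+(A_i{+}A_k)-(A_j{+}A_k)$, which uses exactly the available hypotheses $n\geq 3$ and $\ch(\FF)\neq 2$) is a legitimate self-contained re-derivation of the dimension count that the paper simply cites from \cite{ILP}; and for $\ch(\FF)=2$, $d\leq 1$ you invoke the Weyl embedding of \cite{IP13} just as the paper does. Your observation that in characteristic $2$ the functional $\phi_f(u\wedge v)=f(u,v)$ is well defined (since $f$ is alternating) and annihilates the Pl\"ucker image correctly explains why no argument inside $\bigwedge^2V$ can work there.

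However, in the decisive case $\ch(\FF)=2$, $d=2$, $\mathrm{def}_2(\cQ)=2$ your proof stops at a declaration of intent: ``I would verify that the Weyl module embedding \dots remains a bona fide embedding of $\cQ_2$ without its dimension dropping below ${N\choose 2}$.'' This is a genuine gap, not a routine verification: \cite{IP13} constructs the Weyl embedding only for $d\leq 1$, no embedding of $\cQ_2$ of dimension ${N\choose 2}$ is available in the literature (or built by you) for the elliptic characteristic-$2$ case, and the paper conspicuously avoids any embedding argument at this point. Its actual route is different: since $(\mathrm{def}_1(\cQ),\mathrm{def}_2(\cQ))=(0,2)$, the space $\cQ$ is isomorphic to a hyperplane $H$ of a parabolic polar space $\cQ'$ of rank $n+1$ and defect $1$, for which the already-established bound gives $\gr(\cQ'_2)\geq{{N+1}\choose 2}$. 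Proposition \ref{lemma1} provides the generating set $S_2(H,p_0,\ell_0)=S_2(H)\cup S_2(p_0)\cup\{\ell_0\}$ of $\cQ'_2$, whence $\gr(\cQ_2)+\gr(\Res(p_0))+1\geq {{N+1}\choose 2}$; since $\gr(\Res(p_0))=2n+1=N-1$ by Corollary \ref{t:k=1 symplectic even}, this yields $\gr(\cQ_2)\geq {{N+1}\choose 2}-N={N\choose 2}$. In other words, the lower bound in the hard case is obtained by bounding the size of a generating set of the line-Grassmannian of a \emph{larger} polar space where an embedding does exist, not by exhibiting an embedding of $\cQ_2$ itself; note also that the hypothesis $\mathrm{def}_2(\cQ)=d$ is used to realize $\cQ$ as a hyperplane of the parabolic $\cQ'$, not merely as the condition $\Rad(f)=0$ feeding a Weyl-module argument. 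As written, your proposal leaves precisely the case the lemma's extra hypothesis was designed for unproven.
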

\begin{proof}
Put $N := 2n+d$. When $\ch(\FF)\neq 2$ the Pl\"{u}cker embedding of $\cQ_2$ has dimension $N\choose 2$; see \cite{ILP}. So $\gr(\cQ_2)\geq\er(\cQ_2)\geq{N\choose 2}$. Let $\ch(\FF)=2$. If $d \leq 1$ then $\cQ_2$ admits the so-called Weyl embedding (see \cite{IP13}), which is ${N\choose 2}$-dimensional. Hence $\gr(\cQ_2)\geq{N\choose 2}$ in this case too. Finally, let $\ch(\FF) = 2$ and $d = 2$. Then $(\mathrm{def}_1(\cQ), \mathrm{def}_2(\cQ)) = (0,2)$ by assumption. In this case there exist a non-degenerate orthogonal polar space $\cQ' = \cQ'(\FF)$ of rank $n+1$ and defect $\mathrm{def}(\cQ') = 1$ such that $\cQ \cong H$ for a hyperplane $H$ of $\cQ'$. By Proposition \ref{lemma1} the line-Grassmannian $\cQ'_2$ admits a generating set of the form $S_k(H, p_0, \ell_0)$. By the above,
\[\gr(\cQ'_2) ~ \geq ~ {{2(n+1)+\mathrm{def}(\cQ')}\choose 2} ~ = ~ {{2n+3}\choose 2} ~ = ~ {{(2n+d)+1}\choose 2} ~ = ~{{N+1}\choose 2}.\]
Hence $\gr(H_2) + \gr(\Res(p_0)) + 1 \geq {{N+1}\choose 2}$. However $\gr(H_2) = \gr(\cQ_2)$ since $H \cong \cQ$ and the residue $\Res(p_0)$ of $p_0$ in $\cQ'$ is generated by $2n+1 = N-1$ of its points (Corollary \ref{t:k=1 symplectic even}). Therefore
$\gr(\cQ_2) + N-1 + 1 \geq {{N+1}\choose 2}$, namely $\gr(\cQ_2) \geq {N\choose 2}$.
\end{proof}

The next corollary finishes the proof of Theorem \ref{MT4} of the Introduction.

\begin{corollary}\label{c-orth}
For $q\in\{4,8,9\}$ let $\cQ(\FF_q)$ be a non-degenerate orthogonal polar space defined over $\FF_q$ with reduced Witt index $n \geq 3$ and defect $d \leq 2$, with $d > 0$ when $n = 3$. Then $\gr(\cQ_2(\FF_q)) = {{2n+d}\choose 2}$. If moreover $d\leq 1$ then $\cQ_2(\FF_q)$ is generated over the prime subfield of $\FF_q$.
\end{corollary}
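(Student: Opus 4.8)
The plan is to match the lower bound of Lemma~\ref{l-bound} with two different upper‑bound arguments: when $d\le 1$ I would prove that $\cQ_2(\FF_q)$ is \emph{generated over its prime subfield} $\FF_0$ (which forces $\gr(\cQ_2(\FF_q))\le\gr(\cQ_2(\FF_0))=\binom{2n+d}{2}$ by Cooperstein~\cite{C98b} and the inequality recorded after Definition~\ref{subgeometry}), and when $d=2$ I would bound $\gr(\cQ_2)$ directly by a hyperplane reduction to the $d=1$ case. Here $\FF_0=\FF_2$ if $q\in\{4,8\}$ and $\FF_0=\FF_3$ if $q=9$, and I take $q$ in the normal form \eqref{q-odd}, \eqref{q-ell}, defined over $\FF_0$ when $d\le 1$. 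First I would check that Lemma~\ref{l-bound} applies in every case, giving $\gr(\cQ_2(\FF_q))\ge\binom{2n+d}{2}$: for $q=9$ the characteristic is odd, while for $q\in\{4,8\}$ an anisotropic binary form in characteristic $2$ over a (perfect) finite field has non‑degenerate bilinearization, so $d=2$ is automatically elliptic, i.e. $\mathrm{def}_2(\cQ)=2$, as required by that lemma.

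For $d\le 1$ I would establish $\FF_0$-generation by induction on $n\ge 3$, the base case $(n,d)=(3,1)$, that is $Q_2(6,\FF_q)$, being exactly Theorem~\ref{t-gen}. For the step with $n\ge 4$ and $d=1$, choose $\FF_0$-rational points $\hat q$ (singular) and $p_0\notin\hat q^{\perp}$; then $\{\hat q,p_0\}^{\perp}$ is an $\FF_0$-rational non‑degenerate orthogonal polar space of rank $n-1\ge 3$ and defect $1$, so its line‑Grassmannian is $\FF_0$-generated by the inductive hypothesis. By Corollary~\ref{cor2} the set $S_2(\hat q)\cup S_2(\{\hat q,p_0\}^{\perp})\cup S_2(p_0)\cup\{\ell_0\}$ spans $\cQ_2(\FF_q)$: the blocks $S_2(\hat q)$ and $S_2(p_0)$ consist of lines meeting $\cQ(\FF_0)$ at $\hat q$ respectively $p_0$, hence lie in $\langle\cQ_2(\FF_0)\rangle_{\cQ_2(\FF_q)}$ by Lemma~\ref{l-gen}; the middle block lies there by the inductive hypothesis; and $\ell_0$ can be chosen $\FF_0$-rational. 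For $n\ge 4$ and $d=0$ I would run the same argument using Proposition~\ref{lemma1} with a non‑degenerate hyperplane $H$ of rank $n-1$ (a parabolic section of the hyperbolic quadric, of defect $1$): since $n-1>2$ we have $\cP_2(H)=H_2$, which is $\FF_0$-generated by the $d=1$ case, and $S_2(p_0)$ is again absorbed by Lemma~\ref{l-gen}. The point that $\{\hat q,p_0\}^{\perp}$ (resp. $H$) has rank $\ge 3$, so that its line‑Grassmannian genuinely carries lines, is precisely what breaks at $n=3$ (where $k=2=n-1$), which is why the induction must rest on Theorem~\ref{t-gen} rather than on a rank‑$2$ reduction. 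Having proved $\FF_0$-generation, the upper bound $\gr(\cQ_2(\FF_q))\le\binom{2n+d}{2}$ follows and matches Lemma~\ref{l-bound}, settling both assertions for $d\le 1$.

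For $d=2$ no generation over $\FF_0$ is claimed, and I would argue the rank directly. Take $H$ a non‑degenerate hyperplane of $\cQ=Q^-(2n+1,\FF_q)$ of rank $n$ and defect $1$ (a parabolic section). By Proposition~\ref{lemma1}, $\cQ_2$ is spanned by $S_2(H)\cup S_2(p_0)\cup\{\ell_0\}$, where $S_2(H)$ is the point‑set of $H_2=\cP_2(H)$ (as $\rank H=n>2$), of generating rank $\binom{2n+1}{2}$ by the $d=1$ result just obtained, while $S_2(p_0)$ is the point‑set of $\Res(p_0)$, a polar space of rank $n-1$ and defect $2$ generated by $2(n-1)+2=2n$ points via Corollary~\ref{t:k=1 symplectic even}. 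Counting,
\[
\gr(\cQ_2)\ \le\ \binom{2n+1}{2}+2n+1\ =\ \binom{2n+2}{2},
\]
which together with Lemma~\ref{l-bound} gives $\gr(\cQ_2(\FF_q))=\binom{2n+2}{2}$.

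The main obstacle I anticipate is not conceptual but the verification, over the small prime fields $\FF_2$ and $\FF_3$, that the auxiliary data $\hat q,p_0,\ell_0$ (or $H$) can be taken $\FF_0$-rational while still satisfying the genericity hypotheses of Corollary~\ref{cor2} and Proposition~\ref{lemma1}, namely $\ell_0\cap\{\hat q,p_0\}^{\perp}=\emptyset$, $\ell_0\not\subseteq\hat q^{\perp}\cup p_0^{\perp}$, and $\hat q^{\perp}\cap\ell_0\ne p_0^{\perp}\cap\ell_0$. These are open conditions carving out proper subvarieties, and for $n\ge 4$ the polar space $\cQ(\FF_0)$ of rank $\ge 4$ is large enough to contain $\FF_0$-rational configurations avoiding them; I would dispatch this by an explicit coordinate choice in the normal form \eqref{q-odd}, in the same spirit as the $\FF_0$-rational witnesses produced in the proof of Theorem~\ref{t-gen}.
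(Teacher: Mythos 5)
Your proposal is correct and follows the paper's strategy in all essentials: the same lower bound via Lemma~\ref{l-bound} (including the same observation that over a finite, hence perfect, field of characteristic $2$ the case $d=2$ forces $\mathrm{def}_2(\cQ)=2$), the same base case $(n,d)=(3,1)$ resting on Theorem~\ref{t-gen} together with \cite{C98b}, a hyperplane-section induction for $d\le 1$, and the same count $\binom{2n+1}{2}+2n+1=\binom{2n+2}{2}$ for $d=2$. Two local deviations are worth recording. First, your $d=1$ inductive step slices with the \emph{singular} hyperplane $\hat q^{\perp}$ and invokes Corollary~\ref{cor2}, descending from rank $n$ to the rank-$(n-1)$, defect-$1$ space $\{\hat q,p_0\}^{\perp}$, so your induction stays inside the parabolic family and obtains $d=0$ as a by-product; the paper instead uses only Proposition~\ref{lemma1} with non-degenerate hyperplane sections, interleaving the defects at essentially fixed rank: $(n,0)$ from $(n-1,1)$ via a parabolic section, $(n,1)$ from $(n,0)$ via a hyperbolic section $Q^+(2n-1,\FF)$ of the same rank $n$, then $(n,2)$ from $(n,1)$. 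Both reductions are legitimate; as you note, $k=2\le n-2$ guarantees that the induced geometry on $S_2(\{\hat q,p_0\}^{\perp})$ really is the line-Grassmannian of that subspace, so spans computed there and in $\cQ_2(\FF_q)$ agree, and Corollary~\ref{cor2} is not in its degenerate regime $k=n-1$. Second, for $d\le 1$ you derive the numerical bound indirectly, from $\FF_0$-generation plus Cooperstein's prime-field value $\gr(\cQ_2(\FF_0))=\binom{2n+d}{2}$ via the inequality recorded after Definition~\ref{subgeometry}, and you absorb the blocks $S_2(\hat q)$ and $S_2(p_0)$ wholesale by Lemma~\ref{l-gen}; the paper instead constructs an explicit $\FF_0$-rational generating set of size exactly $\binom{2n+d}{2}$ (counting $\binom{2n-1}{2}+(2n-2)+1=\binom{2n}{2}$, etc., with $S_2(p_0)$ handled by generating $\Res(p_0)$ with $2n+d'$ points as in Corollary~\ref{t:k=1 symplectic even}). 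Your route is slightly leaner but leans more heavily on \cite{C98b}; the paper's counting is self-contained about cardinalities. The one item you defer, producing $\FF_0$-rational witnesses $\hat q$, $p_0$, $\ell_0$ (resp.\ $H$) satisfying the genericity conditions, is the same routine verification the paper also dispatches with ``it is not difficult to see,'' and explicit coordinates in the normal form \eqref{q-odd} settle it for $n\ge 4$.
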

\begin{proof}
Throughout this proof $\FF = \FF_q$ with $q$ as above, $\FF_0$ is the prime subfield of $\FF$ and $\cQ = \cQ(\FF)$ with $n$ and $d$ as above. Note that, as $\FF$ is finite, when $d = 2$ and $\ch(\FF) = 2$ then necessarily $\mathrm{def}_2(\cQ) = 2$. So, the hypotheses of Lemma \ref{l-bound} are satisfied. To begin with, we show that

\begin{itemize}
\item[$(\ast)$] the $2$-Grassmannian $\cQ_2$ admits a generating set $G$ of cardinality at most
${{2n+d}\choose 2}$. Moreover, if $d \leq 1$ then $G$ can be chosen to be $\FF_0$-rational.
\end{itemize}
We proceed by induction on $n$, firstly assuming that $d \leq 1$.

\medskip

\noindent
Step 1. When $n=3$ and $d=1$ claim $(\ast)$ follows from Theorem~\ref{t-gen} and \cite{C98b}.

\medskip

\noindent
Step 2. Let  $n > 3$ and  $d=0$. We can assume that the quadratic form associated to $\cQ$ is given as in case $d = 0$ of \eqref{q-odd}. Let $X$ be the hyperplane of $\PG(2n-1,\FF)$ represented by the equation $x_{2n-1}=x_{2n}$. Then $H:=\cQ\cap X\cong Q(2n-2,\FF)$ is a hyperplane of $\cQ$. Take a point $p_0\in\cQ\setminus H$ defined over $\FF_0$, e.g. $p_0= [0,..., 0,1]$. By Proposition~\ref{lemma1} the geometry $\cQ_2$ admits a generating set $S_2(H, p_0, \ell_0) = H_2\cup S_2(p_0)\cup\{\ell_0\}$ where $\ell_0\in \cQ_2$ is such that $\ell_0\not\subseteq H\cup p_0^{\perp}$ and $H\cap\ell_0\neq p_0^{\perp}\cap\ell_0$. It is not difficult to see that we can also choose $\ell_0$ in such a way that $\ell_0$ is $\FF_0$-rational. Claim $(\ast)$ holds for $H_2$ either by step 1 or by the inductive hypothesis, since $H$ is a non degenerate polar space of rank $n-1$ and defect $1$ and it is $\FF_0$-rational. Hence $H_2$ admits an $\FF_0$-rational generating set of size at most ${{2n-1}\choose 2}$. The set $S_2(p_0)$ is the point-set of $\Res(p_0)\cong Q^+(2n-3,\FF)$. Hence $2n-2$ lines through $p_0$ are enough to generate it. As $p_0$ is $\FF_0$-rational, we can choose those $2n-2$ lines in such a way that all of them are $\FF_0$-rational. Tu sum up, we can generate $\cQ_2$ by at most ${{2n-1}\choose 2}+ (2n-2) + 1 = {{2n}\choose 2}$ elements, all of which can be chosen to be $\FF_0$-rational, as claimed in $(\ast)$.

\medskip

\noindent
Step 3. Let $n > 3$ and  $d= 1$. We can assume that the quadratic form associated to $\cQ$ is given as in case $d = 1$ of \eqref{q-odd}. Let $X$ be the hyperplane of $\PG(2n,\FF)$ represented by the equation $x_{2n+1} = 0$. Then $H:=\cQ\cap X\cong Q(2n-1,\FF)$ is a hyperplane of $\cQ$.
We obtain the conclusion as in step 2, except that now when dealing with $H_2$ we refer to step 2 instead of the inductive hypothesis.

\medskip

\noindent
Step 4. Finally, let $d = 2$. Recalling that $\mathrm{def}_2(\cQ) =  2$ when $\FF$ is $\FF_4$ or $\FF_8$, the quadratic form defining $\cQ$ can be taken as in \eqref{q-ell}. Let $X$ be the hyperplane of $\PG(2n+1,\FF)$ of equation $x_{2n+2}=0$. Then $H:=\cQ\cap X\cong Q(2n,\FF)$ is a hyperplane of $\cQ$. So, we can apply the same argument as in steps 2 and 3, but referring to step 3 when dealing with $H_2$ and without caring  of choosing an $\FF_0$-rational line as $\ell_0$ (although we can) and an $\FF_0$ rational generating set for $S_2(p_0)$ (which we do not know if it exists).

The proof of $(\ast)$ is complete. The statement of the corollary now follows by comparing $(\ast)$ with Lemma \ref{l-bound}.
\end{proof}

\begin{note}
The restriction $d \leq 1$ in the last claim of Corollary \ref{c-orth} is due to the fact that when $d = 2$ the scalars $\lambda$ and $\mu$ in \eqref{q-ell} need not belong to $\FF_0$. If there is no way to choose them in $\FF_0$ by replacing the given quadratic form with another one equivalent or proportional to it, then $\cQ$ itself is not $\FF_0$-generated.
\end{note}

\vskip.2cm\noindent
\begin{minipage}[t]{\textwidth}
Authors' addresses:
\vskip.2cm\noindent\nobreak
\centerline{
\begin{minipage}[t]{7cm}
Ilaria Cardinali and Antonio Pasini \\
Department of Information Engineering and Mathematics\\
University of Siena\\
Via Roma 56, I-53100, Siena, Italy\\
ilaria.cardinali@unisi.it\\
antonio.pasini@unisi.it
\end{minipage}\hfill
\begin{minipage}[t]{6cm}
Luca Giuzzi\\
D.I.C.A.T.A.M. \\ Section of Mathematics \\
Universit\`a di Brescia\\
Via Branze 53, I-25123, Brescia, Italy \\
luca.giuzzi@unibs.it
\end{minipage}}
\end{minipage}

\bigskip


 \end{document}